\newif\ifpersonal
\numberwithin{equation}{subsection}
\theoremstyle{plain}
\newtheorem{theorem}[equation]{Theorem}
\newtheorem{lemma}[equation]{Lemma}
\newtheorem*{example*}{Example}
\newtheorem*{definition*}{Definition}
\newtheorem{proposition}[equation]{Proposition}
\newtheorem{corollary}[equation]{Corollary}
\newtheorem{assumption}[equation]{Assumption}
\theoremstyle{definition}
\newtheorem{definition}[equation]{Definition}
\newtheorem{notation}[equation]{Notation}
\newtheorem{example}[equation]{Example}
\newtheorem{remark}[equation]{Remark}
\newtheorem{recollection}[equation]{Recollection}
\newtheorem{construction}[equation]{Construction}
\newcommand{\personal}[1]{\textcolor[rgb]{0,0,1}{(Personal: #1)}}
\newcommand{\discussion}[1]{\textcolor{violet}{(Discussion: #1)}}
\newcommand{\personal}[1]{\ignorespaces}
\newcommand{\discussion}[1]{\ignorespaces}
\def\A{\mathbb{A}}
\def\G{\mathbb{G}}
\def\N{\mathbb{N}}
\def\S{\mathbb{S}}
\def\ZZ{\mathbb{Z}}
\def\C{\mathcal{C}}
\def\D{\mathcal{D}}
\def\E{\mathcal{E}}
\def\X{\mathcal{X}}
\def\Y{\mathcal{Y}}
\def\M{\mathcal{M}}
\def\OO{\mathcal{O}}
\def\S{\textsf{S}}
\def\Prlo{\mathrm{Pr}^{\mathrm{L}, \omega}}
\def\ST{\overline{\mathrm{ST}}_R}
\def\Funst{\mathrm{Fun}^{\mathrm{st}}}
\def\FunR{\mathrm{Fun}^{\mathrm{R}}}
\DeclareMathOperator{\Frac}{Frac}
\DeclareMathOperator{\Gr}{Gr}
\DeclareMathOperator{\Rep}{Rep}
\DeclareMathOperator{\QCoh}{QCoh}
\DeclareMathOperator{\dAff}{dAff}
\DeclareMathOperator{\dSt}{dSt}
\DeclareMathOperator{\Aff}{Aff}
\DeclareMathOperator{\fib}{fib}
\DeclareMathOperator{\cof}{cof}
\DeclareMathOperator{\Loc}{Loc}
\DeclareMathOperator{\Hom}{Hom}
\DeclareMathOperator{\Cons}{Cons}
\DeclareMathOperator{\Fun}{Fun}
\DeclareMathOperator{\Sh}{Sh}
\DeclareMathOperator{\Mod}{Mod}
\DeclareMathOperator{\LMod}{LMod}
\DeclareMathOperator{\Ext}{Ext}
\DeclareMathOperator{\St}{St}
\DeclareMathOperator{\Map}{Map}
\DeclareMathOperator{\End}{End}
\DeclareMathOperator{\Cat}{\mathrm{Cat}}
\DeclareMathOperator{\Spc}{Spc}
\DeclareMathOperator{\ev}{ev}
\DeclareMathOperator{\coev}{coev}
\DeclareMathOperator{\Id}{Id}
\DeclareMathOperator{\Perf}{Perf}
\DeclareMathOperator{\Perv}{Perv}
\DeclareMathOperator{\Spec}{Spec}
\newcommand{\colim}{\operatornamewithlimits{colim}}
\renewenvironment{proof}[1][\relax]{\par
  \pushQED{\qed}%
  \normalfont \topsep6\p@\@plus6\p@\relax
  \trivlist
  \item[\hskip\labelsep\itshape
    \ifx#1\relax \proofname\else\proofname{} of #1\fi\@addpunct{.}]\ignorespaces
}{%
  \popQED\endtrivlist\@endpefalse
}
\let\save@mathaccent\mathaccent
\newcommand*\if@single[3]{%
	\setbox0\hbox{${\mathaccent"0362{#1}}^H$}%
	\setbox2\hbox{${\mathaccent"0362{\kern0pt#1}}^H$}%
	\ifdim\ht0=\ht2 #3\else #2\fi
}
\newcommand*\rel@kern[1]{\kern#1\dimexpr\macc@kerna}
\newcommand*\widebar[1]{\@ifnextchar^{{\wide@bar{#1}{0}}}{\wide@bar{#1}{1}}}
\newcommand*\wide@bar[2]{\if@single{#1}{\wide@bar@{#1}{#2}{1}}{\wide@bar@{#1}{#2}{2}}}
\newcommand*\wide@bar@[3]{%
	\begingroup
	\def\mathaccent##1##2{%
		\let\mathaccent\save@mathaccent
		\if#32 \let\macc@nucleus\first@char \fi
		\setbox\z@\hbox{$\macc@style{\macc@nucleus}_{}$}%
		\setbox\tw@\hbox{$\macc@style{\macc@nucleus}{}_{}$}%
		\dimen@\wd\tw@
		\advance\dimen@-\wd\z@
		\divide\dimen@ 3
		\@tempdima\wd\tw@
		\advance\@tempdima-\scriptspace
		\divide\@tempdima 10
		\advance\dimen@-\@tempdima
		\ifdim\dimen@>\z@ \dimen@0pt\fi
		\rel@kern{0.6}\kern-\dimen@
		\if#31
		\overline{\rel@kern{-0.6}\kern\dimen@\macc@nucleus\rel@kern{0.4}\kern\dimen@}%
		\advance\dimen@0.4\dimexpr\macc@kerna
		\let\final@kern#2%
		\ifdim\dimen@<\z@ \let\final@kern1\fi
		\if\final@kern1 \kern-\dimen@\fi
		\else
		\overline{\rel@kern{-0.6}\kern\dimen@#1}%
		\fi
	}%
	\macc@depth\@ne
	\let\math@bgroup\@empty \let\math@egroup\macc@set@skewchar
	\mathsurround\z@ \frozen@everymath{\mathgroup\macc@group\relax}%
	\macc@set@skewchar\relax
	\let\mathaccentV\macc@nested@a
	\if#31
	\macc@nested@a\relax111{#1}%
	\else
	\def\gobble@till@marker##1\endmarker{}%
	\futurelet\first@char\gobble@till@marker#1\endmarker
	\ifcat\noexpand\first@char A\else
	\def\first@char{}%
	\fi
	\macc@nested@a\relax111{\first@char}%
	\fi
	\endgroup
}
\tikzset{
  closed/.style = {decoration = {markings, mark = at position 0.5 with { \node[transform shape, xscale = .8, yscale=.4] {/}; } }, postaction = {decorate} },
  open/.style = {decoration = {markings, mark = at position 0.5 with { \node[transform shape, scale = .7] {$\circ$}; } }, postaction = {decorate} }
}
      \string\usetikzlibrary{decorations.markings} to use arrows with markings}{}}{}%
\newcommand{\leftrarrows}{\mathrel{\raise.75ex\hbox{\oalign{%
  $\scriptstyle\leftarrow$\cr
  \vrule width0pt height.5ex$\hfil\scriptstyle\relbar$\cr}}}}
\newcommand{\lrightarrows}{\mathrel{\raise.75ex\hbox{\oalign{%
  $\scriptstyle\relbar$\hfil\cr
  $\scriptstyle\vrule width0pt height.5ex\smash\rightarrow$\cr}}}}
\newcommand{\Rrelbar}{\mathrel{\raise.75ex\hbox{\oalign{%
  $\scriptstyle\relbar$\cr
  \vrule width0pt height.5ex$\scriptstyle\relbar$}}}}
\def\leftrightarrowsfill@{\arrowfill@\leftrarrows\Rrelbar\lrightarrows}
\newcommand{\xleftrightarrows}[2][]{\ext@arrow 3399\leftrightarrowsfill@{#1}{#2}}
\NewCommandCopy{\notocsection}{\section}
\xpatchcmd{\notocsection}{{1}}{{1001}}{}{}
\begin{document}

\title{Good moduli for moduli of objects}
\author{Enrico Lampetti}
\address{Sorbonne Université and Université Paris Cité, CNRS, IMJ-PRG, F-75005 Paris, France}
\email{enrico.lampetti@imj-prg.fr}

\subjclass[2020]{}
\keywords{}

\begin{abstract}
    We construct good moduli spaces from moduli of objects in the sense of Toën-Vaquié.
   As an application, we construct good moduli spaces for perverse sheaves. 
\end{abstract}

\maketitle

\tableofcontents

\section{Introduction}
	Good moduli spaces for algebraic stacks were introduced by Alper in \cite{Alp} and have played a prominent role in moduli theory as generalizations of Mumford's good GIT quotients \cite{GIT} and Abramovich-Olsson-Vistoli's tame stacks \cite{AOV}.  
	For the sake of this introduction, recall that a good moduli space for an algebraic stack $\X$ is a qcqs morphism $q \colon \X \to X$ to an algebraic space such that the pushforward along $q$ is exact on quasi-coherent sheaves and such that the canonical morphism $\OO_X \to q_\ast \OO_\X$ is an equivalence.
	If a good moduli space $q \colon \X \to X$ exists, then it is unique, as $q$ is universal for maps from $\X$ to algebraic spaces (\cite[Theorem 6.6]{Alp}).		         
    In a similar way that the Keel-Mori Theorem \cite{KM} provides an intrinsic way to show that a Deligne-Mumford stack admits a coarse moduli space, a fundamental result of Alper-Halpern Leistner-Heinloth \cite[Theorem A]{AHLH} provides an intrinsic way to show that an algebraic stack admits a good moduli space.
    The existence of a good moduli space for an algebraic stack $\X$ has many pleasant consequences.
	It allows for example to give a local presentation of $\X$ by quotient stacks and prove the compact generation of $\QCoh(\X)$ (\cite{AHR}). 
	Also, the existence of good moduli spaces can be exploited to construct BPS Lie algebras and unlock the study of the cohomology of $\X$ via cohomological Hall algebras, see e.g. \cite{DHM, Davison, BDNIKP, Hennecart}. \medskip
	
	As a consequence of \cite[Theorem A]{AHLH}, given a nice enough cocomplete abelian category $\C$, the authors construct a good moduli space for Artin-Zhang's moduli of objects \cite{AZ} parametrizing compact objects in $\C$. 
	 This good moduli space has been further studied by Fernandez Herrero-Lennen-Makarova in \cite{FHLM} and, when it exists, is necessarily \textit{proper} by \cite[Theorem 7.23]{AHLH}.
	 Still many interesting moduli encountered in algebraic geometry are not proper.
	As an example, for a complex algebraic variety $X$ one can show by hand that the algebraic stack $\mathbf{Loc}(X)$ parametrizing local systems with finite dimensional stalks on $X$ admits a good moduli space, that is the character variety of $X$, which is non proper in general. 
	In particular, $\mathbf{Loc}(X)$ does not fit in the framework of \cite{AZ}.
	The point is that the compact objects of the cocomplete category $\Loc(X; \Mod_\mathbb{C})$ do not need to have finite dimensional stalks, as the following example shows :
	\begin{example}\label{example_intr}
	Consider the local system $L$ on $X = \mathbb{G}_m$ corresponding to the representation of $\pi_1(X) \simeq \ZZ$ defined by $\bigoplus_{\ZZ} \mathbb{C}$ with $\ZZ$-action given by translation.
	The object $L$ is compact in $\Loc(X; \Mod_\mathbb{C})$ as it corepresents the stalk functor, but its stalks are not finite dimensional.
\end{example}
	In particular, the above example shows that to single-out local systems with finite dimensional stalks, one needs to impose a condition stronger than compactness.
	As proved in \cite{Exodromy, Perv_moduli}, it turns out that local systems with finite dimensional stalks correspond to \textit{pseudo-perfect} objects of $\Loc(X; \Mod_\mathbb{C})$ in the sense of Toën-Vaquié \cite{TV}.
	Thus, to construct a good moduli for  $\mathbf{Loc}(X)$ in an intrinsic way as in \cite{AHLH}, one is naturally led to replace Artin-Zhang moduli by Toën-Vaquié moduli of objects parametrizing pseudo-perfect objects. \medskip
	
	Note that pseudo-perfect objects require $\infty$-categories to make sense \cite{HAG-I, HAG-II, HTT,HA}.
	Furthermore in Toën-Vaquié's setting, $\C$ is not an abelian category anymore, but a triangulated category (actually its $\infty$-categorically avatar, that is a stable $\infty$-category).
	Then, the original abelian category of interest appears as the heart $\C^\heartsuit$ of a $t$-structure $\tau$ on $\C$.
	The next issue is then to extract from Toën-Vaquié moduli of objects $\M_\C$ a substack parametrizing objects in $\C^\heartsuit$.
	Such problem has already been encountered by Lieblich \cite{Lieblich} for the stack of perfect complexes and led to the notion of \textit{$\tau$-flat} objects in $\C$.
	Following \cite{AP, Polishchuk, stability_in_families, HDR}, we define 
\[
\M_\C^\heartsuit \subset \M_\C
\]
as the substack parametrizing $\tau$-flat objects in $\C$.
    The stack $\M_\C^\heartsuit$  is the central player of this paper.	
	Before moving to our main results, observe that working at this level of generality gives access to new moduli spaces that cannot be constructed via classical algebraic geometry.
	For example, the derived moduli of perverse sheaves from \cite{Exodromy, exodromyconicality, Perv_moduli} provides a far reaching generalization of the character stack and fits into Toën-Vaquié's framework. \medskip
	
	Let us now introduce the main result of this paper.
	Let $k$ be a noetherian ring of characteristic $0$ and $\C$ be a finite type $k$-linear $\infty$-category equipped with an admissible $t$-structure $\tau$ (\cref{accessible_t_structure}).
	Let $\M_\C$ be Toën-Vaquié moduli of objects parametrizing pseudo-perfect objects in $\C$ and $\M_\C^\heartsuit \subset \M_\C$ be the substack parametrizing $\tau$-flat pseudo-perfect objects in $\C$ (\cref{def_tau_flat}).	
	Under suitable natural assumptions on $\tau$ called openness of flatness (\cref{opennes_flatness}), the substack    
$\M_\C^\heartsuit$ is an open substack of $\M_\C$.	
	The last input necessary to construct good moduli spaces is a technical assumption on $\tau$ loosely stated as 
	
\begin{assumption}\label{Assumption_subobject_intro}
For any DVR $R$ over $k$, for every pseudo-perfect object $F$ with coefficients in $R$ and lying in $\C_R^\heartsuit$, the subobjects of $F$ are also pseudo-perfect.
\end{assumption}	

   \cref{Assumption_subobject_intro} is satisfied in many cases of interest.

\begin{example}\label{example_Perv}
	For local systems on a complex algebraic variety, pseudo-perfect objects over $R$ correspond to local systems of $R$-modules with finitely generated stalks.
     Then \cref{Assumption_subobject_intro} is immediately satisfied.
\end{example}
	
\begin{example}
	For perverse sheaves, pseudo-perfect objects over $R$ correspond to perverse sheaves of $R$-modules with perfect stalks. 
	In that case, \cref{Assumption_subobject_intro} is satisfied in virtue of  \cref{Perv_subobjects}.
\end{example}
	
\begin{example}
	Let $X$ be a smooth scheme over $k$.
	For coherent sheaves, a theorem of Ben Zvi-Nadler-Preygel \cite[Theorem 3.0.2]{BZNP} describes the pseudo-perfect objects over $R$ as the coherent sheaves over $X_R$ with proper support.
    In that case, \cref{Assumption_subobject_intro} is thus obvious.
\end{example}

	In the above setup, let $\Spec(\kappa) \to \Spec(k)$ be a closed point with $\kappa$ algebraically closed. 
	Our main result is the following

\begin{theorem}[{\cref{good_general}}]\label{good_general_intr}	
	Every closed quasi-compact substack $\X \subset \M_\C^\heartsuit$ admits a separated good moduli space $X$. Moreover, the $\kappa$-points of $X$ parametrize pseudo-perfect semisimple objects of $\C_\kappa^\heartsuit$ lying over $\X$.
\end{theorem}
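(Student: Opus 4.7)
The plan is to apply the existence criterion of Alper--Halpern Leistner--Heinloth \cite[Theorem A]{AHLH} to $\X$. Since $\M_\C$ is algebraic and locally of finite presentation with affine diagonal by Toën--Vaquié \cite{TV}, and $\M_\C^\heartsuit$ is an open substack of $\M_\C$ under the openness-of-flatness hypothesis, the closed quasi-compact substack $\X$ is automatically of finite type over $k$ with affine diagonal. By \cite[Theorem A]{AHLH}, combined with the characteristic $0$ assumption on $k$, it then suffices to verify (i) $\Theta$-reductivity, (ii) S-completeness, and (iii) reductivity of the stabilizers at closed points.

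For $\Theta$-reductivity, I would show that every morphism $\Theta_R \setminus \{0\} \to \X$ from the punctured quotient stack $\Theta_R = [\A^1_R/\G_m]$ extends uniquely to $\Theta_R \to \X$, for every DVR $R$ over $k$. Such a map corresponds to a $\ZZ$-weighted filtration of a pseudo-perfect object over the generic point of $\Theta_R$, so the extension problem amounts to producing the associated graded limit at the origin. Using $\tau$-flatness on the punctured disk together with \cref{Assumption_subobject_intro}, the filtration steps extend to pseudo-perfect subobjects over all of $\Theta_R$; the extended filtration is then $\tau$-flat because its graded pieces are.

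For S-completeness, I would show that every morphism $\ST \setminus \{0\} \to \X$ extends to $\ST \to \X$. This amounts to gluing two pseudo-perfect $R$-families along a finite sequence of elementary modifications at the closed point of $\Spec(R)$, producing a $\ZZ$-filtration of an ambient object. This is precisely where \cref{Assumption_subobject_intro} is indispensable: the elementary modifications are produced as certain subobjects of a $\tau$-flat pseudo-perfect object of $\C^\heartsuit_R$, and the assumption guarantees that they remain pseudo-perfect. I expect this to be the main technical obstacle, since it requires stepwise control of both pseudo-perfection and $\tau$-flatness throughout the modification process, in the spirit of \cite{AHLH, FHLM}.

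Finally, reductivity of the stabilizers at closed points, and the identification of $\kappa$-points of $X$, follows from the previous two properties: a $\kappa$-point of $\X$ is closed exactly when the associated object admits no destabilizing filtration, that is when it is a direct sum $E \simeq \bigoplus_i E_i^{\oplus n_i}$ of pairwise non-isomorphic simple objects of $\C^\heartsuit_\kappa$. The automorphism group of such an $E$ is then $\prod_i \mathrm{GL}_{n_i}$, which is reductive, giving (iii) and simultaneously the description of the $\kappa$-points of $X$ as semisimple pseudo-perfect objects.
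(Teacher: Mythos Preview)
Your proposal is correct and follows the same route as the paper: establish $\Theta$-reductivity and $\mathrm{S}$-completeness of $\M_\C^\heartsuit$ via the explicit filtration/modification analysis under \cref{Assumption_subobject_intro}, pass these to the closed substack $\X$, and invoke \cite[Theorem A]{AHLH}. The only redundancy is your condition (iii): in characteristic $0$, $\mathrm{S}$-completeness together with affine diagonal already forces closed points to have reductive (hence linearly reductive) stabilizers by \cite[Proposition 3.47]{AHLH}, so the existence criterion reduces to (i) and (ii) alone.
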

 	
 	Let us point out that a notion of derived good moduli spaces has been introduced by Ahlqvist-Hekking-Pernice-Savvas in \cite{derived_good} and that often derived enhancement of a good moduli space come for free thanks to \cite[Theorem 2.12]{derived_good}. \medskip
 	
	The main tool for constructing good moduli spaces is \cite[Theorem A]{AHLH}.
    It states that an algebraic stack of finite presentation over a noetherian ring of characteristic zero admits a separated good moduli space if and only if it is "$\Theta$-reductive" (\cref{def_Theta_reductive}) and "$\mathrm{S}$-complete" (\cref{def_S_complete}).
	These two properties are of valuative nature and involve the algebraic stacks
\[
\Theta_R \coloneqq \left[\faktor{A_{1,R}}{\mathbb{G}_{m,R}}\right], \qquad \ST \coloneqq \left[\faktor{\Spec(R[s,t]/(st-\pi))}{\G_{m,R}}\right],
\]
for $R$ a DVR and $\pi \in R$ a uniformizer.
	For moduli of objects one can explicitly describe $\Theta$-reductiveness and $S$-completeness (\cref{flat_perf_Theta} and \cref{flat_perf_ST}).
	The explicit description of $\Theta$-reductiveness repose on the work Moulinos \cite{Moulinos}, whose main result implies the existence of a symmetric monoidal equivalence 
\[
\QCoh(\Theta_R) \simeq \Rep(\ZZ; \Mod_R),
\]
where the category on the right hand side is the category of filtered $R$-modules.
	The explicit description of $\mathrm{S}$-completeness repose on Appendix \ref{Appendix_QCoh_ST_Theta}, where we use similar techniques to \cite{Moulinos}. For $R$ a DVR with uniformizer $\pi$, our main results is the existence of a symmetric monoidal equivalence 
\[
\QCoh(\ST) \simeq \Mod_{R[s,t] / (st-\pi)} (\Rep(\ZZ^{ds}; \Mod_R)),
\]
where the category on the right hand side is the category of graded $R$-modules equipped with an action of $s$ of weight 1 and an action of $t$ of weight $-1$, subjects to the relations $st = \pi = ts$. \medskip

    \cref{good_general_intr} reduces the existence of good moduli spaces from moduli of objects to the existence of closed quasi-compact substacks $\M_\C^\heartsuit$.
    We exhibit a large class of such in \cref{subsection_extension} and we give a hint of the construction below.
    For this purpose, let $\Spec(\kappa) \to \Spec(k)$ be a closed point with $\kappa$ algebraically closed, let $n \in \N$ and let $\underline{F} \subset \C_\kappa^\heartsuit$ be a finite set of pseudo-perfect semisimple objects over $\kappa$.
	Then \cref{construction_ext} shows that there exists a quasi-compact closed substack
\[
\M_\C^{\heartsuit, \underline{F}, N} \to \M_\C^\heartsuit
\]
whose closed $\kappa$-points are in bijection with extension of length at most $N$ of objects in $\underline{F}$.
	Hence \cref{good_general_intr} gives
\begin{theorem}[{\cref{good_ext}}]\label{good_ext_intro}
	In the setting of \cref{good_general_intr}, the algebraic stack $\M_\C^{\heartsuit, \underline{F}, N}$ admits a separated good moduli space $\mathrm{M}_\C^{\heartsuit, \underline{F}, N}$. Moreover, the $\kappa$-points of $\mathrm{M}_\C^{\heartsuit, \underline{F}, N}$ are in bijection with direct sum of at most $N$ objects in $\underline{F}$, possibly with repetitions.
\end{theorem}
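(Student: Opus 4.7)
The plan is to apply \cref{good_general_intr} directly to the stack $\X = \M_\C^{\heartsuit, \underline{F}, N}$. First I would observe that by \cref{construction_ext} the stack $\M_\C^{\heartsuit, \underline{F}, N}$ sits inside $\M_\C^\heartsuit$ as a quasi-compact closed substack, so the hypotheses of \cref{good_general_intr} are met and we immediately obtain a separated good moduli space $\mathrm{M}_\C^{\heartsuit, \underline{F}, N}$.

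Next I would identify the $\kappa$-points. By the second assertion of \cref{good_general_intr}, the $\kappa$-points of $\mathrm{M}_\C^{\heartsuit, \underline{F}, N}$ parametrize pseudo-perfect semisimple objects $G$ of $\C_\kappa^\heartsuit$ whose associated closed $\kappa$-point lies in the closed substack $\M_\C^{\heartsuit, \underline{F}, N}$. By \cref{construction_ext} the closed $\kappa$-points of $\M_\C^{\heartsuit, \underline{F}, N}$ are exactly the extensions of length at most $N$ of objects in $\underline{F}$, so it is enough to check that \emph{semisimple} extensions of length at most $N$ of objects of $\underline{F}$ coincide with direct sums of at most $N$ objects of $\underline{F}$, with repetitions.

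For the forward direction I would proceed by induction on the length of the filtration: in any abelian category a subobject of a semisimple object is a direct summand, so a short exact sequence $0 \to A \to G \to B \to 0$ with $G$ semisimple splits as $G \simeq A \oplus B$. Applied inductively to a filtration $0 = G_0 \subset G_1 \subset \cdots \subset G_N = G$ with successive quotients in $\underline{F}$, this yields $G \simeq \bigoplus_i (G_i/G_{i-1})$, a direct sum of at most $N$ objects of $\underline{F}$. The converse is immediate since any direct sum of at most $N$ objects of $\underline{F}$ is semisimple and sits tautologically in a filtration of length $\leq N$ with factors in $\underline{F}$.

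Since \cref{good_general_intr} and \cref{construction_ext} do the heavy lifting, the argument is essentially formal. The only point that requires care, and hence the main (and mild) obstacle, is the compatibility of the two descriptions of $\kappa$-points: closed $\kappa$-points of the stack as supplied by \cref{construction_ext} versus semisimple objects of $\C_\kappa^\heartsuit$ as supplied by \cref{good_general_intr}. This should reduce to the standard fact that the good moduli morphism $\M_\C^{\heartsuit, \underline{F}, N} \to \mathrm{M}_\C^{\heartsuit, \underline{F}, N}$ induces a bijection between closed $\kappa$-points of the source and $\kappa$-points of the target, combined with the splitting argument above.
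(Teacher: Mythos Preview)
Your proposal is correct and follows essentially the same route as the paper: the paper's proof of \cref{good_ext} is the single sentence ``combine \cref{quasi_compact_ext} with \cref{good_general} and \cref{closed_point_is_semisimple}'', which is exactly your step of invoking the closedness and quasi-compactness from \cref{construction_ext}, applying \cref{good_general_intr}, and then reading off the $\kappa$-points. Your explicit splitting argument (a semisimple object with a finite filtration is the direct sum of its graded pieces) is precisely the content the paper leaves implicit in its citation of \cref{closed_point_is_semisimple}.
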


   Let us now describe particular instances of \cref{good_general_intr}.

\subsection{Perverse sheaves}
	The role of perverse sheaves is central in the study of singularities and they naturally generalize local system. Moreover they are linked to the study of differential equation with regular singularities (i.e., simple poles) via the Riemann-Hilbert correspondence (\cite{Kashiwara, Mebkhout}).
	Perverse sheaves originated form the theory of $\mathcal{D}$-modules via the Riemann-Hilbert correspondence as the shifted solution complex and the shifted de Rham complex of a holonomic $\mathcal{D}$-module on a complex manifold (\cite{Kashiwara, KS}).
	An axiomatic background has been introduced in the influential \cite{BBDG}.
	For a stratified space $(X,P)$, let $\mathfrak{p}\colon P \to \ZZ$ be a function, called \textit{perversity}.
	We can attach to $\mathfrak{p}$ a perverse $t$-structure ${}^{\mathfrak{p}}\tau$ on the derived category of constructible sheaves on $X$ and the category of perverse sheaves is defined as its heart.
	The derived stack ${}^{\mathfrak{p}}\mathbf{Perv}_P(X)$ of perverse sheaves on $(X,P)$ with perversity $\mathfrak{p}$ has been constructed in \cite{Exodromy, exodromyconicality, Perv_moduli} and fits in our framework.
	We denote by $t_0{}^{\mathfrak{p}}\mathbf{Perv}_P(X)$ the truncation of ${}^{\mathfrak{p}}\mathbf{Perv}_P(X)$, which is an algebraic stack locally of finite presentation over $k$.\medskip
	
	We will provide the existence of good moduli spaces for perverse sheaves for a large class of stratified space, including real algebraic varieties equipped and compact analytic varieties equipped with Whitney stratifications. 
	The key result here is \cref{Perv_subobjects}:
	it shows that a subobject of a perverse sheaf with perfect stalks also has perfect stalks as long as we take coefficients in $\Mod_R$ for $R$ a regular noetherian ring.
	As already discussed in \cref{example_Perv}, this is precisely the translation of \cref{Assumption_subobject} to the case of perverse sheaves.
	Hence \cref{good_general_intr} and \cref{good_ext_intro} apply.\medskip
	
	If $(X, P)$ is a Whitney stratified manifold (\cref{Withney_manifold_def}) and $\mathfrak{p}\colon P \to \ZZ$ the middle perversity (\cref{middle_perversity_def}), we can prove the existence of a good moduli space for the entire algebraic stack of perverse sheaves, hence generalizing \cref{good_general_intr}.
	More precisely, in the above setting and for $\Spec(\kappa) \to \Spec(k)$ a closed point with $\kappa$ algebraically closed, our main result is the following
\begin{theorem}[{\cref{good_Perv_algebraic}}]
	The algebraic stack $t_0{}^{\mathfrak{p}}\mathbf{Perv}_P(X)$ admits a separated good moduli space $t_0{}^{\mathfrak{p}}\Perv_P(X)$.
	Moreover, the $\kappa$-points of $t_0{}^{\mathfrak{p}}\Perv_P(X)$ parametrize semisimple perverse sheaves with perfect stalks.
	Furthermore, $t_0{}^{\mathfrak{p}}\Perv_P(X)$ admits a natural derived enhancement ${}^{\mathfrak{p}}\Perv_P(X)$ which is a derived good moduli space for ${}^{\mathfrak{p}}\mathbf{Perv}_P(X)$.
\end{theorem}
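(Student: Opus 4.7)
The plan is to leverage \cref{good_general_intr} by decomposing $t_0{}^{\mathfrak{p}}\mathbf{Perv}_P(X)$ into a disjoint union of closed quasi-compact open-and-closed substacks, and to use that good moduli morphisms commute with disjoint unions. Write $\C$ for the stable $\infty$-category of constructible sheaves on $(X,P)$ endowed with the perverse $t$-structure, so that $t_0{}^{\mathfrak{p}}\mathbf{Perv}_P(X) \simeq \M_\C^\heartsuit$ by \cite{Exodromy, exodromyconicality, Perv_moduli}.

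I would introduce an invariant $\alpha$ on $\M_\C^\heartsuit$ assigning to each pseudo-perfect $\tau$-flat object $F$ the family of ranks of the cohomology sheaves $H^n(i_Z^* F)$ over each stratum $Z \in P$ and each cohomological degree $n$. For $F$ with coefficients in $R$, these cohomology sheaves are local systems of perfect $R$-modules on $Z$ by pseudo-perfectness, and $\tau$-flatness forces the underlying $R$-modules to be flat, hence of locally constant rank on $\Spec(R)$. This gives a locally constant invariant, producing a decomposition
\[
t_0{}^{\mathfrak{p}}\mathbf{Perv}_P(X) = \bigsqcup_\alpha t_0{}^{\mathfrak{p}}\mathbf{Perv}_P(X)_\alpha
\]
into open-and-closed substacks.

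For middle perversity on a Whitney stratified manifold, the heart of the perverse $t$-structure is an Artinian abelian category whose simple objects are the intersection cohomology sheaves $\mathrm{IC}(\overline{Z}, \mathcal{L})$ attached to a stratum and a simple local system on it. Each such simple has a nonzero generic stalk on its open stratum, so the total rank bounds the Jordan--Hölder length: every piece $t_0{}^{\mathfrak{p}}\mathbf{Perv}_P(X)_\alpha$ is therefore quasi-compact and closed in $\M_\C^\heartsuit$. Applying \cref{good_general_intr} to each piece yields a separated good moduli space whose $\kappa$-points parametrize semisimple perverse sheaves with perfect stalks and invariant $\alpha$. Taking the disjoint union produces $t_0{}^{\mathfrak{p}}\Perv_P(X)$, whose $\kappa$-points thus enumerate all semisimple perverse sheaves with perfect stalks. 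The derived enhancement is obtained from the derived stack ${}^{\mathfrak{p}}\mathbf{Perv}_P(X)$ of \cite{Perv_moduli} by invoking \cite[Theorem 2.12]{derived_good}, which automatically upgrades a classical good moduli space to a derived one in this setting.

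The main obstacle is verifying local constancy of $\alpha$ on $\tau$-flat families and the uniform boundedness of Jordan--Hölder length by $\alpha$. The former is a subtle compatibility between the perverse notion of $\tau$-flatness and ordinary flatness of restrictions to strata; \cref{Perv_subobjects} (used in the verification of \cref{Assumption_subobject_intro} for perverse sheaves) is the key technical input, and the Whitney stratification hypothesis ensures the stratum restrictions behave well. The latter then follows from Artinianity of the heart, together with the classification of simples as IC sheaves and the nonvanishing of their generic stalks.
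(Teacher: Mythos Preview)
Your decomposition step fails: perverse $\tau$-flatness of $F$ over $R$ does \emph{not} force the stalk cohomologies $H^n(i_Z^\ast F)$ to be flat $R$-modules, so your invariant $\alpha$ is only upper semicontinuous on $\M_\C^\heartsuit$, not locally constant. A concrete counterexample on $(\mathbb{C},\{0,\mathbb{C}^\ast\})$ with middle perversity: in the standard nearby/vanishing-cycles quiver description $(\psi,\phi,\mathrm{can},\mathrm{var})$, take $\psi=\phi=R$ for $R$ a DVR with uniformizer $\pi$, $\mathrm{can}=\pi$, $\mathrm{var}=0$. This is a $\tau_R$-flat pseudo-perfect perverse sheaf (multiplication by $\pi$ is injective on the underlying object, cf.\ \cref{flatness_DVR}), but its stalk at $0\in\mathbb{C}$ is the two-term perfect complex $[R\xrightarrow{\pi}R]$, acyclic over the generic point of $\Spec(R)$ and with cohomology $\kappa$ in two adjacent degrees over the closed point. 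The Euler characteristic is $0$ throughout; the individual ranks jump. Note that \cref{Perv_subobjects} is irrelevant here: it asserts that subobjects of pseudo-perfect perverse sheaves remain pseudo-perfect, not that stalk cohomology is flat over the base.

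The paper's argument is engineered precisely around this obstruction. It decomposes $t_0{}^{\mathfrak{p}}\mathbf{Perv}_P(X)$ by the stalkwise Euler--Poincar\'e index $\underline{\chi}$, which \emph{is} locally constant for perfect complexes (\cref{Perf_chi_representability}, \cref{representability_Perv_chi}), and then invokes a theorem of Massey (\cref{bound_cohomologies}) bounding the stalk cohomology dimensions in terms of the characteristic cycle---equivalently, in terms of $\underline{\chi}$---to embed each $\underline{\chi}$-piece into some $\M_\C^{\heartsuit,\nu}$, which is quasi-compact by \cref{nu_substacks}. This analytic input (available only for the middle perversity on a Whitney stratified \emph{manifold}) is the substantive content of the hypotheses, and your Jordan--H\"older argument supplies no replacement for it: bounding the length does not by itself bound the stalk ranks of the simples, since there are infinitely many isomorphism classes of simple local systems of each rank on each stratum.
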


\subsection{Constructible sheaves and Stokes data}
	Other possible generalizations for local systems are constructible sheaves and Stokes data.
	Although the corresponding moduli fit in our framework, after writing this paper we realized that these moduli satisfy a form of \textit{Hartogs' principle} \cite[Remark 3.51]{AHLH}, that readily implies $\Theta$-reductiveness and $S$-completeness.
	Moreover, they admit natural quasi-compact open and closed substacks.
    This makes the construction of the good moduli spaces easier and it will appear in a companion paper \cite{good_Stokes}.
	However, in order to characterize closed points of the moduli of constructible sheaves and of Stokes data as well as points of their respective good moduli spaces, we need all the strength of the results contained in the present paper.

\subsection{Linear overview}
	In \cref{generalities} we recall the definitions and results about moduli of objects in the sense of Toën-Vaquié and of good moduli spaces that we will need in the rest of the paper. \medskip
	
	In \cref{general_case} we prove the existence of good moduli spaces for moduli of objects of a finite type category $\C$ equipped with a $t$-structure satisfying \cref{Assumption_subobject}. 
	More precisely, in \cref{section_Theta_reductive} we prove that $\M_\C^\heartsuit$ is $\Theta$-reductive, while in  \cref{section_S_complete} we prove that $\M_\C^\heartsuit$ is $\mathrm{S}$-complete.
	As a corollary, we obtain a characterization of closed points of $\M_\C^\heartsuit$ as those whose corresponding object in $\C^\heartsuit$ is semisimple (\cref{closed_point_is_semisimple}).
	We then move on giving the main result of this paper, namely the existence of good moduli spaces for quasi-compact closed substacks of moduli of objects (\cref{good_general}).
	We give applications of the above result in \cref{good_ext}. \medskip
	
	In \cref{section_good_Perv} we specialize our discussion to the case of perverse sheaves.
	More precisely, we show in \cref{Perv_subobjects} that the perverse $t$-structure satisfy \cref{Assumption_subobject} (see \cref{algebraic_strat}).
	In 
	\medskip
	
	In Appendix \ref{Appendix_QCoh_ST_Theta} we recall results on the derived category $\QCoh(\Theta_R)$ from \cite{Moulinos} and we provide an explicit description of the derived category $\QCoh(\ST)$. These results are used in \cref{general_case}.

\subsection{Notation}
We introduce the following running notations.
\begin{itemize}\itemsep=0.2cm
    \item We fix $k$ a discrete noetherian ring of characteristic $0$, that is, a discrete noetherian ring containing $\mathbb{Q}$;
    \item $\mathrm{Grpd}$ is the category of groupoids;
    \item $\mathrm{Spc}$ is the $\infty$-category of spaces;
    \item $\mathrm{Sp}$ is the $\infty$-category of spectra;
    \item $\Cat_\infty$ is the $\infty$-category of $\infty$-categories;
    \item $(-)^\omega \colon \Cat_\infty \to \Cat_\infty$ is the functor that assign to an $\infty$-category its full subcategory spanned compact objects;
    \item $(-)^\simeq \colon \Cat_\infty \to \Spc$ is the functor that assign to an $\infty$-category its maximal $\infty$-sub-grupoid;
    \item $\Pr^{\mathrm{L}}_k$ is the $\infty$-category of presentable $k$-linear $\infty$-categories with left adjoints functors;
    \item $\Prlo_k$ is the $\infty$-category of presentable compactly generated $k$-linear $\infty$-categories with left adjoints functors;
    \item $\Aff_k$ is the opposite of the category of $k$-algebras;
    \item $\dAff_k$ is the opposite of the $\infty$-categories of simplicial $k$-algebras;
    \item the category of stacks over $k$ is the full subcategory
    \[
    \mathrm{St}_k \subset \Fun(\Aff_k, \mathrm{Grpd})
    \]
    spanned by stacks over $k$;
    \item the $\infty$-category of derived stacks over $k$ is the full subcategory
    \[
    \dSt_k \subset \Fun(\dAff_k, \mathrm{Spc})
    \]
    spanned by derived stacks over $k$;
    \item denote by $\tau_{\geq 1}: \mathrm{Spc} \to \mathrm{Grpd}$ the functor that assign to a space its homotopy category, which is a groupoid.
    Denote by $i: \Aff_k \to \dAff_k$ the canonical inclusion.
    The composition 
    \[
    \Fun(\dAff_k, \mathrm{Spc}) \xrightarrow{- \circ i} \Fun(\Aff_k, \mathrm{Spc}) \xrightarrow{\tau_{\geq 1} \circ -} \Fun(\Aff_k, \mathrm{Grpd})
    \]
    restricts to a functor $t_0 \colon \dSt_k \to \St_k$.
    We will refer to $t_0$ as the truncation functor.
\end{itemize}

\notocsection{Acknowledgments}
	I would like to thank my advisors Mauro Porta and Jean-Baptiste Teyssier for their suggestions and their constant support.
	I own much to Matteo Verni with whom I discussed many of the ideas of this paper.
	I would also like to thank Matteo Montagnani, Drimik Roy and Sasha Zakharov for helpful conversations.
	I would like to warmly thank Andres Fernandez Herrero for many useful comments about a first draft of this paper.

\section{Moduli of objects and $t$-structures}\label{generalities}

	In this section we recall the main definitions, constructions and results needed in the rest of the paper.

\subsection{Families of objects}\label{section_moduli_of_objects}
	In this paragraph we recall properties of the tensor product of presentable $\infty$-categories, which is used to define families of objects. 
	We also discuss some finiteness properties one can impose on compactly generated presentable categories.

\begin{recollection}[{\cite[Section 4.8]{HA}}]\label{tensor_product}
	For $\C, \D \in \mathrm{Pr}^\mathrm{L}_k$, we can consider their tensor product
\[
\C \otimes_k \D \simeq \FunR_k(\C^{op}, \D)
\]
where the right hand side is the $\infty$-category of $k$-linear functors commuting with limits.
	This tensor product endows $\Pr^{\mathrm{L}}_k$ with a symmetric monoidal structure which restricts to $\Prlo_k$. 
	When $\C$ is compactly generated, there is a canonical equivalence
\[
\C \otimes_k \D \simeq \Funst_k((\C^{\omega})^{op}, \D)
\]
where the right hand side is the $\infty$-category of exact $k$-linear functors.
\end{recollection}

\begin{remark}\label{functoriality_tens_prod}
	Let $\C, \D, \E \in \Pr_k^{\mathrm{L}}$.
	Then an adjunction
\[
f \colon \D \leftrightarrows \E \colon g
\]
induces an adjunction 
\[
\Id_\C \otimes_k f \colon \C \otimes_k \D\leftrightarrows \C \otimes_k \E \colon \Id_\C \otimes_k g.
\]
	The functor $\Id_\C \otimes_k g$ corresponds to 
\[
g \circ - \colon \FunR_k(\C^{op}, \E) \to C \otimes_k \E \simeq \FunR_k(\C^{op}, \D)
\]
under the equivalences of \cref{tensor_product}.
	Its left adjoint $\Id_\C \otimes_k f$ has no easy description in general.
	Notice indeed that post-composition with $f$ do not preserve functors that commutes with limits.
	Nevertheless, if $\C$ is compactly generated one can see that the adjunction $\Id_\C \otimes_k f  \dashv \Id_\C \otimes_k g$ corresponds to 
\[
f \circ - \colon \Funst_k((\C^{\omega})^{op}, \D) \leftrightarrows \Funst_k((\C^{\omega})^{op}, \D) \colon g \circ -
\]
under the equivalences of \cref{tensor_product}.
\end{remark}

\begin{notation}
In the setting of \cref{functoriality_tens_prod}, we denote the functors $\Id_\C \otimes_k f, \Id_\C \otimes_k g$ respectively by $f_\C, g_\C$.
\end{notation}

	In the stable setting, the following result is a reformulation of \cite[Lemma 2.2.1]{SS}.

\begin{lemma}[{\cite[Lemma 2.4]{AG}}]\label{generators_stable}
	Let $\E \in \Prlo$ stable and $X \subset \E^\omega$ a set of compact objects.
	Then the following are equivalent:
\begin{enumerate}\itemsep=0.2cm
	\item $\forall y \in \E$,
\[
\mathrm{Map}_\E(x,y) \simeq * \in \mathrm{Sp} \ \  \forall x \in X \Rightarrow y \simeq 0;
\]
	\item the $\infty$-category $\E^\omega$ is the smallest full subcategory of $\E$ containing $X$ and stable under finite colimits, shifts and retracts.
\end{enumerate}
\end{lemma}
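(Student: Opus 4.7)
The plan is to prove the two implications separately, relying on the universal property of the Ind-completion together with the compact generation of $\E$.

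For (2) $\Rightarrow$ (1), I would fix $y \in \E$ with $\mathrm{Map}_\E(x, y) \simeq 0$ for all $x \in X$ and consider the full subcategory $\mathcal{F}_y = \{c \in \E : \mathrm{Map}_\E(c, y) \simeq 0\}$. Since $\mathrm{Map}_\E(-, y) \colon \E^{op} \to \mathrm{Sp}$ sends colimits to limits, $\mathcal{F}_y$ is closed under finite colimits, shifts, and retracts, and by hypothesis it contains $X$. By the minimality in (2), $\mathcal{F}_y \supset \E^\omega$. Since $\E$ is compactly generated, writing any $z \in \E$ as a colimit of compact objects shows that $\mathrm{Map}_\E(-, y)$ vanishes on all of $\E$; applied to $z = y$, this forces $y \simeq 0$.

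For (1) $\Rightarrow$ (2), let $\D$ be the smallest full subcategory of $\E$ containing $X$ and closed under finite colimits, shifts, and retracts. Since $X \subset \E^\omega$ and these operations preserve compactness in a compactly generated stable presentable $\infty$-category, one has $\D \subset \E^\omega$. The subcategory $\D$ is small, stable, and idempotent complete, so the canonical extension $L \colon \mathrm{Ind}(\D) \to \E$ of the inclusion is a fully faithful left adjoint, with right adjoint $G$ by presentability. To see that $L$ is essentially surjective, I would take $y \in \E$ and set $F \coloneqq \fib(LG(y) \to y)$. Fully faithfulness of $L$ combined with the adjunction yields $\mathrm{Map}_\E(L(c), F) \simeq 0$ for every $c \in \mathrm{Ind}(\D)$; in particular $\mathrm{Map}_\E(x, F) \simeq 0$ for every $x \in X$, so hypothesis (1) forces $F \simeq 0$. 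Hence $L$ is an equivalence, and identifying compact objects on both sides gives $\E^\omega \simeq \mathrm{Ind}(\D)^\omega \simeq \D$, the last equivalence holding because $\D$ is idempotent complete.

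No serious obstacle is expected: the argument is formal once one accepts the two standard inputs that $\mathrm{Ind}(\D) \hookrightarrow \E$ is fully faithful whenever $\D \subset \E^\omega$, and that $\mathrm{Ind}(\D)^\omega = \D$ when $\D$ is idempotent complete. The only mild verification is that closure under finite colimits in the stable setting already forces $\D$ to contain the zero object (as the empty colimit) and all cofibers, hence to be a stable subcategory; combined with closure under shifts and retracts, $\D$ is precisely the idempotent complete stable hull of $X$ inside $\E^\omega$, which is what the argument requires.
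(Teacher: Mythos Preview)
Your argument is correct and is precisely the standard proof of this fact. Note, however, that the paper does not supply its own proof of this lemma: it is stated with a citation to \cite[Lemma 2.4]{AG} and used as a black box, so there is no in-paper proof to compare against. Your write-up would serve perfectly well as a self-contained justification; the only cosmetic point is that closure under shifts is already implied by closure under finite colimits in a stable $\infty$-category (cofibers give suspensions), so you could drop ``shifts'' from the list of operations defining $\D$ without loss.
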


\begin{definition}\label{compact_gen_def}
	Let $\E \in \Prlo$ stable and $X \subset \E^\omega$ a set of compact objects.
	We say that $X$ is a set of compact generators if it satisfies the equivalent conditions of \cref{generators_stable}.
\end{definition}

\begin{example}
	Let $A \in \mathrm{Alg}_k$.
	Then $A$ is a compact generator of $\Mod_A$ in the sense of \cref{compact_gen_def}.
\end{example}

\begin{definition}
	For $\C \in \mathrm{Pr}^\mathrm{L}_k$ and $X\in \dSt_k$, define the $\infty$-category of $X$-families of objects in $\C$ as
\[
\C_X \coloneqq  \C \otimes_k \QCoh(X).
\]
\end{definition}

\begin{notation}
	Let $\C \in \Pr^{\mathrm{L}}_k$ and $S = \Spec (A) \in \dAff_k$.
	The $\infty$-category of $S$-family of objects in $\C$ will be denoted equivalently by $\C_S$ or by $\C_A$.
\end{notation}

\begin{definition}\label{pseudo_perfect}
	For $\C \in \Prlo_k$ and $X \in \dSt_k$, define the $\infty$-category of pseudo-perfect families over $X$ in $\C$ as the full subcategory
\[
\Funst((C^\omega)^{op}, \Perf(X)) \subset \C_X
\]
spanned by functors valued in $\Perf(X)$.
\end{definition}

In order to introduce finiteness conditions on presentable categories, let us give the following:

\begin{lemma}[{\cite[Theorem D.7.0.7]{SAG}}]\label{finiteness_conditions}
	Let $\C \in \Prlo_k$.
	Then $\C$ is dualizable in $\Pr_k^{\mathrm{L}}$ with dual given by $\C^\vee \simeq \mathrm{Ind}((\C^\omega)^{op})$.
	In particular, there are up to a contractible space of choices unique maps in $\Pr^{\mathrm{L}}_k$
\[
\mathrm{ev}_\C \colon  \C^\vee \otimes_k \C \to \Mod_k
\]
\[
\mathrm{coev}_\C  \colon  \Mod_k \to \C \otimes_k \C^\vee
\]
satisfying the triangular identities.
\end{lemma}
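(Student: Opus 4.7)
The plan is as follows. Since $\C \in \Prlo_k$, we have $\C \simeq \mathrm{Ind}(\C^\omega)$, and I define $\C^\vee := \mathrm{Ind}((\C^\omega)^{op})$, which again belongs to $\Prlo_k$ since $(\C^\omega)^{op}$ is a small idempotent-complete stable $k$-linear $\infty$-category. The next step is to identify the relevant tensor products explicitly. Using that the inclusion $\Prlo_k \subset \Pr^{\mathrm{L}}_k$ is symmetric monoidal and that for small stable $k$-linear $\mathcal{A}, \mathcal{B}$ one has $\mathrm{Ind}(\mathcal{A}) \otimes_k \mathrm{Ind}(\mathcal{B}) \simeq \mathrm{Ind}(\mathcal{A} \otimes_k \mathcal{B})$, I obtain
\[
\C^\vee \otimes_k \C \simeq \mathrm{Ind}((\C^\omega)^{op} \otimes_k \C^\omega), \qquad \C \otimes_k \C^\vee \simeq \Funst(\C^\omega, \C),
\]
where the second equivalence uses \cref{tensor_product} applied to the compactly generated category $\C^\vee$, whose compact objects recover $(\C^\omega)^{op}$.

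I would then define $\mathrm{ev}_\C$ as the unique colimit-preserving extension of the mapping bifunctor $\mathrm{Map}_\C(-,-) \colon (\C^\omega)^{op} \otimes_k \C^\omega \to \Mod_k$, whose existence is guaranteed by the universal property of $\mathrm{Ind}$. Dually, I define $\mathrm{coev}_\C$ as the colimit-preserving functor $\Mod_k \to \C \otimes_k \C^\vee \simeq \Funst(\C^\omega, \C)$ determined by sending $k$ to the canonical inclusion $\C^\omega \hookrightarrow \C$. Both assignments are canonical, which is the source of the uniqueness clause.

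The main obstacle is verifying the triangular identities. Unwinding the explicit descriptions above, the composite $(\mathrm{id}_\C \otimes \mathrm{ev}_\C) \circ (\mathrm{coev}_\C \otimes \mathrm{id}_\C)$ evaluates on $x \in \C$ to $\colim_{y \in (\C^\omega)_{/x}} y$, which recovers $x$ since every object of a compactly generated category is the colimit of the compact objects mapping into it. The other triangular identity is formally symmetric and reduces in the same way to a co-Yoneda type colimit formula. Once the triangular identities are checked, the abstract theory of duality in a symmetric monoidal $\infty$-category ensures that $\mathrm{ev}_\C$ and $\mathrm{coev}_\C$ are determined up to a contractible space of choices, completing the proof.
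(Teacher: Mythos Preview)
The paper does not actually give its own proof of this lemma: it is stated with the citation \cite[Theorem D.7.0.7]{SAG} and no proof environment follows. So there is nothing in the paper to compare against beyond the reference itself.

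Your sketch is the standard argument and is essentially what one finds in the cited reference. The identifications of $\C^\vee \otimes_k \C$ and $\C \otimes_k \C^\vee$, the definitions of $\mathrm{ev}_\C$ via the mapping bifunctor and of $\mathrm{coev}_\C$ via the inclusion $\C^\omega \hookrightarrow \C$, and the reduction of the triangular identities to the co-Yoneda formula are all correct in outline. The only place where you are a bit informal is the second triangular identity, which you dismiss as ``formally symmetric''; it is not literally symmetric since $\C$ and $\C^\vee$ play different roles, but it does reduce to the same co-Yoneda argument applied in $\C^\vee$ rather than in $\C$. With that caveat, your proposal is correct and matches the approach of the cited source.
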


	We will need the following finiteness condition:

\begin{definition}\label{def_finiteness_conditions}
	Let $\C \in \Prlo_k$.
	We say that $\C$ is 
\begin{enumerate}\itemsep=0.2cm
    \item smooth if $\ev_\C$ preserves compact objects;
    \item proper if $\coev_\C$ preserve compact objects;
    \item of finite type if it is a compact object of $\Prlo_{k}$.
\end{enumerate}
\end{definition}

	The next result compares the different finiteness conditions of \cref{def_finiteness_conditions}.

\begin{proposition}[{\cite[Proposition I.4.16]{HDR}}]\label{relation_finiteness_conditions}
	Let $\C \in \Prlo_k$.
	Then:
\begin{enumerate}\itemsep=0.2cm
    \item if $\C$ is of finite type, it is smooth;
    \item if $\C$ is smooth and proper, it is of finite type.
\end{enumerate}
Moreover, if $\C$ is smooth then it admits a compact generator in the sense of \cref{compact_gen_def}.
\end{proposition}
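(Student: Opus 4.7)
The proof plan is to leverage the symmetric monoidal structure on $\Prlo_k$, the dualizability from \cref{finiteness_conditions}, and the identification $\C \otimes_k \D \simeq \Funst_k((\C^\omega)^{op}, \D)$ from \cref{tensor_product}. Applied with $\D = \C$, this gives $\C^\vee \otimes_k \C \simeq \Funst_k(\C^\omega, \C)$ (using $(\C^\vee)^\omega \simeq (\C^\omega)^{op}$), under which the identity functor $\Id_\C$ corresponds to a canonical object $\eta_\C$. The compact objects of $\C^\vee \otimes \C$ are generated, under finite colimits, shifts, and retracts, by tensors $x^\vee \otimes y$ with $x,y \in \C^\omega$, and on such generators the evaluation is computed by $\ev_\C(x^\vee \otimes y) \simeq \Map_\C(x,y) \in \Mod_k$.

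The crux of the argument is the equivalence: $\C$ is compact in $\Prlo_k$ if and only if $\eta_\C$ is compact in $\C^\vee \otimes \C$. This follows from rewriting $\Map_{\Prlo_k}(\C, \D)$ as the space of compact-preserving left adjoints $\C \to \D$, which by dualizability embeds as a subspace of $(\C^\vee \otimes \D)^\simeq$; compactness of $\C$ (preservation of filtered colimits by $\Map_{\Prlo_k}(\C,-)$) then translates into compactness of $\eta_\C$. Granting this reformulation, for (1) a compact $\eta_\C$ is a retract of a finite colimit of standard generators $x^\vee \otimes y$; the triangle identity $(\ev_\C \otimes \Id_\C) \circ (\Id_\C \otimes \coev_\C) \simeq \Id_\C$, tensored appropriately with $\eta_\C$, propagates compactness through $\ev_\C$ and shows that $\ev_\C$ preserves compact objects. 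For (2), smoothness and properness give respectively that $\ev_\C$ preserves compact objects and that $\coev_\C(k) \in \C \otimes \C^\vee$ is compact. Combining these via the triangle identity produces $\eta_\C$ as a compact object of $\C^\vee \otimes \C$, hence $\C$ is of finite type by the crux equivalence.

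For the \emph{moreover} statement, assume $\C$ is smooth. Under the equivalence of \cref{tensor_product}, $\coev_\C(k) \in \C \otimes \C^\vee \simeq \Funst_k((\C^\omega)^{op}, \C^\vee)$ corresponds to a functor realizing the Yoneda embedding in compact form. Smoothness permits one to extract from it a single compact object $G \in \C^\omega$ — typically by taking the direct sum of a finite family of compact objects arising from a finite presentation of $\eta_\C$ — such that $\Map_\C(G, y) \simeq 0$ forces $y \simeq 0$; that is, $G$ is a compact generator in the sense of \cref{generators_stable} via the criterion of \cref{compact_gen_def}.

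The main obstacle I expect is establishing the crux equivalence rigorously, since it requires a precise comparison between $\Map_{\Prlo_k}(\C, \D)$ (whose points are colimit-preserving and compact-preserving functors) and the maximal subgroupoid of $\C^\vee \otimes \D$, as well as a careful analysis of how filtered colimits in $\Prlo_k$ interact with the internal tensor product and with compactness. Once the crux is in hand, the triangle-identity manipulations in parts (1) and (2), as well as the construction of the compact generator from $\coev_\C(k)$, become essentially formal.
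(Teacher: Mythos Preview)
The paper does not prove this proposition; it is imported wholesale from \cite[Proposition I.4.16]{HDR}, so there is no argument in the present paper to compare against. On its own merits, however, your sketch contains a genuine gap.

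Your ``crux equivalence'' --- that $\C$ is compact in $\Prlo_k$ if and only if $\eta_\C$ is compact in $\C^\vee \otimes_k \C$ --- is false. Under the identification $\C^\vee \otimes_k \C \simeq \Fun^L_k(\C,\C)$, the object $\eta_\C$ corresponding to $\Id_\C$ is the diagonal bimodule, and its compactness is precisely smoothness (compare \cref{lemma_smoothness}), not finite type. A concrete counterexample: take $k$ a field and $\C = \Mod_{k(x)}$. Then $\C^\vee \otimes_k \C \simeq \Mod_{k(x)\otimes_k k(x)}$ and $\eta_\C = k(x)$ is perfect there, being cut out by the single regular element $x\otimes 1 - 1\otimes x$. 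Yet $\Mod_{k(x)} = \colim_f \Mod_{k[x,f^{-1}]}$ in $\Prlo_k$ and the identity cannot factor through any finite stage (there is no $k$-algebra map $k(x)\to k[x,f^{-1}]$), so $\Mod_{k(x)}$ is not compact in $\Prlo_k$.

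This collapses both deductions. For (1), your chain ``finite type $\Rightarrow \eta_\C$ compact $\Rightarrow$ smooth'' has as its first arrow exactly the statement to be proved, and the triangle identities add nothing to the second. For (2), producing $\eta_\C$ compact only recovers smoothness, not finite type. The standard arguments are different in kind. For (2): smooth and proper together say that both $\ev_\C$ and $\coev_\C$ preserve compact objects, so the duality datum lifts to $\Prlo_k$ and $\C$ is dualizable there; since $\Mod_k$ is compact in $\Prlo_k$ and $\otimes_k$ preserves filtered colimits, any dualizable object is compact. For (1): one shows that compact objects of $\Prlo_k$ are retracts of finite cell objects built from $\Mod_k$, and that smoothness passes along the relevant pushouts and retracts; this is the route in \cite{TV} and in the cited reference. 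Your argument for the ``moreover'' clause, by contrast, is essentially correct once one knows $\eta_\C$ is compact (which is the hypothesis of smoothness): writing $\eta_\C$ as a retract of a finite extension of pure tensors $x_i^\vee\otimes y_i$ and taking $G=\bigoplus_i x_i$ does yield a compact generator.
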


\begin{lemma}\label{lemma_smoothness}
	Let $\C \in \Prlo_k$.
	The following are equivalent:
\begin{enumerate}
	\item $\C$ is smooth;
	\item there exists an equivalence $\C \simeq \LMod_A$ with $A \in \mathrm{Alg}_k$ smooth in the sense of \cite[Definition 4.6.4.13]{HA}.
\end{enumerate}
\end{lemma}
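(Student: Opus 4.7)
The plan is to combine the existence of a compact generator (guaranteed by \cref{relation_finiteness_conditions}) with the Schwede--Shipley recognition principle in the form of \cite[Theorem 7.1.2.1]{HA}, and then to match the categorical finiteness condition of \cref{def_finiteness_conditions} with the algebraic smoothness of \cite[Definition 4.6.4.13]{HA} via the explicit description of the duality data on module categories.

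For the direction $(2) \Rightarrow (1)$, given $\C \simeq \LMod_A$ with $A$ smooth, the description of the dual in \cref{finiteness_conditions} specializes to $\C^\vee \simeq \LMod_{A^{\op}}$ through the standard equivalence $\Perf(A)^{\op} \simeq \Perf(A^{\op})$ by $A$-linear duality. Under the resulting identification
\[
\C^\vee \otimes_k \C \simeq \LMod_{A \otimes_k A^{\op}},
\]
the compact objects on the right are the perfect $A$-bimodules, and the evaluation $\ev_\C$ is determined, via the triangle identities, by the diagonal bimodule $A$. Since $A$ being smooth in the sense of \cite[Definition 4.6.4.13]{HA} means precisely that this diagonal bimodule is perfect, one concludes that $\ev_\C$ sends a set of compact generators of $\C^\vee \otimes_k \C$ to perfect $k$-modules, and hence preserves compact objects.

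For the direction $(1) \Rightarrow (2)$, smoothness of $\C$ together with the last assertion of \cref{relation_finiteness_conditions} produces a compact generator $G \in \C^\omega$. Setting $A \coloneqq \End_\C(G)$, the $\infty$-categorical Schwede--Shipley theorem \cite[Theorem 7.1.2.1]{HA} yields a $k$-linear equivalence $\C \simeq \LMod_A$. Running the previous unwinding in the opposite direction, the hypothesis that $\ev_\C$ preserves compact objects forces the diagonal bimodule $A$ to be compact in $\LMod_{A \otimes_k A^{\op}}$, and hence a perfect bimodule, which is the definition of $A$ being smooth.

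The main obstacle is the concrete identification of the evaluation map $\ev_\C$ under the Morita equivalence $\C \simeq \LMod_A$, and the comparison of the resulting condition with \cite[Definition 4.6.4.13]{HA}. This identification is essentially a bookkeeping exercise with the tensor product of compactly generated presentable categories recalled in \cref{tensor_product}, but it is precisely what is needed to make both directions of the equivalence work.
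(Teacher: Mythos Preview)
Your argument has a genuine gap at the crucial step. You assert that ``the evaluation $\ev_\C$ is determined, via the triangle identities, by the diagonal bimodule $A$'' and conclude that perfection of the diagonal (i.e.\ smoothness of $A$) forces $\ev_\C$ to preserve compact objects. But unwinding the identifications, $\ev_\C$ on $(N,M)\in\RMod_A\times\LMod_A$ is $N\otimes_A M$, so on the compact generator $A^e=A^{\op}\otimes_k A$ of $\C^\vee\otimes_k\C$ one gets $\ev_\C(A^e)\simeq A$ as a $k$-module. Hence $\ev_\C$ preserves compact objects precisely when $A$ is perfect over $k$, which is Lurie's \emph{properness}, not smoothness. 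The example $A=k[x]$ makes this concrete: the diagonal $k[x]$ is perfect over $k[x]\otimes_k k[x]$, yet $\ev_\C(A^e)=k[x]\notin\Perf(k)$. The reverse direction of your argument fails for the same reason.

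What actually matches \cite[Definition 4.6.4.13]{HA} is the coevaluation: $\coev_\C(k)\in\C\otimes_k\C^\vee\simeq\Fun^L_k(\C,\C)\simeq{}_A\mathrm{BMod}_A$ is the identity functor, i.e.\ the diagonal bimodule $A$, and $\coev_\C$ preserves compacts iff this is a perfect bimodule. So the condition ``$\ev_\C$ preserves compact objects'' in \cref{def_finiteness_conditions} appears to be a slip for ``$\coev_\C$ preserves compact objects''; note that every subsequent use of ``smooth'' in the paper (e.g.\ \cref{pseudo_perfect_vs_compact}, \cref{left_dualizable_lemma}) is consistent with the latter reading, as is \cref{lemma_smoothness} itself. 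The paper's own one-line proof defers to \cite[Proposition 11.3.2.4]{SAG} and \cite[Remark 4.6.4.15]{HA}, which work directly with Lurie's notion and so are unaffected by this discrepancy. Your Schwede--Shipley strategy becomes correct once you swap $\ev_\C$ for $\coev_\C$ throughout.
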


\begin{proof}
This follows by combining \cite[Proposition 11.3.2.4]{SAG} with \cite[Remark 4.2.1.37 \& Remark 4.6.4.15]{HA}.
\end{proof}

\begin{remark}\label{rem_smooth}
	Let $\C$ be smooth and $E \in \C$ be a compact generator.
	Lurie shows in the proof of \cite[Proposition 11.3.2.4]{SAG} that $A \coloneqq \End(E) \in \mathrm{Alg}_k$ is smooth and $\C \simeq \LMod_{A^{rev}}$, where $A^{rev}$ is the opposite algebra of $A$.
\end{remark}

\begin{example}\label{tens_smooth_example}
	In the setting of \cref{rem_smooth} and for $X \in \dSt_k$, it follows from 
\cite[Theorem 4.8.5.16-(4) \& Remark 4.2.1.37]{HA} that we have an equivalence
\[
\C_X \simeq \LMod_A(\Mod_k) \otimes_k \LMod_{\OO_X}(\QCoh(X)) \simeq \LMod_{A \otimes_k \OO_X}(\QCoh(X)).
\]
\end{example}

\begin{corollary}\label{compact_generator_smooth}
	Let $\C \in \Prlo_k$ be a smooth category and $E \in \C$ be a compact generator.
	Let $X$ be a derived stack over $k$.
	Then $F \in \C_X$ is pseudo-perfect if and only if $F(E) \in \Perf(X)$.
\end{corollary}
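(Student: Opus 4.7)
The plan is an elementary "thick subcategory" argument, using only the two inputs that $E$ is a compact generator in the sense of \cref{compact_gen_def} and that pseudo-perfectness is a property detected objectwise on $\C^\omega$.

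The forward direction is immediate from the definitions: if $F$ is pseudo-perfect, i.e. factors through $\Perf(X) \subset \QCoh(X)$, then in particular $F(E) \in \Perf(X)$ because $E \in \C^\omega$.

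For the converse, assume $F(E) \in \Perf(X)$, and consider
\[
\D \coloneqq \{ c \in \C^\omega \mid F(c) \in \Perf(X)\} \subset \C^\omega.
\]
I would first observe that $\Perf(X)$ is a stable subcategory of $\QCoh(X)$ closed under shifts and retracts. Since $F \colon (\C^\omega)^{op} \to \QCoh(X)$ is exact and $k$-linear, it preserves finite colimits, shifts and retracts; hence $\D$ is stable under the same operations in $\C^\omega$. By hypothesis $E \in \D$, so $\D$ is a full subcategory of $\C^\omega$ containing $E$ and stable under finite colimits, shifts and retracts. By the characterization of compact generators given in \cref{generators_stable}, such a subcategory must be all of $\C^\omega$. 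Thus $F(c) \in \Perf(X)$ for every $c \in \C^\omega$, i.e. $F$ is pseudo-perfect.

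There is no real obstacle: the content is the thick-subcategory characterization of compact generation recorded in \cref{generators_stable}. The only point worth being careful about is the exactness direction: $F$ is exact as a functor of stable $\infty$-categories, so it preserves both finite limits and finite colimits, which is what makes $\D$ closed under the operations appearing in the generation of $\C^\omega$ by $E$.
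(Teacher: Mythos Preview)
Your proof is correct and follows exactly the same approach as the paper: the forward direction is immediate, and the converse is the thick-subcategory argument using \cref{generators_stable} together with the fact that $\Perf(X)$ is closed in $\QCoh(X)$ under shifts, finite (co)limits and retracts. The paper's proof is a one-line version of what you wrote.
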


\begin{proof}
	Since $E$ generates $\C^\omega$ under shifts, finite colimits and retracts and $\Perf(X)$ is closed in $\QCoh(X)$ under shifts finite (co)limits and retracts, the result follows.
\end{proof}

\begin{lemma}[{\cite[Corollary I.5.7]{HDR}}]\label{pseudo_perfect_vs_compact}
	Let $\C \in \Prlo_k$ and let $\Spec (A) \in \dAff_k$.
\begin{enumerate}\itemsep=0.2cm
    \item If $\C$ is smooth, then any pseudo-perfect family in $\C_A$ is compact.
    \item If $\C$ is proper, then any compact object in $\C_A$ is pseudo-perfect.
\end{enumerate}
\end{lemma}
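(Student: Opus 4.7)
The plan is to use the dualizability of $\C$ in $\Pr^{\mathrm{L}}_k$ given by \cref{finiteness_conditions}, together with the identification $\C_A \simeq \Funst_k((\C^\omega)^{op}, \Mod_A)$ from \cref{tensor_product}. Under this equivalence, an object $F \in \C_A$ is pseudo-perfect if and only if the corresponding exact $k$-linear functor $\widetilde F \colon (\C^\omega)^{op} \to \Mod_A$ factors through $\Perf(A) \subset \Mod_A$.

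For (1), I would reduce via \cref{lemma_smoothness} and \cref{rem_smooth} to the case $\C \simeq \LMod_B$ for $B \in \mathrm{Alg}_k$ smooth, and invoke \cref{tens_smooth_example} to obtain $\C_A \simeq \LMod_{B \otimes_k A}$ with $B$ as a compact generator. By \cref{compact_generator_smooth}, $F \in \C_A$ is pseudo-perfect exactly when $\Hom_B(P, F) \in \Perf(A)$ for every $P \in \Perf(B)$; specializing to $P = B$ already shows that $F$ is perfect as an $A$-module. Smoothness of $B$ means $B$ is perfect as a module over $B^e \coloneqq B \otimes_k B^{op}$, hence a retract of a finite iterated extension of finite free $B^e$-modules. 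Tensoring this presentation of $B$ with $F$ over $B$ writes $F \simeq B \otimes_B F$ as a retract of a finite iterated extension of modules of the form $B \otimes_k F$ inside $\LMod_{B \otimes_k A}$. Since $B \in \Perf(B)$ and $F \in \Perf(A)$, the external tensor product $B \otimes_k F$ lies in $\Perf(B \otimes_k A)$, and therefore so does $F$, i.e.\ $F$ is compact in $\C_A$.

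For (2), compact objects of $\C_A = \C \otimes_k \Mod_A$ are retracts of finite colimits of external tensor products $c \otimes_k P$ with $c \in \C^\omega$ and $P \in \Perf(A)$, since both tensor factors are compactly generated. The pseudo-perfect subcategory of $\C_A$ is closed under finite colimits, shifts, and retracts: for every $d \in \C^\omega$ the evaluation functor $\Funst_k((\C^\omega)^{op}, \Mod_A) \to \Mod_A$ at $d$ is exact, and $\Perf(A)$ is closed under these operations. It therefore suffices to check that each $c \otimes_k P$ is pseudo-perfect; the corresponding functor sends $d \in (\C^\omega)^{op}$ to $\Map_\C(d, c) \otimes_k P$, and properness of $\C$ gives $\Map_\C(d, c) \in \Perf(k)$, so this value lies in $\Perf(A)$. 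The main obstacle is the bookkeeping in part (1): one must carefully track the commuting $B$- and $A$-actions on $F$ when converting a $B^e$-resolution of $B$ into a $(B \otimes_k A)$-presentation of $F$ whose pieces are the external products $B \otimes_k F$, so that perfectness is preserved at each step.
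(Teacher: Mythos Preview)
The paper does not supply its own proof of this lemma; it is simply cited from \cite[Corollary I.5.7]{HDR}. So there is nothing in the paper to compare against directly, and one can only assess your argument on its own terms.

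Your argument is correct. For (1), reducing via \cref{lemma_smoothness} to $\C = \LMod_B$ with $B$ smooth, and then using that $B \in \Perf(B^e)$ to write $F \simeq B \otimes_B F$ as a retract of a finite iterated extension of shifts of $B^e \otimes_B F \simeq B \otimes_k F$ in $\LMod_{B \otimes_k A}$, is exactly the standard Morita-theoretic proof. The functor $(-) \otimes_B F$ from $B$-bimodules to $\LMod_{B \otimes_k A}$ is colimit-preserving, so it carries a perfect presentation of $B$ to a presentation of $F$ by compact objects, and $B \otimes_k F$ is compact because external tensors of compacts are compact in $\C \otimes_k \Mod_A$. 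The bookkeeping you flag as the main obstacle is routine once one observes that the relevant functor is $M \mapsto M \otimes_B F$ with $M$ ranging over left $B^e$-modules.

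For (2), your argument is also correct, with one remark: you invoke properness in the form ``$\Hom_\C(d,c) \in \Perf(k)$ for all $c,d \in \C^\omega$,'' which is equivalent to, but not literally, the paper's \cref{def_finiteness_conditions} (that $\coev_\C$ preserves compact objects). The equivalence is standard---again reduce to $\C = \LMod_B$, where both conditions amount to $B \in \Perf(k)$---but it is not recorded in the paper, so strictly speaking you are importing one extra fact.
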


\begin{remark}\label{cpt_gen_tens_product}
	Let $\C \in \Prlo_k$ smooth and $\Spec (B) \in \dAff_k$.
	Let $E$ be a compact generator of $\C$ (\cref{relation_finiteness_conditions}).
	Then $E \otimes_k A$ is a compact generator of $\C_A$.
	This follows by the functor $\mathrm{Forget}\colon \C_A \to \C$ being conservative and the equivalence
	\[
\Map_{\C_A}(E \otimes_k A, -) \simeq \Map_{\C}(E, \mathrm{Forget}(-)) \colon \C_A \to \Mod_k.
\]
\end{remark}

\begin{lemma}\label{pseudo_perf_cpt_Hom_affine}
	Let $\C \in \Prlo_k$ smooth.
	Let $\Spec (A) \in \dAff_k$ and $F_1, F_2 \in \C_A$ with $F_1$ compact and $F_2$ pseudo-perfect.
	Then $\Hom_{\C_A}(F_1, F_2) \in Perf(A)$.
\end{lemma}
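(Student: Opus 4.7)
The plan is to exploit the existence of a compact generator furnished by smoothness and reduce the statement to a Yoneda-type computation at a single object. By \cref{relation_finiteness_conditions}, $\C$ admits a compact generator $E \in \C^{\omega}$, and by \cref{cpt_gen_tens_product} the base change $E_A \coloneqq E \otimes_k A$ is a compact generator of $\C_A$. I would then proceed in two stages: first, verify that $\Hom_{\C_A}(E_A, F_2) \in \Perf(A)$; second, propagate this property from $E_A$ to every compact object of $\C_A$.

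For the first stage, I would pass to the description $\C_A \simeq \Funst_k((\C^{\omega})^{op}, \Mod_A)$ from \cref{tensor_product}, under which $F_2$ corresponds to a functor $\tilde{F}_2 \colon (\C^{\omega})^{op} \to \Mod_A$. The pseudo-perfectness hypothesis (\cref{pseudo_perfect}) says precisely that $\tilde{F}_2$ factors through $\Perf(A)$, and a Yoneda-type argument identifies the $A$-linear mapping object $\Hom_{\C_A}(E_A, F_2)$ with the evaluation $\tilde{F}_2(E) \in \Perf(A)$. Should one prefer to bypass Yoneda, \cref{lemma_smoothness} and \cref{tens_smooth_example} rewrite $\C_A \simeq \LMod_{B^{\mathrm{rev}} \otimes_k A}$ with $B = \End(E)$; under this identification $E_A$ corresponds to the free module of rank one, so $\Hom_{\C_A}(E_A, F_2)$ is simply $F_2$ regarded as an $A$-module, which is perfect by hypothesis.

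For the second stage, I would introduce the full subcategory
\[
\mathcal{G} \coloneqq \{\, F \in \C_A \;:\; \Hom_{\C_A}(F, F_2) \in \Perf(A) \,\} \subset \C_A.
\]
Since $\Hom_{\C_A}(-, F_2)$ sends finite colimits to finite limits, shifts to shifts, and retracts to retracts, and $\Perf(A) \subset \Mod_A$ is closed under each of these operations, $\mathcal{G}$ is itself stable under finite colimits, shifts and retracts in $\C_A$. The previous stage places $E_A \in \mathcal{G}$, so \cref{generators_stable} forces $\C_A^{\omega} \subset \mathcal{G}$, and the compactness of $F_1$ yields the conclusion. The only genuinely delicate point is upgrading the spectrum-level identity $\Map_{\C_A}(E_A, F_2) \simeq \Map_{\C}(E, \mathrm{Forget}(F_2))$ recalled in \cref{cpt_gen_tens_product} to an $A$-linear identification, but this is a formal consequence of the $\Mod_A$-enrichment of $\C_A$ and should not present a serious obstacle.
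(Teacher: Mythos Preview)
Your proof is correct and follows essentially the same approach as the paper: both reduce to the single compact generator $E \otimes_k A$ via \cref{generators_stable} and then identify $\Hom_{\C_A}(E \otimes_k A, F_2)$ with $F_2(E) \in \Perf(A)$. The paper is simply more terse, compressing your second stage into a one-line reduction, and omits your alternative module-theoretic description as well as the remark about the $A$-linear enrichment.
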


\begin{proof}
	It is enough to prove \cref{pseudo_perf_cpt_Hom_affine} for a set of compact objects generating $\C_A^\omega$ under finite colimits, shifts and retracts. 
	Let a compact generator $E \in \C$ (\cref{relation_finiteness_conditions}).
	By \cref{cpt_gen_tens_product}, the object $E \otimes_k A$ is a compact generator of $\C_A$.
	By \cref{generators_stable}, we are reduced to prove that
\[
\Hom_{\C_A}(E \otimes_k A, F_2) \simeq F_2(E) \in \Perf(A),
\]
which holds by our hypothesis that $F_2$ is pseudo-perfect.
\end{proof}

\begin{corollary}\label{pseudo_perf_Hom_affine}
	Let $\C \in \Prlo_k$ smooth. 
	Let $\Spec (A) \in \dAff_k$ and $F_1, F_2 \in \C_A$ pseudo-perfect.
	Then $\Hom_{\C_A}(F_1, F_2) \in Perf(A)$.
\end{corollary}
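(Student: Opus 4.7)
The statement is essentially an immediate combination of the two preceding results, so my plan is short. The point is that in the previous lemma the hypothesis on $F_1$ is that it be compact, whereas now it is merely pseudo-perfect; the bridge between the two conditions is provided by \cref{pseudo_perfect_vs_compact}.

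Concretely, I would proceed as follows. First, since $\C$ is assumed smooth, I invoke \cref{pseudo_perfect_vs_compact}(1) applied to $F_1 \in \C_A$: this tells me that the pseudo-perfect object $F_1$ is automatically compact in $\C_A$. Now $F_1$ is compact and $F_2$ is pseudo-perfect, which is exactly the hypothesis of \cref{pseudo_perf_cpt_Hom_affine}; applying that lemma gives $\Hom_{\C_A}(F_1, F_2) \in \Perf(A)$, as desired.

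There is no real obstacle here: the corollary is essentially an unwinding. The only subtlety worth flagging in the write-up is that the implication ``pseudo-perfect $\Rightarrow$ compact'' for families over $A$ does require smoothness of $\C$ (and not only of $\C_A$), which is precisely what the hypothesis supplies via \cref{pseudo_perfect_vs_compact}.
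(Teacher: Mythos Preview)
Your proposal is correct and follows exactly the same approach as the paper: use \cref{pseudo_perfect_vs_compact}(1) to deduce that $F_1$ is compact from smoothness of $\C$, then apply \cref{pseudo_perf_cpt_Hom_affine}.
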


\begin{proof}
	By \cref{pseudo_perfect_vs_compact} we know that $F_1$ is compact. The result then follows by \cref{pseudo_perf_cpt_Hom_affine}.
\end{proof}

\begin{lemma}\label{descent_left_dualizable}
	Let $I \to \mathrm{Alg}(\Pr^\mathrm{L}_k)$, $i \mapsto \D_i$, be a diagram and let $\D$ be its limit.
	Let $p_i \colon \D \to \D_i$ be the projection.
	Let $A \in \mathrm{Alg}(\D)$ and put $A_i \coloneqq p_i(A)$.
	Then an object $F \in \LMod_A(\D)$ is left-dualizable if and only if every projection $F_i \in \LMod_{A_i}(\D_i)$ is left-dualizable.
\end{lemma}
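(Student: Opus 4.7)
The plan is to reduce the statement to two inputs. The first is that forming module categories commutes with limits of monoidal presentable categories: in our setup there is an equivalence $\LMod_A(\D) \simeq \lim_i \LMod_{A_i}(\D_i)$, under which $F$ corresponds to the compatible family $\{F_i\}_i$. This follows from the interaction of $\mathrm{Alg}$ and $\LMod$ with limits in $\mathrm{Pr}^{\mathrm{L}}_k$ (cf.\ \cite[\S 4.2, \S 4.8.5]{HA}), together with the fact that each projection $p_i\colon \D \to \D_i$, being a morphism in $\mathrm{Alg}(\Pr^{\mathrm{L}}_k)$, is monoidal and so induces the functor $\LMod_A(\D) \to \LMod_{A_i}(\D_i)$, $F \mapsto F_i$. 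The second input is that left-dualizability is a \emph{property} in the bicategorical sense: for a monoidal $\infty$-category, the space of choices of a left dual together with its unit and counit for a fixed object is either empty or contractible.

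The \emph{only if} direction is immediate from the first input: each $p_i$ preserves monoidal structure and hence preserves left-dualizable objects, so $F_i=p_i(F)$ inherits $p_i(F^L)$ as a left dual. For the \emph{if} direction, suppose each $F_i$ admits a left dual $F_i^L$ with unit $\eta_i$ and counit $\varepsilon_i$. By the second input and the monoidality of the transition functors in $i \mapsto \D_i$, the family $\{F_i^L,\eta_i,\varepsilon_i\}$ is automatically compatible with the diagram up to canonical equivalence: each transition functor sends $F_i^L$ to $F_j^L$ along with unit and counit, because left duals and their dualization data are unique up to contractible choice and monoidal functors preserve them. Hence $\{F_i^L\}_i$ assembles to an object $F^L \in \lim_i \LMod_{A_i}(\D_i) \simeq \LMod_A(\D)$, along with candidate morphisms $\eta$ and $\varepsilon$ in the limit. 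The triangle identities for $(F,F^L,\eta,\varepsilon)$ can be checked after applying each $p_i$, where they hold by assumption; since the $\{p_i\}$ are jointly conservative as projections out of a limit, we conclude that $F$ is left-dualizable.

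The main obstacle I expect is the careful bookkeeping behind the first input, i.e., identifying $\LMod_A(\D)$ with the limit of the $\LMod_{A_i}(\D_i)$ in a way that makes the projections computable as $F \mapsto F_i$. This is standard but involves chasing through the description of limits in $\mathrm{Alg}(\Pr^{\mathrm{L}}_k)$ and the functoriality of $\LMod$. Once that is in hand, the compatibility step in the ``if'' direction is automatic thanks to the property (rather than structure) nature of dualizability, which spares us from tracking higher coherences by hand.
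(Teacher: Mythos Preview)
Your proposal is correct and follows essentially the same approach as the paper: both directions rest on monoidal functors preserving duality data and on the fact that left-dualizability is a property (the space of duality data is contractible), which the paper cites as \cite[Remark 4.6.2.11]{HA}. The only cosmetic difference is that the paper argues directly in $\D = \lim_i \D_i$ rather than passing through the equivalence $\LMod_A(\D) \simeq \lim_i \LMod_{A_i}(\D_i)$, but the content is the same.
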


\begin{proof}
	The direct implication is obvious since every $p_i \colon \C \to \C_i$ is monoidal.
	Let now $F \in \LMod_A(\C)$ and suppose that every projection $F_i \coloneqq p_i(F)$ is left dualizable in $LMod_{A_i}(\D_i)$. 
	Choose duality data $(F_i, F_i^\vee, \coev_{F_i}, \ev_{F_i})$ for each $i \in I$. 	By \cite[Remark 4.6.2.11]{HA}, these duality data are uniquely determined by the $F_i$'s up to a contractible space of choices.
	Hence we have a compatible system of duality data. This defines a duality datum on $F$, proving that $F$ is left-dualizable.
\end{proof}

\begin{lemma}\label{left_dualizable_lemma}
	Let $\C \in \Prlo_k$ smooth and $X \in \dSt_k$.
	Then every pseudo-perfect family $F \in \C_X$ is left-dualizable.
\end{lemma}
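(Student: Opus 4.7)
The plan is to reduce to the case $X = \Spec(B)$ affine via \v Cech descent, and then leverage the smoothness of $\C$ to convert pseudo-perfectness into compactness (hence perfectness over an $E_1$-algebra), for which left-dualizability is standard.

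For the reduction, I would pick an affine atlas $\{U_i = \Spec(B_i) \to X\}$, form its \v Cech nerve $U_\bullet$, and invoke flat descent to write $\QCoh(X) \simeq \lim_{[n] \in \Delta} \QCoh(U_n)$ as a limit in $\mathrm{Alg}(\mathrm{Pr}^{\mathrm{L}}_k)$. Using \cref{lemma_smoothness}, I present $\C \simeq \LMod_A$ for a smooth $E_1$-algebra $A$; a stacky variant of \cref{tens_smooth_example} then identifies $\C_X$ with $\LMod_{A \otimes_k \OO_X}(\QCoh(X))$. Applying \cref{descent_left_dualizable} to the algebra $A \otimes_k \OO_X$ and this descent diagram reduces left-dualizability of $F$ to the left-dualizability of each restriction $F|_{U_n} \in \LMod_{A \otimes_k B_n}(\Mod_{B_n})$, and pseudo-perfectness of $F$ descends to each $F|_{U_n}$ because $\Perf$ is stable under pullback.

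In the affine case $X = \Spec(B)$, \cref{tens_smooth_example} gives $\C_B \simeq \LMod_{A \otimes_k B}$. \cref{pseudo_perfect_vs_compact}(1), which crucially uses the smoothness of $\C$, then shows that $F$ is compact in $\C_B$, so $F$ corresponds to a perfect left $(A \otimes_k B)$-module. It remains to note that a perfect left module $F$ over any $E_1$-algebra $R$ is left-dualizable in the bimodule sense: take $F^\vee \coloneqq \mathrm{R}\Hom_R(F,R) \in \RMod_R$, with evaluation $F^\vee \otimes_R F \to R$ (well, $F^\vee \otimes_A F \to 1$ in the bimodule picture) and coevaluation $k \to F \otimes F^\vee$; the triangle identities hold tautologically for $F = R$ and propagate to all perfect $F$ since left-dualizable objects are closed under finite colimits, shifts, and retracts.

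The main subtlety is the first step: one must verify that the $\QCoh$-descent statement is compatible with the identification $\C_X \simeq \LMod_{A \otimes_k \OO_X}(\QCoh(X))$ at the level of algebras and their left-module categories, so that \cref{descent_left_dualizable} genuinely applies to the algebra $A \otimes_k \OO_X$ and its restrictions. Once that scaffolding is in place, the remaining steps are essentially formal.
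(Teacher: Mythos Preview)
Your proposal is correct and follows the same overall architecture as the paper: reduce to the affine case via \cref{descent_left_dualizable}, then use \cref{lemma_smoothness} and \cref{tens_smooth_example} to present $\C_B \simeq \LMod_{A \otimes_k B}(\Mod_B)$. The difference lies in the affine endgame. You invoke \cref{pseudo_perfect_vs_compact}(1) to deduce that $F$ is compact, hence a perfect left $(A \otimes_k B)$-module, and then argue directly that perfect left modules are left-dualizable. The paper instead cites \cite[Proposition 4.6.4.12-(6)]{HA}, which says that over a \emph{smooth} algebra left-dualizability can be tested after forgetting to the base; the forgetful functor lands in $\Mod_B$ and sends $F$ to $F(E)$, which is perfect by pseudo-perfectness. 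Both routes use smoothness in an essential way (yours through \cref{pseudo_perfect_vs_compact}, the paper's through the cited HA result), and both are short; the paper's is a one-line citation while yours is a self-contained closure argument. Your explicit flagging of the compatibility needed to apply \cref{descent_left_dualizable} to $A \otimes_k \OO_X$ is a point the paper leaves implicit.
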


\begin{proof}
	By \cref{descent_left_dualizable}, it is enough to prove that for every $f \colon \Spec (B) \to X$, the object $f^\ast F \in \C_B$ is left-dualizable.
	Hence we may assume that $X = \Spec (B)$.
	By \cref{lemma_smoothness}, we can assume that $\C= \LMod_A$ with $A \in \mathrm{Alg}_k$ smooth.
	By \cref{tens_smooth_example}, we have an equivalence
\[
\C_B \simeq \LMod_{A \otimes_k B}(\Mod_B).
\]
Moreover, the above equivalence makes the diagram
\[\begin{tikzcd}[sep=small]
	{\Funst_k((\C^\omega)^{op}; \Mod_B)} && {\LMod_A(\Mod_k) \otimes_k \Mod_B} && {\LMod_{A \otimes_k B}(\Mod_k)} \\
	\\
	&& {\Mod_B}
	\arrow["{\ev_A}"', from=1-1, to=3-3]
	\arrow["\sim"', from=1-3, to=1-1]
	\arrow["\sim", from=1-3, to=1-5]
	\arrow["{\mathrm{Forget}\otimes_k \Id_B}", from=1-3, to=3-3]
	\arrow[from=1-5, to=3-3]
\end{tikzcd}\]
commute.
	Since left-dualizability can be checked after applying the right diagonal arrow by \cite[Proposition 4.6.4.12-(6)]{HA}, we are left to show that $F(E) \in \Perf(B)$.
	Since $F$ is pseudo-perfect, the conclusion follows.
\end{proof}

\begin{lemma}\label{Hom_pseudo_perf_pullback}
	Let $\C \in \Prlo_k$ smooth.
	Let $f \colon Y \to X$ in $\dSt_k$ and let $F_1, F_2 \in \C_X$ with $F_1$ pseudo-perfect.
	There is a canonical equivalence
\[
	f^\ast \Hom_{\C_X}(F_1, F_2) \simeq \Hom_{\C_B}(f_\C^\ast F_1, f_\C^\ast F_2) .
\]
\end{lemma}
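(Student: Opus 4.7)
The plan is to exploit the left-dualizability of pseudo-perfect families, established in \cref{left_dualizable_lemma}, to reduce the internal Hom to a tensor product, which then tautologically commutes with pullback.

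First, by \cref{lemma_smoothness} I would reduce to the case $\C \simeq \LMod_A$ with $A \in \mathrm{Alg}_k$ smooth. \cref{tens_smooth_example} then identifies $\C_X \simeq \LMod_{A \otimes_k \OO_X}(\QCoh(X))$, and analogously for $Y$, with $f_\C^\ast$ corresponding to the base change functor
\[
- \otimes_{A \otimes_k \OO_X} (A \otimes_k \OO_Y).
\]
In this description, for $F_1, F_2 \in \C_X$, the internal Hom $\Hom_{\C_X}(F_1, F_2) \in \QCoh(X)$ is the underlying $\OO_X$-module of the mapping object in $A \otimes_k \OO_X$-modules.

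Next, by \cref{left_dualizable_lemma}, $F_1$ is left-dualizable in $\C_X$, viewed as a $\QCoh(X)$-module category. Writing $F_1^\vee$ for its left dual, the standard formalism of dualizable objects in symmetric monoidal $\infty$-categories (applied to the $\QCoh(X)$-module structure, cf.\ \cite[\S 4.6]{HA}) yields a canonical equivalence
\[
\Hom_{\C_X}(F_1, F_2) \simeq F_2 \otimes_{A \otimes_k \OO_X} F_1^\vee,
\]
where the right-hand side is a relative tensor product landing in $\QCoh(X)$.

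Finally, the equivalence follows from compatibility of relative tensor products and duals with base change. Indeed, $f^\ast$ is symmetric monoidal and colimit-preserving, so it commutes with relative tensor products and carries the dual $F_1^\vee$ to $(f_\C^\ast F_1)^\vee$ (the latter being the dual of $f_\C^\ast F_1$, which remains dualizable since dualizability is preserved by symmetric monoidal functors). Combining,
\[
f^\ast \Hom_{\C_X}(F_1, F_2) \simeq f_\C^\ast F_2 \otimes_{A \otimes_k \OO_Y} (f_\C^\ast F_1)^\vee \simeq \Hom_{\C_Y}(f_\C^\ast F_1, f_\C^\ast F_2),
\]
and naturality is immediate from the construction. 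The only real content, and the main obstacle, is justifying the identification $\Hom_{\C_X}(F_1, F_2) \simeq F_2 \otimes F_1^\vee$ for left-dualizable $F_1$ in the $\QCoh(X)$-linear setting; this is a formal but somewhat delicate consequence of the duality axioms in the $\infty$-categorical framework, and everything else is bookkeeping.
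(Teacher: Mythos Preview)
Your approach is essentially the same as the paper's: invoke \cref{left_dualizable_lemma} to get left-dualizability of $F_1$, rewrite the internal $\Hom$ as $F_2 \otimes F_1^\vee$, and use that $f_\C^\ast$ is monoidal. The only difference is that your preliminary reduction to $\C \simeq \LMod_A$ via \cref{lemma_smoothness} and \cref{tens_smooth_example} is unnecessary, since \cref{left_dualizable_lemma} already applies to arbitrary smooth $\C$ and arbitrary $X \in \dSt_k$; the paper proceeds directly without this step.
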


\begin{proof}
	By \cref{left_dualizable_lemma}, the object $F_1 \in \C_X$ is left-dualizable.
	Since $f^\ast_\C \colon \C_X \to \C_Y$ is monoidal, it follows that $f^\ast F_1 \in \C_Y$ is left-dualizable.
	Hence we get
\[
f^\ast\Hom_{\C_X}(F_1, F_2) \simeq
f^\ast(F_2 \otimes_{X} F_1^\vee) \simeq
f_\C^\ast F_2 \otimes_Y (f_\C^\ast F_1)^\vee \simeq
\Hom_{\C_Y}(f_\C^\ast F_1, f_\C^\ast F_2).
\]
\end{proof}

\begin{corollary}\label{pseudo_perf_Hom}
	Let $\C \in \Prlo_k$ smooth.
	Let $X \in \dSt_k$ and $F_1, F_2 \in \C_X$ pseudo-perfect.
	Then $\Hom_{\C_X}(F_1, F_2) \in \Perf(X)$.
\end{corollary}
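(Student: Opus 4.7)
The plan is to reduce to the affine case already established in \cref{pseudo_perf_Hom_affine}. Perfectness is a local property of quasi-coherent sheaves on a derived stack, so it suffices to verify that for every morphism $f \colon \Spec(B) \to X$ from a derived affine, the pullback $f^\ast \Hom_{\C_X}(F_1, F_2)$ lies in $\Perf(B)$.

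First, I would invoke \cref{Hom_pseudo_perf_pullback}: since $F_1$ is pseudo-perfect (hence left-dualizable by \cref{left_dualizable_lemma}), we obtain a canonical equivalence
\[
f^\ast \Hom_{\C_X}(F_1, F_2) \simeq \Hom_{\C_B}(f_\C^\ast F_1, f_\C^\ast F_2).
\]
Next I would observe that $f_\C^\ast \colon \C_X \to \C_B$ preserves pseudo-perfect families. Indeed, under the identification $\C_X \simeq \Funst_k((\C^\omega)^{\op}, \QCoh(X))$ from \cref{tensor_product}, the functor $f_\C^\ast$ corresponds to post-composition with the pullback $f^\ast \colon \QCoh(X) \to \QCoh(B)$, and the latter restricts to a functor $\Perf(X) \to \Perf(B)$. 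Therefore if $F_i$ factors through $\Perf(X)$, then $f_\C^\ast F_i$ factors through $\Perf(B)$, i.e.\ is pseudo-perfect in $\C_B$.

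Finally, \cref{pseudo_perf_Hom_affine} applied to the pseudo-perfect families $f_\C^\ast F_1, f_\C^\ast F_2 \in \C_B$ gives $\Hom_{\C_B}(f_\C^\ast F_1, f_\C^\ast F_2) \in \Perf(B)$, and combining with the equivalence above yields $f^\ast \Hom_{\C_X}(F_1, F_2) \in \Perf(B)$ for every $f$, as desired. No serious obstacle is expected: the entire argument is a formal globalization, with \cref{Hom_pseudo_perf_pullback} providing the compatibility of internal Hom with pullback and left-dualizability of pseudo-perfect families (\cref{left_dualizable_lemma}) being exactly what makes that lemma applicable.
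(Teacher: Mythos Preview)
Your proof is correct and follows essentially the same approach as the paper: both reduce to the affine case via \cref{Hom_pseudo_perf_pullback} and \cref{pseudo_perf_Hom_affine}, using that $\Perf(-)$ can be checked locally (the paper cites faithfully flat descent for $\Perf(-)$ from SAG, which is exactly the ``perfectness is local'' principle you invoke). Your explicit verification that $f_\C^\ast$ preserves pseudo-perfect families is a detail the paper leaves implicit, but the argument is otherwise identical.
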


\begin{proof}
	Since $\Perf(-)$ satisfies faithfully flat descent by \cite[Proposition 2.8.4.2 \& Corollary D.6.3.3]{SAG}, the result follows by combining \cref{Hom_pseudo_perf_pullback} with \cref{pseudo_perf_Hom_affine}.
\end{proof}

\subsection{Moduli of objects and $t$-structures}\label{moduli_of_objects}
	In this paragraph we introduce the moduli of objects of an $\infty$-category of \textit{finite type}.
	This construction firstly appeared in \cite{TV}, but we will follow the presentation given in \cite[Section 1.5]{HDR}.

\begin{recollection}\label{moduli_of_objects_recollection}
	Let $\C \in \Prlo_k$.
	Consider the presheaf
\[
\widehat{\M}_\C \colon \dAff_k^{op} \to \mathrm{Spc}
\]
\[
\Spec A \to \C_A^{\simeq}.
\]
	Since $\C$ is dualizable (\cref{finiteness_conditions}) and $\Mod(-)$ satisfies faithfully flat descent (\cite[Corollary D.6.3.3]{HA}), the presheaf $\widehat{\M}_\C $ defines a derived stack over $k$.
	The stack $\widehat{\M}_\C$ is too big and has no chance to be representable.
	The derived moduli of objects of $\C$ is defined as the presheaf 
\[
\M_\C: \dAff_k^{op} \to \mathrm{Spc}
\]
\[
\Spec A \to \C_A^{pp}
\]
where $\C_A^{pp} \subset \C_A$ is the maximal groupoid spanned by pseudo-perfect families over $A$ in the sense of \cref{pseudo_perfect}.
	Since $\Perf(-)$ satisfies hyper-descent (\cite[Proposition 2.8.4.2-(10)]{SAG}), the sub-presheaf
\[
\M_\C \subset \widehat{\M}_\C
\]
is a derived substack by \cref{functoriality_tens_prod}.
\end{recollection}

	The main theorem of \cite{TV} translates as:
\begin{theorem}
	Let $\C \in \Prlo_{k}$ be of finite type.
	Then $\M_\C$ is a locally geometric derived stack locally of finite presentation over $k$.
	Furthermore, the tangent complex at a point $x \colon  S \to \M_\C$ corresponding to a pseudo-perfect family of objects $M_x \in \C_\S$ is given by
\[
x^\ast\mathbb{T}_{\M_\C} \simeq \Hom(M_x, M_x)[1].
\]
\end{theorem}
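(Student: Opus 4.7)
The plan is to apply Lurie's formulation of Artin's representability criterion (cf.~\cite[Theorem 18.1.0.2]{SAG}) to $\M_\C$. This reduces the theorem to verifying four conditions on the derived prestack $\M_\C$: (i) nilcompleteness, (ii) infinitesimal cohesion, (iii) existence of a cotangent complex at every point, and (iv) local (almost) finite presentation. The explicit formula for the tangent complex then falls out of the computation performed in step (iii).

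Conditions (i) and (ii) are inherited from $\QCoh(-)$ via the tensor product construction of $\C_A$. More precisely, for any dualizable $\C \in \Prlo_k$ the assignment $A \mapsto \C_A$ is obtained by tensoring $\QCoh(\Spec A)$ with the fixed object $\C$, and \cref{functoriality_tens_prod} allows us to describe pseudo-perfect families pointwise on $\C^\omega$ with values in $\Perf(-)$. Since $\Perf(-)$ itself satisfies nilcompleteness and infinitesimal cohesion, the restriction to $\M_\C \subset \widehat{\M}_\C$ preserves these properties. Condition (iv) is where the finite type hypothesis enters in an essential way: because $\C$ is compact in $\Prlo_k$, the functor $\C \otimes_k -$ commutes with filtered colimits in $\Prlo_k$, and combining this with the compatibility of $\Perf(-)$ with filtered colimits of rings yields $\M_\C(\colim_i A_i) \simeq \colim_i \M_\C(A_i)$ for every filtered system.

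The heart of the argument is (iii). Fix $x \colon S = \Spec A \to \M_\C$ corresponding to a pseudo-perfect $M_x \in \C_A$, and let $M \in \QCoh(S)_{\leq 0}$. Standard deformation theory of objects in a stable $k$-linear $\infty$-category identifies the fiber over $x$ of $\M_\C(A \oplus M) \to \M_\C(A)$ with the space
\[
\Omega^{\infty}\bigl(\Hom_{\C_A}(M_x, M_x) \otimes_A M[1]\bigr).
\]
By \cref{pseudo_perf_Hom}, the object $\Hom_{\C_A}(M_x, M_x)$ is perfect over $A$, so this deformation space is corepresented by $\Hom_{\C_A}(M_x, M_x)^\vee[-1]$. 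We conclude $x^\ast \mathbb{L}_{\M_\C} \simeq \Hom_{\C_A}(M_x, M_x)^\vee[-1]$, and dualizing gives the announced formula
\[
x^\ast \mathbb{T}_{\M_\C} \simeq \Hom_{\C_A}(M_x, M_x)[1].
\]

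The main obstacle is condition (iv): without the finite type hypothesis, $\M_\C$ has no reason to be locally of finite presentation, and the representability criterion fails. The tangent complex computation, by contrast, is largely formal once one knows $\Hom_{\C_A}(M_x, M_x) \in \Perf(A)$, which is \cref{pseudo_perf_Hom}, and once one has ensured via \cref{left_dualizable_lemma} that the usual duality manipulations in $\C_A$ are legitimate on pseudo-perfect families.
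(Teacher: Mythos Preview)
The paper does not prove this theorem: it is stated immediately after the sentence ``The main theorem of \cite{TV} translates as:'' and is simply a restatement of To\"en--Vaqui\'e's result, with no proof given. So there is no ``paper's own proof'' to compare against; your proposal is a reconstruction of an argument the paper chose to cite rather than reproduce.

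Your outline is broadly in the spirit of how such results are established, but two points deserve care. First, your justification of (iv) is slightly off: the functor $\C \otimes_k -$ commutes with filtered colimits for \emph{any} dualizable $\C$ (it is a left adjoint), so this cannot be where the finite type hypothesis enters. What you actually need is that $\Fun^{\mathrm{ex}}((\C^\omega)^{\mathrm{op}}, -)$ commutes with filtered colimits of small stable $\infty$-categories, and this is exactly the statement that $\C^\omega$ is compact in $\Cat_\infty^{\mathrm{ex}}$, i.e.\ that $\C$ is of finite type. Combined with $\Perf(\colim_i A_i) \simeq \colim_i \Perf(A_i)$, this yields the desired commutation for $\M_\C$.

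Second, the conclusion ``locally geometric'' is not a single application of a representability criterion: Artin--Lurie gives you an $n$-geometric stack once the cotangent complex has Tor-amplitude in a fixed range $[-n,1]$. For $\M_\C$ one stratifies by the Tor-amplitude of the universal pseudo-perfect family (this is the role of the substacks $\M_\C^{[a,b]}$ in \cite{TV}), obtains that each stratum is $n$-geometric for the appropriate $n$, and concludes that $\M_\C$ is an increasing union of open geometric substacks. Your sketch produces the cotangent complex and shows it is perfect via \cref{pseudo_perf_Hom}, which is the right ingredient, but the stratification step should be mentioned if you intend this as a proof rather than a reference.
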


	In this paper we are mainly interested in moduli of objects of abelian categories.
	If $\C \in \Prlo_k$ is equipped with a $t$-structure $\tau$, we can introduce a substack 
\[
\M_\C^{[0,0]} \subset \M_\C
\] 
whose classical truncation parametrizes families of objects in the heart $\tau$.
	Under suitable assumption on $\tau$, the stack $\M_\C^{[0,0]}$ is an open substack of $\M_\C$, so that it is also locally geometric locally of finite type when $\C$ is of finite type.

\begin{definition}\label{accessible_t_structure}
	Let $\C \in \Prlo_k$ equipped with a $t$-structure $\tau$. 
	We say that
\begin{enumerate}\itemsep=0.2cm
    \item $\tau$ is $\omega$-accessible if the full subcategory $\C_{\leq 0} \subseteq \C$ is stable under filtered colimits;
    \item $\tau$ is left-complete if $\bigcap \C_{\geq n} \simeq 0$;
    \item $\tau$ is right-complete if $\bigcap \C_{\leq n} \simeq 0$;
    \item $\tau$ is non-degenerate if it is both right-complete and left-complete.
    \item $\tau$ is admissible if it is $\omega$-accessible and non-degenerate.
\end{enumerate}
\end{definition}

\begin{recollection}\label{induced_t_structure}
	Let $\C \in \Prlo_k$.
	Then a morphism $f \colon \Spec (B) \to \Spec (A)$ in $\dAff_k$ corresponding to a map $A \to B$ induces an adjunction
\[
f_\C^\ast \colon \C_{A} \leftrightarrows \C_{B}\colon f_{\C, \ast}
\]
where $f_{\C, \ast}$ is the forgetful functor.
	Denote by $p \colon \Spec(A) \to \Spec(k)$ be the structural morphism.
	As explained in \cite[Section I.5.4]{HDR}, given an $\omega$-accessible right-complete $t$-structure $\tau$ on $\C$, we get an induced $\omega$-accessible right-complete $t$-structure $\tau_A$ on $\C_A$ for any $Spec (A) \in \dAff_k$ by declaring that $F \in \C_A$ is (co)connective if $p_{_\C, \ast} (F) \in \C$ is (co)connective.
	Moreover $\tau_A$ is non-degenerate if $\tau$ is.\\ \indent
	For $X\in \dSt_k$, we can also define an induced $t$-structure $\tau_X$ on $\C_X$ whose connective part is defined by descent.
	That is, $F \in (\C_X)_{\geq 0}$ if and only if $f^\ast F \in (\C_A)_{\geq 0}$ for any $f \colon \Spec (A) \to X$.
\end{recollection}

\begin{definition}[{\cite[Definition I.5.28]{HDR}}]\label{def_tau_flat}
	Let $\C \in \Prlo_k$ equipped with an $\omega$-admissible right-complete $t$-structure $\tau$ and $\Spec (A) \in \dAff_k$.
	We say that a family $F \in \C_A$ is $\tau$-flat over $\Spec (A)$ (or $\tau_A$-flat) if for every $M \in \Mod_A^\heartsuit$ one has $F \otimes_A M \in \C_A^\heartsuit$.
\end{definition}

\begin{remark}\label{tau_flat_in_heart}
	In the setting of \cref{def_tau_flat}, when $A$ is discrete, a $\tau$-flat family $F$ over $A$ lies in $\C_A^\heartsuit$.
	Indeed in this case one has $A \in \Mod_A^\heartsuit$, so that $F \simeq F \otimes_A A \in \C_A^\heartsuit$.
	The converse holds when $A$ is a field.
\end{remark}

We briefly recall why $\tau$-flatness behaves as in the geometric setting.

\begin{lemma}\label{exact_pullback}
	Let $\C \in \Prlo_k$ equipped with an $\omega$-accessible right-complete $t$-structure $\tau$ and let $f \colon \Spec(B) \to \Spec(A)$ be flat.
	Then the functor $f_\C^\ast \colon \C_A \to \C_B$ is $t$-exact.
\end{lemma}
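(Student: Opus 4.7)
The plan is to reduce $t$-exactness of $f_\C^*$ to an endofunctor computation in $\C_A$, identify that endofunctor as tensoring with $B$, and then attack via Lazard's theorem. First I would exploit that the induced $t$-structure on any $\C_S$ is pulled back along the forgetful functor $p_{S,\C,*}\colon \C_S \to \C$. Since $p_B = p_A \circ f$, one has $p_{B,\C,*} = p_{A,\C,*} \circ f_{\C,*}$, so for $G \in \C_B$, $G \in (\C_B)_{\geq 0}$ iff $f_{\C,*}(G) \in (\C_A)_{\geq 0}$, and the analogous equivalence holds for $(\C_B)_{\leq 0}$. Thus $f_{\C,*}$ both preserves and reflects (co)connectivity, and the $t$-exactness of $f_\C^*$ is equivalent to showing that the endofunctor $f_{\C,*}\circ f_\C^*\colon \C_A \to \C_A$ preserves both aisles of $\tau_A$.

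Next I would use the $\Mod_A$-linear structure on $\C_A \simeq \C\otimes_k \Mod_A$ to identify $f_{\C,*}f_\C^*(F)\simeq F\otimes_A B$ in $\C_A$, where $B$ is regarded as an $A$-module via $f$. Since $A$ is connective (as an object of $\dAff_k$) and $A \to B$ is flat, Lazard's theorem for connective $E_1$-rings provides a filtered diagram of finitely generated free $A$-modules with $B \simeq \colim_{i} A^{n_i}$ in $\Mod_A$; tensoring gives $F \otimes_A B \simeq \colim_i F^{n_i}$ in $\C_A$.

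It remains to check that this filtered colimit lies in the appropriate aisle. Both $(\C_A)_{\geq 0}$ and $(\C_A)_{\leq 0}$ are closed under finite direct sums, so each $F^{n_i}$ inherits the (co)connectivity of $F$; the connective aisle is stable under all colimits automatically, and for coconnectivity we use that $(\C_A)_{\leq 0} = p_{A,\C,*}^{-1}(\C_{\leq 0})$, the forgetful functor $p_{A,\C,*}$ preserves all colimits (being a restriction of scalars), and $\C_{\leq 0}$ is closed under filtered colimits by the $\omega$-accessibility of $\tau$. Hence $(\C_A)_{\leq 0}$ is closed under filtered colimits as well, finishing the argument. The main obstacle I anticipate is the appeal to Lazard in the derived connective setting, together with the canonical identification $f_{\C,*}\circ f_\C^*(-) \simeq -\otimes_A B$, which rests on the compatibility of the $\Mod_A$-action on $\C_A$ with the adjunction $f_\C^* \dashv f_{\C,*}$; once these two points are secured, the rest is a formal manipulation of $t$-structures.
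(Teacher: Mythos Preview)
Your proposal is correct and follows essentially the same approach as the paper's proof: reduce via the definition of the induced $t$-structure to showing that $f_{\C,\ast}f_\C^\ast(F)\simeq F\otimes_A B$ is (co)connective whenever $F$ is, then invoke Lazard's theorem to write $F\otimes_A B$ as a filtered colimit of finite direct sums of $F$, and conclude using $\omega$-accessibility. Your write-up is more explicit than the paper's (in particular about why the coconnective aisle is closed under filtered colimits), but the strategy is the same.
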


\begin{proof}
	Let $F \in \C_A$.
	By definition of induced $t$-structure (\cref{induced_t_structure}), we have that $f^\ast F$ is (co)connective if and only if $f_{\C, \ast} f_\C^\ast F \simeq F \otimes_B A$ is (co)connective.
	By Lazard's Theorem \cite[Theorem 7.2.2.15]{HA}, we have that $F \otimes_A B$ is filtered colimit of direct sums of $F$.
	Since the $t$-structure is $\omega$-accessible, it then follows that $F \otimes_A B$ is (co)connective if $F$ is.
\end{proof}

\begin{lemma}\label{exact_affine_atlas}
	Let $\C \in \Prlo_k$ equipped with an admissible $t$-structure $\tau$.
	Let $p \colon V \to X$ be a flat effective epimorphism in $\dSt_k$ and assume that $X$ has affine diagonal and $V$ is a derived scheme.
	Then $f^\ast \colon \C_X \to \C_V$ is $t$-exact and conservative.
\end{lemma}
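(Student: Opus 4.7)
The plan is to tackle conservativity and $t$-exactness separately, both relying on flat descent along $p$.

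\emph{Conservativity.} Since $\C \in \Prlo_k$ is dualizable in $\Pr^{\mathrm{L}}_k$ (\cref{finiteness_conditions}), the endofunctor $\C \otimes_k -$ of $\Pr^{\mathrm{L}}_k$ preserves limits. Combining this with faithfully flat descent for $\QCoh$ along the \v{C}ech nerve of $p$, I obtain an equivalence $\C_X \simeq \lim_{[n] \in \Delta} \C_{V^{\times_X (n+1)}}$. If $p^\ast F \simeq 0$, then all further pullbacks of $F$ to the $V^{\times_X n}$ vanish as well, so $F \simeq 0$ in the limit.

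\emph{$t$-exactness.} First I would choose an affine open cover $\{u_j \colon \Spec(A_j) \hookrightarrow V\}_{j}$. Each $u_j$ is flat, so the composite $q_j \coloneqq p \circ u_j \colon \Spec(A_j) \to X$ is flat as well. The affine-diagonal hypothesis on $X$ ensures that every iterated fibered product $\Spec(A_{j_0}) \times_X \cdots \times_X \Spec(A_{j_n})$ is again affine, which furnishes a flat affine hyper-cover of $X$ refining $p$. Preservation of connectivity is immediate from the definition of $\tau_X$ in \cref{induced_t_structure}: for any $g \colon \Spec(B) \to V$, one has $g^\ast p^\ast F = (pg)^\ast F$, which is connective whenever $F$ is. For coconnectivity, I would argue that the truncation functor $\tau_{\geq 1}$ commutes with $p^\ast$: the above hyper-cover being affine, one checks via \cref{exact_pullback} that $\tau_{\geq 1}$ is compatible with all the face and degeneracy maps of the cover, and admissibility of $\tau$ lets one glue these local truncations into a truncation on $\C_X$ that commutes with $p^\ast$. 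Hence for $F \in (\C_X)_{\leq 0}$ we get $\tau_{\geq 1}(p^\ast F) \simeq p^\ast \tau_{\geq 1}(F) \simeq 0$, so $p^\ast F \in (\C_V)_{\leq 0}$.

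The main obstacle I anticipate is precisely the coconnectivity step: \cref{induced_t_structure} only defines $\tau_X$ by descent on connectives, so the commutation $p^\ast \tau_{\geq 1} \simeq \tau_{\geq 1} p^\ast$ does not come for free. Establishing it requires combining the $t$-exactness of flat pullback between affines (\cref{exact_pullback}) with the affine-diagonal hypothesis (which keeps the whole hyper-cover affine and thus within the scope of \cref{exact_pullback}) and with the $\omega$-accessibility and non-degeneracy encapsulated in admissibility of $\tau$.
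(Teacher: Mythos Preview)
Your proposal is correct and follows essentially the same route as the paper: both express $\C_X$ as a limit over the affine \v{C}ech nerve of (an affine refinement of) $p$, invoke \cref{exact_pullback} for $t$-exactness of the transition maps, and read off conservativity and $t$-exactness of $p^\ast$ from this limit description. Your gluing-of-truncations argument for coconnectivity is just a more explicit rendering of what the paper compresses into ``we are reduced to the affine case''; for conservativity the paper cites \cite[Proposition D.6.4.6]{SAG} rather than arguing directly from the limit, but the content is the same.
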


\begin{proof}
	Up to replacing $V$ with a Zariski open cover, we can assume $V = \Spec(A)$.
	Since $p$ is a flat effective epimorphism, we have that $X$ is equivalent to the  \v{C}ech nerve $\Spec(A^\bullet)$ of $p$.
	Hence we have an equivalence 
\[
\QCoh(X) \simeq \lim_{n \in \Delta_s} \Mod_{A^n}.
\]
	Since $\C \in \Prlo_k$ is dualizable by \cref{finiteness_conditions}, the functor
\[
\C \otimes_k - \colon \Prlo_k \to \Prlo_k
\]
commutes with limits. 
	Hence we have an equivalence
\[
\C_X \simeq \lim_{n \in \Delta_s} \C_{A^n}.
\]
	Under the above equivalence, the pullback $f_\C^\ast \colon \C_X \to \C_A$ is identified with the projection $\lim_{n \in \Delta_s} \C_{A^n} \to \C_A$.
	Moreover since $p$ is a flat epimorphism and $X$ has affine diagonal, all the transition functor are pullbacks along faithfully flat maps between derived affines.
	Hence we are reduced to the affine case. 
	Since $\tau$ is admissible, the statement follows by \cref{exact_pullback} and \cite[Proposition D.6.4.6]{SAG}.
\end{proof}

\begin{corollary}\label{exact_pullback_non_affine}
In the setting of \cref{exact_affine_atlas}, a family $F \in \C_X$ is (co)connective if and only if $f^\ast F \in \C_V$ is (co)connective.
\end{corollary}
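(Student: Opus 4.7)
The plan is to deduce this corollary directly from the two properties established in \cref{exact_affine_atlas}, namely that $f_\C^\ast \colon \C_X \to \C_V$ is $t$-exact and conservative. Since $t$-exactness immediately gives the implications $F \text{ (co)connective} \Rightarrow f_\C^\ast F \text{ (co)connective}$, the only content of the statement is the converse direction.

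For connectivity, the converse is essentially tautological: recall from \cref{induced_t_structure} that the $t$-structure $\tau_X$ on $\C_X$ is defined by descent, so $F \in (\C_X)_{\geq 0}$ if and only if $g_\C^\ast F \in (\C_A)_{\geq 0}$ for every $g \colon \Spec(A) \to X$. Specializing to an affine Zariski cover of $V$ refining $p$ yields a family of such tests that factor through $f_\C^\ast F$, and flatness of the cover combined with \cref{exact_pullback} shows that connectivity of $f_\C^\ast F$ propagates to each of these.

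For coconnectivity, I would argue by using the truncation triangle. Consider the cofiber sequence
\[
\tau_{\leq 0} F \longrightarrow F \longrightarrow \tau_{> 0} F
\]
in $\C_X$ coming from the induced $t$-structure $\tau_X$. Applying the $t$-exact functor $f_\C^\ast$ produces a cofiber sequence
\[
f_\C^\ast(\tau_{\leq 0} F) \longrightarrow f_\C^\ast F \longrightarrow f_\C^\ast(\tau_{> 0} F)
\]
which is, by $t$-exactness, the truncation triangle of $f_\C^\ast F$ with respect to $\tau_V$. Hence if $f_\C^\ast F$ is coconnective, then $f_\C^\ast(\tau_{>0} F) \simeq 0$, and conservativity of $f_\C^\ast$ forces $\tau_{>0} F \simeq 0$, i.e.\ $F$ is coconnective. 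The same argument, applied to $\tau_{<0} F \to F \to \tau_{\geq 0}F$, takes care of the connectivity converse without appealing to the descent definition, and is perhaps the cleanest way to present both cases uniformly.

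There is no real obstacle here: the entire content has been packaged into \cref{exact_affine_atlas}, and the corollary is a formal two-out-of-three style consequence of $t$-exactness plus conservativity of the pullback. The only thing to be careful about is ensuring that the truncations one writes down genuinely live in the $t$-structures $\tau_X$ and $\tau_V$ introduced in \cref{induced_t_structure}, which is guaranteed by admissibility of $\tau$.
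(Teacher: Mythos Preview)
Your proposal is correct and takes essentially the same approach as the paper: the paper's proof is exactly your ``uniform'' argument, phrased as $F \in (\C_X)_{\geq 0} \Leftrightarrow \tau_{\leq -1} F \simeq 0 \Leftrightarrow f_\C^\ast(\tau_{\leq -1} F) \simeq 0 \Leftrightarrow \tau_{\leq -1}(f_\C^\ast F) \simeq 0 \Leftrightarrow f_\C^\ast F \in (\C_V)_{\geq 0}$, using conservativity and $t$-exactness at the middle steps. Your initial descent-definition approach for connectivity is a slight detour (and would need a word on why testing against maps factoring through $V$ suffices), so the paper's choice to go straight to the truncation argument, which you also arrive at, is the cleaner route.
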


\begin{proof}
	We prove the claim for connective families, the case of coconnective families being analogous.
	We have $F \in (\C_X)_{\geq 0}$ if and only if $\tau_{\leq -1} F \simeq 0$ in $\C_X$.
	Since $f^\ast$ is conservative and $t$-exact by \cref{exact_affine_atlas}, we have that $\tau_{\leq -1} F \simeq 0$ in $\C_X$ if and only if $\tau_{\leq -1} f^\ast F \simeq 0$ in $\C_V$.
	The latter condition is equivalent to $f^\ast F \in (\C_V)_{\geq 0}$.
\end{proof}

\begin{corollary}\label{exact_pullback_lemma}
	Let $\C \in \Prlo_k$ equipped with an admissible $t$-structure $\tau$.
	Consider a commutative square in $\dSt_k$
\[\begin{tikzcd}[sep=small]
	{U} && {V} \\
	\\
	Y && X
	\arrow["g", from=1-1, to=1-3]
	\arrow["q"', from=1-1, to=3-1]
	\arrow["p", from=1-3, to=3-3]
	\arrow["f"', from=3-1, to=3-3]
\end{tikzcd}\]
	where $X,Y$ have affine diagonal, $U,V$ are derived schemes, $f,g$ are flat and $p,q$ are flat effective epimorphisms.
	Then $f^\ast \colon \C_X \to \C_Y$ is $t$-exact.
\end{corollary}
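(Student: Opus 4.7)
The plan is to reduce to the already-established affine and atlas cases by using the flat effective epimorphisms $p,q$ to test connectivity, and then exploit the commutativity of the square to transfer the problem across the flat morphism $g$ between derived schemes.

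First I would fix $F \in \C_X$ and argue separately for connectivity and coconnectivity; by symmetry I only need to treat connective objects. By \cref{exact_pullback_non_affine} applied to $q \colon U \to Y$ (which is a flat effective epimorphism with target having affine diagonal and source a derived scheme), the family $f^\ast F \in \C_Y$ is connective if and only if $q^\ast f^\ast F \in \C_U$ is connective. Using commutativity of the square I then rewrite $q^\ast f^\ast F \simeq g^\ast p^\ast F$. On the other hand \cref{exact_affine_atlas} applied to $p \colon V \to X$ shows that $p^\ast \colon \C_X \to \C_V$ is $t$-exact, so $p^\ast F \in \C_V$ is connective (and, in the coconnective case, coconnective). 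Thus the problem boils down to showing that $g^\ast \colon \C_V \to \C_U$ preserves (co)connectivity.

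For this last step, I would cover both derived schemes by affine opens. Pick an affine open cover $\{V_j \hookrightarrow V\}$; for each $j$ choose an affine open cover $\{U_{ij} \hookrightarrow g^{-1}(V_j)\}$. Since open immersions are flat, the composites $U_{ij} \to V_j$ are flat maps between derived affines, so \cref{exact_pullback} makes them $t$-exact on $\C$-families. Because derived schemes have affine diagonal, \cref{exact_affine_atlas} tells me that connectivity of a family on $V$ (resp.\ on $U$) can be checked after pulling back along the faithfully flat cover $\bigsqcup V_j \to V$ (resp.\ $\bigsqcup U_{ij} \to U$). Combining these observations gives $t$-exactness of $g^\ast$, which completes the argument.

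I expect the mildly delicate step to be this last one: formally transporting the affine-level statement \cref{exact_pullback} to the setting of flat morphisms between possibly non-affine derived schemes. Everything else is a direct combination of \cref{exact_pullback_non_affine} with the commutativity of the square; the content really lies in checking (co)connectivity locally on an atlas, which is enabled by the hypothesis that $U$ and $V$ are derived schemes (hence automatically have affine diagonal).
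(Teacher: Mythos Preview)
Your approach is essentially the same as the paper's: use the atlases $p,q$ together with \cref{exact_pullback_non_affine} to test (co)connectivity of $f^\ast F$ upstairs, commute around the square, and reduce to showing that $g^\ast$ is $t$-exact via the affine statement \cref{exact_pullback}. The paper simply cites \cref{exact_pullback} and \cref{exact_pullback_non_affine} together and leaves the reduction of $g$ to the affine case implicit; you have made that step explicit, which is helpful.

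There is one inaccuracy in your last paragraph: the sentence ``derived schemes have affine diagonal'' is not true in general (it holds for separated schemes). You do not actually need that claim. A cleaner way to finish is to reduce to the case where $U$ and $V$ are themselves affine. Replace $V$ by a disjoint union $V'=\bigsqcup V_j$ of affine Zariski opens; then $V'\to V\to X$ is still a flat effective epimorphism. Form $U' = U\times_V V'$ (an open cover of $U$) and further refine $U'$ by an affine Zariski cover $U''=\bigsqcup U_{ij}$; then $U''\to U\to Y$ is again a flat effective epimorphism. You now have a commutative square with $U'',V'$ affine, $p':V'\to X$ and $q':U''\to Y$ flat effective epimorphisms, and $g':U''\to V'$ flat between affines. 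Now \cref{exact_pullback} applies directly to $g'$, and \cref{exact_pullback_non_affine} applied to $p'$ and $q'$ (whose targets $X,Y$ do have affine diagonal by hypothesis) finishes the argument exactly as you outlined.
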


\begin{proof}
	Let $F \in (\C_Y)_{\geq 0}$.
	By \cref{exact_pullback} and \cref{exact_pullback_non_affine}, we have $g^\ast p^\ast F \in (\C_U)_{\geq 0}$.
	Since $g^\ast p^\ast F \simeq q^\ast f^\ast F$, applying again  \cref{exact_pullback_non_affine} we get that $f^\ast F \in (\C_X)_{\leq 0}$.
	A similar proof shows that $f^\ast$ preserve coconnective families.
\end{proof}

\begin{lemma}\label{pullback_flat_affine}
	Let $\C \in \Prlo_k$ equipped with an $\omega$-admissible right-complete $t$-structure $\tau$.
	Let $f\colon \Spec (B) \to \Spec (A)$ in $\dAff_k$.
	If $F \in \C_A$ is $\tau_B$-flat, then $f^\ast F$ is $\tau_B$-flat.
\end{lemma}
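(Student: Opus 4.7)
The plan is to reduce $\tau_B$-flatness of $f_\C^* F$ to $\tau_A$-flatness of $F$ via a base-change manipulation of the tensor product. Fix $N \in \Mod_B^\heartsuit$. Using the equivalence $\C_A \simeq \C \otimes_k \Mod_A$, the identification $f_\C^\ast = - \otimes_A B$, and the $\Mod_A$-linear structure, associativity of the tensor product yields a canonical equivalence
\[
f_\C^\ast F \otimes_B N \simeq (F \otimes_A B) \otimes_B N \simeq F \otimes_A f_\ast N,
\]
where $f_\ast \colon \Mod_B \to \Mod_A$ denotes the restriction of scalars along $A \to B$.

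Next, I would show that $f_\ast$ is $t$-exact and in particular preserves the heart. Specializing \cref{induced_t_structure} to $\C = \Mod_k$, both $t$-structures on $\Mod_A$ and $\Mod_B$ are characterized by (co)connectivity of the underlying $k$-module, and $f_\ast$ leaves this underlying $k$-module unchanged. Hence $f_\ast N \in \Mod_A^\heartsuit$, and the $\tau_A$-flatness of $F$ yields $F \otimes_A f_\ast N \in \C_A^\heartsuit$.

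Finally, I would transfer this conclusion back to $\C_B$ using \cref{induced_t_structure} again. By construction, an object $X \in \C_B$ lies in $\C_B^\heartsuit$ if and only if $p_{B, \C, \ast}(X) \in \C^\heartsuit$, where $p_B \colon \Spec(B) \to \Spec(k)$ is the structural morphism. Since $p_B = p_A \circ f$ with $p_A \colon \Spec(A) \to \Spec(k)$ also structural, we have $p_{B, \C, \ast} \simeq p_{A, \C, \ast} \circ f_{\C, \ast}$, and therefore
\[
p_{B, \C, \ast}(f_\C^\ast F \otimes_B N) \simeq p_{A, \C, \ast}(F \otimes_A f_\ast N) \in \C^\heartsuit,
\]
the latter since $F \otimes_A f_\ast N \in \C_A^\heartsuit$. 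Thus $f_\C^\ast F \otimes_B N \in \C_B^\heartsuit$, establishing $\tau_B$-flatness of $f_\C^\ast F$. The only step requiring any attention is the projection-formula identification of the first display, which is an immediate consequence of the monoidal structure on $\Pr^{\mathrm{L}}_k$; once this is in hand, the rest of the argument is pure bookkeeping with the definition of the induced $t$-structure.
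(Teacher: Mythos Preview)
Your proof is correct and follows the same approach as the paper: both arguments hinge on the projection formula $f_{\C,\ast}(f_\C^\ast F \otimes_B N) \simeq F \otimes_A f_\ast N$ (the paper invokes \cite[Theorem 0.5]{Haine} for this, while you obtain it from associativity of the relative tensor product), combined with the fact that (co)connectivity in $\C_B$ is detected after applying $f_{\C,\ast}$. One minor notational point: your first display, as written, equates an object of $\C_B$ with an object of $\C_A$; the precise statement is the projection formula just quoted, which is exactly what your final computation uses.
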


\begin{proof}
	By \cite[Theorem 0.5]{Haine}, since in the affine case projection formula holds for the adjunction
\[
f^\ast \colon \Mod_A \leftrightarrows \Mod_B \colon f_{\ast},
\]
it also holds for the adjunction 
\[
f_\C^\ast \colon \C_A \leftrightarrows \C_B \colon f_{\C, \ast}.
\]
	Since (co)connectivity can be tested after applying $\Id_\C \otimes_k f_\ast$, the conclusion follows.
\end{proof}

\begin{remark}\label{tens_separately_colim}
	Let $A \in \dAff_k$ and $\E_1, \E_2 \in \Prlo_A$.
	The tensor product functor 
\[
- \otimes_A - \colon \E_1 \times \E_2 \to \E_1 \otimes_A \E_2
\]
preserves colimits separately in each variable.
	Indeed since $A$ is commutative, we have
\[
\Prlo_A \coloneqq \Mod_{\Mod_A}(\Prlo) \simeq {}_{\Mod_A}\mathrm{BiMod}_{\Mod_A}(\Prlo).
\]
	By definition of $- \otimes_A - $ via Bar construction (\cite[Section 4.4.2]{HA}), we have a factorization
\[
\E_1 \times \E_2 \xrightarrow{- \otimes -} \E_1 \otimes \E_2 \xrightarrow{p} \E_1 \otimes_A \E_2 .
\]
Since $p$ is in $\Pr^{\mathrm{L}}$ and the absolute tensor product $- \otimes -$ commutes with colimits in each variable by \cite[Remark 4.8.1.24]{HA}, the claim follows.
\end{remark}

\begin{lemma}\label{tens_commute_colim}
	Let $\C \in \Prlo_k$, $\Spec(A) \in \dAff_k$ and $F \in \C_A$.
	The functor
\[
	F \otimes_A - \colon \Mod_A \to \C_A
\]
commutes with colimits.
\end{lemma}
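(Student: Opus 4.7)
The plan is to deduce this directly from the previous Remark \ref{tens_separately_colim}. That remark asserts that if $\E_1, \E_2 \in \Prlo_A$, then the tensor product functor $- \otimes_A - \colon \E_1 \times \E_2 \to \E_1 \otimes_A \E_2$ preserves colimits separately in each variable. So the strategy is to identify the functor $F \otimes_A -$ as the restriction to $\{F\} \times \Mod_A$ of such a bivariant tensor product.

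First I would observe that $\C_A = \C \otimes_k \Mod_A$ is naturally an object of $\Prlo_A$: it is presentable and compactly generated because both $\C$ and $\Mod_A$ are, and it carries a canonical $\Mod_A$-module structure in $\Prlo_k$ coming from the second factor. Next, I would apply Remark \ref{tens_separately_colim} to the pair $\E_1 = \C_A$ and $\E_2 = \Mod_A$. Since $\Mod_A$ is the unit of the symmetric monoidal structure on $\Prlo_A$, the target is
\[
\C_A \otimes_A \Mod_A \simeq \C_A,
\]
so the bivariant tensor product becomes a functor $- \otimes_A - \colon \C_A \times \Mod_A \to \C_A$ that preserves colimits in each variable separately.

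Fixing the first argument to be our chosen $F \in \C_A$ then yields the desired conclusion that $F \otimes_A - \colon \Mod_A \to \C_A$ commutes with colimits. There is essentially no obstacle here beyond the minor bookkeeping check that $\C_A$ genuinely belongs to $\Prlo_A$ so that Remark \ref{tens_separately_colim} applies; once this is in place the statement is immediate.
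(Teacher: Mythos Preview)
Your proof is correct and follows exactly the same approach as the paper: the paper's proof simply says to apply Remark~\ref{tens_separately_colim} with $\E_1 = \C_A$ and $\E_2 = \Mod_A$. Your additional bookkeeping (noting that $\C_A \in \Prlo_A$ and that $\C_A \otimes_A \Mod_A \simeq \C_A$) just makes explicit what the paper leaves implicit.
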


\begin{proof}
	It follows directly from \cref{tens_separately_colim} by taking $\E_1= \C_A$ and $\E_2 = \Mod_A$.
\end{proof}

\begin{corollary}\label{tens_preserve_coconnective}
	Let $\C \in \Prlo_k$ equipped with an $\omega$-accessible and right complete $t$-structure $\tau$.
	Let $A \in \dAff_k$, $M \in (\Mod_A)_{\geq 0}$ and $F \in (\C_A)_{\geq 0}$.
	Then $F \otimes_A M \in (\C_A)_{\geq 0}$.
\end{corollary}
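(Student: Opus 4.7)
The plan is to reduce the claim to a single generation statement for the connective part of $\Mod_A$. Concretely, I would introduce the full subcategory
\[
\mathcal{S} \coloneqq \{\, N \in \Mod_A \mid F \otimes_A N \in (\C_A)_{\geq 0} \,\} \subseteq \Mod_A
\]
and argue that $\mathcal{S}$ contains all of $(\Mod_A)_{\geq 0}$.

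Two formal observations are immediate. First, $A \in \mathcal{S}$, since $F \otimes_A A \simeq F \in (\C_A)_{\geq 0}$ by hypothesis. Second, $\mathcal{S}$ is closed under colimits in $\Mod_A$: the functor $F \otimes_A - \colon \Mod_A \to \C_A$ preserves colimits by \cref{tens_commute_colim}, and the connective part $(\C_A)_{\geq 0}$ is automatically closed under colimits in $\C_A$ as the connective half of a $t$-structure.

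The conclusion then follows from the standard fact that, since $A$ is a connective simplicial commutative $k$-algebra, $(\Mod_A)_{\geq 0}$ is the smallest full subcategory of $\Mod_A$ containing $A$ and closed under colimits ($A$ being a compact projective generator of $(\Mod_A)_{\geq 0}$; see for instance \cite[Section 7.2.2]{HA}). This generation statement is the only mildly nontrivial input, and it combines with the two observations above to force $(\Mod_A)_{\geq 0} \subseteq \mathcal{S}$, which is exactly the desired conclusion. Note that the $\omega$-accessibility and right-completeness of $\tau$ are not needed in the argument itself; they were required earlier to ensure that the induced $t$-structure on $\C_A$ is well-defined.
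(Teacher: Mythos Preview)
Your proof is correct and follows essentially the same approach as the paper's: both define the full subcategory of $\Mod_A$ on which $F \otimes_A -$ lands in $(\C_A)_{\geq 0}$, observe that it contains $A$ and is closed under colimits (via \cref{tens_commute_colim}), and conclude using the generation of $(\Mod_A)_{\geq 0}$ by $A$ under colimits. The paper cites \cite[Proposition 7.1.1.13]{HA} for the last step rather than Section 7.2.2, but the argument is otherwise identical.
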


\begin{proof}
	Let
\[
\D \coloneqq \left\lbrace N \in \Mod_A \ \big| \ F \otimes_A N \in (\C_A)_{\geq 0} \right\rbrace \subset \Mod_A.
\]
	Since $(\C_A)_{\geq 0}$ is stable under colimits in $\C_A$, it follows from \cref{tens_commute_colim} that the full subcategory $\D$ is stable under colimits in $\Mod_A$. 
	Since $F$ is connective, we have that $A \in \D$.
	By \cite[Proposition 7.1.1.13]{HA}, we deduce that $(\Mod_A)_{\geq 0} \subset \D$.
	Hence $F \otimes_A M$ is connective.
\end{proof}

\begin{lemma}\label{descent_flat_affine}
	Let $\C \in \Prlo_k$ equipped with an admissible $t$-structure $\tau$.
	Let $f \colon \Spec (B) \to \Spec (A)$ be a faithfully flat morphism.
	Then $F \in \C_A$ is $\tau_A$-flat if and only if $f_\C^\ast F$ is $\tau_B$-flat.
\end{lemma}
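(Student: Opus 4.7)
The plan is to deduce this from faithfully flat descent, in the form of conservativity and $t$-exactness of $f_\C^\ast$.

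The forward direction is immediate: if $F$ is $\tau_A$-flat, then \cref{pullback_flat_affine} gives that $f_\C^\ast F$ is $\tau_B$-flat (no faithfulness needed).

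For the converse, assume $f_\C^\ast F$ is $\tau_B$-flat and let $M \in \Mod_A^\heartsuit$. I first observe that $f_\C^\ast \colon \C_A \to \C_B$ is both $t$-exact and conservative. Indeed, $t$-exactness follows from \cref{exact_pullback} since $f$ is flat, while conservativity follows from \cref{exact_affine_atlas} applied to the flat effective epimorphism $\Spec(B) \to \Spec(A)$ (the source is affine and the target has affine diagonal). In particular, an object of $\C_A$ lies in $\C_A^\heartsuit$ if and only if its $f_\C^\ast$-pullback lies in $\C_B^\heartsuit$, since truncation commutes with $t$-exact functors and vanishing is detected by conservative functors.

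Next I use the monoidality of base change. Viewing $\C_A$ as a $\Mod_A$-module category and $f_\C^\ast$ as the base change $-\otimes_{\Mod_A} \Mod_B$, there is a canonical equivalence
\[
f_\C^\ast(F \otimes_A M) \simeq f_\C^\ast F \otimes_B f^\ast M
\]
for $F \in \C_A$ and $M \in \Mod_A$. Since $f$ is flat, $f^\ast M \in \Mod_B^\heartsuit$. Using the $\tau_B$-flatness of $f_\C^\ast F$ applied to $f^\ast M$, the right-hand side lies in $\C_B^\heartsuit$. By the reflection property of the previous paragraph, I conclude $F \otimes_A M \in \C_A^\heartsuit$, so $F$ is $\tau_A$-flat.

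The main technical point is justifying that $f_\C^\ast$ is both $t$-exact and conservative — but both statements are already available in the excerpt (\cref{exact_pullback} and \cref{exact_affine_atlas}), so the argument is short. The secondary input, the compatibility $f_\C^\ast(F \otimes_A M) \simeq f_\C^\ast F \otimes_B f^\ast M$, is just the monoidality of the base-change functor and presents no real obstacle.
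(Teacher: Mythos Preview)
Your proof is correct and follows the same approach as the paper's: both use $t$-exactness and conservativity of $f_\C^\ast$ together with the monoidal compatibility $f_\C^\ast(F\otimes_A M)\simeq f_\C^\ast F\otimes_B f^\ast M$. The only cosmetic difference is that the paper cites \cite[Proposition~D.6.4.6]{SAG} directly for conservativity and invokes \cref{tens_preserve_coconnective} to handle connectivity separately, whereas your uniform ``in the heart iff pullback in the heart'' is a slightly cleaner packaging of the same idea.
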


\begin{proof}
	The direct implication follows by \cref{pullback_flat_affine}. 
	Suppose now that $f_\C^\ast F$ is $\tau_B$-flat and let $M \in \Mod_A^\heartsuit$.
	By \cref{tens_preserve_coconnective} it is enough to prove that $\tau_{\geq 1} (F \otimes_A M) \simeq 0$.
	Since $f$ is faithfully flat and $\tau$ is admissible, the functor $f_\C^\ast \colon \C_A \to \C_B$ is conservative by \cite[Proposition D.6.4.6]{SAG}.
	Hence it is enough to prove that $f_\C^\ast (\tau_{\geq 1} (F \otimes_A M)) \simeq 0$.
	Since $f_\C^\ast \colon \C_A \to \C_B$ is $t$-exact by \cref{exact_pullback}, we get
\[
f_\C^\ast (\tau_{\geq 1}(F \otimes M)) \simeq \tau_{\geq 1}(f_\C^\ast (F \otimes_B M) )\simeq \tau_{\geq 1}(f_\C^\ast (F) \otimes_A f^\ast(M)).
\]
	Since $f \colon \Spec(A) \to \Spec(B)$ is flat, we have that $f^\ast(M) \in Mod_A^\heartsuit$.
Then by $\tau_A$-flatness of $f_\C^\ast(F)$ we get that $f_\C^\ast (F) \otimes_A f^\ast(M) \in \C_A^\heartsuit$. Hence $\tau_{\geq 1}(f_\C^\ast (F) \otimes_A f^\ast(M)) \simeq 0$.
\end{proof}

\begin{recollection}\label{descent_flat}
	Let $\C \in \Prlo_k$ equipped with an admissible $t$-structure $\tau$, and consider the sub-presheaf of $\M_\C$
\[
\M_\C^{[0,0]}: \dAff_k^{op} \to \mathrm{Spc}
\]
sending $S \in \dAff_k$ to the maximal $\infty$-groupoid of $\tau$-flat pseudo-perfect families of objects over $S$.
	By \cref{moduli_of_objects_recollection}, \cref{pullback_flat_affine} and \cref{descent_flat_affine} the presheaf $\M_\C^{[0,0]}$ is a derived substack of $\M_\C$.
\end{recollection}

\begin{notation}
	We will denote by 
\[
\M_\C^\heartsuit \colon \Aff_k^{op} \to \mathrm{Grpd}
\]
the truncation of the derived stack $\M_\C^{[0,0]}$.
\end{notation}

\begin{remark}
	The above notation is justified by the following: when testing on an underived affine, a $\tau$-flat family lies automatically in the heart (see \cref{tau_flat_in_heart}), while in general such family is only of Tor-amplitude $[0,0]$.
\end{remark}

	The following definition extends the notion of $\tau$-flat families for a general derived stack.
	
\begin{definition}
	Let $\C \in \Prlo_k$ equipped with an admissible $t$-structure $\tau$ and let $X \in \dSt_k$.
	A family $F \in \C_X$ is $\tau_X$-flat if for every $\Spec (B) \to X$, the pullback $f_\C^\ast F \in \C_B$ is $\tau_B$-flat.
\end{definition}

\begin{lemma}\label{stability_flatness}
	Let $\C \in \Prlo_k$ equipped with an admissible $t$-structure $\tau$.
	Let $f\colon X \to Y$ be a morphism in $\dSt_k$.
	If $F \in \C_Y$ is $\tau_Y$-flat, then $f_\C^\ast F$ is $\tau_X$-flat.
\end{lemma}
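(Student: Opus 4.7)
The plan is to reduce the statement directly to the definition of $\tau_Y$-flatness by composing pullbacks. Concretely, to show that $f_\C^\ast F \in \C_X$ is $\tau_X$-flat, I need to verify that for every morphism $g \colon \Spec(B) \to X$ from a derived affine, the pullback $g_\C^\ast(f_\C^\ast F) \in \C_B$ is $\tau_B$-flat in the sense of \cref{def_tau_flat}.

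First, I would invoke the functoriality of the tensor product $\C \otimes_k -$ to identify $g_\C^\ast \circ f_\C^\ast$ with $(f \circ g)_\C^\ast$. More precisely, the assignment $X \mapsto \C_X = \C \otimes_k \QCoh(X)$ is a functor $\dSt_k^{op} \to \Prlo_k$ (using that $\QCoh(-)$ is contravariantly functorial and that tensoring a fixed presentable $k$-linear category commutes with this), so the canonical natural equivalence $g^\ast f^\ast \simeq (f \circ g)^\ast$ at the level of $\QCoh$ transports to a canonical equivalence
\[
g_\C^\ast \circ f_\C^\ast \simeq (f \circ g)_\C^\ast \colon \C_Y \to \C_B.
\]

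Now the composition $h \coloneqq f \circ g \colon \Spec(B) \to Y$ is itself a derived affine over $Y$, and the hypothesis that $F \in \C_Y$ is $\tau_Y$-flat says exactly that $h_\C^\ast F \in \C_B$ is $\tau_B$-flat. Combining with the previous equivalence yields that $g_\C^\ast(f_\C^\ast F)$ is $\tau_B$-flat. Since $g$ was arbitrary, $f_\C^\ast F$ is $\tau_X$-flat, concluding the proof.

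There is essentially no obstacle here: the content of the lemma is just that $\tau$-flatness is formulated as a property stable under pullback by construction, so the proof amounts to unwinding definitions together with the standard composition law for pullbacks of $\C$-families. The only thing worth checking carefully is that the equivalence $g_\C^\ast \circ f_\C^\ast \simeq (f \circ g)_\C^\ast$ really is natural and coherent, which follows formally from the functoriality spelled out in \cref{functoriality_tens_prod} applied to the functor $\QCoh(-) \colon \dSt_k^{op} \to \Prlo_k$.
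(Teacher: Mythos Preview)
Your proof is correct and follows exactly the same approach as the paper: pick an arbitrary $g\colon \Spec(B)\to X$, use $g_\C^\ast f_\C^\ast \simeq (f\circ g)_\C^\ast$, and conclude by the definition of $\tau_Y$-flatness.
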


\begin{proof}
	We need to prove that for every $g: \Spec (B) \to X$, $g_\C^\ast (f_\C^\ast F)$ is $\tau_B$-flat.
	Since we have an equivalence $g_\C^\ast (f_\C^\ast F) \simeq (f\circ g)_\C^\ast F$, this holds since $F$ is $\tau_Y$-flat.
\end{proof}

\begin{lemma}\label{flat_affine_atlas}
	Let $\C \in \Prlo_k$ equipped with an admissible $t$-structure $\tau$.
	Let $f \colon Y \to X$ be a flat effective epimorphism, with $Y$ quasi-geometric in the sense of \cite[Definition 9.1.0.1]{SAG}.
	Then $F \in \C_X$ is $\tau_X$-flat if and only if $f_\C^\ast F$ is $\tau_Y$-flat.
\end{lemma}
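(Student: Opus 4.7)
The plan is to deduce the statement from its two affine counterparts already available: \cref{stability_flatness} (stability of $\tau$-flatness under pullback) and \cref{descent_flat_affine} (faithfully flat descent of $\tau$-flatness). The forward direction is immediate from \cref{stability_flatness}, so all the content lies in the converse.

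For the converse, assume $f_\C^\ast F$ is $\tau_Y$-flat. The first step is to reduce to the case where the base $X$ is affine. Given a test map $g \colon \Spec(B) \to X$, I would form the pullback square
\[
\begin{tikzcd}
W \ar[r, "g'"] \ar[d, "f'"'] & Y \ar[d, "f"] \\
\Spec(B) \ar[r, "g"'] & X.
\end{tikzcd}
\]
Both ``flat effective epimorphism'' and ``quasi-geometric'' are preserved under arbitrary base change, so $f'$ is again a flat effective epimorphism with quasi-geometric source $W$. By \cref{stability_flatness} applied to $g'$, the family $(f')_\C^\ast(g_\C^\ast F) \simeq (g')_\C^\ast(f_\C^\ast F)$ is $\tau_W$-flat. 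Since checking $\tau_X$-flatness of $F$ amounts, by definition, to checking $\tau_B$-flatness of $g_\C^\ast F$ for every such $g$, we are reduced to the case $X = \Spec(B)$ affine.

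Having reduced to $X = \Spec(B)$, I would use quasi-geometricity of $Y$ to produce a faithfully flat morphism $p \colon V = \Spec(C) \to Y$ from a single affine scheme: quasi-compactness allows one to extract a finite affine cover from a flat atlas and then replace it by the disjoint union. The composite $h \coloneqq f \circ p \colon \Spec(C) \to \Spec(B)$ is then a faithfully flat morphism between affines, and \cref{stability_flatness} gives that $h_\C^\ast F \simeq p_\C^\ast(f_\C^\ast F)$ is $\tau_C$-flat. A final application of \cref{descent_flat_affine} to the faithfully flat morphism $h$ then yields the desired $\tau_B$-flatness of $F$.

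The one technical point that deserves care is the extraction of a faithfully flat morphism from a single affine scheme in the second step; this should be a direct consequence of the quasi-geometricity hypothesis in the sense of \cite[Definition 9.1.0.1]{SAG} together with its built-in quasi-compactness. Once this is in place, the argument is a purely formal shuffling of the two affine lemmas cited above, with no further subtleties expected.
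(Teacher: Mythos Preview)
Your forward direction and your second step (once the base is affine) are fine, but the reduction to $X$ affine contains a genuine gap: the claim that ``quasi-geometric'' is preserved under arbitrary base change is false. A concrete obstruction is condition (3) of \cite[Definition 9.1.0.1]{SAG}, the existence of a faithfully flat morphism from a single affine. For instance, take $Y=\Spec(k)$ (affine, hence quasi-geometric) mapping to $X=B\underline{\ZZ}$ via the canonical atlas; this is a flat effective epimorphism, but base-changing along the same map gives $W=\Spec(k)\times_{B\underline{\ZZ}}\Spec(k)\simeq\coprod_{\ZZ}\Spec(k)$, which is not quasi-compact and therefore admits no faithfully flat morphism from an affine. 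So your pullback $W$ need not be quasi-geometric, and the faithfully flat $p\colon\Spec(C)\to W$ you invoke in the second step need not exist.

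The paper sidesteps this by reducing the \emph{source} rather than the target: quasi-geometricity of $Y$ directly supplies a flat effective epimorphism $p\colon\Spec(A)\to Y$, and the lemma for $f$ follows formally from the lemma for the two composites $f\circ p\colon\Spec(A)\to X$ and $p\colon\Spec(A)\to Y$, both of which have affine source. With $Y=\Spec(A)$ affine, the converse is then handled not by base-changing $Y$ but by exploiting that $\Spec(A)\to X$ is an effective epimorphism: for any test map $g\colon\Spec(B)\to X$ one finds an fppf cover $\Spec(B')\to\Spec(B)$ over which $g$ lifts through $\Spec(A)$, and then \cref{pullback_flat_affine} together with \cref{descent_flat_affine} finishes. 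This lifting-after-a-cover argument is exactly what replaces your unavailable affine atlas of $W$.
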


\begin{proof}
	Let us notice that by definition of quasi-geometric stack there exists a flat epimorphism $p \colon \Spec(A) \to Y$.
	If we can prove the claim when the source is derived affine, then we deduce the general claim by applying it to $f \circ p \colon \Spec(A) \to X$ and to $p \colon \Spec(A) \to Y$.
	Hence we can assume $Y = \Spec(A)$.
	The direct implication follows from \cref{stability_flatness}.
	For the converse, suppose that $f^\ast F \in \C_A$ is $\tau_A$-flat. 
	We need to show that for every $g \colon \Spec (B) \to X$ we have that $g_\C^\ast F$ is $\tau_B$-flat.
	Since $f$ is an effective epimorphism, there exists an fppf cover $\pi \colon \Spec (B') \to \Spec (B)$ such that there exists a lift $\widetilde{g}: \Spec (B') \to X$ of $g$ as in the following diagram:
\[\begin{tikzcd}[sep=small]
	&&&& {\Spec (A)} \\
	\\
	{\Spec (B')} && {\Spec (B)} && X
	\arrow["f", two heads, from=1-5, to=3-5]
	\arrow["{\widetilde{g}}", dashed, from=3-1, to=1-5]
	\arrow["\pi"', from=3-1, to=3-3]
	\arrow["g"', from=3-3, to=3-5]
\end{tikzcd}\]
	By \cref{descent_flat_affine}, $g_\C^\ast F$ is $\tau_B$-flat if and only if $\pi_\C^\ast g_\C^\ast F \simeq \widetilde{g}_\C^\ast f_\C^\ast F$ is $\tau_{B'}$-flat.
	The result then follows from \cref{pullback_flat_affine}.
\end{proof}


	The key property for a $t$-structure to define an open substack of $\M_\C$ is linked to the following definition:

\begin{definition}
	Let $\C \in \Pr_k^{L, \omega}$ equipped with an admissible $t$-structure $\tau$.
	Let $\S \in \dAff_k$ and let $F \in \C_S$.
	The flat locus of $F$ is the functor 
\[
\Phi_F \colon  (\dAff_k)^{op}_{/_{S}} \to \Spc
\]
\begin{align*}
(T \xrightarrow{f} \S) \mapsto 
\left\{
    \begin{aligned}
        & \ast \ \ \text{if $f^\ast(F)$ is $\tau$-flat relative to $T$,} \\
        & \emptyset \ \ \ \ \text{otherwise.}                  
    \end{aligned}
\right.
\end{align*}
\end{definition}

\begin{definition}\label{opennes_flatness}
	Let $\C \in \Pr_k^{L, \omega}$ equipped with an admissible $t$-structure $\tau$.
	We say that $\tau$ universally satisfies openness of flatness if for every $S \in \dAff_k$ and every $F \in \C_S$ the flat locus $\Phi_F$ of $F$ is an open derived subscheme of $S$.
\end{definition}

\begin{proposition}[{\cite[Proposition I.5.35]{HDR}}]\label{moduli_flat_objects}
	Let $\C \in \Pr_k^{L, \omega}$ be of finite type equipped with an admissible $t$-structure $\tau$.
	If $\tau$ universally satisfies openness of flatness, then the structural morphism
\[
\M_\C^{[0,0]} \to \M_\C
\]
is representable by a Zariski open immersion.
	In particular, $\M_\C^{[0,0]}$ is a locally geometric derived stack, locally of finite presentation.
\end{proposition}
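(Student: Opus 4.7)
The plan is to unwind the universal properties of $\M_\C^{[0,0]}$ and $\M_\C$ and identify the fiber product $\M_\C^{[0,0]} \times_{\M_\C} S$ with the flat locus $\Phi_F$ for every test affine $S \to \M_\C$. The hypothesis of universal openness of flatness then immediately upgrades the structural morphism to a Zariski open immersion after base change.

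More concretely, first I would fix $S = \Spec(A) \in \dAff_k$ together with a morphism $x \colon S \to \M_\C$ corresponding, by the definition of $\M_\C$ recalled in \cref{moduli_of_objects_recollection}, to a pseudo-perfect family $F \in \C_S$. Next, I would compute the fiber product $\M_\C^{[0,0]} \times_{\M_\C} S$ as a functor on $\dAff_{k/S}$: by the definition of $\M_\C^{[0,0]}$ recalled in \cref{descent_flat}, a $T$-point of this fiber product (for $f \colon T \to S$) consists of the datum of $x \circ f$ together with a factorization through $\M_\C^{[0,0]}$, which by construction is equivalent to the condition that $f_\C^\ast F \in \C_T$ be $\tau_T$-flat. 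By the very definition of the flat locus, this identifies
\[
\M_\C^{[0,0]} \times_{\M_\C} S \simeq \Phi_F
\]
as subfunctors of $S$.

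Then I would invoke the assumption that $\tau$ universally satisfies openness of flatness (\cref{opennes_flatness}): this says exactly that $\Phi_F$ is an open derived subscheme of $S$, hence the projection $\M_\C^{[0,0]} \times_{\M_\C} S \to S$ is a Zariski open immersion. Since this holds for every affine derived scheme mapping to $\M_\C$, the structural morphism $\M_\C^{[0,0]} \to \M_\C$ is representable by Zariski open immersions.

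Finally, for the last sentence, I would combine this with the Toën–Vaquié theorem recalled earlier, which asserts that $\M_\C$ is a locally geometric derived stack, locally of finite presentation over $k$, under the finite type hypothesis on $\C$. Since being locally geometric and locally of finite presentation is preserved under Zariski open immersions on the source, this transfers to $\M_\C^{[0,0]}$. The only potential subtlety in this argument is the functorial identification of the fiber product with $\Phi_F$, where one must check compatibility of pullbacks in the tensor product description of $\C_T$ with the notion of $\tau$-flatness; but this is precisely the content of \cref{pullback_flat_affine} and \cref{stability_flatness}, which guarantee that $\tau$-flatness is stable under pullback and therefore that $\Phi_F$ is well-defined as a subfunctor of $S$.
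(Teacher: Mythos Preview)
Your proposal is correct and is exactly the natural argument: identify the fiber product with the flat locus and invoke the hypothesis. The paper does not actually supply a proof of this proposition; it is stated with a citation to \cite[Proposition I.5.35]{HDR}, so there is nothing further to compare against.
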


	A priori, \cref{moduli_flat_objects} only guarantees that $\M_\C^{[0,0]}$ and its truncation $\M_\C^\heartsuit$ have an open exhaustion by (derived) algebraic stacks.
	When $\Spec (A) \in \dAff_k$ is discrete, the $\infty$-grupoid $\M_\C^{[0,0]}(\Spec(A))$ takes values in $1$-grupoids by \cref{tau_flat_in_heart}.
	Hence $\M_C^\heartsuit$ is $1$-truncated in the sense of \cite{HAG-II}.
	Then \cite[Lemma 2.19]{TV} implies the following

\begin{corollary}\label{geometric_heart}
	Let $\C \in \Pr_k^{L, \omega}$ be of finite type equipped with an admissible $t$-structure $\tau$.
	If $\tau$ universally satisfies openness of flatness, then $\M_\C^{[0,0]}$ is a $1$-Artin stack.
	In particular its truncation $\M_\C^\heartsuit$ is an algebraic stack locally of finite presentation.
\end{corollary}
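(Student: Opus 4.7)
The plan is to invoke \cref{moduli_flat_objects} and then upgrade \emph{locally geometric} to \emph{$1$-Artin} via the truncation criterion of \cite[Lemma 2.19]{TV}. From \cref{moduli_flat_objects} we already know that $\M_\C^{[0,0]}$ sits inside $\M_\C$ as a Zariski open derived substack, and that it is locally geometric and locally of finite presentation over $k$. The remaining content is therefore to control the level of geometricity.

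The key observation is that $\M_\C^{[0,0]}$ is $1$-truncated on discrete test objects. Indeed, for every discrete $\Spec(A) \in \Aff_k$, \cref{tau_flat_in_heart} says that a $\tau$-flat family in $\C_A$ automatically lies in the heart $\C_A^\heartsuit$. Since $\C_A^\heartsuit$ is an ordinary $1$-category, its maximal $\infty$-subgroupoid is a $1$-groupoid. Thus the restriction of $\M_\C^{[0,0]}$ to $\Aff_k$ takes values in $1$-groupoids, which matches the defining condition of being $1$-truncated in the sense of \cite{HAG-II}.

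Finally I would invoke \cite[Lemma 2.19]{TV}, which upgrades a locally geometric derived stack satisfying an appropriate $n$-truncation hypothesis to an $n$-Artin stack. Applied with $n = 1$, this yields the first claim on $\M_\C^{[0,0]}$. The ``in particular'' part follows formally, since the truncation functor $t_0$ preserves $n$-geometricity and local finite presentation; hence $\M_\C^\heartsuit = t_0 \M_\C^{[0,0]}$ is a classical algebraic stack locally of finite presentation. The only step where I anticipate any subtlety is matching the precise form of the truncation hypothesis in \cite[Lemma 2.19]{TV} against what \cref{tau_flat_in_heart} provides, but since both are formulated in the conventions of \cite{HAG-II}, this should be routine bookkeeping rather than a substantive obstacle.
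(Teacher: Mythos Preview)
Your proposal is correct and follows essentially the same route as the paper: invoke \cref{moduli_flat_objects} for local geometricity, use \cref{tau_flat_in_heart} to see that on discrete affines the values land in $1$-groupoids (hence $1$-truncation in the sense of \cite{HAG-II}), and then apply \cite[Lemma 2.19]{TV}. The paper's argument is exactly this, stated in the paragraph immediately preceding the corollary.
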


\begin{example}
	For an example satisfying universal openness of flatness let us refer to \cref{openness_flatness_perv}.
\end{example}

\subsection{An Hartogs-like lemma for flatness}\label{section_flatness}

	In this paragraph we reproduce a fundamental result of \cite{AHLH} in our setting, namely \cite[Lemma 7.15]{AHLH}.
	The proof will essentially be the same.
	This lemma is crucial in the construction of good moduli spaces for moduli of objects.

\begin{lemma}\label{tau_flat_vs_flat}
	Let $\C \in \Prlo_k$ equipped with an $\omega$-accessible right-complete $t$-structure $\tau$.
	Let $\Spec (A) \in \dAff_k$ and $F \in \C_A$ a $\tau_A$-flat family.
	Then the functor
\[
F \otimes_A -  \colon  \Mod_A \to \C_A
\]
is exact.
	In particular it restricts to an exact functor
\[
F \otimes_A -  \colon  \Mod_A^\heartsuit \to \C_A^\heartsuit .
\]
\end{lemma}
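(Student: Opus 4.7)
The plan is to establish $t$-exactness of $G \coloneqq F \otimes_A -$, from which the restriction to the abelian hearts being well-defined and exact is automatic: cofiber sequences whose three terms all lie in $\C_A^\heartsuit$ yield short exact sequences via the long exact homotopy sequence. That $G$ is exact as a functor between stable $\infty$-categories follows immediately from \cref{tens_commute_colim}, since $G$ preserves colimits and, being a functor between stable categories, thereby preserves cofiber and fiber sequences. So the non-trivial content is preservation of $(\Mod_A)_{\geq 0}$ and of $(\Mod_A)_{\leq 0}$.

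For preservation of $(\Mod_A)_{\geq 0}$, I would first verify $F \in (\C_A)_{\geq 0}$ and then invoke \cref{tens_preserve_coconnective}. Connectivity of $F$ is immediate when $A$ is discrete by \cref{tau_flat_in_heart}. In the general simplicial case, one analyzes the Postnikov tower of $A$: each $\pi_k(A) \in \Mod_A^\heartsuit$ yields $F \otimes_A \pi_k(A)[k] \in (\C_A)_{\geq k}$ by $\tau$-flatness, and these pieces assemble, via the fiber sequence $F \otimes_A \tau_{\geq 1} A \to F \to F \otimes_A \pi_0(A)$ and iterated Postnikov variants, combined with right-completeness of $\tau_A$, to show that $F$ is connective.

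For preservation of $(\Mod_A)_{\leq 0}$, I would proceed by Postnikov induction. The base case $M \in \Mod_A^\heartsuit$ is $\tau$-flatness directly. For bounded $M \in (\Mod_A)^{[-n,0]}$, tensoring the cofiber sequence $\pi_{-n}(M)[-n] \to M \to \tau_{\geq -n+1}(M)$ with $F$ reduces to a smaller amplitude plus the observation that $F \otimes_A \pi_{-n}(M)[-n] \in (\C_A)_{\leq -n} \subseteq (\C_A)_{\leq 0}$; closure of $(\C_A)_{\leq 0}$ under extensions completes the induction. For a general $M \in (\Mod_A)_{\leq 0}$, use right-completeness of the standard $t$-structure on $\Mod_A$ to write $M \simeq \colim_n \tau_{\geq -n}(M)$ as a filtered colimit along the Whitehead tower. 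Since $G$ preserves colimits, $F \otimes_A M \simeq \colim_n F \otimes_A \tau_{\geq -n}(M)$, each term in $(\C_A)_{\leq 0}$ by the bounded case; the admissibility hypothesis on $\tau$ transfers to $\omega$-accessibility of $\tau_A$, so that $(\C_A)_{\leq 0}$ is closed under filtered colimits, yielding the desired coconnectivity.

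The principal technical obstacle, I expect, is verifying connectivity of $F$ in the simplicial case: one cannot simply tensor $A \simeq \lim_n \tau_{\leq n}(A)$ through $G$ since $G$ does not preserve limits, and one has to weave together $\tau$-flatness applied to each $\pi_k(A)$ with a convergence argument from right-completeness of $\tau_A$. The unbounded coconnective case is the other place where care is required; its resolution is precisely that the Whitehead-tower description of a coconnective object as a filtered colimit is exactly compatible with $\omega$-accessibility, so that coconnectivity transfers from bounded truncations to the whole object.
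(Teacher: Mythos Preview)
Your proposal significantly over-reads the statement. In the paper, ``exact'' means exact as a functor between stable $\infty$-categories (i.e.\ preserving fiber/cofiber sequences), not $t$-exact. The paper's proof is accordingly two lines: $F \otimes_A -$ preserves colimits by \cref{tens_commute_colim}, hence is exact by \cite[Proposition~1.1.4.1]{HA}. For the ``in particular'', the restriction to $\Mod_A^\heartsuit$ lands in $\C_A^\heartsuit$ \emph{by the very definition of $\tau_A$-flatness} (\cref{def_tau_flat}); since short exact sequences in the heart are exactly fiber/cofiber sequences with all three terms in the heart, stable exactness immediately yields exactness on hearts.

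You do recover the stable-exactness argument in your first paragraph, but then you set out to establish full $t$-exactness, which is neither claimed nor needed here. The heart-to-heart property you are working to derive via Postnikov and Whitehead towers is simply the hypothesis. Your coconnective argument looks sound, and the connective one is plausible, but both are superfluous for this lemma; note also that the lemma only assumes $\omega$-accessibility and right-completeness, not admissibility, so your appeal to ``the admissibility hypothesis'' in the coconnective step goes slightly beyond what is given.
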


\begin{proof}
	Thus the functor $F \otimes_A -$ is commutes with (finite) colimits by \cref{tens_commute_colim}.
	Hence the first claim follows by \cite[Proposition 1.1.4.1]{HA}.
	The second one follows from the first as exact sequences in the heart of a $t$-structure are (by definition) fiber/cofiber sequences where all the terms are in the heart.
\end{proof}

\begin{remark}\label{flat_objects}
	Assume that $\Spec (A)$ is discrete.
	Differently to our convention, flat families over $\Spec (A)$ are defined in \cite{AHLH} as families $F \in \C_A^\heartsuit$ such that
\[
\pi_0(F \otimes_A -)  \colon  \Mod_A^\heartsuit \to \C_A^\heartsuit
\]
is an exact functor.
	\cref{tau_flat_vs_flat} shows that $\tau$-flat families in the sense of \cref{def_tau_flat} are flat in the sense of \cite{AHLH}. 
	The two notions agree when the $t$-structure is left-complete.
\end{remark}

\begin{proposition}[{\cite[Lemma 6.2.2]{HL}}]\label{tau_flat_vs_flat_bis}
	Let $\C \in \Prlo_k$ equipped with an admissible $t$-structure $\tau$.
	Let $\Spec (A) \in \dAff_k$ discrete. 
	For a family $F \in \C_A$ over $A$ the following are equivalent:
\begin{enumerate}\itemsep=0.2cm
    \item $F$ is $\tau_A$-flat;
    \item $F \in \C_A^\heartsuit$ and $\pi_0(F \otimes_A -) \colon  \Mod_A^\heartsuit \to  \C_A^\heartsuit$ is an exact functor.
\end{enumerate}
\end{proposition}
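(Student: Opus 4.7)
The implication (1) $\Rightarrow$ (2) should be immediate: taking $M = A$ in the definition of $\tau_A$-flatness gives $F \simeq F \otimes_A A \in \C_A^\heartsuit$ (cf.\ \cref{tau_flat_in_heart}), while \cref{tau_flat_vs_flat} already furnishes the stronger statement that $F \otimes_A - \colon \Mod_A \to \C_A$ is exact, so restricting to the hearts and post-composing with $\pi_0$ yields the exactness required by (2).

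For the harder direction (2) $\Rightarrow$ (1), the plan is to fix $M \in \Mod_A^\heartsuit$ and establish that $\pi_n(F \otimes_A M) = 0$ for every $n \geq 1$. Connectivity of $F \otimes_A M$ is free from \cref{tens_preserve_coconnective}, since both $F$ and $M$ are connective (the latter because $A$ is discrete), so only coconnectivity is at stake. I would induct on $n$ using a free presentation $0 \to K \to P \to M \to 0$ in $\Mod_A^\heartsuit$ with $P \simeq \bigoplus_I A$ a free $A$-module.

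The key observation is that $F \otimes_A P \simeq \bigoplus_I F$ lies in $\C_A^\heartsuit$: the tensor product commutes with the direct sum by \cref{tens_commute_colim}, and arbitrary direct sums of heart objects remain in the heart because admissibility of $\tau$ builds in $\omega$-accessibility (together with the fact that finite direct sums are biproducts in a stable category, hence preserve coconnectivity automatically). Consequently $\pi_n(F \otimes_A P) = 0$ for $n \geq 1$, and the long exact sequence attached to the fiber sequence $F \otimes_A K \to F \otimes_A P \to F \otimes_A M$ collapses to $\pi_n(F \otimes_A M) \simeq \pi_{n-1}(F \otimes_A K)$ for $n \geq 2$. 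The base case $n=1$ reads
\[
0 \to \pi_1(F \otimes_A M) \to \pi_0(F \otimes_A K) \to \pi_0(F \otimes_A P) \to \pi_0(F \otimes_A M) \to 0,
\]
which, compared against the short exact sequence $0 \to \pi_0(F \otimes_A K) \to \pi_0(F \otimes_A P) \to \pi_0(F \otimes_A M) \to 0$ supplied by hypothesis (2), forces $\pi_1(F \otimes_A M) = 0$. The inductive hypothesis applied to $K \in \Mod_A^\heartsuit$ then handles higher $n$.

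I do not foresee a substantive obstacle: the argument is essentially dimension-shifting along a free resolution. The only genuinely nontrivial ingredient beyond hypothesis (2) is the stability of the heart under arbitrary direct sums, which is precisely guaranteed by the $\omega$-accessibility clause of admissibility. In particular, no noetherian or finiteness hypothesis on $A$ is needed.
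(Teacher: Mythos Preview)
Your argument is correct and matches the paper's proof essentially line for line: both use a free presentation, the long exact sequence, and dimension-shifting, with the heart's closure under direct sums supplied by $\omega$-accessibility. The one point you leave implicit is that converting ``$\pi_n(F\otimes_A M)=0$ for $n\geq 1$'' into ``$F\otimes_A M$ is coconnective'' uses the left-completeness half of admissibility, which the paper invokes explicitly at the end.
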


\begin{proof}
	That (1) implies (2) follows directly from \cref{tau_flat_in_heart} and \cref{tau_flat_vs_flat}.
	Suppose now that 
\[
\pi_0(F \otimes_A -) \colon  \Mod_A^\heartsuit \to  \C_A^\heartsuit
\]
is exact and consider $M \in \Mod_A^\heartsuit$.
	We need to show that $F \otimes_A M \in \C_A^\heartsuit$.
	By \cref{tens_preserve_coconnective}, it is enough to show that $\tau_{\geq 1} (F \otimes_A M) \simeq 0$.
	A choice of generators of $M$ gives a presentation
\begin{equation}\label{presentation_s_e_s}
0 \to K \to A^S \to M \to 0.
\end{equation}
By \cref{tau_flat_vs_flat}, applying $F \otimes_A -$ to \eqref{presentation_s_e_s} yields a fiber/cofiber sequence
\[
F \otimes_A K \to F \otimes_A A^S \to F \otimes_A M .
\]
The associated long exact sequence of homotopy groups yields an exact sequence
\[
\begin{tikzcd}[sep=small]
	\cdots & \pi_1(F\otimes_A A^S) & {\pi_1(F \otimes_A M)} & { \pi_0(F \otimes_A K)} & { \pi_0(F \otimes_A A^S)} & { \pi_0(F \otimes_A M)}
	\arrow[from=1-1, to=1-2]
	\arrow[from=1-2, to=1-3]
	\arrow[from=1-3, to=1-4]
	\arrow[from=1-4, to=1-5]
	\arrow[from=1-5, to=1-6]
\end{tikzcd}
\]
	By hypothesis, the map 
\[
\pi_1(F\otimes_A A^S) \to \pi_1(F \otimes_A M)
\]
is an epimorphism. 
	Moreover since $F \in \C_A^\heartsuit$ and $\C_A^\heartsuit$ is closed under arbitrary direct sums, we have 
\[
F\otimes_A A^S \simeq \bigoplus_S F \in \C_A^\heartsuit.
\]
	Thus we get $ \pi_{i}(F \otimes_A A^S) =0$ for any $i>0$.
	Hence we get $\pi_1(F \otimes_A M) = 0$ and
\[
\pi_{i+1}(F \otimes_A M) \xlongrightarrow{\sim}  \pi_{i}(F \otimes_A K)
\]
for any $i>0$.
	This two facts allow to conclude by induction that $ \pi_{i}(F \otimes_A M)= 0$ for any $i>0$.
	Indeed, we can apply recursively the same argument where we replace $M$ by $K$ until we get $i=1$.
	Since $\tau_A$ is left-complete, we get
\[
\tau_{\geq 1}(F\otimes_A M) \in \bigcap_{n \geq 1} (\C_A)_{\geq n} \simeq 0.
\]
\end{proof}

The following Lemmas are analogues of \cite[Lemma C1.12 \& Proposition C1.13]{AZ} to our setup.

\begin{lemma}[{local criterion for flatness}]\label{local_criterion_flatness}
	Let $\C \in \Prlo_k$ equipped with an $\omega$-accessible right-complete $t$-structure $\tau$ and let $R$ be a discrete noetherian commutative ring.
	For $F \in \C_R$, the following statements are equivalent:
\begin{enumerate}
	\item $F \in \C_R$ is $\tau_R$-flat;
	\item for every prime ideal $\mathfrak{p} \subset R$ we have $F \otimes_R R/\mathfrak{p} \in \C_R^\heartsuit$;
\end{enumerate} 
\end{lemma}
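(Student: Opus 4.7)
My plan is to handle the two implications separately. The direction $(1) \Rightarrow (2)$ is immediate from \cref{def_tau_flat}: each $R/\mathfrak{p}$ is discrete and so lies in $\Mod_R^\heartsuit$, so applying $\tau_R$-flatness to $M = R/\mathfrak{p}$ yields $F \otimes_R R/\mathfrak{p} \in \C_R^\heartsuit$.

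The converse $(2) \Rightarrow (1)$ I would establish by dévissage, in two steps. First, I reduce to finitely generated $M$: every $M \in \Mod_R^\heartsuit$ is a filtered colimit of its finitely generated submodules, the functor $F \otimes_R -$ commutes with filtered colimits by \cref{tens_commute_colim}, and the heart $\C_R^\heartsuit$ is itself closed under filtered colimits since $(\C_R)_{\geq 0}$ is closed under all colimits (general fact about $t$-structures) and $(\C_R)_{\leq 0}$ is closed under filtered colimits by $\omega$-accessibility of $\tau$. Second, for a finitely generated $M$ over the noetherian ring $R$, a classical result in commutative algebra provides a finite filtration $0 = M_0 \subset M_1 \subset \cdots \subset M_n = M$ with $M_i/M_{i-1} \simeq R/\mathfrak{p}_i$ for primes $\mathfrak{p}_i \subset R$. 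I would induct on $n$, the inductive step applying $F \otimes_R -$ to the short exact sequence $0 \to M_{i-1} \to M_i \to R/\mathfrak{p}_i \to 0$ to obtain a fiber sequence in $\C_R$
\[
F \otimes_R M_{i-1} \to F \otimes_R M_i \to F \otimes_R R/\mathfrak{p}_i,
\]
whose outer terms lie in $\C_R^\heartsuit$ by the inductive hypothesis and by (2) respectively; the associated long exact sequence of homotopy objects then forces $\pi_j(F \otimes_R M_i) = 0$ for every $j \neq 0$.

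The main obstacle is to promote this homotopy vanishing to genuine membership $F \otimes_R M_i \in \C_R^\heartsuit$, using only $\omega$-accessibility and right-completeness of $\tau$. Right-completeness combined with the vanishing of $\pi_j$ for $j < 0$ readily gives $F \otimes_R M_i \in (\C_R)_{\geq 0}$, so the delicate point is coconnectivity. I would handle this by realizing the extension of $F \otimes_R R/\mathfrak{p}_i$ by $F \otimes_R M_{i-1}$ directly inside the abelian heart $\C_R^\heartsuit$, using the classifying element in $[F \otimes_R R/\mathfrak{p}_i,\, (F \otimes_R M_{i-1})[1]]$, and then identifying this extension with $F \otimes_R M_i$ by uniqueness of the fiber sequence determined by that classifying map.
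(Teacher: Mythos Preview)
Your proof follows exactly the same d\'evissage as the paper: reduce to finitely generated $M$ via filtered colimits and $\omega$-accessibility, then filter $M$ by $R/\mathfrak{p}_i$'s and induct along the resulting fiber sequences.

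The only divergence is in the inductive step, where you take an unnecessary detour. Once you have a fiber sequence $F \otimes_R M_{i-1} \to F \otimes_R M_i \to F \otimes_R R/\mathfrak{p}_i$ with both outer terms in $\C_R^\heartsuit$, you are done immediately: the heart of any $t$-structure is closed under extensions, because both $(\C_R)_{\geq 0}$ and $(\C_R)_{\leq 0}$ are closed under extensions as a direct consequence of the $t$-structure axioms, with no completeness hypotheses needed. This is exactly what the paper invokes. Your proposed workaround---building the extension by hand inside $\C_R^\heartsuit$ from the classifying map and then identifying it with $F \otimes_R M_i$---is essentially a reproof of that closure property, so it works, but it is more labor than required. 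Your worry about coconnectivity is legitimate if you insist on arguing via vanishing of all $\pi_j$ (that would indeed require left-completeness), but the point is that you never need to pass through homotopy objects at all.
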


\begin{proof}
	That (1) implies (2) follows by definition of $\tau$-flatness.
	Assume now that (2) holds.
	Let $M \in \Mod_R^\heartsuit$.
	We need to prove that $F \otimes_R M \in \C_R^\heartsuit$.
	Write $M$ as a filtered colimit of its finitely generated submodule.
	Since $\tau$ is $\omega$-accessible and tensor product commutes with filtered colimits by \cref{tens_commute_colim}, we can assume that $M$ is finitely generated.
	By \cite[\href{https://stacks.math.columbia.edu/tag/00L0}{Lemma 00L0}]{stacks-project}, there exists a filtration
	\[
	0 = M_0 \subset M_1 \subset \cdots \subset M_n = M
	\]
such that $M_i / M_{i-1} \simeq R / \mathfrak{p}_i$ for some prime ideal $\mathfrak{p}_i \subset R$.
	We prove that $F\otimes M_i \in \C_R^\heartsuit$ by induction on $0 \leq i \leq n$.
	For $i=0$ is obvious.
	Let now $0 \leq i < n$ and assume that $F \otimes_R M_i \in \C_R^\heartsuit$.
	By \cref{tau_flat_vs_flat}, we have a fiber/cofiber sequence
	\[
	F \otimes_R M_i \to F \otimes_R M_{i+1} \to F \otimes_R R / \mathfrak{p}_{i+1}.
	\]
	The first and third term belong to $\C_R^\heartsuit$ by hypothesis.
	Since $\C_R^\heartsuit$ is closed under extensions, we get that $F \otimes_R M_{i+1} \in \C_R^\heartsuit$.
\end{proof}

\begin{corollary}\label{flatness_DVR}
	Let $\C \in \Prlo_k$ equipped with an $\omega$-accessible right-complete $t$-structure $\tau$ and let $R$ be a DVR over $k$ with uniformizer $\pi$.
	Then $F \in \C_R$ is $\tau_R$-flat if and only if $F \in \C_R^\heartsuit$ and the multiplication by $F \xrightarrow{\pi} F$ is injective.
\end{corollary}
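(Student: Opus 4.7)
The plan is to apply the local criterion for flatness (\cref{local_criterion_flatness}) and exploit the fact that in the DVR $R$ there are only two primes, namely $(0)$ and $(\pi)$, so $\tau_R$-flatness reduces to the conditions $F \otimes_R \kappa \in \C_R^\heartsuit$ and $F \otimes_R \Frac(R) \in \C_R^\heartsuit$, together with $F \in \C_R^\heartsuit$ itself (obtained by taking $M = R$ in \cref{def_tau_flat}).

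For the forward direction, assuming $F$ is $\tau_R$-flat, the definition of $\tau$-flatness with $M=R$ gives $F \simeq F\otimes_R R \in \C_R^\heartsuit$, and with $M=\kappa$ gives $F \otimes_R \kappa \in \C_R^\heartsuit$. Tensoring $F$ with the cofiber sequence $R \xrightarrow{\pi} R \to \kappa$ in $\Mod_R$ produces a cofiber sequence
\[
F \xrightarrow{\pi} F \longrightarrow F \otimes_R \kappa
\]
in $\C_R$, whose long exact sequence of homotopy with respect to $\tau_R$, combined with $F \in \C_R^\heartsuit$, identifies $\pi_1(F \otimes_R \kappa)$ with $\ker(\pi \colon F \to F)$ computed in the abelian category $\C_R^\heartsuit$, while $\pi_n(F \otimes_R \kappa) = 0$ automatically for $n \neq 0,1$. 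Membership $F \otimes_R \kappa \in \C_R^\heartsuit$ therefore forces this kernel to vanish, i.e.\ $\pi$ acts injectively on $F$.

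For the converse, assume $F \in \C_R^\heartsuit$ and that $\pi$ is injective on $F$. By \cref{local_criterion_flatness} it suffices to verify $F \otimes_R R/\mathfrak{p} \in \C_R^\heartsuit$ for $\mathfrak{p} \in \{(0),(\pi)\}$. For $(\pi)$ the same long exact sequence read backwards gives $F \otimes_R \kappa \in \C_R^\heartsuit$. For $(0)$, I write $\Frac(R) \simeq \colim_n (R \xrightarrow{\pi} R)$ in $\Mod_R$ and apply \cref{tens_commute_colim} to obtain
\[
F \otimes_R \Frac(R) \simeq \colim_n \bigl(F \xrightarrow{\pi} F\bigr),
\]
a filtered colimit of objects of $\C_R^\heartsuit$; since $\tau_R$ is $\omega$-accessible, $(\C_R)_{\leq 0}$ is stable under filtered colimits while $(\C_R)_{\geq 0}$ always is, hence so is $\C_R^\heartsuit$, and the claim follows. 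The only mild subtlety is matching the two descriptions of injectivity of $\pi$ via the long exact sequence, but no real obstacle arises beyond the bookkeeping.
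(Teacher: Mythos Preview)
Your argument is correct and follows the same route as the paper: reduce via \cref{local_criterion_flatness} to the two primes $(0)$ and $(\pi)$, and use the cofiber sequence $F \xrightarrow{\pi} F \to F \otimes_R \kappa$ together with its long exact sequence to identify $\pi_1(F\otimes_R\kappa)$ with $\ker(\pi\colon F\to F)$. The one slip is in the treatment of $\mathfrak{p}=(0)$: the criterion asks for $F\otimes_R R/\mathfrak{p}$, and $R/(0)=R$, not $\Frac(R)$, so the condition there is simply $F\simeq F\otimes_R R\in\C_R^\heartsuit$, which is your standing assumption---your filtered-colimit argument for $F\otimes_R\Frac(R)$ is correct but unnecessary.
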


\begin{proof}
	The only prime ideals are  $(0), (\pi) \subset R$.
	We have that $F \in \C_R^\heartsuit$ if and only if $F \otimes_R R/(0) \in \C_R^\heartsuit$.
	Let $\kappa \coloneqq R / (\pi)$.
	By \cref{local_criterion_flatness}, it is enough to prove that for $F \in \C_R^\heartsuit$ we have $F \otimes_R \kappa \in \C_R^\heartsuit$ if and only if $F \xrightarrow{\pi} F$ is injective.
	By \cref{tau_flat_vs_flat}, the short exact sequence 
\[
0 \to R \xrightarrow{\pi} R \to \kappa \to 0
\]
induces a fiber cofiber sequence
\[
F \xrightarrow{\pi} F \to F \otimes_R \kappa .
\]
	Taking the associated long exact sequence of homotopy groups yields an exact sequence
\[
0 \to \pi_1(F \otimes_s \kappa) \to F \xrightarrow{\pi} F \to \pi_0( F \otimes_R \kappa) \to 0
\]
and $\pi_i(F \otimes_R \kappa) \simeq 0$ for every $i \neq 0,1$.
	Hence we get that $F \otimes_R \kappa \in \C_R^\heartsuit$ if and only if $F \xrightarrow{\pi} F$ is injective.
\end{proof}

	In the next paragraph we give a more explicit description of pseudo-perfect $\tau$-flat families in a case we are particularly interested into.

\begin{recollection}\label{descent_data}
	Let $\C \in \Prlo_k$ equipped with an admissible $t$-structure $\tau$.
	Let $X = \Spec (R)  \in \dAff_k$ be discrete local regular noetherian of dimension 2.
	Let $j \colon U \to X$ be the complement of the unique closed point and let $x, y \in R$ be a regular sequence.
	By \cite[Lemma 2.11]{DAG_descent}, a family $F \in \C_U$ corresponds to the following data:
\begin{itemize}\itemsep=0.2cm
    \item[(i)] Families
    \[
    F_x \in \C_{R_x},\ \ F_y \in \C_{R_x}, \ \ F_{xy} \in \C_{R_{xy}};
    \]
    \item[(ii)] descent data 
    \[
    F_{y}\otimes_{R_y}R_{xy} \simeq F_{xy} \simeq F_x \otimes_{R_x}R_{xy}.
    \]
\end{itemize}
	By \cref{flat_affine_atlas}, a $\tau$-flat family on $U$ is given by (i) and (ii) as above, with the requirement that $F_x$, $F_y$ and $F_{xy}$ are $\tau$-flat.
\end{recollection}

\begin{recollection}\label{functor_to_limit}
	Let $A \to \Prlo_k$, $\alpha \mapsto \D_\alpha$, be a diagram in $\Prlo_k$, and let $\D$ be its limit.
	Let $f \colon \C \to \D$ in $\Prlo_k$ and put $f_\alpha \coloneqq p_\alpha \circ f$ for every $\alpha \in A$, where $p_\alpha \colon \D \to \D_\alpha$ is the projection.
	For $\alpha \in A$, let $g_\alpha \colon D_\alpha \to \C$ be the right adjoint to $f_\alpha$.
	One immediately checks that the right adjoint $g \colon \D \to \C$ to $f \colon \C \to \D$ is computed by
\[
g \colon \D \to \C
\]
\[
d \mapsto \lim_\alpha g_\alpha \circ p_\alpha (d)
\]
\end{recollection}

\begin{lemma}\label{formula_push_derived}
	In the setting of \cref{descent_data}, let $F \in \C_U$ be a family over $U$.
	We have
\[
j_{\ast} F \simeq \fib(F_x \oplus F_y \to F_{xy})
\]
where $F_x$, $F_y$ and $F_{xy}$ are considered as families in $\C_R$ via the respective forgetful functors.
\end{lemma}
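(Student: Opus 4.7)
The plan is to combine the descent description of $\C_U$ from \cref{descent_data} with \cref{functor_to_limit}, computing $j_\ast$ as a limit in $\C_R$ indexed by the cospan of the Zariski cover $U = \Spec(R_x) \cup \Spec(R_y)$.

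The first step I would carry out is to upgrade the descent data description to a genuine limit presentation in $\Prlo_k$. Zariski descent for $\QCoh$ applied to the cover $U = \Spec(R_x) \cup \Spec(R_y)$ yields
\[
\QCoh(U) \simeq \QCoh(\Spec R_x) \times_{\QCoh(\Spec R_{xy})} \QCoh(\Spec R_y),
\]
and tensoring with $\C$ over $k$ preserves this limit because $\C$ is dualizable by \cref{finiteness_conditions}. This identifies $j^\ast_\C \colon \C_R \to \C_U$ with the tuple-of-base-changes functor $G \mapsto (G_{R_x}, G_{R_y}, G_{R_{xy}})$, whose three component functors $j_x^\ast, j_y^\ast, j_{xy}^\ast$ each have a right adjoint given by the corresponding forgetful functor along the ring map $R \to R_?$ (since these are just $\Id_\C \otimes_k (-)$ applied to the restriction-of-scalars adjunctions, cf.\ \cref{functoriality_tens_prod}).

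Second, I would apply \cref{functor_to_limit} to this limit presentation: the right adjoint $j_{\C, \ast}$ is computed termwise as the limit in $\C_R$ of the cospan
\[
(j_x)_\ast F_x \longrightarrow (j_{xy})_\ast F_{xy} \longleftarrow (j_y)_\ast F_y,
\]
and each $(j_?)_\ast F_?$ is just $F_?$ regarded in $\C_R$ via the forgetful functor by the previous step. Since $\C_R$ is stable, this cospan limit is by definition $\fib(F_x \oplus F_y \to F_{xy})$, where the map is the difference of the two arrows to $F_{xy}$ coming from the descent data. This yields the stated formula.

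The only non-formal point is the first step, namely verifying that the limit presentation of $\C_U$ in $\Prlo_k$ is compatible with how $j^\ast_\C$ factors through the projections, so that \cref{functor_to_limit} applies with exactly the right adjoints one expects. Once this is established, the rest reduces to the observation that a cospan limit in a stable category is computed by a fiber of a direct sum.
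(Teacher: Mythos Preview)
Your proposal is correct and follows essentially the same approach as the paper: both use dualizability of $\C$ (\cref{finiteness_conditions}) to turn Zariski descent for $\QCoh$ into a pullback presentation of $\C_U$ in $\Prlo_k$, then apply \cref{functor_to_limit} to compute $j_{\C,\ast}$ as the pullback of the cospan $F_x \to F_{xy} \leftarrow F_y$ in $\C_R$, and finally use stability to rewrite this as a fiber. Your additional remark identifying the componentwise right adjoints with the forgetful functors via \cref{functoriality_tens_prod} makes explicit something the paper leaves implicit.
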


\begin{proof}
	Since $\C \in \Prlo_k$ is dualizable by \cref{finiteness_conditions}, the functor
\[
\C \otimes_k - \colon \Prlo_k \to \Prlo_k
\]
commutes with limits. Thus $\C_U$ fits in a pullback square in $\Prlo_k$
\[\begin{tikzcd}[sep=small]
	{\C_{U}} && {\C_{R_{y}}} \\
	\\
	{\C_{R_{x}}} && {\C_{R_{xy}} \ .}
	\arrow[from=1-1, to=1-3]
	\arrow[from=1-1, to=3-1]
	\arrow[from=1-3, to=3-3]
	\arrow[from=3-1, to=3-3]
\end{tikzcd}\]
	By \cref{functor_to_limit}, the family $j_{\C, \ast}F$ fits in a pullback square in $\C_R$
\[\begin{tikzcd}[sep=small]
	{j_{\ast}F} && {F_{y}} \\
	\\
	{F_{x}} && {F_{xy} \ .}
	\arrow[from=1-1, to=1-3]
	\arrow[from=1-1, to=3-1]
	\arrow["q", from=1-3, to=3-3]
	\arrow["p"', from=3-1, to=3-3]
\end{tikzcd}\]
Since $\C_R$ is stable, we can rewrite the above as
\[
j_{\C, \ast}F \simeq \fib(F_x \oplus F_y \to F_{xy}).
\]
\end{proof}

\begin{corollary}\label{formula_push}
	In the setting of \cref{descent_data}, suppose that $F \in \C_U$ is $\tau$-flat over $U$. Then
\[
\pi_0 j_{\ast} F \simeq \mathrm{Ker}(F_x \oplus F_y \to F_{xy})
\]
where $F_x$, $F_y$ and $F_{xy}$ are considered as families in $\C_R^\heartsuit$ via the respective forgetful functors.
\end{corollary}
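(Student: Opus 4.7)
My plan is to deduce this corollary directly from \cref{formula_push_derived} by passing to the long exact sequence of homotopy groups with respect to the $t$-structure $\tau_R$ on $\C_R$, once I have checked that all three terms $F_x$, $F_y$, $F_{xy}$ appearing on the right-hand side of the fiber sequence
\[
j_{\ast}F \simeq \fib(F_x \oplus F_y \to F_{xy})
\]
belong to the heart $\C_R^\heartsuit$.

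The first step is to argue that the forgetful functors $\C_{R_x} \to \C_R$, $\C_{R_y} \to \C_R$ and $\C_{R_{xy}} \to \C_R$ are $t$-exact. This follows from the very definition of the induced $t$-structures in \cref{induced_t_structure}: connectivity and coconnectivity in $\C_{R_x}$ (and the other two) are tested after pushing forward to $\C$ through the structural morphism to $\Spec(k)$, and since this morphism factors through $\Spec(R)$, pushforward to $\C_R$ preserves both (co)connectivity.

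Next, since $F$ is $\tau_U$-flat, by definition its pullbacks $F_x \in \C_{R_x}$, $F_y \in \C_{R_y}$ and $F_{xy} \in \C_{R_{xy}}$ are $\tau$-flat over the respective discrete rings, hence lie in the corresponding hearts by \cref{tau_flat_in_heart}. Combining with the previous step, the images of $F_x, F_y, F_{xy}$ in $\C_R$ all lie in $\C_R^\heartsuit$. Consequently $F_x \oplus F_y$ and $F_{xy}$ are concentrated in degree $0$ with respect to $\tau_R$.

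The final step is to write out the long exact sequence of homotopy objects associated with the fiber sequence of \cref{formula_push_derived}. Since $\pi_i(F_x \oplus F_y) = 0 = \pi_i(F_{xy})$ for all $i \neq 0$, this long exact sequence collapses to a four-term exact sequence
\[
0 \to \pi_0 j_{\ast}F \to F_x \oplus F_y \to F_{xy} \to \pi_{-1} j_{\ast}F \to 0
\]
in $\C_R^\heartsuit$, which immediately identifies $\pi_0 j_\ast F$ with $\mathrm{Ker}(F_x \oplus F_y \to F_{xy})$. There is no substantial obstacle here; the only point requiring a moment of care is the $t$-exactness of the three forgetful functors, and this is essentially by construction of the induced $t$-structure.
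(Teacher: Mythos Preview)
Your proof is correct and follows exactly the approach of the paper: use $\tau_U$-flatness plus $t$-exactness of the forgetful functors to place $F_x$, $F_y$, $F_{xy}$ in $\C_R^\heartsuit$, then read off the kernel from the fiber sequence of \cref{formula_push_derived}. The paper's proof is simply terser, compressing your long exact sequence argument into ``the statement then follows by \cref{formula_push_derived}''.
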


\begin{proof}
	By $\tau_U$-flatness of $F$ and exactness of forgetful functors, we have that $F_x, F_y, F_{xy} \in \C_R^\heartsuit$.
	The statement then follows by \cref{formula_push_derived}.
\end{proof}

\begin{remark}
	Let $\C \in \Prlo_k$ equipped with an admissible $t$-structure $\tau$ and let $f \colon \X \to \Y$ in $\dSt_k$.
	By definition of $\tau_\X$ and $\tau_\Y$, the functor $f_\C^\ast \colon \C_\Y \to \C_\X$ preserves connective objects.
	In particular, the adjunction $f_\C^\ast \dashv (f_\C)_\ast$ restricts to an adjunction
\[
\pi_0 f_\C^\ast \colon \C_Y^\heartsuit \leftrightarrows \C_X^\heartsuit \colon \pi_0 (f_\C)_\ast 
\]
\end{remark}

\begin{lemma}\label{push_fully_faith_derived}
	In the setting of \cref{descent_data}, the functor $j_{\C, \ast} \colon C_U \to \C_X$ is fully faithful.
\end{lemma}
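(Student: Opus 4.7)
The plan is to reduce the statement to the analogous (and standard) fact that $j_\ast \colon \QCoh(U) \to \QCoh(X)$ is fully faithful, by exploiting the dualizability of $\C$ and the functorial description of $j_\C^\ast$ and $j_{\C,\ast}$ provided by \cref{tensor_product} and \cref{functoriality_tens_prod}.

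First, observe that since $x,y$ form a regular sequence in the $2$-dimensional regular local ring $R$, they are a system of parameters, so $V(x,y) = \{\mathfrak m\}$ and therefore $U = D(x) \cup D(y)$. In particular, $j \colon U \to X$ is an open immersion of schemes, which implies that the counit $j^\ast j_\ast \to \Id_{\QCoh(U)}$ is an equivalence, i.e.\ $j_\ast$ is fully faithful.

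Next, since $\C \in \Prlo_k$ is compactly generated, \cref{tensor_product} provides a canonical equivalence
\[
\C_D \simeq \Funst_k((\C^\omega)^{op}, D)
\]
natural in $D \in \Prlo_k$. Under this identification, \cref{functoriality_tens_prod} shows that the adjunction $j_\C^\ast \dashv j_{\C,\ast}$ is computed by post-composition with $j^\ast$ and $j_\ast$ respectively: that is, $j_{\C,\ast}(F) \simeq j_\ast \circ F$ and $j_\C^\ast(G) \simeq j^\ast \circ G$ for $F \colon (\C^\omega)^{op} \to \QCoh(U)$ and $G \colon (\C^\omega)^{op} \to \QCoh(X)$.

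Composing these two expressions, the counit of $j_\C^\ast \dashv j_{\C,\ast}$ is given objectwise by
\[
j_\C^\ast j_{\C,\ast}(F) \simeq j^\ast \circ j_\ast \circ F \xrightarrow{\sim} F,
\]
where the equivalence follows from the fact that $j^\ast j_\ast \simeq \Id_{\QCoh(U)}$. Hence the counit is an equivalence, which proves that $j_{\C,\ast}$ is fully faithful. There is no real obstacle in this strategy: the whole argument is an instance of the general principle that tensoring with a dualizable object in $\Pr^{\mathrm L}_k$ preserves adjunctions whose counit (resp.\ unit) is an equivalence, and the hypotheses of the lemma are used only to guarantee that $j$ is an open immersion.
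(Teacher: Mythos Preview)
Your proof is correct and takes a different, more conceptual route than the paper. The paper computes $j_{\C,\ast}F$ explicitly via \cref{formula_push_derived} as the fiber $\fib(F_x \oplus F_y \to F_{xy})$, and then checks directly that restricting this fiber back to $\Spec(R_x)$, $\Spec(R_y)$, and $\Spec(R_{xy})$ recovers $F_x$, $F_y$, $F_{xy}$; this uses the specific Zariski cover of $U$ by two affines. Your argument instead invokes \cref{functoriality_tens_prod} to identify the adjunction $j_\C^\ast \dashv j_{\C,\ast}$ with pointwise post-composition by $j^\ast \dashv j_\ast$ on $\Funst_k((\C^\omega)^{op},-)$, and then appeals to the standard fact that $j_\ast$ is fully faithful for any qcqs open immersion. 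This is cleaner and more general: it works for any open immersion $j$, not just the punctured spectrum of a two-dimensional regular local ring, and it makes transparent that the result is simply the statement that tensoring a fully faithful right adjoint with a compactly generated category preserves full faithfulness. The paper's explicit computation, on the other hand, has the virtue of being self-contained and of setting up the formula that is reused in the subsequent corollaries (\cref{formula_push}, \cref{push_preserve_flat}).
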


\begin{proof}
	We need to prove that the counit $j_\C^\ast j_{\C, \ast} \to \Id_{\C_U}$ is an equivalence.
	Given $F \in \C_U$, let $F_x, F_y, F_{xy}$ as in \cref{descent_data}.
	By \cref{formula_push_derived}, we have $j_{\C, \ast} F \simeq \fib(F_x \oplus F_y \to F_{xy})$.
	Thus restricting to $R_x$ (respectively, $R_y$, $R_{xy}$), we get back $F_x$ (respectively, $F_{y}$, $F_{xy}$).
	Hence $j_\C^\ast j_{\C, \ast} F \simeq F$.
\end{proof}

\begin{corollary}\label{push_fully_faith}
	In the setting of \cref{descent_data}, the functor $\pi_0 j_{\C, \ast}\colon C_U^\heartsuit \to \C_X^\heartsuit$ is fully faithful.
\end{corollary}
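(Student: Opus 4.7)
The plan is to deduce this from the derived version \cref{push_fully_faith_derived} via the adjunction on hearts $\pi_0 j_\C^\ast \dashv \pi_0 j_{\C,\ast}$ recorded in the preceding remark. Since a right adjoint is fully faithful if and only if its counit is an equivalence, the task reduces to showing that for every $F \in \C_U^\heartsuit$ the heart-level counit
\[
\pi_0 j_\C^\ast \pi_0 j_{\C,\ast} F \longrightarrow F
\]
is an equivalence.

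The key input is that $j \colon U \to X$ is a Zariski open immersion, hence flat, so the pullback $j_\C^\ast \colon \C_X \to \C_U$ is $t$-exact. This should follow from \cref{exact_pullback_lemma} applied to the square formed by the identity atlas $\Spec R \xrightarrow{\Id} X$ and the standard affine atlas $\Spec R_x \sqcup \Spec R_y \to U$: both are flat effective epimorphisms whose sources are derived schemes, while $X$ is affine and $U$ is separated, so both have affine diagonal. In particular, $j_\C^\ast$ preserves the heart and commutes with the truncation $\pi_0$.

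With $t$-exactness in hand, the computation is immediate. Since $\pi_0 j_{\C, \ast} F \in \C_X^\heartsuit$, $t$-exactness of $j_\C^\ast$ yields the identification
\[
\pi_0 j_\C^\ast \pi_0 j_{\C,\ast} F \;=\; j_\C^\ast \pi_0 j_{\C,\ast} F \;=\; \pi_0 j_\C^\ast j_{\C,\ast} F.
\]
By \cref{push_fully_faith_derived}, the derived counit $j_\C^\ast j_{\C,\ast} F \to F$ is an equivalence; applying $\pi_0$ and using that $F \in \C_U^\heartsuit$ gives the desired equivalence $\pi_0 j_\C^\ast \pi_0 j_{\C,\ast} F \simeq \pi_0 F = F$, which by naturality is the heart-level counit.

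I do not expect any serious obstacle. The only non-automatic step is establishing the $t$-exactness of $j_\C^\ast$: because $U$ is not affine, this does not follow directly from \cref{exact_pullback} and requires the descent argument packaged in \cref{exact_pullback_lemma}.
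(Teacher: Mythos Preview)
Your proposal is correct and follows essentially the same argument as the paper: reduce to the heart-level counit, use $t$-exactness of $j_\C^\ast$ (via \cref{exact_pullback_lemma}) to identify $\pi_0 j_\C^\ast \pi_0 j_{\C,\ast} \simeq \pi_0(j_\C^\ast j_{\C,\ast})$, and conclude from \cref{push_fully_faith_derived}. Your added justification for why \cref{exact_pullback_lemma} applies is more detailed than what the paper spells out, but the logic is identical.
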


\begin{proof}
	We need to prove that the counit $\pi_0 j_\C^\ast \pi_0 j_{\C, \ast} \to \Id_{\C_U^\heartsuit}$ is an equivalence.
	Since $j_\C^\ast$ is $t$-exact by \cref{exact_pullback_lemma}, we have
\[
\pi_0 j_\C^\ast \pi_0 j_{\C, \ast} \simeq j_\C^\ast \pi_0 j_{\C, \ast} \simeq \pi_0 (j_\C^\ast j_{\C, \ast}).
\]
	The conclusion follows by \cref{push_fully_faith_derived}.
\end{proof}

\begin{lemma}\label{push_preserve_flat}
	In the setting of \cref{descent_data}, the functor $\pi_0 j_{\C, \ast} \colon C_U^\heartsuit \to \C_X^\heartsuit$ preserve $\tau$-flat objects.
\end{lemma}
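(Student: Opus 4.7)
My plan is to apply the local criterion for flatness, \cref{local_criterion_flatness}, to $G \coloneqq \pi_0 j_{\C, \ast} F$. By \cref{formula_push}, $G$ identifies with the kernel in the abelian category $\C_R^\heartsuit$ of the natural map $F_x \oplus F_y \to F_{xy}$; moreover, each of $F_x, F_y, F_{xy}$ is $\tau_R$-flat, via the respective flat localization maps $R \to R_x, R_y, R_{xy}$ together with \cref{pullback_flat_affine}. It thus suffices to verify that $G \otimes_R R/\mathfrak{p} \in \C_R^\heartsuit$ for every prime $\mathfrak{p} \subset R$.

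The case $\mathfrak{p} = (0)$ is immediate. For a height-one prime, $R$ is a regular local ring and hence a UFD, so $\mathfrak{p} = (f)$ for some irreducible $f$; the cofiber sequence $G \xrightarrow{f} G \to G \otimes_R R/(f)$ in $\C_R$, combined with its long exact sequence of homotopy objects, reduces the claim to the injectivity of multiplication by $f$ on $G$. Since $G$ embeds as a sub-object of $F_x \oplus F_y$ in $\C_R^\heartsuit$, and $f$ is a nonzero element of the domains $R_x, R_y$, $\tau$-flatness of $F_x, F_y$ together with \cref{tau_flat_vs_flat} applied to the short exact sequence $0 \to R_x \xrightarrow{f} R_x \to R_x/f \to 0$ (and its analog over $R_y$) yields this injectivity on the ambient object, hence on $G$.

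It remains to treat $\mathfrak{p} = \mathfrak{m}$. Since the conclusion is independent of the regular sequence fixed in \cref{descent_data}, we may replace $(x, y)$ by a regular system of parameters of $R$, so that $\mathfrak{m} = (x, y)$ and $R/(x)$ is a DVR with uniformizer $\bar{y}$. By the injectivity of multiplication by $x$ on $G$ already established, $G \otimes_R^L R/(x) \simeq G/xG$, and iterating one obtains $G \otimes_R \kappa \simeq (G/xG) \otimes_{R/(x)} \kappa$. By \cref{flatness_DVR} applied to $G/xG$ over the DVR $R/(x)$, it thus suffices to prove that multiplication by $\bar{y}$ is injective on $G/xG$.

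Here lies the main (and essentially only non-formal) step. Since $x$ is a unit in $R_x$, one has $F_x/xF_x \simeq 0$, whence $(F_x \oplus F_y)/x(F_x \oplus F_y) \simeq F_y/xF_y$. I claim the induced map $G/xG \to F_y/xF_y$ is injective, i.e., $xG = G \cap x(F_x \oplus F_y)$: if $(xa', xb') \in G$ for some $(a', b') \in F_x \oplus F_y$, then the defining condition of $G$ means that $x$ annihilates the image of $(a', b')$ under $F_x \oplus F_y \to F_{xy}$; but $x$ is a unit in $R_{xy}$, so by $\tau$-flatness of $F_{xy}$ it acts injectively on $F_{xy}$, forcing that image to vanish and hence $(a', b') \in G$. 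Finally, $y$ is a unit in $R_y$, so multiplication by $\bar{y}$ is invertible on $F_y/xF_y$, and therefore injective on the sub-object $G/xG$. The local criterion then concludes.
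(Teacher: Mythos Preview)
Your proof is correct and takes a genuinely different route from the paper at the crucial step.

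For non-maximal primes you spell out what the paper leaves implicit (``the statement is clear for $\mathfrak{q}\in U$''): the embedding $G\hookrightarrow F_x\oplus F_y$ together with injectivity of any nonzero $f\in R$ on the $\tau$-flat objects $F_x,F_y$.

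For the maximal ideal the two arguments diverge. The paper uses the Koszul resolution $0\to R\to R^2\to R\to\kappa\to 0$ and the observation that its restriction to $U$ is a short exact sequence $0\to\OO_U\to\OO_U^2\to\OO_U\to 0$; tensoring with the $\tau_U$-flat $F$ keeps this exact, and then applying the left-exact functor $\pi_0 j_{\C,\ast}$ yields the required exactness of $0\to G\to G^2\to G$, which is exactly the vanishing of the higher homotopy of $G\otimes_R\kappa$. Your argument instead works one parameter at a time: first pass to the DVR $R/(x)$, so that by \cref{flatness_DVR} it suffices that $\bar y$ act injectively on $G/xG$; then establish the embedding $G/xG\hookrightarrow F_y/xF_y$ via the computation $xG=G\cap x(F_x\oplus F_y)$, which hinges on $x$ being a unit in $R_{xy}$ and hence acting invertibly on $F_{xy}$.

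The paper's approach is more conceptual and uniform (one Koszul complex, one appeal to left-exactness of $\pi_0 j_{\C,\ast}$), while yours is more elementary and self-contained, trading the Koszul trick for a direct snake-lemma style calculation. Your version also has the minor advantage of making the height-one case fully explicit rather than pointing to $j_\C^\ast G\simeq F$. Note that your freedom to replace $(x,y)$ by a regular system of parameters is legitimate precisely because $\pi_0 j_{\C,\ast}F$ depends only on $j\colon U\hookrightarrow X$, and \cref{formula_push} holds for any regular sequence cutting out the closed point.
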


\begin{proof}
	Let $F \in \C_U^\heartsuit$ be $\tau$-flat over $U$.
	By \cref{local_criterion_flatness}, we need to prove that for any $\mathfrak{q} \in \Spec(R)$, we have $\pi_0j_{\C, \ast} F \otimes_k R /\mathfrak{q} \in \C_R^\heartsuit$.
	Since $j_\C^\ast$ is $t$-exact by \cref{exact_pullback_lemma}, we have $j_\C^\ast \pi_0j_{\C, \ast} F \simeq \pi_0 F \simeq F$.
	Hence the statement is clear for $\mathfrak{q} \in U$ and we are reduced to prove that $\pi_0j_{\C, \ast} F \otimes_k \kappa \in \C_R^\heartsuit$, where $\kappa\coloneqq R / (x,y)$.
	Consider the free resolution
\[
0 \to R \to R \oplus R \to R \to \kappa \to 0.
\]
	It is enough to show that tensoring this with $j_{\C, \ast} F$ yields an exact sequence 
\begin{equation}\label{exact_sequence}
    0 \to \pi_0 j_{\C, \ast} F \to \pi_0 j_{\C, \ast} F \oplus \pi_0 j_{\C, \ast} F \to \pi_0 j_{\C, \ast} F .
\end{equation}
	Consider the exact sequence
\[
0 \to \OO_U \to \OO_U \oplus \OO_U \to \OO_U \to 0.
\]
	Tensoring with $F$ yields an exact sequence over $U$ by \cref{tau_flat_vs_flat_bis}.
	Applying $\pi_0 j_{\C, \ast}$ one gets the sequence in \eqref{exact_sequence}, which is exact since $\pi_0 j_{\C, \ast}$ is exact on the left (because $j_{\C, \ast}$ preserve coconnective objects).
\end{proof}

	We are now ready to prove that \cite[Lemma 7.15]{AHLH} carries on in our context with the same proof.

\begin{lemma}\label{saved_lemma}
	Let $\C \in \Prlo_k$ equipped with an admissible $t$-structure $\tau$.
	Let $X$ be a regular noetherian scheme of dimension 2 over $k$.
	Let $j \colon U \to X$ be complement of a dimension $0$ closed subscheme of $X$.
	Then the adjunction
\[
\pi_0 j_\C^\ast \colon \C_X^\heartsuit \leftrightarrows \C_U^\heartsuit \colon \pi_0 j_{\C, \ast}
\]
induces an equivalence between the full subcategories of $\tau$-flat families over $U$ and $\tau$-flat families over $X$.
\end{lemma}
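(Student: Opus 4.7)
The plan is to follow the strategy of the proof of \cite[Lemma 7.15]{AHLH}. Working Zariski-locally on $X$ around a closed point of the given dimension-$0$ closed subscheme, I may assume $X = \Spec R$ with $R$ a regular noetherian local ring of dimension $2$ and $U \subset X$ the complement of the closed point; I then pick a regular system of parameters $x, y \in \mathfrak{m}$, so that the affine cover $U = \Spec R_x \cup \Spec R_y$ and the description from \cref{descent_data} are available.

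Three of the four ingredients needed for the equivalence are already at hand. First, $\pi_0 j_{\C, \ast}$ sends $\tau$-flat families over $U$ to $\tau$-flat families over $X$ by \cref{push_preserve_flat}. Second, $\pi_0 j_\C^\ast$ sends $\tau$-flat families over $X$ to $\tau$-flat families over $U$: since $j_\C^\ast$ is $t$-exact by \cref{exact_pullback_lemma}, it coincides with $\pi_0 j_\C^\ast$ on hearts, and it preserves $\tau$-flatness by \cref{stability_flatness} applied to the flat morphism $j$. Third, the counit $\pi_0 j_\C^\ast \pi_0 j_{\C, \ast} \to \mathrm{id}$ is already an equivalence on all of $\C_U^\heartsuit$ by \cref{push_fully_faith}, so one composite of the adjunction is the identity on $\tau$-flat objects automatically.

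The remaining and main step is to show that the unit $F \to \pi_0 j_{\C, \ast} j_\C^\ast F$ is an equivalence whenever $F \in \C_X^\heartsuit$ is $\tau$-flat. Applying \cref{formula_push} to the $\tau$-flat family $j_\C^\ast F \in \C_U^\heartsuit$ identifies the right-hand side with $\mathrm{Ker}(F \otimes_R R_x \oplus F \otimes_R R_y \to F \otimes_R R_{xy})$, so I must verify that the sequence
\[
0 \to F \to F \otimes_R R_x \oplus F \otimes_R R_y \to F \otimes_R R_{xy}
\]
is exact in $\C_X^\heartsuit$. On the commutative-algebra side, $R$ being regular of dimension $2$ with $(x,y)$ a regular sequence gives $\mathrm{depth}_{(x,y)}(R) \geq 2$, and this makes the corresponding \v{C}ech sequence $0 \to R \to R_x \oplus R_y \to R_{xy}$ exact in $\Mod_R^\heartsuit$. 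Since $F$ is $\tau$-flat, \cref{tau_flat_vs_flat_bis} guarantees that $F \otimes_R -$ is exact on $\Mod_R^\heartsuit$, so tensoring preserves the \v{C}ech exactness above, yielding precisely the desired identification.

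The principal obstacle, beyond the bookkeeping of the Zariski reduction to the local case, is having simultaneously the \v{C}ech-type exactness of the ring $R$ and the preservation of that exactness by $F \otimes_R -$; the former is classical depth-dimension commutative algebra on a two-dimensional regular local ring, while the latter is exactly what the flatness criterion \cref{tau_flat_vs_flat_bis} supplies. Once these are combined with \cref{formula_push}, essential surjectivity and hence the equivalence follow.
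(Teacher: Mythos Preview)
Your proposal is correct and follows essentially the same route as the paper: reduce to the local situation of \cref{descent_data}, use \cref{push_preserve_flat} and \cref{push_fully_faith} for one direction, and for the unit tensor the exact \v{C}ech sequence $0 \to R \to R_x \oplus R_y \to R_{xy}$ with $F$ using flatness. The only cosmetic difference is that the paper invokes \cref{tau_flat_vs_flat} rather than \cref{tau_flat_vs_flat_bis} for the exactness of $F \otimes_R -$, and you spell out the depth argument for the \v{C}ech sequence that the paper leaves implicit.
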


\begin{proof}
	By descent we reduce to the local situation of \cref{descent_data}.
	By \cref{push_fully_faith} and \cref{push_preserve_flat}, it is enough to prove that for $F \in \C_R$ a $\tau$-flat family over $X = \Spec(R)$, the unit morphism $F \to \pi_0 j_{\C, \ast} \pi_0 j^\ast F$ is an equivalence.
	Localizing $F$ at $x,y$ and $xy$, we get a $\tau$-flat family $j^\ast F$ over $U$.
	By \cref{tau_flat_vs_flat}, tensoring the exact sequence in $Mod_R^\heartsuit$
\[
0 \to R \to R_x \oplus R_y \to R_{xy}
\]
with $F$ yields an exact sequence in $\C_R^\heartsuit$
\[
0 \to F \to F_x \oplus F_y \to F_{xy}.
\]
	Hence $F \simeq \pi_0j_{\C, \ast}j_\C^\ast F \simeq \pi_0 j_{\C, \ast} \pi_0 j_\C^\ast F$, where the second equivalence follows by $t$-exactness of $j_\C^\ast \colon \C_X \to \C_U$ (\cref{exact_pullback_lemma}).
\end{proof}

\begin{corollary}\label{saved_lemma_bis}
	In the setting of \ref{saved_lemma}, let $G$ be a smooth affine algebraic group over $k$ equipped with an action on $X$ which restricts to an action on $U$.
	Set $\X \coloneqq [X/G]$, $\mathfrak{U} \coloneqq [U/G]$ and $\overline{j} \colon \mathfrak{U} \to \X$ the open immersion.
	Then the adjunction
\[
\pi_0 \overline{j}_\C^\ast \colon \C_\X^\heartsuit \leftrightarrows \C_\mathfrak{U}^\heartsuit \colon \pi_0 \overline{j}_{\C, \ast}
\]
induces an equivalence between the full subcategories $\tau$-flat families over $\mathfrak{U}$ and $\tau$-flat families over $\X$.
\end{corollary}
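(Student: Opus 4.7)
The plan is to bootstrap from \cref{saved_lemma} via faithfully flat descent along the smooth atlases $p \colon X \to \X$ and $q \colon U \to \mathfrak{U}$. Since $G$ is smooth affine, these are flat effective epimorphisms, and $j, \overline{j}$ fit into a Cartesian square relating them. By \cref{flat_affine_atlas} and \cref{exact_affine_atlas}, $\tau$-flatness of a family on $\X$ (resp.\ $\mathfrak{U}$) is equivalent to $\tau$-flatness of its pullback to $X$ (resp.\ $U$), and the pullbacks $p_\C^\ast, q_\C^\ast$ are conservative and $t$-exact on the hearts.

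The key intermediate ingredient I need is a base change equivalence $p_\C^\ast \overline{j}_{\C,\ast} \simeq j_{\C,\ast} q_\C^\ast \colon \C_\mathfrak{U} \to \C_X$. This should follow from the analogous flat base change for the open immersion $\overline{j}$ on $\QCoh$, combined with the observation that $\C \otimes_k -$ preserves limits in $\Pr^{\mathrm{L}}_k$ (since $\C$ is dualizable, \cref{finiteness_conditions}), so that the right adjoints $\overline{j}_{\C,\ast}, j_{\C,\ast}$ in our setting are obtained from those on $\QCoh$ by tensoring with $\C$. Applying $\pi_0$ and using $t$-exactness of $p_\C^\ast$ (\cref{exact_pullback_lemma}), I obtain an equivalence $p_\C^\ast \pi_0 \overline{j}_{\C,\ast} \simeq \pi_0 j_{\C,\ast} q_\C^\ast$ on the hearts.

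Given this, everything else is formal. For $F \in \C_\mathfrak{U}^\heartsuit$ a $\tau$-flat family, the pullback $q_\C^\ast F$ is $\tau_U$-flat, so $p_\C^\ast \pi_0 \overline{j}_{\C,\ast} F \simeq \pi_0 j_{\C,\ast} q_\C^\ast F$ is $\tau_X$-flat by \cref{push_preserve_flat}, hence $\pi_0 \overline{j}_{\C,\ast} F$ is $\tau_\X$-flat by \cref{flat_affine_atlas}. That the unit $F \to \pi_0 \overline{j}_\C^\ast \pi_0 \overline{j}_{\C,\ast} F$ and the counit $\pi_0 \overline{j}_{\C,\ast} \pi_0 \overline{j}_\C^\ast G \to G$ are equivalences on $\tau$-flat families is checked by pulling back along $q$ and $p$ respectively, whereupon the conservativity and $t$-exactness of these pullbacks reduces the claim to the non-equivariant equivalence of \cref{saved_lemma}. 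The main obstacle is the base change equivalence $p_\C^\ast \overline{j}_{\C,\ast} \simeq j_{\C,\ast} q_\C^\ast$; once this is granted the rest transcribes the arguments of the previous lemmas with minimal adjustments.
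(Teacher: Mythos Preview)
Your proposal is correct and follows essentially the same route as the paper: reduce to \cref{saved_lemma} via the Cartesian square of atlas maps, using conservativity and $t$-exactness of $p_\C^\ast, q_\C^\ast$ together with the base change equivalence $p_\C^\ast \overline{j}_{\C,\ast} \simeq j_{\C,\ast} q_\C^\ast$. The only point where the paper is more precise is in justifying that base change: rather than arguing via dualizability and preservation of limits, the paper invokes \cite[Theorem 0.5]{Haine} (cf.\ \cref{functoriality_tens_prod}), which states directly that flat base change for $\QCoh$ yields flat base change after applying $\Id_\C \otimes_k -$.
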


\begin{proof}
	Let $p\colon U \to \mathfrak{U}$ and $q\colon X \to X$ be the atlas map. Then we have a pullback diagram
\[\begin{tikzcd}[sep=small]
	U && X \\
	\\
	{\mathfrak{U}} && \X
	\arrow["j", from=1-1, to=1-3]
	\arrow["p"', from=1-1, to=3-1]
	\arrow["q", from=1-3, to=3-3]
	\arrow["{\overline{j}}"', two heads, from=3-1, to=3-3]
\end{tikzcd}\]
	By \cite[Theorem 0.5]{Haine} (see also\cref{functoriality_tens_prod}), flat base change for quasi-coherent sheaves implies flat base change in our setting by applying $\Id_\C \otimes_k -$, that is, we have an equivalence of functors
\[
 q_\C^\ast \overline{j}_{\C, \ast} \simeq j_{\C, \ast} p_\C^\ast \colon \C_{\mathfrak{U}} \to \C_X.
\]
	By \cref{exact_affine_atlas} and \cref{exact_pullback_lemma}, the functors $\overline{j}_\C^\ast, j_\C^\ast, p_\C^\ast, q_\C^\ast$ are $t$-exact and $p_\C^\ast, q_\C^\ast$ are conservative.
	It then follows formally from \cref{saved_lemma} that the unit and counit of the adjunction
\[
\pi_0 \overline{j}_\C^\ast \colon \C_\X^\heartsuit \leftrightarrows \C_\mathfrak{U}^\heartsuit \colon \pi_0 \overline{j}_{\C, \ast}
\]
are equivalent to the identity functors.
	We are left to show that $\pi_0 \overline{j}_\C$ sends $\tau$-flat families over $\mathfrak{U}$ to $\tau$-flat families over $\X$.
	By \cref{stability_flatness}, the family $p^\ast F$ is $\tau_U$-flat.
	Hence it follows by \cref{saved_lemma} that $\pi_0 j_{\C, \ast} p^\ast F \simeq \pi_0 q_\C^\ast\overline{j}_{\C, \ast} F$ is $\tau_X$-flat.
	Since $q^\ast$ is exact, we have that $\pi_0 q_\C^\ast\overline{j}_{\C, \ast} F \simeq q_\C^\ast (\pi_0\overline{j}_{\C, \ast} F)$.
	The result then follows by applying \cref{pullback_flat_affine}.
\end{proof}

\subsection{Existence criteria for good moduli spaces} \label{existence_criteria}
	In this paragraph we recall the definition of good moduli spaces and an existence criterion for them.
	References are given to \cite{Alp, AHLH}.

\begin{definition}[{\cite[Definition 4.1]{Alp}}]
	Let $q\colon \X \to X$ be a qcqs morphism over an algebraic space $S$ with $\X$ an algebraic stack and $X$ an algebraic space. 
	We say that $q \colon \X \to X$ is a good moduli space if the following properties are satisfied:
\begin{enumerate}\itemsep=0.2cm
    \item The functor $q_{\ast} \colon \QCoh(\X) \to \QCoh(X)$ is $t$-exact;
    \item we have $q_{\ast}\OO_\X \simeq \OO_X$.
\end{enumerate}
\end{definition}

	For completeness, we recall some of the main properties of good moduli spaces:

\begin{theorem}[{\cite[Proposition 4.5 \& Theorem 4.16 \& Theorem 6.6]{Alp}}]\label{good_moduli_properties}
	Let $q \colon \X \to X$ be a good moduli space.
	Then
\begin{enumerate}\itemsep=0.2cm
    \item The functor $q^\ast\colon \QCoh(X)^\heartsuit \to \QCoh(\X)^\heartsuit$ is fully faithful;
    \item the map $q$ is universally closed and surjective;
    \item if $Z_1,Z_2$ are closed substacks of $\X$, then
    \[
    \mathrm{Im}(Z_1) \cap \mathrm{Im}(Z_2) = \mathrm{Im}(Z_1 \cap Z_2)
    \]
    where the intersections and images are scheme-theoretic.
    \item for $\kappa$ an algebraically closed field over $k$, $\kappa$-points of $X$ are $\kappa$-points of $\X$ up to closure equivalence (see \cite[Theorem 4.16-(iv)]{Alp} for a precise statement);
    \item If $\X$ is reduced (resp., quasi-
compact, connected, irreducible), then $X$ is also.
	If $\X$ is locally noetherian
and normal, then $X$ is also.
    \item if $\X$ is locally noetherian, then $X$ is locally noetherian and $q_{\ast}\colon \QCoh(\X)^\heartsuit \to \QCoh(X)^\heartsuit$ preserves coherence;
    \item if $k$ is excellent and $\X$ is of finite type, then $X$ is of finite type;
    \item if $\X$ is locally noetherian, then $q \colon \X \to X$ is universal for maps to algebraic spaces.
\end{enumerate}
\end{theorem}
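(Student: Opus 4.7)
The plan is to treat this as a compilation of foundational results from \cite{Alp}, where the heavy lifting is already done, so in the paper the proof is essentially a citation. Nevertheless, I would sketch the common mechanism so that the theorem is illuminated rather than invoked blindly.

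The central tool I would establish first is a projection formula: for $F \in \QCoh(\X)$ and $G \in \QCoh(X)^\heartsuit$, the natural map $q_\ast(F \otimes q^\ast G) \to q_\ast F \otimes G$ is an equivalence. This can be checked locally on $X$, where by $t$-exactness of $q_\ast$ and the fact that $q_\ast$ commutes with filtered colimits (part of being a good moduli space) it reduces to the case $G \simeq \OO_X^{\oplus I}$ and ultimately to $G = \OO_X$. Specializing to $F = \OO_\X$ and combining with $q_\ast \OO_\X \simeq \OO_X$ gives $q_\ast q^\ast G \simeq G$ for $G$ in the heart, which by adjunction yields item (1). Item (8) is cut from the same cloth: a morphism $\X \to Y$ to an algebraic space is determined by local structure sheaves, which descend along $q$ precisely because $q^\ast$ is fully faithful on hearts.

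For the geometric items (2)--(4), I would argue as follows. Surjectivity in (2) is equivalent to the scheme-theoretic image of $q$ being all of $X$, which follows from the injectivity (indeed equivalence) $\OO_X \hookrightarrow q_\ast \OO_\X$. Universal closedness would be extracted from the valuative criterion: given a DVR $R$ with fraction field $K$ and a square $\Spec(K) \to \X$, $\Spec(R) \to X$, a lift $\Spec(R) \to \X$ is obtained by taking the scheme-theoretic closure of the generic point inside the base change $\X \times_X \Spec(R)$, using that good moduli maps are stable under base change along $X$-schemes. For (3), closed substacks $Z_i \subset \X$ with ideals $\I_i \subset \OO_\X$ have images cut out by $q_\ast \I_i$, and $t$-exactness of $q_\ast$ combined with the projection formula yields $q_\ast(\I_1 + \I_2) \simeq q_\ast\I_1 + q_\ast\I_2$, identifying the image of $Z_1 \cap Z_2$ with $\mathrm{Im}(Z_1) \cap \mathrm{Im}(Z_2)$. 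Item (4) is then an application of (3) to scheme-theoretic closures of $\kappa$-points, together with the observation that every closure-class meets a unique closed orbit in the fiber.

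Items (5)--(7) are more technical but formal consequences. Surjectivity plus full faithfulness of $q^\ast$ on hearts yields preservation of reducedness, connectedness, irreducibility and quasi-compactness. Normality and the coherence preservation in (6) use the noetherian hypothesis together with a local atlas argument identifying $q_\ast$ with an invariants functor. Finite type in (7) requires excellence and a finite-generation argument on atlases. The main obstacle I expect is establishing universal closedness in (2) intrinsically, since it is the one item not reducible to the projection formula alone; all the remaining items cascade once the projection formula and valuative criterion are in place. In the actual paper I would simply invoke \cite[Proposition 4.5, Theorem 4.16, Theorem 6.6]{Alp}.
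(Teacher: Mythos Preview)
The paper gives no proof of this theorem; it is stated purely as a recollection of results from \cite{Alp}, with the proof implicit in the citations to Proposition 4.5, Theorem 4.16, and Theorem 6.6 there. You correctly anticipate this in your final sentence, so your proposal matches the paper's approach; the sketch you provide is reasonable supplementary commentary on Alper's arguments but goes beyond anything the paper itself contains.
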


	The necessary and sufficient conditions of \cite{AHLH} for the existence of a good moduli spaces are given in terms of valuative criteria for two stacks that we now introduce.
	In what follows we fix $R$ a DVR over $k$.
	Let $\pi \in R$ be a uniformizer and $\kappa$ the residue field of $R$.
	The first stack we have to consider is
\[
\Theta_R\coloneqq [\Spec(R[x]) / \G_{m, R}],
\]
which has an unique closed point, denoted by $0$.
	Explicitly, 
\[
0 \coloneqq [\Spec(\kappa) / \G_{m,R}].
\]

\begin{definition}\label{def_Theta_reductive}
	We say that $\X \in \mathrm{St}_k$ is $\Theta$-reductive if for any DVR $R$ and any diagram of solid arrows
\[\begin{tikzcd}[sep=small]
	{\Theta_R \setminus \left\lbrace 0 \right\rbrace} && \X \\
	\\ 
	{\Theta_R}
	\arrow["f", from=1-1, to=1-3]
	\arrow["j"', hook, from=1-1, to=3-1]
	\arrow["{\exists ! \ \widetilde{f}}"', dashed, from=3-1, to=1-3]
\end{tikzcd}\]
there exists a unique extension $\widetilde{f}$ of $f$ making the above triangle commutes.
\end{definition}

	The second stack we have to consider is \[
\ST\coloneqq \left[\faktor{\Spec(R[s,t] / (st-\pi))}{\G_{m, R}}\right],
\]
which has a unique closed point, denoted by $0$.
	Explicitly,
\[
0 \coloneqq [\Spec(\kappa) / \G_{m,R}].
\]

\begin{definition}\label{def_S_complete}
	We say that $\X \in \mathrm{St}_k$ is $\mathrm{S}$-complete if for any DVR $R$ and any diagram of solid arrows
\[\begin{tikzcd}[sep=small]
	{\ST\setminus \left\lbrace 0 \right\rbrace} && \X \\
	\\
	\ST
	\arrow["f", from=1-1, to=1-3]
	\arrow["j"', hook, from=1-1, to=3-1]
	\arrow["{\exists ! \ \widetilde{f}}"', dashed, from=3-1, to=1-3]
\end{tikzcd}\]
there exists a unique extension $\widetilde{f}$ of $f$ making the above triangle commute.
\end{definition}

	Let us now state the main theorem of \cite{AHLH}:

\begin{theorem}[{\cite[Theorem A]{AHLH}}]\label{existence_good_separated}
	Let $S$ be an algebraic space of characteristic $0$.
	Let $\X$ be an algebraic stack of finite presentation with affine stabilizers and separated diagonal over $S$.
	Then $\X$ admits a separated good moduli space $X$ if and only if $\X$ is $\Theta$-reductive and $\mathrm{S}$-complete.
\end{theorem}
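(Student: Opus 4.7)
The plan is to treat the two directions separately; necessity will follow from the universal property of good moduli spaces and the structure of $\Theta_R$ and $\ST$, whereas sufficiency requires a local-to-global construction via étale slices.

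For the necessity direction, suppose $q \colon \X \to X$ is a separated good moduli space. Given $f \colon \Theta_R \setminus \{0\} \to \X$, the composition $q \circ f$ takes values in the algebraic space $X$; since $\Theta_R \setminus \{0\} \hookrightarrow \Theta_R$ misses only a codimension-one point and $X$ is separated, the valuative criterion produces a unique extension $\Theta_R \to X$. To lift this to $\X$, one uses that the fiber of $q$ over the image of the closed point $0 \in \Theta_R$ contains a unique closed orbit by \cref{good_moduli_properties}-(iv), and the closure of the image of $f$ in the formal completion at $0$ must land in this closed orbit, producing the required extension. The same argument, with $\ST$ in place of $\Theta_R$, establishes $S$-completeness.

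For sufficiency the strategy is local-to-global. First, one shows that $\Theta$-reductiveness forces every closed-point stabilizer $G_x$ to be reductive: if $G_x$ admitted a non-trivial unipotent quotient, then a one-parameter subgroup mapping trivially to this quotient would yield a map $B\G_m \to \X$ that fails to extend canonically to $\Theta$, contradicting $\Theta$-reductiveness. Next, one applies a local structure theorem in the style of Alper--Hall--Rydh to obtain, near each closed point, an étale neighborhood of the form $[\Spec(A)/G_x] \to \X$ with $G_x$ reductive. In characteristic zero, classical GIT provides a good moduli space $\Spec(A^{G_x})$ for each such chart. Finally, one uses $S$-completeness to glue these charts: two different étale slices through the same closed point of $\X$ produce good moduli spaces that must agree up to a canonical isomorphism, and $S$-completeness is precisely the separation axiom ensuring this comparison is unique. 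One then checks that the algebraic space $X$ obtained by gluing, together with the induced morphism $q \colon \X \to X$, satisfies $t$-exactness of $q_\ast$ and $\OO_X \simeq q_\ast \OO_\X$, and that $X$ is separated.

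The main obstacle is the gluing step: étale descent for good moduli spaces is not automatic, and $S$-completeness is exactly the ingredient that makes the locally constructed quotients agree on overlaps, so that one can descend to an algebraic space over $S$. A secondary difficulty is the reductivity of stabilizers, which requires a delicate formal argument at the closed points relating the $\Theta$-reductive extension property to the Levi decomposition of $G_x$. Once these two ingredients are in place, the remaining verifications (exactness of pushforward, identification of structure sheaves, and separatedness) are relatively formal consequences of the local GIT description and the gluing.
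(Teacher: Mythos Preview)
The paper does not prove this theorem at all: it is stated with the citation \cite[Theorem A]{AHLH} and used as a black box, with no accompanying proof environment. So there is nothing in the paper to compare your argument against.

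That said, your sketch of the AHLH argument has a couple of inaccuracies worth flagging. First, the reductivity of closed-point stabilizers in characteristic zero is a consequence of $S$-completeness, not of $\Theta$-reductiveness: in \cite{AHLH} one shows that for an affine algebraic group $G$ over a field of characteristic zero, $\mathrm{B}G$ is $S$-complete if and only if $G$ is reductive (see \cite[Proposition~3.47]{AHLH}, which the present paper also invokes in \cref{specialization}). Your proposed argument via ``unipotent quotient and a one-parameter subgroup failing to extend to $\Theta$'' does not match how this step actually goes. Second, in the necessity direction your appeal to ``codimension one'' and the valuative criterion for $X$ is not the right mechanism: the clean way is to observe that both $\Theta_R$ and $\ST$ themselves admit $\Spec R$ as good moduli space, so $q \circ f$ automatically extends over the missing point by universality of good moduli maps for morphisms to algebraic spaces, and one then lifts back to $\X$ using properties of $q$. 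Your broad outline of the sufficiency direction (local structure theorem of Alper--Hall--Rydh, local GIT quotients, and gluing) is correct in spirit, but the actual gluing in \cite{AHLH} is more delicate than ``$S$-completeness makes overlaps agree'' and passes through an analysis of $\Theta$-strata and semistable reduction.
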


	Moduli of $\tau$-flat families have affine diagonal, as shown by the following:

\begin{lemma}[{\cite[Lemma 7.20]{AHLH}}]\label{affine_diagonal}
	Let $\C \in \Prlo_k$ be of finite type equipped with be an admissible $t$-structure $\tau$ universally satisfying openness of flatness.
	Then $\M_\C^\heartsuit$ has affine diagonal.
\end{lemma}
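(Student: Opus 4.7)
The plan is to show that for every $\Spec(A) \in \Aff_k$ and every pair $(F_1, F_2)$ of $\tau$-flat pseudo-perfect families over $A$, the fibre $\mathrm{Isom}_A(F_1, F_2)$ of the diagonal $\Delta \colon \M_\C^\heartsuit \to \M_\C^\heartsuit \times_k \M_\C^\heartsuit$ at $(F_1, F_2)$ is representable by an affine $A$-scheme.

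The heart of the argument is to control the Tor-amplitude of the internal Hom object $V \coloneqq \Hom_{\C_A}(F_1, F_2)$, which lies in $\Perf(A)$ by \cref{pseudo_perf_Hom_affine}. I claim $V$ has Tor-amplitude contained in $[0, n]$ for some $n \geq 0$. The upper bound is immediate from perfectness. For the lower bound, I would use left-dualizability of $F_1$ (\cref{left_dualizable_lemma}, since $F_1$ is pseudo-perfect) to obtain, for every $M \in \Mod_A^\heartsuit$,
\[
V \otimes_A M \simeq \Hom_{\C_A}(F_1, F_2 \otimes_A M).
\]
By $\tau$-flatness of $F_2$ we have $F_2 \otimes_A M \in \C_A^\heartsuit$, and applying $t$-structure orthogonality to $F_1 \in \C_A^{\geq 0}$ against $(F_2 \otimes_A M)[-j] \in \C_A^{\leq -j}$ for $j \geq 1$ gives $\pi_j(V \otimes_A M) = 0$. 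Hence $V$ is representable by a bounded complex of finitely generated projective $A$-modules $P^0 \to P^1 \to \cdots \to P^n$.

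Using base change (\cref{Hom_pseudo_perf_pullback}), for every discrete $A$-algebra $B$ one gets
\[
\Hom_{\C_B^\heartsuit}(F_1|_B, F_2|_B) = \pi_0(V \otimes_A B) = \ker\bigl(P^0 \otimes_A B \to P^1 \otimes_A B\bigr),
\]
so the Hom functor on discrete $A$-algebras is representable by a closed subscheme of the affine $A$-scheme $\Spec_A \mathrm{Sym}_A (P^0)^\vee$, yielding an affine morphism $\underline{\Hom}_A(F_1, F_2) \to \Spec(A)$; the same reasoning applies to $\underline{\Hom}_A(F_2, F_1)$. Then $\mathrm{Isom}_A(F_1, F_2)$ sits as the closed subscheme of the affine product $\underline{\Hom}_A(F_1, F_2) \times_A \underline{\Hom}_A(F_2, F_1)$ cut out by $\alpha\beta = \mathrm{id}_{F_2}$ and $\beta\alpha = \mathrm{id}_{F_1}$, both preimages of identity sections under composition maps into the separated (because affine) $A$-schemes $\underline{\Hom}_A(F_i, F_i)$. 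As a closed subscheme of an affine scheme, $\mathrm{Isom}_A(F_1, F_2)$ is affine over $A$, proving the diagonal is affine.

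The main technical obstacle is the Tor-amplitude lower bound for $V$: this is where $\tau$-flatness of $F_2$ enters essentially, via the projection-formula-type identity with the internal Hom, and it is precisely what enables a kernel-of-linear-map representation of $\Hom$ by an affine scheme in the final step; without flatness, $V \otimes_A M$ could acquire arbitrary negative homotopy and the Hom functor would only be represented by a derived linear stack.
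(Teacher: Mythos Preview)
Your proof is correct and is essentially a detailed write-up of the argument in \cite[Lemma~7.20]{AHLH}, which the paper cites without reproducing. The key ingredients you identify---perfectness of the internal Hom (\cref{pseudo_perf_Hom_affine}), the Tor-amplitude bound coming from $\tau$-flatness of $F_2$ via left-dualizability of $F_1$ (\cref{left_dualizable_lemma}), and base change for the internal Hom (\cref{Hom_pseudo_perf_pullback})---are precisely what is needed to transport the AHLH argument to this setting.
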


\begin{proof}
	By \cref{geometric_heart} we have that $\M_\C^\heartsuit$ is an algebraic stack locally of finite presentation.
	Then the proof of \cite[Lemma 7.20]{AHLH} carries on in our setup.
\end{proof}

\begin{remark}
	In our setup having affine diagonal (thus affine stabilizers) and being locally of finite presentation come for free (see \cref{affine_diagonal} and \cref{geometric_heart}), so that in \cref{existence_good_separated} one only needs to check quasi-compactness, $\Theta$-reductiveness and $\mathrm{S}$-completeness.
	The stack $\M_\C^{[0,0]}$ will essentially never be quasi-compact, but $\Theta$-reductiveness and $\mathrm{S}$-completeness are stable under taking closed substacks.
\end{remark}

	Finally, let us recall that a notion of derived good moduli spaces has been introduced in \cite{derived_good}.
	The following is the main result of that paper:
\begin{theorem}[{\cite[Theorem 2.12]{derived_good}}]\label{derived_good}
	Let $\X$ be a geometric derived stack.
\begin{enumerate}\itemsep=0.2cm
    \item If $t_0\X$ admits a good moduli space $t_0q \colon  t_0\X \to t_0X$, then $\X$ admits a derived good moduli space $q' \colon  \X \to X$ such that $t_0q' \simeq t_0q$.
    \item If $\X$ admits a derived good moduli space $q \colon  \X \to X$, then $t_0q$ is a good moduli space for $t_0X$.
\end{enumerate}
\end{theorem}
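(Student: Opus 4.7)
The plan is to address the two implications separately, with (2) providing a computational verification and (1) being the genuine construction.

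For part (2), suppose $q \colon \X \to X$ is a derived good moduli space. I want to verify that $t_0 q \colon t_0\X \to t_0 X$ satisfies the two defining conditions of a classical good moduli space. That $t_0 q$ is qcqs follows from this property being stable under truncation. For the isomorphism $(t_0 q)_\ast \OO_{t_0\X} \simeq \OO_{t_0 X}$, I would apply $\pi_0$ to the equivalence $q_\ast \OO_\X \simeq \OO_X$, using the $t$-exactness of $q_\ast$ together with base change along the classical closed immersion $t_0 X \hookrightarrow X$. For the exactness of $(t_0 q)_\ast$ on the heart, I would take a short exact sequence in $\QCoh(t_0\X)^\heartsuit$, view it inside $\QCoh(\X)^\heartsuit$ via the $t$-exact fully faithful pushforward $i_\ast \colon \QCoh(t_0\X) \to \QCoh(\X)$ along the truncation map $i \colon t_0\X \to \X$, apply the $t$-exact functor $q_\ast$, and observe that the output fiber sequence lands in the essential image of $\QCoh(t_0 X)^\heartsuit \subset \QCoh(X)^\heartsuit$.

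For part (1), suppose $t_0 q \colon t_0\X \to t_0 X$ is a classical good moduli space. The goal is to construct a derived algebraic space $X$ with classical truncation $t_0 X$, together with a morphism $q' \colon \X \to X$ making it a derived good moduli space with $t_0 q' \simeq t_0 q$. The construction proceeds via relative spectrum. Consider the closed immersion $i \colon t_0\X \to \X$ and the connective sheaf of CDGAs $\mathcal{A} \coloneqq i^\ast \OO_\X$ on $t_0\X$, which satisfies $\pi_0 \mathcal{A} \simeq \OO_{t_0\X}$ with each $\pi_n \mathcal{A}$ a coherent $\OO_{t_0\X}$-module. Push it forward to obtain $\mathcal{B} \coloneqq (t_0 q)_\ast \mathcal{A}$ on $t_0 X$, and define $X \coloneqq \Spec_{t_0 X}(\mathcal{B})$. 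The universal property of the relative spectrum yields the morphism $q' \colon \X \to X$, and the $t$-exactness of $(t_0 q)_\ast$ together with $\pi_0 \mathcal{B} \simeq \OO_{t_0 X}$ identifies the classical truncation of $X$ with the original $t_0 X$.

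The main obstacle is verifying that $q'_\ast$ is $t$-exact on all of $\QCoh(\X)$: the equivalence $q'_\ast \OO_\X \simeq \OO_X$ is built into the definition of $X$ as a relative spectrum, but the exactness of pushforward is a genuinely derived statement. The natural strategy is to argue étale-locally on $t_0 X$: by the local structure theorem of Alper--Hall--Rydh \cite{AHR}, $t_0 q$ is étale-locally of the form $[\Spec(A)/G] \to \Spec(A^G)$ with $G$ reductive, and the derived enhancement $\X$ correspondingly takes the local form $[\Spec(A^\bullet)/G]$ where $A^\bullet$ is a $G$-equivariant simplicial thickening of $A$. In characteristic zero, exactness of $G$-invariants on $\Mod_{A^\bullet}$ then reduces, via the spectral sequence associated to the Postnikov filtration of $A^\bullet$, to the classical exactness of invariants for $G$ reductive acting on $A$. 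Gluing these local verifications along an étale cover of $t_0 X$ (using that the construction of $\mathcal{B}$ is local on $t_0 X$) yields the global claim.
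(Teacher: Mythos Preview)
The paper does not prove this theorem; it is quoted verbatim from \cite[Theorem 2.12]{derived_good} (Ahlqvist--Hekking--Pernice--Savvas) and used as a black box in \cref{good_general} and \cref{good_Perv_algebraic}. There is therefore no proof in the present paper to compare your proposal against.

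That said, your sketch is broadly in line with how the cited result is established in \cite{derived_good}: part (2) is indeed a truncation and base-change computation, and part (1) is built by taking the relative spectrum of the pushforward of the derived structure sheaf along the classical good moduli map, with the $t$-exactness of $q'_\ast$ reduced via the Alper--Hall--Rydh local structure theorem to the reductive quotient situation. One point worth tightening: in part (1) you should verify that $\mathcal{B} = (t_0 q)_\ast \mathcal{A}$ is actually a connective sheaf of simplicial commutative rings with $\pi_0 \mathcal{B} \simeq \OO_{t_0 X}$ so that $\Spec_{t_0 X}(\mathcal{B})$ is a genuine derived thickening of $t_0 X$ and in particular a derived algebraic space; this uses the $t$-exactness of $(t_0 q)_\ast$ already at the classical level, plus the lax-monoidal structure of pushforward. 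But since the paper treats the whole statement as an external input, your proposal goes well beyond what is required here.
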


\section{Good moduli spaces for $\M_\C^\heartsuit$}\label{general_case}

	In this section we prove the existence of good moduli spaces for $\M_\C^\heartsuit$. In particular, we show that $\M_\C^\heartsuit$ is $\Theta$-reductive and $\mathrm{S}$-complete under the following

\begin{assumption}\label{Assumption_subobject}
	Let $\C \in Pr^{L, \omega}_k$ smooth equipped with an admissible $t$-structure.
	For any DVR $R$ over $k$, the abelian category $\C_R^\heartsuit$ has the following property: any subobject (equivalently, quotient) of a pseudo-perfect object is also pseudo-perfect.
\end{assumption}

\begin{remark}
	For what follows, it is enough to ask that \cref{Assumption_subobject} holds for any subobject of a $\tau_R$-flat object.
	Moreover in \cite{AHLH} it is shown that it is enough to ask that \cref{Assumption_subobject} holds for any DVR that is essentially of finite type.
\end{remark}

	Let us mention some cases in which the Assumption is satisfied.

\begin{example}
	Standard $t$-structures on functor categories (e.g., quiver representations).
	This is nothing but the fact that a submodule of a finitely generated $R$-module is still finitely generated when $R$ is noetherian.
\end{example}

\begin{example}
	Perverse sheaves on nice enough stratified spaces.
	See \cref{Perv_subobjects}.
\end{example}

\begin{example}
	If $\C\in \Prlo_k$ is smooth and proper, $\C^\heartsuit$ is locally noetherian and $R$ is essentially of finite type. This follows by \cref{pseudo_perfect_vs_compact} and the work of Artin-Zhang \cite{AZ}.
\end{example}

\begin{example}
	Quasi-coherent sheaves on a quasi-compact separated algebraic space locally of finite presentation. This follows by \cite[Theorem 3.0.2]{BZNP}.
\end{example}

\subsection{$\Theta$-reductiveness}\label{section_Theta_reductive}

	We will prove in this paragraph that $\M_\C^\heartsuit$ is $\Theta$-complete.
	In order to proceed with this verification, we will characterize explicitly $\tau_{\Theta_R}$-flat pseudo-perfect families in $\C_{\Theta_R}$ in \cref{flat_perf_Theta}.
\begin{notation}\label{notation_Rep}
	For $\E$ a stable $\infty$-category, put
\[
\Rep(\ZZ; \E) \coloneqq \Fun(\ZZ^{op}, \E) \ .
\]
	Recall from \cref{QCoh_Theta} that
\[
\QCoh(\Theta_R) \simeq \Rep(\ZZ, \Mod_R).
\]
	By \cite[Corollary 3.22]{BZFN} we have
\[
\Perf(\Theta_R) = \QCoh(\Theta_R)^{\omega} \simeq \Rep(\ZZ, \Mod_R)^{\omega} \ .
\]
	Consider the inclusion $i_n \colon \ZZ_{\leq n} \hookrightarrow \ZZ$.
	The pullback $i_n^\ast$ along $i_n$ has a left adjoint $i_{n, !}$ given by left Kan extension.
	Put
\[
(-)_{\leq n}\coloneqq i_{n, !}i_n^\ast \colon \Rep(\ZZ; \E) \to \Rep(\ZZ; \E) \ .
\]
	Explicitly, on an object 
\[
E_\bullet \colon  \ \cdots \xrightarrow{f_{n+1}} E_n \xrightarrow{f_n} E_{n-1} \xrightarrow{f_{n-1}} \cdots
\]
we have
\[
(E_\bullet)_{\leq n} \colon  \ \cdots \rightarrow 0 \rightarrow E_n \xrightarrow{f_n} E_{n-1} \xrightarrow{f_{n-1}} \cdots
\]
\end{notation}

\begin{lemma}\label{filtered_compact_bis}
	Let $\E$ be a stable $\infty$-category.
	Suppose we are given an object 
\[
E_\bullet \colon  \ \cdots \xrightarrow{f_{n+1}} E_n \xrightarrow{f_n} E_{n-1} \xrightarrow{f_{n-1}} \cdots
\]
in $\Rep(\ZZ; \E^{})$.
	Then the canonical comparison
\begin{equation}\label{eq_filtered_compact_bis}
\colim_{n \in \N} (E_\bullet)_{\leq n} \to E_\bullet
\end{equation}
is an equivalence.
\end{lemma}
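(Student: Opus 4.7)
The plan is to reduce to a pointwise check in $\Rep(\ZZ; \E) = \Fun(\ZZ^{op}, \E)$ and exploit that, at each position, the relevant $\N$-indexed diagram is eventually constant.

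I would begin by noting that colimits in $\Fun(\ZZ^{op}, \E)$ are computed pointwise whenever they exist in $\E$. Hence it suffices to prove, for each $m \in \ZZ$, that the comparison
\begin{equation*}
\colim_{n \in \N} \bigl((E_\bullet)_{\leq n}\bigr)_m \longrightarrow E_m
\end{equation*}
is an equivalence in $\E$. From the explicit description of $(-)_{\leq n}$ recalled in \cref{notation_Rep}, one reads off that $((E_\bullet)_{\leq n})_m$ equals $0$ for $n < m$ and equals $E_m$ for $n \geq m$, and that the transition $((E_\bullet)_{\leq n})_m \to ((E_\bullet)_{\leq n+1})_m$ is either the canonical $0 \to E_m$ (in the critical case $n = m-1$) or the identity. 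Therefore the $\N$-indexed diagram $n \mapsto ((E_\bullet)_{\leq n})_m$ is eventually constant at $E_m$.

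Finally, I would argue that any eventually constant $\N$-indexed diagram admits a colimit equal to its eventual value, irrespective of any cocompleteness assumption on $\E$. Concretely, testing against any $X \in \E$, the tower $n \mapsto \Map_\E(((E_\bullet)_{\leq n})_m, X)$ has contractible terms for $n < m$ and is the constant tower with value $\Map_\E(E_m, X)$ and identity transitions for $n \geq m$, so its limit is manifestly $\Map_\E(E_m, X)$ and is computed by the tautological map from the cocone $E_m$. The canonical comparison to $E_m$ is thus an equivalence pointwise, hence in $\Rep(\ZZ; \E)$. No genuine obstacle arises; the only point deserving care is that $\E$ is not assumed cocomplete, but the eventually-constant structure renders this irrelevant.
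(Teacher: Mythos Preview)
Your proposal is correct and follows essentially the same approach as the paper: reduce to a pointwise check and observe that at each $m \in \ZZ$ the diagram $n \mapsto ((E_\bullet)_{\leq n})_m$ is eventually constant at $E_m$. Your additional care in justifying that eventually constant diagrams admit colimits (so that no cocompleteness hypothesis on $\E$ is needed) is a welcome elaboration the paper leaves implicit, but the argument is the same.
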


\begin{proof}
	It is enough to show that $\eqref{eq_filtered_compact_bis}$ is an equivalence after evaluating on each $m \in \ZZ$. But for $n>m$ we have that $(E_\bullet)_{\leq n}(m) = E_m$, so that in the colimit the evaluation at $m$ stabilizes to $E_m$ with transition maps given by $\Id_{E_m}$.
\end{proof}

\begin{corollary}\label{filtered_compact}
	In the setting of \cref{filtered_compact_bis}, if $E_\bullet$ is compact we have $E_n \simeq 0$ for $n\gg 0$.
\end{corollary}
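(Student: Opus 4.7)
The plan is to combine \cref{filtered_compact_bis} with the standard retract argument for compact objects in a filtered colimit. By \cref{filtered_compact_bis}, we have a canonical equivalence
\[
\colim_{n \in \N} (E_\bullet)_{\leq n} \simeq E_\bullet
\]
in $\Rep(\ZZ; \E)$. Since the indexing category $\N$ is filtered and $E_\bullet$ is by assumption a compact object, the mapping space functor $\Map_{\Rep(\ZZ; \E)}(E_\bullet, -)$ commutes with this colimit, yielding
\[
\Map_{\Rep(\ZZ; \E)}(E_\bullet, E_\bullet) \simeq \colim_{n \in \N} \Map_{\Rep(\ZZ; \E)}(E_\bullet, (E_\bullet)_{\leq n}).
\]

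The identity class $[\Id_{E_\bullet}]$ on the left-hand side must therefore lift through one of the finite stages, so there exists some $N \in \N$ and a map $s \colon E_\bullet \to (E_\bullet)_{\leq N}$ whose post-composition with the structural map $(E_\bullet)_{\leq N} \to E_\bullet$ (the counit of the adjunction $i_{N,!} \dashv i_N^\ast$ applied after $i_N^\ast$, which is the natural transition into the colimit) is homotopic to $\Id_{E_\bullet}$. This exhibits $E_\bullet$ as a retract of $(E_\bullet)_{\leq N}$ in $\Rep(\ZZ; \E)$.

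Retracts are computed pointwise in functor categories, so for every $m \in \ZZ$ the object $E_m$ is a retract of $(E_\bullet)_{\leq N}(m)$ in $\E$. For $m > N$, however, we have $(E_\bullet)_{\leq N}(m) = 0$ by construction (see \cref{notation_Rep}), so $E_m$ is a retract of the zero object. In a stable (in particular pointed) $\infty$-category this forces $E_m \simeq 0$, which is the desired conclusion.

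The argument is essentially formal once \cref{filtered_compact_bis} is in hand; there is no real obstacle, only the small bookkeeping point of checking that the map $s$ produced by the filtered-colimit computation of the mapping space is genuinely a section of the transition map $(E_\bullet)_{\leq N} \to E_\bullet$, which is the content of the standard characterization of compact objects as retracts of objects in the system.
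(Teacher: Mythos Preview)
Your proof is correct and follows essentially the same approach as the paper: use \cref{filtered_compact_bis} together with compactness of $E_\bullet$ to factor the identity through some $(E_\bullet)_{\leq N}$, and then read off the vanishing of $E_m$ for $m>N$ from the pointwise description. The only difference is that you spell out the retract step explicitly, whereas the paper simply notes that the factorization of $\Id_{E_\bullet}$ through $(E_\bullet)_{\leq N}$ forces $E_m \simeq 0$ for $m > N$.
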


\begin{proof}
	Since $E_\bullet$ is compact, it follows from \cref{filtered_compact_bis} that
\[
\Id_{E_\bullet} \in \Map(E_\bullet, E_\bullet) \simeq \Map(E_\bullet, \colim_{n \in \N} (E_\bullet)_{\leq n}) \simeq \colim_{n \in \N} \Map(E_\bullet, (E_\bullet)_{\leq n}).
\]
	Thus $\Id_{E_\bullet}$ factors through $(E_\bullet)_{\leq k}$ for some $k \in \ZZ$. 
	Hence $E_n \simeq 0$ for every $k>n$.
\end{proof}

\begin{recollection}
	Let us choose a coordinate $x$ on $\A^1_R$.
	Recall the equivalence
\[
\QCoh(\Theta_R) \simeq \Mod_{R[x]}(\Rep(\ZZ^{ds})) \simeq \Rep(\ZZ, \Mod_R).
\]
	from \cref{QCoh_Theta}.
	It follows that we have equivalences
\begin{equation}\label{Rep_C}
\C_{\Theta_R} = \C \otimes_k \QCoh(\Theta_R) \simeq \C \otimes_k \Rep(\ZZ, \Mod_R) \simeq \Rep(\ZZ, \C_R) \ .
\end{equation}
	Hence a family $F_\bullet \in \C_{\Theta_R}$ can be represented by
\[
F_\bullet  \colon  \ \cdots \xrightarrow{x} F_n \xrightarrow{x} F_{n-1} \xrightarrow{x} \cdots.
\]
where $F_n \in \C_R$ for every $n \in \ZZ$.
\end{recollection}

\begin{corollary}\label{vanishing_Theta}
	Let $\C \in \Prlo_k$ smooth and $\Spec(R) \in \dAff_k$ discrete.
	Let $F_\bullet \in \C_{\Theta_R}$ and represent it
\[
F_\bullet  \colon  \ \cdots \xrightarrow{x} F_n \xrightarrow{x} F_{n-1} \xrightarrow{x} \cdots.
\]
in virtue of \eqref{Rep_C}.
	Assume that $F_\bullet$ is pseudo-perfect. 
	Then $F_i \simeq 0$ for every $i\gg 0$.
\end{corollary}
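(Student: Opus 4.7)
The plan is to reduce the claim to Corollary \ref{filtered_compact} applied to a suitable $\ZZ$-filtered object of $\Mod_R$. Since $\C$ is smooth, Proposition \ref{relation_finiteness_conditions} provides a compact generator $E$ of $\C$. By Corollary \ref{compact_generator_smooth}, the pseudo-perfectness of $F_\bullet$ is equivalent to $F_\bullet(E) \in \Perf(\Theta_R)$, which by Notation \ref{notation_Rep} is the same as the compactness of $F_\bullet(E)$ in $\Rep(\ZZ, \Mod_R)$.

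Next I would chase through the chain of equivalences
\[
\C_{\Theta_R} \simeq \Funst_k((\C^\omega)^{op}, \Rep(\ZZ, \Mod_R)) \simeq \Rep(\ZZ, \Funst_k((\C^\omega)^{op}, \Mod_R)) \simeq \Rep(\ZZ, \C_R)
\]
coming from Recollection \ref{tensor_product} (using that $\C$ is compactly generated), together with the fact that $\Fun(\ZZ^{op}, -)$ commutes with $\Funst_k((\C^\omega)^{op}, -)$. Under this bookkeeping, the filtered diagram $\cdots \to F_n \to F_{n-1} \to \cdots$ of objects of $\C_R$ corresponds, after evaluating at any $X \in \C^\omega$, to the filtered diagram $\cdots \to F_n(X) \to F_{n-1}(X) \to \cdots$ of $R$-modules. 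In particular, applying Corollary \ref{filtered_compact} to the compact object $F_\bullet(E) \in \Rep(\ZZ, \Mod_R)$ produces an $N \in \ZZ$ such that $F_n(E) \simeq 0$ for every $n \geq N$.

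It remains to upgrade the vanishing $F_n(E) \simeq 0$ to the vanishing of $F_n$ as an object of $\C_R$. For this I would use once more Recollection \ref{tensor_product} to identify $F_n$ with an exact functor $(\C^\omega)^{op} \to \Mod_R$. The full subcategory of $\C^\omega$ on which this functor vanishes is automatically closed under shifts, finite colimits, and retracts, and it contains the compact generator $E$ by the previous step; hence Lemma \ref{generators_stable} forces it to coincide with all of $\C^\omega$. This means $F_n \simeq 0$ in $\C_R$, concluding the argument for $n \geq N$.

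I do not expect any serious obstacle: the whole argument is a short transport of structure plus one application of each of Corollaries \ref{compact_generator_smooth} and \ref{filtered_compact}. The only point deserving attention is the compatibility check in the second paragraph, namely that the $n$-th component of $F_\bullet(E) \in \Rep(\ZZ, \Mod_R)$ really is $F_n(E)$ under the stated equivalences; once this is in place the rest is formal.
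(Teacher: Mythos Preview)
Your proposal is correct and follows essentially the same route as the paper: pick a compact generator $E$ via Proposition~\ref{relation_finiteness_conditions}, use Corollary~\ref{compact_generator_smooth} to get $F_\bullet(E)\in\Rep(\ZZ,\Mod_R)^\omega$, apply Corollary~\ref{filtered_compact} to obtain $F_n(E)\simeq 0$ for $n\gg 0$, and then propagate the vanishing to all of $\C^\omega$ using that $E$ generates under shifts, finite colimits and retracts. The only difference is that you make the compatibility between the two descriptions of $F_\bullet(E)$ explicit, whereas the paper leaves this implicit.
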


\begin{proof}
	By \cref{relation_finiteness_conditions}, the $\infty$-category $\C$ admits a compact generator $E$. By \cref{compact_generator_smooth} we have
\[
F_\bullet (E) \in \Fun(\ZZ^{op}, \Mod_R)^\omega \ .
\]
	Thus by \cref{filtered_compact} there exists $N \in \ZZ$ such that $F_i(E)\simeq 0$ for every $i>N$.
	To conclude the proof notice that $E$ generates $\C^\omega$ under finite colimits retracts and shifts, so that $F_i(C) \simeq 0$ for every $i>N$ and $C \in \C^\omega$.
\end{proof}

\begin{lemma}\label{flat_Theta}
	Let $\C \in \Prlo_k$ smooth equipped with an admissible $t$-structure $\tau$ and $\Spec (R)  \in \dAff_k$ discrete.
	Let $F \in \C_{\Theta_R}$ and represent it by 
\[
F_\bullet \colon  \ \ \cdots \xrightarrow{x} F_n \xrightarrow{x} F_{n-1} \xrightarrow{x} \cdots
\]
in virtue of \eqref{Rep_C}.
	Assume $F$ is $\tau_{\Theta_R}$-flat.
	Then:
\begin{itemize}\itemsep=0.2cm
	\item[(a)] $F_n \in \C_R^\heartsuit$ for every $n \in \ZZ$;
	\item[(b)] the map $F_n \xrightarrow{x} F_{n-1}$ is a monomorphism for every $n \in \ZZ$;
	\item[(c)] $F_n / xF_{n+1}$ is $\tau_R$-flat for every $n \in \ZZ$.
\end{itemize}
\end{lemma}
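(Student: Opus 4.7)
The plan is to produce a single short exact sequence of $\OO_{\Theta_R}$-modules, tensor it with $F_\bullet$, and read the result off at each level under the equivalence $\C_{\Theta_R} \simeq \Rep(\ZZ;\C_R)$ established in \eqref{Rep_C}.

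First I would identify the heart of the induced $t$-structure on $\C_{\Theta_R}$ as the pointwise one. By \cref{exact_affine_atlas}, (co)connectivity of $F_\bullet$ can be detected after pullback along the atlas $p\colon\A^1_R\to\Theta_R$, which under the Moulinos-type equivalence of the appendix sends $F_\bullet$ to the underlying $R[x]$-module $\bigoplus_n F_n$ of the graded object. Composing with the $t$-exact forgetful $\C_{R[x]}\to\C_R$ attached to the flat map $R\to R[x]$ (\cref{exact_pullback}), one reduces to a question in $\C_R$; since $\C_R^\heartsuit$ is closed under coproducts (by $\omega$-accessibility) and under retracts, $F_\bullet\in\C_{\Theta_R}^\heartsuit$ if and only if each $F_n\in\C_R^\heartsuit$. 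Now $\tau_{\Theta_R}$-flatness of $F_\bullet$ gives $F_\bullet\simeq F_\bullet\otimes\OO_{\Theta_R}\in\C_{\Theta_R}^\heartsuit$ (combining pullback along $p$ with \cref{tau_flat_in_heart}), so each $F_n\in\C_R^\heartsuit$, which is (a).

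For (b) and (c), use the short exact sequence in $\QCoh(\Theta_R)^\heartsuit$
\[
0 \to \OO_{\Theta_R}(1) \xrightarrow{x} \OO_{\Theta_R} \to i_{\ast}\OO_{B\G_{m,R}} \to 0,
\]
where $i\colon B\G_{m,R}\hookrightarrow\Theta_R$ is the closed immersion of the origin and $\OO_{\Theta_R}(1)$ is the weight-one twist. Tensoring with $F_\bullet$ and applying \cref{tau_flat_vs_flat} yields a short exact sequence in $\C_{\Theta_R}^\heartsuit$
\[
0 \to F_\bullet(1) \xrightarrow{x} F_\bullet \to i_{\ast}i^{\ast}F_\bullet \to 0 .
\]
Under the Rep-equivalence, the twist $\OO_{\Theta_R}(1)$ shifts the filtration, so $F_\bullet(1)$ has $F_{n+1}$ in position $n$ and the displayed $x$ is the transition map. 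Since both the equivalence and finite (co)limits in $\Rep(\ZZ;\C_R)$ are computed pointwise, this globally exact sequence is exactly a family, indexed by $n\in\ZZ$, of short exact sequences in $\C_R^\heartsuit$
\[
0 \to F_{n+1} \xrightarrow{x} F_n \to F_n/xF_{n+1} \to 0 ,
\]
where $F_n/xF_{n+1}$ denotes the cokernel (coinciding with $(i^{\ast}F_\bullet)_n$). Part (b) is the injectivity of the first arrow.

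For (c), \cref{stability_flatness} gives that $i^{\ast}F_\bullet$ is $\tau_{B\G_{m,R}}$-flat. Pulling back further along the faithfully flat atlas $\Spec R\to B\G_{m,R}$ and applying \cref{flat_affine_atlas} produces a $\tau_R$-flat family in $\C_R$, which is precisely $\bigoplus_n F_n/xF_{n+1}$ (the underlying $R$-module of the graded family). Each $F_n/xF_{n+1}$ is a retract of this direct sum in $\C_R$, and $\tau_R$-flatness is evidently preserved by retracts (a summand tensors into a summand, which lies in the heart when the ambient one does), so each $F_n/xF_{n+1}$ is $\tau_R$-flat, yielding (c). The main obstacle is the careful bookkeeping between the geometric description of $\QCoh(\Theta_R)$ (with its line bundles and origin substack) and the filtered/representation-theoretic picture used throughout, but this is exactly what the appendix furnishes.
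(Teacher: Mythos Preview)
Your proof is correct and close in spirit to the paper's, but the packaging differs. The paper works on the atlas $\A^1_R$ throughout: for (a) it uses that $p_\C^\ast F_\bullet$ is $\tau_{R[x]}$-flat hence in $\C_{R[x]}^\heartsuit$; for (b) it tensors the injection $0\to R[x]\xrightarrow{x}R[x]$ with $p_\C^\ast F_\bullet$ and invokes \cref{tau_flat_vs_flat_bis}; for (c) it pulls back along $i\colon B\G_{m,R}\hookrightarrow\Theta_R$ separately and uses $\omega$-accessibility to pass from the direct sum to each summand. You instead stay on $\Theta_R$ and bundle (b) and (c) into the single Koszul-type sequence $0\to\OO_{\Theta_R}(1)\xrightarrow{x}\OO_{\Theta_R}\to i_\ast\OO_{B\G_{m,R}}\to 0$, reading off both the injectivity of $x$ and the identification of the cokernel with $i^\ast F_\bullet$ at once; your retract argument for (c) is a fine alternative to the paper's $\omega$-accessibility argument. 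One small point: \cref{tau_flat_vs_flat} is stated only for affine bases, so your appeal to it over $\Theta_R$ needs a word of justification. Either observe directly that all three terms of the tensored sequence lie in $\C_{\Theta_R}^\heartsuit$ (the twist $F_\bullet(1)$ by your pointwise description of the heart, and $i_\ast i^\ast F_\bullet$ since $i^\ast F_\bullet$ is flat and $i_\ast$ is $t$-exact), whence the cofiber sequence is a short exact sequence; or pull back to the atlas and apply \cref{tau_flat_vs_flat_bis} there, which is precisely the paper's route.
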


\begin{proof}
	Let $p \colon \Spec(R[x]) \to \Theta_R$ be the atlas map.
	By \cref{stability_flatness} we have that $p_\C^\ast F_\bullet$ is $\tau_{R[x]}$-flat.
	In particular $p_\C^\ast F_\bullet \in \C^\heartsuit_{R[x]}$.
	By definition of $\tau_{R[x]}$, we have that $p_\C^\ast F_\bullet \in \C^\heartsuit_{R[x]}$ if and only if the underlying family $F$ over $R$ is in $\C_R^\heartsuit$. 
	By \cref{pullback_atlas_Theta} the family $F$ is identified with $\bigoplus_n F_n$, hence $F_n \in \C_R^\heartsuit$ for every $n$.
	This proves (a).\\ \indent
	By \cref{tau_flat_vs_flat_bis} the exact sequence in $\Mod_{R[x]}^\heartsuit$
\[
0 \to R[x] \xrightarrow{x} R[x]
\]
is sent to an exact sequence in $\C_{R[x]}^\heartsuit$
\[
0 \to p_\C^\ast F_\bullet \xrightarrow{x} p_\C^\ast F_\bullet \ .
\]
	By definition of $\tau_{R[x]}$, the above map induces an exact sequence on the underlying families over $R$
\[
0 \to \bigoplus_{n \in \ZZ} F_n \xrightarrow{x} \bigoplus_{n \in \ZZ} F_n \ .
\]
	Hence every map $x \colon F_n \to F_{n-1}$ is a monomorphism. This proves (b).\\ \indent
	The pullback of $F_\bullet$ along the closed immersion $i \colon \mathrm{B}\G_{m} \to \Theta_R$ is $\tau$-flat by \cref{stability_flatness}.
	The underlying $\tau$-flat family over $R$ is identified with $\bigoplus_n F_{n-1} / xF_{n}$ by (b) and \cref{QCoh_Theta}.
	Since $\tau_R$ is $\omega$-accessible, it follows that the homotopy group functors commute with arbitrary direct sums.
	Hence $F_n / xF_{n+1}$ is $\tau_R$-flat for every $n \in \ZZ$. This proves (c).
\end{proof}

\begin{lemma}\label{perf_Theta}
	Let $\C \in \Prlo_k$ smooth equipped with an admissible $t$-structure $\tau$.
	Let $R \in \Aff_k$.
	Let $F \in \C_{\Theta_R}$ and represent it by 
\[
F_\bullet \colon  \ \ \cdots \xrightarrow{x} F_n \xrightarrow{x} F_{n-1} \xrightarrow{x} \cdots
\]
in virtue of \eqref{Rep_C}.
	Assume that
\begin{enumerate}\itemsep=0.2cm
	\item for every $n \in \ZZ$ we have $F_n \in \C_R^\heartsuit$ and the map $F_n \xrightarrow{x} F_{n-1}$ is a monomorphism;
	\item for $n \gg 0$ we have $F_n \simeq 0$; 
	\item for every $n \in \ZZ$ the family $F_n / xF_{n+1}$ is pseudo-perfect and $\tau_R$-flat.
\end{enumerate}
	Then $F_n$ is $\tau_R$-flat and pseudo-perfect for every $n \in \ZZ$.
\end{lemma}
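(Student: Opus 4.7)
The plan is to proceed by downward induction on $n$, with the base case provided by hypothesis (2): there exists $N \in \ZZ$ such that $F_n \simeq 0$ for all $n \geq N$, and the zero family is trivially pseudo-perfect and $\tau_R$-flat. For the inductive step, I assume that $F_{n+1}$ is pseudo-perfect and $\tau_R$-flat, and I aim to transfer these properties to $F_n$ via a well-chosen fiber/cofiber sequence.

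The key observation is that hypothesis (1) — namely that $F_{n+1} \xrightarrow{x} F_n$ is a monomorphism in the abelian category $\C_R^\heartsuit$ — together with the definition of $F_n/xF_{n+1}$ as the cokernel in $\C_R^\heartsuit$, produces a short exact sequence
\[
0 \to F_{n+1} \xrightarrow{x} F_n \to F_n / xF_{n+1} \to 0
\]
in $\C_R^\heartsuit$. Since the heart lies inside the stable $\infty$-category $\C_R$, this short exact sequence is precisely a fiber/cofiber sequence in $\C_R$ whose three terms all happen to lie in $\C_R^\heartsuit$. Now both outer terms are pseudo-perfect (the left one by the inductive hypothesis, the right one by (3)), and pseudo-perfect families are stable under fibers and cofibers, as they are defined by the condition of landing in the stable subcategory $\Perf(R) \subset \QCoh(R)$ (\cref{pseudo_perfect}). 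Hence $F_n$ is pseudo-perfect.

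For $\tau_R$-flatness, fix $M \in \Mod_R^\heartsuit$ and tensor the fiber/cofiber sequence above with $M$ over $R$. By \cref{tens_commute_colim} the functor $(-) \otimes_R M \colon \C_R \to \C_R$ preserves colimits, hence cofiber sequences, so we obtain a fiber/cofiber sequence
\[
F_{n+1} \otimes_R M \to F_n \otimes_R M \to (F_n / xF_{n+1}) \otimes_R M
\]
in $\C_R$. By the inductive hypothesis and (3), the two outer terms lie in $\C_R^\heartsuit$; since the heart is closed under extensions in $\C_R$, the middle term does too. Thus $F_n \otimes_R M \in \C_R^\heartsuit$ for every $M \in \Mod_R^\heartsuit$, which is exactly $\tau_R$-flatness of $F_n$ (\cref{def_tau_flat}).

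There is no serious obstacle here: the only point requiring care is the translation between the monomorphism condition in the abelian category $\C_R^\heartsuit$ and the corresponding fiber/cofiber sequence in the ambient stable $\infty$-category $\C_R$, and the verification that the two targeted properties — pseudo-perfectness and $\tau_R$-flatness — are both stable under extensions. Condition (2) is only used to initialize the induction, so effectively the argument rests on (1) and (3).
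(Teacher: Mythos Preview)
Your proof is correct and follows essentially the same downward induction as the paper: start from the largest $n_0$ with $F_{n_0}\neq 0$ (where $F_{n_0}\simeq F_{n_0}/xF_{n_0+1}$ is handled by (3)), then use the short exact sequence $0\to F_{n+1}\to F_n\to F_n/xF_{n+1}\to 0$ to descend. The paper simply asserts that extensions of pseudo-perfect and $\tau_R$-flat families remain so, whereas you spell out these closure properties explicitly.
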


\begin{proof}
	By (2) there exists a biggest integer $n_0$ for which $F_{n_0} \neq 0$.
	It follows directly by (3) that $F_{n_0}$ is pseudo-perfect and $\tau_R$-flat.
	We claim that $F_{n_{0} -1}$ is also pseudo-perfect and $\tau_{R}$-flat; indeed by (1) it fits if an exact sequence
\[
0 \to F_{n_0} \to F_{{n_0}-1} \to F_{n_{0}-1} / x F_{n_0} \to 0.
\]
	Thus $F_{{n_0}-1}$ is an extension of pseudo-perfect and $\tau_{R}$-flat families, which proves the claim.
	Arguing by induction we get that $F_n$ is pseudo-perfect and $\tau_{R}$-flat for any $n$.
\end{proof}

\begin{notation}
	Let $\E$ be a stable $\infty$-category.
	Let $n \in \ZZ$.
	Consider the isomorphism of poset 
\[
g_{n} \colon \ZZ^{op} \to \ZZ^{op}
\]
\[m \mapsto m-n
\]
	Given $F_\bullet \in \Rep(\ZZ, \E)$ we denote by $F_\bullet \langle n \rangle$ composition
\[
\ZZ^{op} \xrightarrow{g_n} \ZZ^{op} \xrightarrow{F_\bullet} \E \ .
\]
\end{notation}

\begin{proposition}\label{flat_perf_Theta}
	Let $\C \in \Prlo_k$ smooth equipped with an admissible $t$-structure $\tau$.
	Let $R \in \Aff_k$.
	Let $F \in \C_{\Theta_R}$ and represent it by
\[
F_\bullet \colon  \ \ \cdots \xrightarrow{x} F_n \xrightarrow{x} F_{n-1} \xrightarrow{x} \cdots
\]
via \eqref{Rep_C}.
	Then $F$ is $\tau_{\Theta_R}$-flat and pseudo-perfect if and only if the following are satisfied:
\begin{itemize}\itemsep=0.2cm
    \item[(i)] for every $n \in \ZZ$ we have $F_n \in \C_R^\heartsuit$ and the map $F_n \xrightarrow{x} F_{n-1}$ is a monomorphism;
    \item[(ii)] for every $n \in \ZZ$ the map $F_n \xrightarrow{x} F_{n-1}$ is an isomorphism in $\C_R^\heartsuit$ for $n \ll 0$;
    \item[(iii)] for $n \gg 0$ we have $F_n \simeq 0$;
    \item[(iv)] for every $n \in \ZZ$ the family $F_n / xF_{n+1}$ is $\tau$-flat over $R$;
    \item[(v)] for every $n \in \ZZ$ the family $F_n / xF_{n+1}$ is pseudo-perfect over $R$.
\end{itemize}
\end{proposition}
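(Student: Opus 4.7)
The proof will proceed by verifying the two implications separately, combining the preceding lemmas.

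For the forward direction, assume $F \in \C_{\Theta_R}$ is $\tau_{\Theta_R}$-flat and pseudo-perfect. Conditions (i) and (iv) are immediate from \cref{flat_Theta}, while (iii) is \cref{vanishing_Theta}. For (v), I would pull back along the closed immersion $i \colon \mathrm{B}\G_{m,R} \hookrightarrow \Theta_R$: the functor $i_\C^\ast$ preserves pseudo-perfect families, and by the appendix, $i_\C^\ast F_\bullet$ identifies with the associated graded $\bigoplus_n F_n/xF_{n+1} \in \Rep(\ZZ^{ds}, \C_R)$. Since this graded $\infty$-category splits as a coproduct over weights, a direct sum of non-zero objects is compact only when finitely many summands are non-zero; this yields both (v) (each summand is pseudo-perfect over $R$) and the vanishing $F_n/xF_{n+1} = 0$ for $n \ll 0$. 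Combined with the monicity of $x$ from (i), this vanishing upgrades to the isomorphism $F_{n+1} \xrightarrow{x} F_n$ of (ii).

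For the backward direction, assume (i)--(v). By \cref{perf_Theta}, each $F_n$ is $\tau_R$-flat and pseudo-perfect over $R$. I would then check pseudo-perfectness and $\tau$-flatness of $F_\bullet$ over $\Theta_R$ separately. For pseudo-perfectness, pick a compact generator $E \in \C$ via \cref{relation_finiteness_conditions}; by \cref{compact_generator_smooth} it suffices to show $F_\bullet(E) \in \Perf(\Theta_R) \simeq \Rep(\ZZ, \Mod_R)^\omega$. The filtered $R$-module $F_\bullet(E)$ has perfect steps, vanishes for $n \gg 0$ by (iii), and stabilizes to isomorphisms for $n \ll 0$ by (ii); such a bounded filtration with perfect steps is a finite iterated extension of the standard compact generators of $\Rep(\ZZ, \Mod_R)$ and is therefore compact.

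For $\tau$-flatness, by \cref{flat_affine_atlas} it suffices to show that the atlas pullback $p_\C^\ast F_\bullet \in \C_{R[x]}$ is $\tau_{R[x]}$-flat, where $p \colon \Spec R[x] \to \Theta_R$ is the canonical atlas. Applying the local criterion \cref{local_criterion_flatness} on the noetherian ring $R[x]$, one tensors against $R[x]/\mathfrak{p}$ for each prime $\mathfrak{p} \subset R[x]$: at the prime $(x)$ one recovers the associated graded, which is in the heart by (iv), while at the other primes the pointwise $\tau_R$-flatness of the $F_n$'s together with the $x$-injectivity from (i) keep the tensor in $\C_{R[x]}^\heartsuit$. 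The hard part will be this $\tau$-flatness verification: one cannot argue extensionally, since the skyscraper pieces $(F_n/xF_{n+1})\langle n \rangle$ entering the natural filtration of $F_\bullet$ are $x$-torsion and therefore fail to be $\tau_{\Theta_R}$-flat in their own right, so the argument must be performed globally on the atlas, using condition (i) to secure the decisive torsion-freeness.
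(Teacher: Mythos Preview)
Your forward direction matches the paper exactly.

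For the backward direction, your belief that ``one cannot argue extensionally'' is mistaken, and this misconception leads you into a harder and gappier argument than necessary. You are correct that the skyscraper pieces $F_n/xF_{n+1}$ concentrated in a single weight are $x$-torsion and hence not $\tau_{\Theta_R}$-flat. But that is the wrong filtration. The paper instead builds $F_\bullet$ one weight at a time using the pieces $F_{n_0}\otimes_R\OO_{\Theta_R}\langle -n_0\rangle$ and the successive quotients $(F_{m-1}/xF_{m})\otimes_R\OO_{\Theta_R}\langle -(m-1)\rangle$, i.e.\ objects of the form $\cdots \to 0 \to G \xrightarrow{\mathrm{id}} G \xrightarrow{\mathrm{id}} \cdots$ with $G\in\C_R^\heartsuit$ pseudo-perfect and $\tau_R$-flat. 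These are (shifts of) pullbacks along the structure map $\Theta_R\to\Spec R$, so they are simultaneously $\tau_{\Theta_R}$-flat and pseudo-perfect by \cref{stability_flatness}. Conditions (ii)--(v) guarantee the process terminates after finitely many such extensions, and both properties are closed under extensions. This handles flatness and pseudo-perfectness in one stroke.

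Your alternative route for $\tau$-flatness has a genuine gap: \cref{local_criterion_flatness} requires the base ring to be noetherian, and nothing in the statement forces $R$ (hence $R[x]$) to be noetherian. Even granting noetherianness, your casework on primes of $R[x]$ is underspecified: ``the prime $(x)$'' need not be prime when $R$ is not a domain, and for primes $\mathfrak{p}\subset R[x]$ not containing $x$ you would still have to show that $\bigoplus_n F_n \otimes_{R[x]} R[x]/\mathfrak{p}$ lands in the heart, which does not follow from pointwise $\tau_R$-flatness of the $F_n$ alone without a further argument. Your pseudo-perfectness argument via a compact generator is fine, but note that unwinding it amounts to the same iterated-extension reasoning the paper uses, just after evaluating at $E$.
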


\begin{proof}
	Suppose $F_\bullet \in \C_{\Theta_R}$ is $\tau_{\Theta_R}$-flat and pseudo-perfect.
	Then (i) and (iv) follow by \cref{flat_Theta}.
	By stability under pullbacks of pseudo-perfectness, the associated graded is pseudo-perfect (\cref{QCoh_Theta}-(4)).
	But a direct sum can be pseudo-perfect if and only if it is finite and all the summands are pseudo-perfect.
	It follows immediately that $x \colon  F_n \to F_{n+1}$ is an isomorphism for $n \ll 0$ and that $F_n / xF_{n+1}$ is pseudo-perfect for every $n \in \ZZ$.
	This proves (ii) and (v).
	Condition (iii) follows by \cref{vanishing_Theta}.\\ \indent
	Assume now that $F_\bullet$ satisfies (i), (ii), (iii) and (iv).
	By \cref{perf_Theta} the family $F_n$ is pseudo-perfect and $\tau_R$ flat for every $n \in \ZZ$.
	By (iii) the family $F_n \otimes_R \OO_{\Theta_R}$ is pseudo-perfect and $\tau$-flat over $\Theta_R$ for every $n \in \ZZ$.
	Shifting the grading by $-n$, we get the pseudo-perfect and $\tau$-flat family over $\Theta_R$
\[
F_n \otimes_R \OO_{\Theta_R}\langle -n \rangle  \colon  \ \cdots \to 0 \to F_n \xrightarrow{id} F_n \xrightarrow{id} F_n \xrightarrow{id} \cdots,
\]
where the first non-zero element is in weight $n$.
	Now the result follows since by (i), (iii) and (iv), $F_\bullet$ is obtained as a finite number of extensions of $\tau$-flat and pseudo-perfect families.
	Indeed by (iii) there exists a biggest integer $n_0$ for which $F_{n_0} \neq 0$.
	By (i) we have that the family 
\[
G_\bullet \coloneqq \cdots 0 \to F_{n_0} \xrightarrow{x} F_{n_0 -1} \xrightarrow{id} F_{n_0 -1} \xrightarrow{id} \cdots
\]
is the extension of $F\langle n_0\rangle$ by $F_{n_0 -1} / F_{n_0} \langle n_0 -1 \rangle$, i.e., we have a short exact sequence
\[
0 \to F_{n_0}\langle n_0 \rangle \to F_\bullet' \to F_{n_0 -1} / F_{n_0}\langle n_0 -1 \rangle \to 0.
\]
	By (iv) and (v) both  $F_{n_0}\langle n_0 \rangle$ and $F_{n_0 -1} / F_{n_0}\langle n_0 -1 \rangle$ are pseudo-perfect and $\tau_{\Theta_R}$-flat, so that $G_\bullet$ is pseudo-perfect and $\tau_{\Theta_R}$-flat.
	Iterating this process, which eventually stops by (ii), we get the result.
\end{proof}

\begin{remark}\label{Remark_flat_perf_Theta}
	In the setting of \cref{flat_perf_Theta}, if $F_\bullet$ is $\tau_{\Theta_R}$-flat and $\tau$ satisfies \cref{Assumption_subobject}, we can replace conditions (i)-to-(v) with conditions (i)-to-(iv) and:
\begin{itemize}\itemsep=0.2cm
	\item[(v')] for every $n \in \ZZ$ the family $F_n$ is pseudo-perfect over $R$.
\end{itemize}
	Indeed combining (i) with \cref{Assumption_subobject} we get that (v') implies (v).
	The converse holds by \cref{perf_Theta}.
\end{remark}

\begin{proposition}\label{Theta_reductive}
	Let $\C \in \Prlo_k$ of finite type equipped with an admissible $t$-structure $\tau$ satisfying \cref{Assumption_subobject} and universally satisfying openness of flatness. 
	The algebraic stack $\M_\C^\heartsuit$ is $\Theta$-reductive.
\end{proposition}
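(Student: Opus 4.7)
The strategy is to translate $\Theta$-reductiveness into an extension problem for $\tau$-flat pseudo-perfect families along the open immersion $\overline{j}\colon \Theta_R \setminus \{0\} \hookrightarrow \Theta_R$ and then to apply the Hartogs-type \cref{saved_lemma_bis}. The atlas $\Spec(R[x]) \to \Theta_R$ is a two-dimensional regular noetherian scheme, and the closed point $0 \in \Theta_R$ is cut out by the codimension-two maximal ideal $(x, \pi) \subset R[x]$. Hence \cref{saved_lemma_bis} applies with $X = \Spec(R[x])$, $U = X \setminus V(x, \pi)$ and $G = \G_{m, R}$. Given $f\colon \Theta_R \setminus \{0\} \to \M_\C^\heartsuit$, corresponding to a $\tau$-flat pseudo-perfect family $F \in \C_{\Theta_R \setminus \{0\}}$, the plan is to set $\widetilde{F} \coloneqq \pi_0 \overline{j}_{\C, *} F$. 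By \cref{saved_lemma_bis} this is a $\tau_{\Theta_R}$-flat family with $\pi_0 \overline{j}_\C^* \widetilde{F} \simeq F$, and any other $\tau$-flat extension of $F$ is canonically equivalent to it. Both existence and uniqueness of the desired lift will therefore follow as soon as we show $\widetilde{F}$ is pseudo-perfect.

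For pseudo-perfectness I will apply \cref{Remark_flat_perf_Theta}, so that it suffices to verify conditions (i)--(iv) and (v') there. Using the equivalence \eqref{Rep_C}, represent $\widetilde{F}$ as a filtered family $\cdots \xrightarrow{x} \widetilde{F}_n \xrightarrow{x} \widetilde{F}_{n-1} \xrightarrow{x} \cdots$ in $\C_R^\heartsuit$. Conditions (i) and (iv) follow from $\tau$-flatness by \cref{flat_Theta}. The open substack $\Theta_R \setminus V(x)$ is identified with $\Spec(R)$ and is contained in $\Theta_R \setminus \{0\}$; the restriction of $\widetilde{F}$ to $\Spec(R)$ therefore coincides with $F|_{\Spec(R)}$ and is pseudo-perfect and $\tau_R$-flat, while under \eqref{Rep_C} it is identified with the stable value $\widetilde{F}_{-\infty} \coloneqq \colim_n \widetilde{F}_n$. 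Because the transition maps $\widetilde{F}_n \hookrightarrow \widetilde{F}_{n-1}$ are monomorphisms and filtered colimits in $\C_R^\heartsuit$ are exact, each $\widetilde{F}_n$ injects into $\widetilde{F}_{-\infty}$, and \cref{Assumption_subobject} produces condition (v').

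To obtain (ii) and (iii), I restrict further to $\Theta_K = \Theta_R \setminus V(\pi) \subset \Theta_R \setminus \{0\}$, with $K = \Frac(R)$. Since $F|_{\Theta_K}$ is pseudo-perfect, \cref{flat_perf_Theta} gives that $\widetilde{F}_n \otimes_R K$ vanishes for $n \gg 0$ and that $\widetilde{F}_n \otimes_R K \to \widetilde{F}_{n-1} \otimes_R K$ is an isomorphism for $n \ll 0$. To upgrade these generic facts to the corresponding integral ones, I use that each $\widetilde{F}_n$ is itself $\tau_R$-flat: it is a subobject of the $\tau_R$-flat family $\widetilde{F}_{-\infty}$, hence multiplication by $\pi$ is injective on it, and \cref{flatness_DVR} applies. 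Therefore $\widetilde{F}_n \otimes_R K = 0$ forces $\widetilde{F}_n = 0$, giving (iii); the same argument applied to the quotients $\widetilde{F}_{n-1}/\widetilde{F}_n$, which are $\tau_R$-flat by (iv), yields (ii).

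The main obstacle is this last step: \cref{saved_lemma_bis} readily supplies the $\tau$-flat extension and \cref{Assumption_subobject} promptly gives pointwise pseudo-perfectness of the $\widetilde{F}_n$, but one must propagate the two finiteness features of the filtration --- bounded support and eventual stabilization --- from $\Theta_R \setminus \{0\}$ across the closed point of $\Theta_R$. The decisive input is the rigidity of $\tau_R$-flatness over a DVR: it forces $\pi$-injectivity, so generic vanishing after inverting $\pi$ upgrades to integral vanishing.
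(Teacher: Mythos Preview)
Your proof is correct and shares the same overall strategy with the paper: apply \cref{saved_lemma_bis} to obtain the unique $\tau$-flat extension $\widetilde{F}$, then verify pseudo-perfectness via \cref{flat_perf_Theta}/\cref{Remark_flat_perf_Theta} together with \cref{Assumption_subobject}. The two arguments diverge only in how conditions (ii) and (iii) are obtained.

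The paper writes down the extension explicitly, citing \cite[Lemma 7.17]{AHLH}: describing the datum over $\Theta_R\setminus\{0\}$ as a filtered family $F_\bullet$ over $K=\Frac(R)$ together with a pseudo-perfect $\tau_R$-flat $R$-sublattice $E_1\subseteq F$, the pushforward is identified with $F_\bullet\cap E_1$. Boundedness (ii), (iii) and (v') then drop out at once, since $F_\bullet$ is already bounded over $K$ and intersecting with $E_1$ preserves this, while each $F_n\cap E_1\hookrightarrow E_1$ is pseudo-perfect by \cref{Assumption_subobject}. Your route is more intrinsic: you never compute $\pi_0\overline{j}_{\C,*}F$ explicitly, but instead read off the shape of the filtration by restricting to the opens $\Spec(R)$ and $\Theta_K$ inside $\Theta_R\setminus\{0\}$, and then upgrade the resulting generic vanishing statements to integral ones via the $\pi$-injectivity encoded in \cref{flatness_DVR}. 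The (v') argument is essentially the same in both proofs, phrased through the embedding $\widetilde{F}_n\hookrightarrow\widetilde{F}_{-\infty}\simeq E_1$. Your approach is a little more self-contained, as it does not import the pushforward formula from \cite{AHLH}; the paper's approach has the virtue of making the extended family completely explicit.
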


\begin{proof}
	Consider a pseudo-perfect $\tau_{\Theta_R}$-flat family $F \in \C_{\Theta_R \setminus \left\lbrace 0 \right\rbrace }$ corresponding to a map
\[
\Theta_R \setminus \left\lbrace 0 \right\rbrace \xrightarrow{F} \M_\C^\heartsuit.
\]
	We need to verify that the unique $\tau_{\Theta_R}$-flat extension $j_{\ast}F \in \C_{\Theta_R}$ provided by \cref{saved_lemma} is pseudo-perfect.
	By \cref{flat_Theta}, it is enough to to verify (ii), (iii) and (v) of \cref{flat_perf_Theta}. 
	By \cref{Remark_flat_perf_Theta}, we can replace (v) by (v') in \textit{loc.cit.}.
	The family $F$ can be described as:
\begin{itemize}\itemsep=0.2cm
    \item a family $F \in \C_{\Frac(R)}$ with a filtration 
\[
F_\bullet  \colon  \ \cdots \subseteq F_n \subseteq F_{n-1} \subseteq \cdots
\]
satisfying the conditions of \cref{flat_perf_Theta};
    \item A pseudo-perfect $\tau$-flat $R$-module subobject $E_1 \subseteq F$ with an isomorphism 
\[
E_1 \otimes_R \Frac(R) \simeq F \ .
\]
\end{itemize}
	Then the unique $\tau_{\Theta_R}$-flat extension can be explicitly described as in \cite[ Lemma 7.17]{AHLH}.
	That is, it is the filtered object
\[
j_{\C, \ast}F \simeq F_\bullet \cap E_1 = (\cdots \xrightarrow{x} F_n \cap E_1 \xrightarrow{x} F_{n-1} \cap E_1 \xrightarrow{x} \cdots) \hookrightarrow (\cdots \xhookrightarrow{x} F_n  \xhookrightarrow{x} F_{n-1}\xhookrightarrow{x} \cdots) = F_\bullet \ .
\]

	Since $E_1$ is pseudo-perfect and this properties is stable under taking subobjects by \cref{Assumption_subobject}, we get that $F_n \cap E_1$ is pseudo-perfect for any $n$.
	Since the filtration of $F$ stabilizes for $n \ll 0$ and $F_n \simeq 0$ for $n \gg 0$, the same holds true for $F_\bullet \cap E_1$.
	Thus $j_{\C, \ast}F$ satisfies (i)-to-(v) of \cref{flat_perf_Theta}, hence defining an unique extension
\[\begin{tikzcd}[sep=small]
	{\Theta_R \setminus \left\lbrace 0 \right\rbrace} && {\M_\C^\heartsuit} \\
	\\ 
	{\Theta_R}
	\arrow["F", from=1-1, to=1-3]
	\arrow["j"', hook, from=1-1, to=3-1]
	\arrow["{\exists ! \ j_{\C, \ast}F}"', dashed, from=3-1, to=1-3]
\end{tikzcd}\]
\end{proof}

	\cref{flat_perf_Theta} also has the following immediate corollary:

\begin{corollary}[{\cite[Lemma 7.19]{AHLH}}]\label{closed_point_is_semisimple_preliminar}
	In the setting of \cref{Theta_reductive}, let $\kappa$ be a field over $k$ and $F \in \C_\kappa^\heartsuit$ be a pseudo-perfect family corresponding to a point $x_F \colon \Spec(\kappa) \to \M_\C^\heartsuit$. 
	If $x_F$ is a closed point, then $F \in \C_\kappa^\heartsuit$ is semisimple.
\end{corollary}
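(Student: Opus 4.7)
The plan is to argue by contradiction: from any non-semisimple $F$ I would produce a morphism $f \colon \Theta_\kappa \to \M_\C^\heartsuit$ degenerating $x_F$ into a strictly semisimple point, which is incompatible with $x_F$ being closed. This is the strategy of \cite[Lemma 7.19]{AHLH}; in our categorical setting it becomes a clean application of the explicit description of $\tau_{\Theta_R}$-flat pseudo-perfect families in \cref{flat_perf_Theta}.

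Concretely, assume $F$ is not semisimple. First I would extract a non-trivial filtration $0 = F_0 \subsetneq F_1 \subsetneq \cdots \subsetneq F_n = F$ with $n \geq 2$ in $\C_\kappa^\heartsuit$ such that every $F_i$ (and hence every quotient $F_i/F_{i-1}$) is pseudo-perfect; both properties follow from \cref{Assumption_subobject}. Since $\kappa$ is a field, every object of $\C_\kappa^\heartsuit$ is automatically $\tau_\kappa$-flat. Second, I would re-encode the filtration as an object $\widetilde{F}_\bullet \in \Rep(\ZZ, \C_\kappa) \simeq \C_{\Theta_\kappa}$ via \eqref{Rep_C}, placing $F_{n-i}$ in weight $i$, with $F$ in non-positive weights and $0$ above $n$ and the obvious inclusion maps. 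Conditions (i)--(iv) of \cref{flat_perf_Theta} together with (v') of \cref{Remark_flat_perf_Theta} then hold immediately, yielding a morphism $f \colon \Theta_\kappa \to \M_\C^\heartsuit$ whose fiber over $1 \in \Theta_\kappa$ is $x_F$ and whose fiber over the unique closed point $0 \in \Theta_\kappa$ corresponds to the associated graded $\bigoplus_{i=1}^n F_i/F_{i-1}$, which is semisimple.

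Finally, since the image of $0$ under $f$ lies in the closure of the image of $1$, and since $x_F$ is closed in $\M_\C^\heartsuit$, the two points must coincide, giving $F \simeq \bigoplus_{i=1}^n F_i/F_{i-1}$ in $\C_\kappa^\heartsuit$---a contradiction. I expect the main obstacle to be the first step: extracting the required composition series tacitly uses finiteness of length for pseudo-perfect objects of $\C_\kappa^\heartsuit$, a property not encoded directly in our hypotheses but extractable from \cref{Assumption_subobject} combined with the finite-type assumption on $\C$ (alternatively, one can work with a single non-splitting subobject $F' \subsetneq F$ and iterate the argument, measuring progress by the drop in $\dim_\kappa \Hom(E,-)$ for a fixed compact generator $E$ of $\C$). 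Once this is in place, the rest is a transparent translation of the Hilbert--Mumford-type $\Theta$-degeneration argument via the dictionary of \cref{flat_perf_Theta}.
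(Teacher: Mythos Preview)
Your approach is correct and is exactly what the paper does, only spelled out: the paper's proof verifies that $\M_\C^\heartsuit$ has affine stabilizers (via \cref{affine_diagonal}) and that a pseudo-perfect $F$ cannot decompose as an infinite direct sum of nonzero objects, then defers wholesale to the argument of \cite[Lemma~7.19]{AHLH}, which is precisely the $\Theta$-degeneration you describe via \cref{flat_perf_Theta}. The obstacle you anticipate is the right one, and the paper's resolution matches your parenthetical: since a compact generator $E$ detects nonzero objects and $F(E)\in\Perf(\kappa)$ has finite total dimension, the splitting iteration terminates.
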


\begin{proof}
	Notice that since $\kappa$ is a field, the family $F$ is $\tau_\kappa$-flat by \cref{tau_flat_in_heart}.
	The stack $\M_\C^\heartsuit$ has affine stabilizer by \cref{affine_diagonal}.
	Since $F$ is pseudo-perfect, it can not be expressed as an infinite direct sum of non-zero objects.
	Then the proof of \cite[Lemma 7.19]{AHLH} goes trough without changes.
\end{proof}

\subsection{$\mathrm{S}$-completeness}\label{section_S_complete}

	We will prove in this section that the moduli of objects $\M_\C^\heartsuit$ is $\mathrm{S}$-complete.
	Recall that $R$ is a DVR with uniformizer $\pi$, residue field $\kappa$ and fraction field $K$.
	In order to proceed with this verification, we will first characterize explicitly $\tau_{\ST}$-flat pseudo-perfect families in $\C_{\ST}$ in \cref{flat_perf_ST}.
	We recall in the next paragraph the main results of Appendix \ref{Appendix_QCoh_ST_Theta}.
\begin{recollection}\label{explicit_description_ST}
	The following derived analogue of \cite[Lemma 7.14]{AHLH} hold:
\begin{itemize}\itemsep=0.2cm
    \item Objects of $\QCoh(\ST)$ are identified with
\[\begin{tikzcd}[sep=small]
	\cdots & {F_{n+1}} & {F_{n}} & {F_{n-1}} & \cdots
	\arrow["t", shift left, from=1-1, to=1-2]
	\arrow["s", shift left, from=1-2, to=1-1]
	\arrow["t", shift left, from=1-2, to=1-3]
	\arrow["s", shift left, from=1-3, to=1-2]
	\arrow["t", shift left, from=1-3, to=1-4]
	\arrow["s", shift left, from=1-4, to=1-3]
	\arrow["t", shift left, from=1-4, to=1-5]
	\arrow["s", shift left, from=1-5, to=1-4]
\end{tikzcd}\]
where the $F_i$'s are $R$-modules and $t,s$ are maps of $R$-modules that satisfy $st= \pi =ts$.
	Moreover under this identification, an object is (co)connective if and only if every $F_n$ is.
    \item Restriction along the open immersion $\Spec(R) \xhookrightarrow{s \neq 0} ST$ induces the functor
\[
\begin{tikzcd}[sep=small]
	\cdots & {F_{n+1}} & {F_{n}} & {F_{n-1}} & \cdots & {\colim(\cdots \xrightarrow{t} F_n \xrightarrow{t} F_{n-1} \xrightarrow{t} \cdots )}
	\arrow["t", shift left, from=1-1, to=1-2]
	\arrow["s", shift left, from=1-2, to=1-1]
	\arrow["t", shift left, from=1-2, to=1-3]
	\arrow["s", shift left, from=1-3, to=1-2]
	\arrow["t", shift left, from=1-3, to=1-4]
	\arrow["s", shift left, from=1-4, to=1-3]
	\arrow["t", shift left, from=1-4, to=1-5]
	\arrow["s", shift left, from=1-5, to=1-4]
	\arrow[maps to, from=1-5, to=1-6]
\end{tikzcd}.
\]
    \item Restriction along the open immersion $\Spec(R) \xhookrightarrow{s \neq 0} ST$ induces the functor
\[
\begin{tikzcd}[sep=small]
	\cdots & {F_{n+1}} & {F_{n}} & {F_{n-1}} & \cdots & {\colim(\cdots \xleftarrow{s} F_n \xleftarrow{s} F_{n-1} \xleftarrow{s} \cdots )}
	\arrow["t", shift left, from=1-1, to=1-2]
	\arrow["s", shift left, from=1-2, to=1-1]
	\arrow["t", shift left, from=1-2, to=1-3]
	\arrow["s", shift left, from=1-3, to=1-2]
	\arrow["t", shift left, from=1-3, to=1-4]
	\arrow["s", shift left, from=1-4, to=1-3]
	\arrow["t", shift left, from=1-4, to=1-5]
	\arrow["s", shift left, from=1-5, to=1-4]
	\arrow[maps to, from=1-5, to=1-6]
\end{tikzcd}.
\]
    \item Restriction along the closed immersion $\Theta_{\kappa} \xhookrightarrow{s=0} \ST$ induces the functor
\[
\begin{tikzcd}[sep=small]
	\cdots & {F_{n+1}} & {F_{n}} & {F_{n-1}} & \cdots & {\cdots \xrightarrow{t} F_n / sF_{n-1} \xrightarrow{t} F_{n-1} / sF_{n-2} \xrightarrow{t} \cdots}
	\arrow["t", shift left, from=1-1, to=1-2]
	\arrow["s", shift left, from=1-2, to=1-1]
	\arrow["t", shift left, from=1-2, to=1-3]
	\arrow["s", shift left, from=1-3, to=1-2]
	\arrow["t", shift left, from=1-3, to=1-4]
	\arrow["s", shift left, from=1-4, to=1-3]
	\arrow["t", shift left, from=1-4, to=1-5]
	\arrow["s", shift left, from=1-5, to=1-4]
	\arrow[maps to, from=1-5, to=1-6]
\end{tikzcd}
\]
where the quotient has to be understood as cofibers along the morphisms $s$.
    \item Restriction along the closed immersion $\Theta_{\kappa} \xhookrightarrow{t=0} \ST$ induces the functor
\[
\begin{tikzcd}[sep=small]
	\cdots & {F_{n+1}} & {F_{n}} & {F_{n-1}} & \cdots & {\cdots \xleftarrow{s} F_n / tF_{n+1} \xleftarrow{s} F_{n-1} / tF_{n} \xleftarrow{s} \cdots}
	\arrow["t", shift left, from=1-1, to=1-2]
	\arrow["s", shift left, from=1-2, to=1-1]
	\arrow["t", shift left, from=1-2, to=1-3]
	\arrow["s", shift left, from=1-3, to=1-2]
	\arrow["t", shift left, from=1-3, to=1-4]
	\arrow["s", shift left, from=1-4, to=1-3]
	\arrow["t", shift left, from=1-4, to=1-5]
	\arrow["s", shift left, from=1-5, to=1-4]
	\arrow[maps to, from=1-5, to=1-6]
\end{tikzcd}
\]
where the quotient has to be understood as cofibers along the morphisms $t$.
\end{itemize}
\end{recollection}

\begin{recollection}\label{adj_ST}
	Via the equivalence of \cref{QCoh_ST} and \cref{explicit_description}, we can represent an object $F_\bullet \in \QCoh(\ST)$ by 
\[\begin{tikzcd}[sep=small]
	\cdots & {F_{n+1}} & {F_{n}} & {F_{n-1}} & \cdots
	\arrow["t", shift left, from=1-1, to=1-2]
	\arrow["s", shift left, from=1-2, to=1-1]
	\arrow["t", shift left, from=1-2, to=1-3]
	\arrow["s", shift left, from=1-3, to=1-2]
	\arrow["t", shift left, from=1-3, to=1-4]
	\arrow["s", shift left, from=1-4, to=1-3]
	\arrow["t", shift left, from=1-4, to=1-5]
	\arrow["s", shift left, from=1-5, to=1-4]
\end{tikzcd}\]
where $F_n \in \C_R$ for every $n \in \ZZ$.
	Let $i_{\left\lbrace s = 0 \right\rbrace} \colon \Theta_\kappa \to \ST$ be the closed immersion corresponding to the locus $\left\lbrace s = 0 \right\rbrace$.
	By \cref{closed_points}, the pullback along $i_{\left\lbrace s = 0 \right\rbrace}$ is identified with
\[
\Gr_s \colon \QCoh(\ST) \to \QCoh(\Theta_\kappa)
\]
\[
[\begin{tikzcd}[sep=small]
	\cdots & {F_{n}} & {F_{n-1}} & \cdots
	\arrow["t", shift left, from=1-1, to=1-2]
	\arrow["s", shift left, from=1-2, to=1-1]
	\arrow["t", shift left, from=1-2, to=1-3]
	\arrow["s", shift left, from=1-3, to=1-2]
	\arrow["t", shift left, from=1-3, to=1-4]
	\arrow["s", shift left, from=1-4, to=1-3]
\end{tikzcd}]
\mapsto
[\cdots \xrightarrow{t} \cof(F_{n} \xrightarrow{s} F_{n+1}) \xrightarrow{t} cof(F_{n-1} \xrightarrow{s} F_{n} \xrightarrow{t} \cdots)]
\]
	Shifting the degree by $-1$ we get a functor
\[
\fib_s \coloneqq \Gr_s [-1] \colon \QCoh(\ST) \to \QCoh(\Theta_\kappa)
\]
\[
[\begin{tikzcd}[sep=small]
	\cdots & {F_{n}} & {F_{n-1}} & \cdots
	\arrow["t", shift left, from=1-1, to=1-2]
	\arrow["s", shift left, from=1-2, to=1-1]
	\arrow["t", shift left, from=1-2, to=1-3]
	\arrow["s", shift left, from=1-3, to=1-2]
	\arrow["t", shift left, from=1-3, to=1-4]
	\arrow["s", shift left, from=1-4, to=1-3]
\end{tikzcd}]
\mapsto
[\cdots \xrightarrow{t} \fib(F_{n} \xrightarrow{s} F_{n+1}) \xrightarrow{t} fib(F_{n-1} \xrightarrow{s} F_{n}) \xrightarrow{t} \cdots]
\]
	The functor $\fib_s$ commutes with limits, thus admitting a left adjoint by \cite[Corollary 5.5.2.9]{HTT} informally given by
\[
\fib_s^\mathrm{L} \colon \QCoh(\Theta_\kappa) \to \QCoh(\ST)
\]
\[
[\cdots \xrightarrow{t} F_n \xrightarrow{t} F_{n-1} \xrightarrow{t} \cdots] \mapsto 
[\begin{tikzcd}[sep=small]
	\cdots & {F_{n}} & {F_{n-1}} & \cdots
	\arrow["t", shift left, from=1-1, to=1-2]
	\arrow["0", shift left, from=1-2, to=1-1]
	\arrow["t", shift left, from=1-2, to=1-3]
	\arrow["0", shift left, from=1-3, to=1-2]
	\arrow["t", shift left, from=1-3, to=1-4]
	\arrow["0", shift left, from=1-4, to=1-3]
\end{tikzcd}]
\]
\end{recollection}

\begin{lemma}\label{truncated_representation}
	Let $M \in \Mod_\kappa$ and $n \in \N$.
	Let $i_n \colon \left\lbrace n \right \rbrace \to \ZZ$ be the inclusion and $\ev_{n, !}$ be the left adjoint to the evaluation functor $\ev_n \coloneqq i_n^\ast \colon \Fun(\ZZ; \Mod_\kappa) \to \Mod_\kappa$.
	In the setting of \cref{adj_ST}, consider the quasi-coherent complex on $\ST$
\[M (\leq n) \coloneqq \fib_s^\mathrm{L} \circ \ev_{n,!} (M) \simeq
\begin{tikzcd}[sep = small]
	\cdots & 0 & M & M & \cdots
	\arrow[shift left, from=1-1, to=1-2]
	\arrow[shift left, from=1-2, to=1-1]
	\arrow[shift left, from=1-2, to=1-3]
	\arrow[shift left, from=1-3, to=1-2]
	\arrow["id", shift left, from=1-3, to=1-4]
	\arrow["0", shift left, from=1-4, to=1-3]
	\arrow["id", shift left, from=1-4, to=1-5]
	\arrow["0", shift left, from=1-5, to=1-4]
\end{tikzcd}\]
where the first non-zero copy of $M$ is placed in weight $n$.
	If $M \in \Perf(\kappa)$, then $M(\leq n) \in \Perf(\ST)$.
\end{lemma}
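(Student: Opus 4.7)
The plan is to exhibit $M(\leq n)$ as a compact object of $\QCoh(\ST)$. Under the identification provided by Appendix~\ref{Appendix_QCoh_ST_Theta}, the category $\QCoh(\ST)$ is compactly generated and its subcategory of compact objects agrees with $\Perf(\ST)$, so it suffices to prove compactness. Since $M(\leq n) = \fib_s^\mathrm{L} \circ \ev_{n,!}(M)$ and $M \in \Perf(\kappa) = \Mod_\kappa^\omega$, the strategy reduces to checking that each of $\ev_{n,!}$ and $\fib_s^\mathrm{L}$ preserves compact objects; by the standard adjunction criterion, this amounts to showing that each of their right adjoints commutes with filtered colimits.

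For $\ev_{n,!}$, I would observe that its right adjoint $\ev_n$ is evaluation at a single object of $\ZZ$ in a functor category, hence preserves all small colimits as these are computed pointwise. Consequently $\ev_{n,!}$ preserves compact objects, and since $M \in \Mod_\kappa^\omega$ we obtain $\ev_{n,!}(M) \in \Fun(\ZZ;\Mod_\kappa)^\omega$, which by \cref{notation_Rep} (applied with $\kappa$ in place of $R$) is $\Perf(\Theta_\kappa)$.

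For $\fib_s^\mathrm{L}$, the right adjoint $\fib_s$ is given levelwise by $\fib_s(F_\bullet)_n = \fib(F_n \xrightarrow{s} F_{n+1})$, that is, as a pointwise finite limit. Filtered colimits in $\QCoh(\ST)$ and $\QCoh(\Theta_\kappa)$ are pointwise under the explicit combinatorial descriptions of \cref{explicit_description_ST} and \cref{adj_ST}, and filtered colimits commute with finite limits in any stable presentable $\infty$-category. Hence $\fib_s$ preserves filtered colimits, so $\fib_s^\mathrm{L}$ preserves compact objects, and we conclude that $M(\leq n) = \fib_s^\mathrm{L}(\ev_{n,!}(M)) \in \QCoh(\ST)^\omega = \Perf(\ST)$.

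I do not anticipate any serious obstacle: the argument is a two-step application of the adjoint-functor criterion for preservation of compactness, made routine by the pointwise formulas available in \cref{adj_ST} and Appendix~\ref{Appendix_QCoh_ST_Theta}. The only point that requires mild care is to invoke the correct $\infty$-categorical version of the filtered-colimit/finite-limit interchange, but this is standard in the stable presentable setting.
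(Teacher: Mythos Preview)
Your proposal is correct and follows essentially the same approach as the paper: both argue that $M(\leq n)$ is compact in $\QCoh(\ST)$ by showing that the right adjoint to $\fib_s^{\mathrm{L}}\circ\ev_{n,!}$ preserves filtered colimits, and then identify $\QCoh(\ST)^\omega$ with $\Perf(\ST)$. The only cosmetic difference is that the paper treats the composite adjunction $\fib_s^{\mathrm{L}}\circ\ev_{n,!}\dashv \ev_n\circ\fib_s$ in one step, whereas you check each factor separately; the paper also explicitly invokes the characteristic-zero hypothesis (via \cite[Corollary~3.22]{BZFN}) to justify $\QCoh(\ST)^\omega=\Perf(\ST)$, which you should cite as well.
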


\begin{proof}
	In the adjunction 
\[
\fib_s^\mathrm{L} \circ \ev_{n,!} \dashv ev_n \circ \fib_s
\]
the right adjoint preserves filtered colimits as both $ev_n$ and $\fib_s$ do.
	It then follows by \cite[Lemma 5.5.14]{HTT} that $\fib_s^\mathrm{L} \circ \ev_{n,!}$ preserves compact objects.
	In particular, if $M \in \Perf(\kappa) = \Mod_\kappa^\omega$, then $M(\leq n) \in \QCoh(\ST)^\omega$.
	Since we are in characteristic zero, we also have $\QCoh(\ST)^\omega = \Perf(\ST)$ by \cite[Corollary 3.22]{BZFN}. Hence $M(\leq n)$ is perfect.
\end{proof}

\begin{lemma}\label{pseudo_perf_ST}
	Let $\C \in \Prlo_k$ smooth equipped with an admissible $t$-structure $\tau$.
	Let $F_{\bullet} \in \C_{\ST}$ and represent it by 
\[\begin{tikzcd}[sep=small]
	\cdots & {F_{n+1}} & {F_{n}} & {F_{n-1}} & \cdots
	\arrow["t", shift left, from=1-1, to=1-2]
	\arrow["s", shift left, from=1-2, to=1-1]
	\arrow["t", shift left, from=1-2, to=1-3]
	\arrow["s", shift left, from=1-3, to=1-2]
	\arrow["t", shift left, from=1-3, to=1-4]
	\arrow["s", shift left, from=1-4, to=1-3]
	\arrow["t", shift left, from=1-4, to=1-5]
	\arrow["s", shift left, from=1-5, to=1-4]
\end{tikzcd}\]
in virtue of \cref{adj_ST}.
	Assume that
\begin{itemize}\itemsep=0.2cm
	\item[(a)] $F_n \in \C_R^\heartsuit$ for every $n \in \ZZ$;
	\item[(b)] $t \colon F_n \to F_{n-1}$ is a monomorphism for every $n \in \ZZ$;
	\item[(c)] $s \colon  F_n \to F_{n+1}$ is an isomorphism for $n \gg 0$ and  $t \colon F_n \to F_{n-1}$ is an isomorphism $n \ll 0$;
	\item[(d)] $F_n $ is pseudo-perfect over $R$ for every $n \in \ZZ$.
\end{itemize}
	Then $F_\bullet$ is pseudo-perfect over $\ST$.
\end{lemma}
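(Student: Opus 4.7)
The plan is to exhibit $F_\bullet$ as a finite iterated extension in $\C_{\ST}$ of pseudo-perfect families, using that the full subcategory of pseudo-perfect objects in $\C_{\ST}$ is stable (as $\Perf(\ST) \subset \QCoh(\ST)$ is closed under finite colimits) and hence closed under extensions.

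First, I would isolate the \emph{top piece}. Consider the sub-object $F^{[N_+]}_\bullet \subset F_\bullet$ defined weight-wise by $F^{[N_+]}_n \coloneqq F_n$ for $n \geq N_+$ and $F^{[N_+]}_n \coloneqq t^{N_+-n}F_{N_+}$ for $n < N_+$. Hypothesis (b) makes these honest submodules, and together with $st = ts$ and the fact that $s$ is an isomorphism for $n \geq N_+$ from (c), ensures closedness under both $s$ and $t$. Identifying every weight piece with $F_{N_+}$ (via $s$ above $N_+$ and via the monomorphism $t^{N_+-n}$ below) and tracking the induced $(s,t)$-actions gives a canonical isomorphism $F^{[N_+]}_\bullet \cong F_{N_+} \otimes_R \OO_{\ST}\langle N_+ \rangle$, which is pseudo-perfect because $F_{N_+}$ is pseudo-perfect in $\C_R$ by (d) and $\OO_{\ST}\langle N_+ \rangle \in \Perf(\ST)$.

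Next, I would iterate by peeling off the new top weight at each stage. Put $Q^{(0)}_\bullet \coloneqq F_\bullet$ and, recursively, $Q^{(k+1)}_\bullet \coloneqq Q^{(k)}_\bullet / (Q^{(k)})^{[N_+-k]}_\bullet$, where the sub-object is defined analogously to $F^{[N_+]}_\bullet$ but with $N_+$ replaced by $N_+-k$. By induction on $k$ one verifies: (i) the support of $Q^{(k)}_\bullet$ is contained in $n \leq N_+-k$; (ii) since the weight just above vanishes, the top weight $Q^{(k)}_{N_+-k}$ is annihilated by $\pi = ts$, so it is a $\kappa$-object, and it is pseudo-perfect by iterated application of \cref{Assumption_subobject} starting from $F_{N_+-k}$; (iii) as $s$ vanishes on $Q^{(k)}_{N_+-k}$ and commutes with $t$, the sub-object $(Q^{(k)})^{[N_+-k]}_\bullet$ is isomorphic to the truncated representation $Q^{(k)}_{N_+-k}(\leq N_+-k)$ of \cref{truncated_representation}. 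Evaluation at a compact generator $E \in \C^\omega$, provided by \cref{relation_finiteness_conditions}, together with \cref{truncated_representation}, then shows each such piece is pseudo-perfect in $\C_{\ST}$.

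The induction terminates after $N_+ - N_-$ steps: the quotient $Q^{(N_+-N_-)}_\bullet$ is supported in weights $\leq N_-$ and, by the $t$-stabilization from (c), is constant there; the arguments from (ii) and (iii) identify it with $Q^{(N_+-N_-)}_{N_-}(\leq N_-)$, once more pseudo-perfect. Altogether $F_\bullet$ is realised as a finite iterated extension of $N_+-N_-+1$ pseudo-perfect families, hence pseudo-perfect. The main obstacle is the careful inductive bookkeeping of (i), (ii), (iii): one must check that $(Q^{(k)})^{[N_+-k]}_\bullet$ is genuinely closed under the $\ST$-action, that $Q^{(k+1)}_\bullet$ inherits both the monomorphism property (b) and the stabilization (c) needed to set up the next step, and that the top weight $Q^{(k)}_{N_+-k}$ remains pseudo-perfect as a $\kappa$-object throughout the descent.
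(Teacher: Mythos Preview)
Your overall strategy matches the paper's: build $F_\bullet$ as a finite iterated extension of one structure-sheaf piece $F_{N_+}\otimes_R \OO_{\ST}\langle N_+\rangle$ together with truncated representations $M(\leq n)$, and conclude by closure of pseudo-perfect objects under extensions. The paper runs the induction from the bottom, peeling off $F_{n_2}/tF_{n_2+1}(\leq n_2)$ as a \emph{quotient} at each stage and ending with the structure-sheaf tensor as base case; you run it from the top, taking the structure-sheaf tensor first as a \emph{sub-object} and then peeling truncated representations. These are mirror images of the same argument.

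There is, however, a genuine gap in your step (ii): you invoke \cref{Assumption_subobject} to deduce that $Q^{(k)}_{N_+-k}$ is pseudo-perfect, but that assumption is \emph{not} among the hypotheses of this lemma (it only enters later, in \cref{flat_perf_ST} and \cref{S_complete}). Fortunately the assumption is unnecessary here. A direct computation of your iterated quotients gives
\[
Q^{(k)}_n \;\simeq\; F_n \big/ t^{(N_+-k+1)-n}F_{N_+-k+1} \qquad (n\leq N_+-k),
\]
so in particular $Q^{(k)}_{N_+-k}\simeq F_{N_+-k}/tF_{N_+-k+1}$. By (b) this is the cofiber of the monomorphism $t\colon F_{N_+-k+1}\hookrightarrow F_{N_+-k}$ between two objects that are pseudo-perfect by (d); hence it is pseudo-perfect by the two-out-of-three property for perfect complexes, with no appeal to \cref{Assumption_subobject}. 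This is exactly how the paper handles the corresponding piece $F_{n_2}/tF_{n_2+1}$. Once you replace your appeal to \cref{Assumption_subobject} by this observation, the rest of your bookkeeping (closure under the $(s,t)$-action, persistence of (b) and (c) on the quotients, and the identification with $M(\leq n)$ via \cref{truncated_representation} after evaluating at a compact generator) goes through.
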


\begin{proof}
	By (c) there exists two integers $n_1 \geq n_2$ such that $s \colon  F_n \to F_{n+1}$ is an isomorphism for every $n \geq n_1$ and $t \colon F_n \to F_{n-1}$ is an isomorphism for every $n \leq n_2$. We will proceed by induction on $r = n_1 - n_2$.
	If $r=0$ then $F_\bullet$ is equivalent to 
\[(F_{n_2})_\bullet \langle n_2 \rangle \colon \ \begin{tikzcd}[sep=small]
	\cdots && F_{n_2} && F_{n_2} && F_{n_2} && \cdots
	\arrow["\pi", shift left, from=1-1, to=1-3]
	\arrow["id", shift left, from=1-3, to=1-1]
	\arrow["\pi", shift left, from=1-3, to=1-5]
	\arrow["id", shift left, from=1-5, to=1-3]
	\arrow["id", shift left, from=1-5, to=1-7]
	\arrow["\pi", shift left, from=1-7, to=1-5]
	\arrow["id", shift left, from=1-7, to=1-9]
	\arrow["\pi", shift left, from=1-9, to=1-7]
\end{tikzcd}\]
where the central copy of $F_{n_2}$ sits in weight $n_2$. 
     Let \[
p \colon \Spec(R[s,t] / (st-\pi)) \to \ST
\]
be the atlas map.
    Invoking \cref{flat_affine_atlas} it is enough to show that $(F_{n_2})_\bullet \langle {n_2} \rangle$ is pseudo-perfect over $\ST$.
	By \cref{computation_pullback_atlas_ST}, the pullback $p_\C^\ast(F_{n_2})_\bullet \langle {n_2} \rangle$ is identified with (a shift of) 
\[
\widetilde{F}_{n_2} \coloneqq F_{n_2} \otimes_R (R[s,t] / (st-\pi)).
\]
	Hence, it is enough to show that $\widetilde{F}_{n_2}$ is pseudo-perfect over  $\Spec(R[s,t]/(st-\pi))$.
	This follows immediately from (d) and \cref{stability_flatness}.
	Suppose now that the statement holds for $r-1 = n_1 - n_2-1$. 
	Then $F_\bullet$ is equivalent to
\[\begin{tikzcd}
	{G_\bullet\colon \ \ \ \cdots} & {F_{M+2}} & {F_{M+1}} & {F_{M}} & {F_{M}} & \cdots
	\arrow[shift left, from=1-1, to=1-2]
	\arrow[shift left, from=1-2, to=1-1]
	\arrow["t", shift left, from=1-2, to=1-3]
	\arrow["s", shift left, from=1-3, to=1-2]
	\arrow["t", shift left, from=1-3, to=1-4]
	\arrow["s", shift left, from=1-4, to=1-3]
	\arrow["id", shift left, from=1-4, to=1-5]
	\arrow["\pi", shift left, from=1-5, to=1-4]
	\arrow["id", shift left, from=1-5, to=1-6]
	\arrow["\pi", shift left, from=1-6, to=1-5]
\end{tikzcd}\]
	Consider the following sequence in $\QCoh(\ST)$:
\[\begin{tikzcd}[sep=small]
	{H_\bullet \colon\ \ \ \ \cdots} & {F_{n_2+2}} & {F_{n_2+1}} & {F_{n_2+1}} & {F_{n_2+1}} & \cdots \\
	\\
	{G_\bullet\colon \ \ \ \cdots} & {F_{n_2+2}} & {F_{n_2+1}} & {F_{n_2}} & {F_{n_2}} & \cdots \\
	\\
	{F_{n_2} / tF_{n_2 +1}(\leq n_2)\colon \ \cdots} & 0 & 0 & {F_{n_2} / tF_{n_2 +1}} & {F_{n_2} / tF_{n_2 +1}} & \cdots
	\arrow[shift left, from=1-1, to=1-2]
	\arrow[shift left, from=1-2, to=1-1]
	\arrow["t", shift left, from=1-2, to=1-3]
	\arrow["id"', from=1-2, to=3-2]
	\arrow["s", shift left, from=1-3, to=1-2]
	\arrow["id", shift left, from=1-3, to=1-4]
	\arrow["id"', from=1-3, to=3-3]
	\arrow["\pi", shift left, from=1-4, to=1-3]
	\arrow["id", shift left, from=1-4, to=1-5]
	\arrow["t"', from=1-4, to=3-4]
	\arrow["\pi", shift left, from=1-5, to=1-4]
	\arrow["id", shift left, from=1-5, to=1-6]
	\arrow["t"', from=1-5, to=3-5]
	\arrow["\pi", shift left, from=1-6, to=1-5]
	\arrow[shift left, from=3-1, to=3-2]
	\arrow[shift left, from=3-2, to=3-1]
	\arrow["t", shift left, from=3-2, to=3-3]
	\arrow[two heads, from=3-2, to=5-2]
	\arrow["s", shift left, from=3-3, to=3-2]
	\arrow["t", shift left, from=3-3, to=3-4]
	\arrow[from=3-3, to=5-3]
	\arrow["s", shift left, from=3-4, to=3-3]
	\arrow["id", shift left, from=3-4, to=3-5]
	\arrow[from=3-4, to=5-4]
	\arrow["\pi", shift left, from=3-5, to=3-4]
	\arrow["id", shift left, from=3-5, to=3-6]
	\arrow[from=3-5, to=5-5]
	\arrow["\pi", shift left, from=3-6, to=3-5]
	\arrow[shift left, from=5-1, to=5-2]
	\arrow[shift left, from=5-2, to=5-1]
	\arrow[shift left, from=5-2, to=5-3]
	\arrow[shift left, from=5-3, to=5-2]
	\arrow[shift left, from=5-3, to=5-4]
	\arrow[shift left, from=5-4, to=5-3]
	\arrow["id", shift left, from=5-4, to=5-5]
	\arrow["0", shift left, from=5-5, to=5-4]
	\arrow["id", shift left, from=5-5, to=5-6]
	\arrow["{0}", shift left, from=5-6, to=5-5]
\end{tikzcd}\]
	By (b) the above is a fiber sequence in $\QCoh(\ST)$.
	Hence it is enough to show that $H_\bullet$ and $F_{n_2} / tF_{n_2 +1}(\leq n_2)$ are pseudo-perfect.
	The former is pseudo-perfect by recursion. 
	By (b) and (d), the quotient $F_{n_2} / tF_{n_2 +1}$ is pseudo-perfect over $R$.
	Moreover the action of $\pi$ on $F_{n_2} / tF_{n_2 +1} \in \C_R \simeq \Fun^{ex}((\C^\omega)^{op}, \Mod_R)$ is trivial as $\pi = ts$.
	It follows that $F_{n_2} / tF_{n_2 +1}$ is (the pushforward of an element) in $\C_\kappa \simeq \Fun^{ex}((\C^\omega)^{op}, \Mod_\kappa)$, and it is automatically pseudo-perfect over $\kappa$.
	By \cref{truncated_representation}, the family $F_{n_2} / tF_{n_2 +1}(\leq n_2)$ is pseudo-perfect over $\ST$. 
\end{proof}

\begin{lemma}\label{flat_ST}
	Let $\C \in \Prlo_k$ smooth equipped with an admissible $t$-structure $\tau$.
	Let $F_{\bullet} \in \C_{\ST}$ and represent it by 
\[\begin{tikzcd}[sep=small]
	\cdots & {F_{n+1}} & {F_{n}} & {F_{n-1}} & \cdots
	\arrow["t", shift left, from=1-1, to=1-2]
	\arrow["s", shift left, from=1-2, to=1-1]
	\arrow["t", shift left, from=1-2, to=1-3]
	\arrow["s", shift left, from=1-3, to=1-2]
	\arrow["t", shift left, from=1-3, to=1-4]
	\arrow["s", shift left, from=1-4, to=1-3]
	\arrow["t", shift left, from=1-4, to=1-5]
	\arrow["s", shift left, from=1-5, to=1-4]
\end{tikzcd}\]
in virtue of \cref{adj_ST}.
	Then $F$ is $\tau_{\ST}$-flat if and only if the following are satisfied:
\begin{itemize}\itemsep=0.2cm
    \item[(i)] for every $n \in \ZZ$, we have $F_n \in \C_R^\heartsuit$ and the maps $F_n \xrightarrow{t} F_{n-1}, F_n \xrightarrow{s} F_{n+1}$ are monomorphisms;
    \item[(ii)] for every $n \in \ZZ$, the map $F_{n+1} / sF_{n} \xrightarrow{t} F_{n} / sF_{n-1}$ is a monomorphism.
\end{itemize}
	Also, condition (ii) can be replaced by
\begin{itemize}\itemsep=0.2cm
    \item[(ii')] for every $n \in \ZZ$, the map $F_{n-1} / tF_{n+1} \xrightarrow{s} F_{n+1} / tF_{n}$ is a monomorphism for every $n \in \ZZ$.
\end{itemize}
\end{lemma}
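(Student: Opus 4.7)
The plan is to check both directions by reducing $\tau_{\ST}$-flatness to $\tau_A$-flatness of the pullback along the atlas $p : \Spec(A) \to \ST$ with $A = R[s,t]/(st-\pi)$, via \cref{flat_affine_atlas}. Throughout I would identify $p^\ast F_\bullet$ with the $A$-module $\bigoplus_n F_n$ in $\C_R$ via \cref{explicit_description_ST}.

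For necessity, assuming $F_\bullet$ is $\tau_{\ST}$-flat, \cref{stability_flatness} gives $\tau_A$-flatness of $p^\ast F_\bullet$, so each $F_n \in \C_R^\heartsuit$. Since $s$ and $t$ are non-zero-divisors in $A$ (both become units in $A[\pi^{-1}]$, and $\pi$ is a non-zero-divisor), \cref{tau_flat_vs_flat} applied to the monomorphisms $A \xrightarrow{s} A$ and $A \xrightarrow{t} A$ shows that $s, t$ act injectively on $\bigoplus_n F_n$, giving (i). Next, pulling back along $i_s : \Theta_\kappa \hookrightarrow \ST$ at $\{s = 0\}$ and using \cref{stability_flatness} again, $i_s^\ast F_\bullet$ is $\tau_{\Theta_\kappa}$-flat. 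By \cref{explicit_description_ST}, it is the representation $\cdots \xrightarrow{t} F_{n+1}/sF_n \xrightarrow{t} F_n/sF_{n-1} \xrightarrow{t} \cdots$ (cofibers coincide with ordinary cokernels by the injectivity of $s$ just established). Applying \cref{flat_Theta}(b) yields (ii); the symmetric argument using $i_t$ at $\{t=0\}$ yields (ii'), and in particular (ii) $\Leftrightarrow$ (ii') under (i).

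For sufficiency, assume (i) and (ii). Then $\bigoplus_n F_n \in \C_A^\heartsuit$, and since $\pi = ts$ acts injectively on each $F_n$, \cref{flatness_DVR} shows each $F_n$ is $\tau_R$-flat. By \cref{local_criterion_flatness}, it remains to verify $F \otimes_A^{\mathbb L} A/\mathfrak{p} \in \C_A^\heartsuit$ for every prime $\mathfrak{p} \subset A$. The decisive case is the unique closed $\G_m$-invariant maximal ideal $\mathfrak{m} = (s, t)$: $(s, t)$ is a regular sequence in $A$ (as $A/(s) = \kappa[t]$ is a domain on which $t$ is a non-zero-divisor), giving the Koszul resolution
\[
0 \to A \xrightarrow{\binom{t}{-s}} A^{\oplus 2} \xrightarrow{(s,t)} A \to \kappa \to 0.
\]
Tensoring with $F$, one has $\Tor_2^A(F, \kappa) = \Ker(s) \cap \Ker(t) = 0$ by (i). For $\Tor_1^A(F, \kappa)$, a weight-$w$ cycle $(a, b) \in F_{w-1} \oplus F_{w+1}$ with $sa + tb = 0$ forces $tb = -sa \in sF_{w-1}$; condition (ii) then forces $b \in sF_w$, and writing $b = sc$, the relation $s(a + tc) = 0$ combined with (i) gives $a = -tc$, exhibiting $(a, b) = \binom{t}{-s}(-c)$ as a boundary. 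Hence $\Tor_1^A(F, \kappa) = 0$.

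The remaining primes would be controlled by (i) alone. Primes not containing $\pi$ live in $A[\pi^{-1}] \simeq K[s, s^{-1}]$ with $K = \Frac(R)$, and (i) makes the maps $F_n \otimes_R K \xrightarrow{s} F_{n+1} \otimes_R K$ into isomorphisms after inverting $\pi$ (using $st = \pi$), so that $\bigoplus_n F_n \otimes_R K$ becomes a free $K[s, s^{-1}]$-module on $F_0 \otimes_R K$, hence flat. Primes containing $\pi$ but distinct from $\mathfrak{m}$ lie on one of $\{s = 0\} = \Spec(\kappa[t])$ or $\{t = 0\} = \Spec(\kappa[s])$ away from the origin, where the height-one resolutions $0 \to A \xrightarrow{s} A \to A/(s) \to 0$ and $0 \to A \xrightarrow{t} A \to A/(t) \to 0$, combined with appropriate localization, reduce Tor vanishing to the injectivity statements in (i). The main obstacle of the proof is therefore the Koszul computation at $\mathfrak{m}$, where condition (ii) is precisely what is needed to kill $\Tor_1$; outside $\mathfrak{m}$, condition (i) already suffices.
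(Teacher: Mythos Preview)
Your proof is correct and the necessity direction is identical to the paper's. For sufficiency the paper takes a slightly more packaged route: rather than checking every prime of $A=R[s,t]/(st-\pi)$ by hand, it invokes \cref{local_criterion_ST} from the Appendix, which says that $\tau_{\ST}$-flatness follows once the restrictions of $F_\bullet$ to $\{s\neq 0\}\simeq\Spec(R)$, $\{t\neq 0\}\simeq\Spec(R)$, and $\{s=t=0\}\simeq B\G_{m,\kappa}$ are $\tau$-flat. Condition (i) gives the first two via \cref{flatness_DVR} (exactly as you observe that $\pi=ts$ acts injectively), and condition (ii) gives the third because it forces the iterated cofibers computing $(i_{\{s=t=0\}})^\ast F_\bullet$ to be ordinary cokernels, hence to lie in the heart over the field $\kappa$. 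Your argument is essentially the proof of \cref{local_criterion_ST} inlined: your localization steps at the non-maximal primes reproduce the reduction to the two open copies of $\Spec(R)$, and your Koszul computation at $(s,t)$ is an explicit version of the ``flatness at the origin'' check. What your route buys is a transparent explanation of exactly where (ii) is used---it is precisely the condition that kills $\Tor_1^A(F,\kappa)$ in the Koszul complex---whereas the paper's route keeps the argument at the level of the stack and avoids writing down any resolutions. One small sharpening: your claim that the non-maximal primes containing $\pi$ are handled by (i) alone is correct, but the cleanest way to see it is the one implicit in \cref{local_criterion_ST}: for $\mathfrak p$ with $s\in\mathfrak p$, $t\notin\mathfrak p$, localize at $t$ to land in $A_t\simeq R[t^{\pm 1}]$, where $p^\ast F_\bullet$ becomes the pullback of $F_\bullet|_{\{t\neq 0\}}\in\C_R$, which is $\tau_R$-flat by (i) and \cref{flatness_DVR}; then every prime of $R[t^{\pm 1}]$ is automatically handled.
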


\begin{proof}
	Assume $F \in \C_{\ST}$ is $\tau_{\ST}$-flat.
	Arguing as in the proof of \cref{flat_Theta} and using \cref{pullback_atlas_ST}, item (i) follows.
	Similarly using \cref{explicit_description_ST}, item (ii) and (ii') follow.
	But then by \cref{flat_perf_Theta}-(iii) we have that $F_{n+1}/ sF_n \simeq 0$ for $n\gg 0$ and $F_n / tF_{n-1} \simeq 0$ for $n \ll 0$.\\ \indent
	Conversely, suppose that $F_\bullet \in \C_{\ST}$ satisfies (i) and (ii).
	Then (i) shows that multiplication by $\pi$ is injective on $(F_\bullet)|_{\left\lbrace s \neq 0 \right\rbrace}$ and $(F_\bullet)|_{_{\left\lbrace t \neq 0 \right\rbrace}}$. 
	Hence $(F_\bullet)|_{\left\lbrace s \neq 0 \right\rbrace}$ and $(F_\bullet)|_{_{\left\lbrace t \neq 0 \right\rbrace}}$ are $\tau$-flat over $(\ST)_{s \neq 0} \simeq \Spec(R) \simeq (\ST)_{t \neq 0}$ by \cref{flatness_DVR}.
	By (ii) we have that multiplication by $t$ is injective on $(F_\bullet)|_{\left\lbrace t = 0\right\rbrace}$, which in turn implies that $(F_\bullet)$ is $\tau$-flat at the origin by \cref{local_criterion_ST}. 
	A similar argument shows that (i) and (ii') also characterize flatness.
\end{proof}

\begin{proposition}\label{flat_perf_ST}
	In the setting of \cref{Assumption_subobject}, let $F_{\bullet} \in \C_{\ST}$ and represent it by 
\[\begin{tikzcd}[sep=small]
	\cdots & {F_{n+1}} & {F_{n}} & {F_{n-1}} & \cdots
	\arrow["t", shift left, from=1-1, to=1-2]
	\arrow["s", shift left, from=1-2, to=1-1]
	\arrow["t", shift left, from=1-2, to=1-3]
	\arrow["s", shift left, from=1-3, to=1-2]
	\arrow["t", shift left, from=1-3, to=1-4]
	\arrow["s", shift left, from=1-4, to=1-3]
	\arrow["t", shift left, from=1-4, to=1-5]
	\arrow["s", shift left, from=1-5, to=1-4]
\end{tikzcd}\]
in virtue of \cref{adj_ST}.
	Assume $F_\bullet$ is $\tau_{\ST}$-flat. Then $F_\bullet$ is pseudo-perfect if and only if:
\begin{itemize}\itemsep=0.2cm
    \item[(i)] for every $n \gg 0$, the map $s \colon  F_n \to F_{n+1}$ is an isomorphism and for every $n \ll 0$, the map $t \colon F_n \to F_{n-1}$ is an isomorphism;
    \item[(ii)] for every $n \in \ZZ$, the family $F_n $ is pseudo-perfect over $R$.
\end{itemize}
\end{proposition}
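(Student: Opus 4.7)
The plan is to deduce sufficiency directly from \cref{pseudo_perf_ST} and to establish necessity by exploiting the pullbacks of $F_\bullet$ along the two closed immersions $i_{\{s=0\}}, i_{\{t=0\}} \colon \Theta_\kappa \to \ST$ together with the pullback along the open immersion $\{t \neq 0\} \simeq \Spec(R) \hookrightarrow \ST$.

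For sufficiency, assume (i) and (ii). By \cref{flat_ST}, $\tau_{\ST}$-flatness of $F_\bullet$ yields that $F_n \in \C_R^\heartsuit$ for every $n$ and that $t \colon F_n \to F_{n-1}$ is a monomorphism; these are hypotheses (a) and (b) of \cref{pseudo_perf_ST}, while (i) and (ii) match its (c) and (d) verbatim. Hence \cref{pseudo_perf_ST} yields pseudo-perfectness of $F_\bullet$.

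For necessity, suppose $F_\bullet$ is both pseudo-perfect and $\tau_{\ST}$-flat. To establish (i), I would pull $F_\bullet$ back along the two closed immersions $i_{\{s=0\}}, i_{\{t=0\}} \colon \Theta_\kappa \to \ST$. Pseudo-perfectness is preserved under pullback and $\tau$-flatness is preserved by \cref{stability_flatness}, so these pullbacks are pseudo-perfect $\tau_{\Theta_\kappa}$-flat families on $\Theta_\kappa$ whose underlying filtered objects have $n$-th piece $F_n/sF_{n-1}$ and $F_n/tF_{n+1}$ respectively, in virtue of \cref{explicit_description_ST}. Condition (iii) of \cref{flat_perf_Theta} applied to each then yields $F_n/sF_{n-1} \simeq 0$ for $n \gg 0$ and $F_n/tF_{n+1} \simeq 0$ for $n \ll 0$, i.e., $s$ is surjective in the first range and $t$ in the second. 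Combined with the monomorphism property of $s$ and $t$ from \cref{flat_ST}, this yields (i).

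For (ii), I would restrict $F_\bullet$ along the open immersion $\{t \neq 0\} \simeq \Spec(R) \hookrightarrow \ST$. By \cref{explicit_description_ST}, this restriction is $\colim(\cdots \xleftarrow{s} F_n \xleftarrow{s} F_{n-1} \xleftarrow{s} \cdots)$. By the portion of (i) just proven, the transition maps are isomorphisms for $n \gg 0$, so the colimit stabilizes to $F_m$ for any $m \gg 0$. Since pullback preserves pseudo-perfectness, $F_m$ is pseudo-perfect over $R$ for $m \gg 0$. For an arbitrary $n \in \ZZ$ and any $m \geq n$ with $m \gg 0$, composing the $s$-monomorphisms supplied by \cref{flat_ST} exhibits $F_n$ as a subobject of $F_m$ in $\C_R^\heartsuit$. \cref{Assumption_subobject} then gives that $F_n$ is pseudo-perfect, establishing (ii).

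The main delicate step is the bookkeeping of indices and arrow directions in the $\{t=0\}$ pullback formula of \cref{explicit_description_ST}, since it must be reindexed in order to fit the convention of \cref{flat_perf_Theta} in which transition maps point towards decreasing $n$. Once this is sorted out, the proof is a concatenation of the earlier lemmas, and the crucial conceptual ingredient is \cref{Assumption_subobject}, which allows us to propagate pseudo-perfectness of $F_m$ (for $m \gg 0$) back to each $F_n$ via the iterated subobject inclusion.
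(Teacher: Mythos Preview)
Your proof is correct and follows essentially the same strategy as the paper: sufficiency via \cref{flat_ST} and \cref{pseudo_perf_ST}, necessity of (i) via the pullbacks to $\Theta_\kappa$ and \cref{flat_perf_Theta}(iii), and necessity of (ii) by restricting to one of the open $\Spec(R)$ loci to identify some $F_m$ as pseudo-perfect, then propagating via \cref{Assumption_subobject}. The only difference is that the paper restricts along $\{t\neq 0\}$, obtaining the colimit along $t$ which stabilizes to $F_N$ for $N\ll 0$, and then uses the $t$-monomorphisms to embed each $F_n$ into $F_N$; you use the $s$-side instead (note that the colimit along $s$ you wrote down is the restriction to $\{s\neq 0\}$, not $\{t\neq 0\}$ as you labeled it --- cf.\ \cref{generic_points}). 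This is immaterial by the evident $s\leftrightarrow t$ symmetry, and you rightly flag the index bookkeeping as the only delicate point.
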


\begin{proof}
	Assume that $F_\bullet$ is pseudo-perfect.
	The restriction to $s=0$ and $t=0$ are pseudo-perfect and $\tau_{\Theta_\kappa}$-flat.
	By \cref{flat_perf_Theta}-(iii) and \cref{explicit_description_ST}, we get $F_{n+1} / sF_n \simeq 0$ for $n\gg 0$ and $F_n / tF_{n-1} \simeq 0$ for $n\ll 0$.
This proves (iii).
	To get (ii) one notices that by (i) we have that there exists $N \ll 0$ such that 
\[
(i_{\left\lbrace t \neq 0\right\rbrace})_\C^\ast F_\bullet \simeq \colim (\cdots \xrightarrow{t} F_n \xrightarrow{t} F_{n-1} \xrightarrow{t} \cdots) \simeq F_N.
\]
	Thus $F_N$ is pseudo-perfect and $\tau$-flat over $R$.
	By \cref{flat_perf_ST}-(i) and \cref{Assumption_subobject}, the family $F_n$ is pseudo-perfect for every $n \in \ZZ$.
    Conversely, assume that $F_\bullet$ satisfies (i) and (ii). 
	Since $F_\bullet$ is $\tau_{\ST}$-flat, \cref{flat_ST} implies  that the conditions of \cref{pseudo_perf_ST} are satisfied. 
	The conclusion follows.
\end{proof}

\begin{proposition}\label{S_complete}
	Let $\C \in \Prlo_k$ of finite type equipped with an admissible $t$-structure $\tau$ satisfying \cref{Assumption_subobject} and universally satisfying openness of flatness. 
	The algebraic stack $\M_\C^\heartsuit$ is $\mathrm{S}$-complete.
\end{proposition}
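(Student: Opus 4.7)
The plan is to follow the same strategy as the proof of $\Theta$-reductiveness (\cref{Theta_reductive}). Given a pseudo-perfect $\tau_{\ST \setminus \{0\}}$-flat family $F \in \C_{\ST \setminus \{0\}}$ corresponding to a map $\ST \setminus \{0\} \to \M_\C^\heartsuit$, first produce a candidate $\tau$-flat extension via the Hartogs-like lemma \cref{saved_lemma_bis}, then verify that this extension is pseudo-perfect using the explicit characterization of \cref{flat_perf_ST}.

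For the candidate, apply \cref{saved_lemma_bis} with $X = \Spec(R[s,t]/(st-\pi))$, which is a regular noetherian scheme of dimension $2$ over $k$ (since $R$ is a DVR), with $Z \subset X$ the closed origin (a $0$-dimensional closed subscheme), and with $G = \G_{m,R}$ acting with weights $1$ and $-1$ on $s$ and $t$. This yields a unique $\tau_{\ST}$-flat extension $\widetilde{F} \coloneqq \pi_0 \overline{j}_{\C, \ast} F \in \C_{\ST}^\heartsuit$, which takes care of uniqueness automatically.

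It remains to verify pseudo-perfectness of $\widetilde{F}$. Via \cref{adj_ST}, represent $\widetilde{F}$ by the diagram of $s,t$ actions between objects $\widetilde{F}_n \in \C_R^\heartsuit$. Since $\widetilde{F}$ is $\tau_{\ST}$-flat, \cref{flat_perf_ST} reduces the question to checking that $s \colon \widetilde{F}_n \to \widetilde{F}_{n+1}$ is an isomorphism for $n \gg 0$, that $t \colon \widetilde{F}_n \to \widetilde{F}_{n-1}$ is an isomorphism for $n \ll 0$, and that each $\widetilde{F}_n$ is pseudo-perfect over $R$. The restrictions $\widetilde{F}|_{\{s \neq 0\}}$ and $\widetilde{F}|_{\{t \neq 0\}}$ to the two open substacks $\simeq \Spec(R)$ of $\ST$ coincide with those of $F$, and are therefore pseudo-perfect and $\tau_R$-flat. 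By \cref{explicit_description_ST} these restrictions are the filtered colimits of the $\widetilde{F}_n$'s along the transition maps $s$ and $t$, and by \cref{flat_ST} these transition maps are monomorphisms; since the colimits are pseudo-perfect, hence compact (by \cref{pseudo_perfect_vs_compact}), the systems must stabilize, producing integers $N_-, N_+$ with $s \colon \widetilde{F}_n \xrightarrow{\sim} \widetilde{F}_{n+1}$ for $n \geq N_+$ and $t \colon \widetilde{F}_n \xrightarrow{\sim} \widetilde{F}_{n-1}$ for $n \leq N_-$. In particular $\widetilde{F}_{N_+}$ and $\widetilde{F}_{N_-}$ are pseudo-perfect over $R$, and for the finitely many intermediate indices, the objects $\widetilde{F}_n$ sit as subobjects of $\widetilde{F}_{n+1}$ via the monomorphism $s$, so pseudo-perfectness propagates downward from $\widetilde{F}_{N_+}$ to all $\widetilde{F}_n$ by \cref{Assumption_subobject}.

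The main obstacle is this last pseudo-perfectness step, which fundamentally relies on \cref{Assumption_subobject}: without the property that subobjects of pseudo-perfect $\tau_R$-flat objects in $\C_R^\heartsuit$ remain pseudo-perfect, one cannot conclude. Modulo this input, the argument is formally parallel to \cref{Theta_reductive}, with \cref{flat_perf_ST} playing the role of \cref{flat_perf_Theta}.
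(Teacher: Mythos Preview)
Your proof is correct and follows essentially the same approach as the paper's. The only difference is cosmetic: the paper invokes the explicit formula $\pi_0 j_{\C,\ast}F \simeq \bigoplus_n (E_1 \cap \pi^{-n}E_2)\,t^n$ from \cref{pushforward_ST} and then argues that each graded piece is a subobject of the pseudo-perfect object $E_1$ and that the filtration stabilizes because $E_1$, $E_2$ are compact; you bypass the explicit formula and instead read off the same facts directly from \cref{flat_ST}~(i) (the maps $s,t$ are monomorphisms) together with \cref{explicit_description_ST} (the restrictions to $\{s\neq 0\}$, $\{t\neq 0\}$ are the colimits along $s$, $t$), using compactness of these restrictions to force stabilization. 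Both routes land on exactly the same verification of conditions (i)--(ii) of \cref{flat_perf_ST}, with \cref{Assumption_subobject} supplying pseudo-perfectness of the intermediate pieces.
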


\begin{proof}
	Consider a $\tau$-flat pseudo-perfect family $F_\bullet \in \C_{\ST \setminus \left\lbrace 0 \right\rbrace }$ corresponding to a map
\[
\ST \setminus \left\lbrace 0 \right\rbrace \xrightarrow{F} \M_\C^\heartsuit.
\]
	We need to verify that the unique $\tau_{\ST}$-flat extension $\pi_0j_{\C, \ast}F \in \C_{\ST}$ provided by \cref{saved_lemma} is pseudo-perfect. By \cref{descent_ST}, the family $F_\bullet$ can be described as:
\begin{itemize}\itemsep=0.2cm
    \item a family $F \in \C_{\Frac(R)}^\heartsuit$;
    \item pseudo-perfect $\tau$-flat $R$-module subobjects $E_1, E_2 \subseteq F$ with equivalences
    \[
    E_1 \otimes_R \Frac(R) \simeq F \simeq E_2 \otimes_R \Frac(R) \ .
    \]
\end{itemize}
	Moreover, \cref{pushforward_ST} shows that the unique $\tau_{\ST}$-flat extension can be explicitly described by
\[
\pi_0j_{\C, \ast}F =\bigoplus_n (E_1 \cap (\pi^{-n}E_2))t^n \subseteq \bigoplus_n F t^n,
\]
	Since $E_1$ is pseudo-perfect and $E_1 \cap (\pi^{-n}E_2)$ is a subobject of $E_1$, each graded piece is pseudo-perfect by \cref{Assumption_subobject}.
	Since
\[
F \simeq E_2 \otimes_R K \simeq E_2 \otimes_R \colim (\cdots \xrightarrow{\pi} R \xrightarrow{\pi} R \xrightarrow{\pi} \cdots),
\]
the union of the filtration 
\[
\cdots \subseteq E_1 \cap(\pi^{-n}
E_2) \subseteq E_1 \cap(\pi^{-n-1}
E_2) \subseteq \cdots
\]
is $E_1 \cap (E_2 \otimes_R K) \simeq E_1 \cap F = E_1$.
	Because $E_1$ is pseudo-perfect (thus compact by \cref{pseudo_perfect_vs_compact}), this union must stabilize.
	Similarly by inverting the roles of $E_1$ and $E_2$, the filtration also stabilizes on the left.
	Thus $\pi_0j_{\C, \ast}F$ satisfies (i)-(ii) of \cref{flat_perf_ST}, hence defining an unique extension
\[
\begin{tikzcd}[sep=small]
	{\ST \setminus \left\lbrace 0 \right\rbrace} && {\M_\C^\heartsuit} \\
	\\
	{\ST}
	\arrow["F", from=1-1, to=1-3]
	\arrow["j"', hook, from=1-1, to=3-1]
	\arrow["{\exists ! \ \pi_0j_{\C, \ast}F}"', dashed, from=3-1, to=1-3]
\end{tikzcd}
\]
\end{proof}

	\cref{S_complete} allows us to characterize closed points of $\M_\C^\heartsuit$. Indeed we show in \cref{closed_point_is_semisimple} that closed points are in bijection with semisimple objects.


\begin{lemma}\label{specialization}
	Let $\kappa$ be a field of characteristic zero and $\X \hookrightarrow \Y$ be an immersion in $\St_\kappa$.
	\begin{enumerate}\itemsep=0.2cm
	    \item If $\Y$ is $\mathrm{S}$-complete, $\X$ is of finite type and $\kappa$ is algebraically closed, then any specialization of $\kappa$-points $x \rightsquigarrow y$ in $\X$, where $y$ is closed in $\X$, is realized by a morphism $\Theta_\kappa \to \X$.
	    \item If $\Y$ is $\Theta$-reductive and $\mathrm{S}$-complete, then the closure of any $\kappa$-point of $\Y$ contains a unique closed point.
	\end{enumerate}
\end{lemma}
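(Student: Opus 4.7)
The plan is to adapt the classical arguments of \cite[Propositions 3.17--3.18]{AHLH}.

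For (1), I will realize the specialization $x \rightsquigarrow y$ by a DVR $R$ with residue field $\kappa$---available since $\kappa$ is algebraically closed and $\X$ is of finite type---together with a morphism $g \colon \Spec(R) \to \X$ whose generic point is the canonical $\Frac(R)$-extension of $x$ and whose closed point is $y$. I then pair $g$ with the constant morphism $\xi \colon \Spec(R) \to \Spec(\kappa) \xrightarrow{x} \X$. Since $g$ and $\xi$ agree over $\Spec(\Frac(R))$, they glue into a morphism $\ST \setminus \left\lbrace 0 \right\rbrace \to \X \hookrightarrow \Y$. By $\mathrm{S}$-completeness of $\Y$, this extends uniquely to $\widetilde{g} \colon \ST \to \Y$. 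The closed fibre of $\ST \to \Spec(R)$ is the union of two copies of $\Theta_\kappa$ glued along $B\G_{m,\kappa}$, and extracting the two legs of the resulting morphism on the closed fibre yields two morphisms $\Theta_\kappa \to \Y$: one with generic point $y$ and closed point some $z \in \Y(\kappa)$, the other with generic point $x$ and closed point the same $z$. I then argue that $z = y$, using that $y$ is closed in $\X$ and $\X$ is of finite type, and that the second morphism $\Theta_\kappa \to \Y$ factors through $\X$, using that its topological image lies in $|\X|$ and that a locally closed substack of $\Theta_\kappa$ whose support contains $\left\lbrace 0, 1 \right\rbrace$ must be all of $\Theta_\kappa$.

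For (2), the essential content is uniqueness. Given two closed $\kappa$-points $y_1, y_2$ in the closure of $x \in \Y(\kappa)$, I choose a common DVR $R$ and morphisms $\xi_i \colon \Spec(R) \to \Y$ realizing the specializations $x \rightsquigarrow y_i$ and agreeing over $\Spec(\Frac(R))$. These glue to $\ST \setminus \left\lbrace 0 \right\rbrace \to \Y$, which by $\mathrm{S}$-completeness extends uniquely to $\ST \to \Y$. The two-leg extraction on the closed fibre produces specializations $y_1 \rightsquigarrow z$ and $y_2 \rightsquigarrow z$ to a common $z \in \Y(\kappa)$, and closedness of both $y_i$ in $\Y$ forces $y_1 = z = y_2$. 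Existence of a closed point in the closure of $x$ follows from a standard Noetherian-type argument, with $\Theta$-reductiveness used to rule out infinite descending specialization chains that would otherwise prevent termination.

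The main difficulty is the verification in (1) that $z = y$ and that the relevant morphism $\Theta_\kappa \to \Y$ factors through $\X$, since the hypothesis is only that $y$ is closed in $\X$ rather than in $\Y$. I expect to handle this via finite-typeness of $\X$ combined with a careful analysis of the locally closed substack of $\Theta_\kappa$ obtained as the preimage of $\X$.
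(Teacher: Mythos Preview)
For part (2), your uniqueness argument via the $\ST$-extension is essentially correct and is the standard one; the role you assign to $\Theta$-reductivity for existence is unclear (existence of a closed point in the closure follows from local finite-typeness), but this is a minor point.

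Your approach to part (1), however, has a genuine gap at exactly the spot you flag, and the proposed fix does not work. After extending to $\widetilde{g} \colon \ST \to \Y$ via S-completeness of $\Y$, nothing forces the image $z$ of the origin to lie in $\X$. Write $\X = U \cap Z$ with $U \subset \Y$ open and $Z \subset \Y$ closed. Under the first leg $f_1 \colon \Theta_\kappa \to \Y$ (with $f_1(1) = y$ and $f_1(0) = z$) one has $f_1^{-1}(Z) = \Theta_\kappa$, since $|\Theta_\kappa|$ has no proper closed subset containing the generic point; hence $z \in |Z|$. But $f_1^{-1}(U)$ is an open substack containing $1$, and the open point $\Spec(\kappa) \hookrightarrow \Theta_\kappa$ alone is a perfectly legitimate such substack, so $z \notin |U|$ is not excluded. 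Finite-typeness of $\X$ gives no leverage here: nothing prevents $y$ from specializing in $\Y$ to a point outside the open part of $\X$. In the paper's intended application $\X = \M_\C^{\heartsuit,\nu}$ is even open in $\Y = \M_\C^\heartsuit$, so this obstruction is not hypothetical.

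The paper takes a different route that sidesteps this issue entirely. Rather than using S-completeness of $\Y$ as an extension property, it invokes \cite[Proposition~3.47]{AHLH} to extract a \emph{pointwise} consequence: S-completeness of $\Y$ (with affine diagonal) forces every geometric point of $\Y$ to have reductive stabilizer, hence linearly reductive in characteristic zero by \cite[Remark~2.6]{AHLH}. Since the immersion $\X \hookrightarrow \Y$ induces isomorphisms on automorphism groups, the point $y$, viewed in $\X$, inherits a linearly reductive stabilizer. This is exactly the hypothesis of \cite[Lemma~3.24]{AHLH}, whose proof (via the local structure theorem and a Hilbert--Mumford type argument) produces the morphism $\Theta_\kappa \to \X$ working entirely inside $\X$, never passing through the ambient $\Y$. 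The missing idea in your proposal is this passage through the stabilizer condition rather than through an $\ST$-extension in $\Y$.
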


\begin{proof}
	Since immersions induces equivalence between automorphisms groups of points and we are in characteristic zero, it follows from \cref{affine_diagonal}, \cref{S_complete} and \cite[Remark 2.6 \& Proposition 3.47]{AHLH} that we can apply \cite[Lemma 3.24 \& Lemma 3.25]{AHLH}.
\end{proof}

\begin{recollection}\label{nu_substacks}
	Let $\C \in \Prlo_k$ be of finite type equipped with an admissible $t$-structure $\tau$ universally satisfying openness of flatness.
	Let $E \in \C$ be a compact generator and $\nu: \ZZ \to \N$ be a function with finite support.
	Define a presheaf
\[
\M_\C^\nu: \dAff_k^{op} \to \Spc
\]
that assigns to $\Spec (A)$ the maximal $\infty$-subgroupoid
\[
\C^\nu(A) \subset (C_A^{pp})^{\simeq}
\]
spanned by those $F \in \Fun^{st}((\C^\omega)^{op}, \Perf(A))$ for which the following property holds: for any field $K$ and any $A \to \pi_0(A) \to K$ we have $\dim_K \pi_i(F(E)\otimes_A K) \leq \nu(i)$.
	This defines a substack of $\M_\C$ that depends on the choice of $nu$ and of the compact generator $E$.
	By \cite[Proposition 3.20]{TV}, the derived stack $\M_\C^\nu$ is an open substack of $\M_\C$ of finite presentation.
	We denote by $\M_\C^{[0,0], \nu}$ the intersection of the open substacks $\M_\C^{[0,0]}$ and $\M_\C^\nu$ in $\M_\C$, and by $\M_\C^{\heartsuit, \nu}$ its truncation.
	Since $\M_\C^{\heartsuit, \nu} \subset t_0\M_\C^\nu$ is an open substack of a finitely presented stack over the noetherian ring $k$, it follows that $\M_\C^{\heartsuit, \nu}$ is locally of finite presentation and quasi-compact.
	Hence $\M_\C^{\heartsuit, \nu}$ is of finite presentation.
\end{recollection}

\begin{lemma}\label{max_nu}
	In the setting of \cref{nu_substacks}, let $\kappa$ be a field over $k$ and $x,y \colon \Spec(\kappa) \to \M_\C^\heartsuit$.
	There exists a function $\nu \colon \ZZ \to \N$ with finite support such that both $x$ and $y$ factor through $\M_\C^{\heartsuit, \nu}$.
\end{lemma}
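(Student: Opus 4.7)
The plan is to extract, from each of the two points $x$ and $y$, a numerical invariant recording the dimensions of the homotopy groups of the evaluation of the associated family on the compact generator $E$, and then take the pointwise maximum of these two invariants.

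More precisely, the points $x$ and $y$ correspond to $\tau_\kappa$-flat pseudo-perfect families $F_x, F_y \in \C_\kappa^\heartsuit$. By \cref{compact_generator_smooth}, pseudo-perfectness of $F_x$ and $F_y$ is equivalent to $F_x(E), F_y(E) \in \Perf(\kappa)$. Since $\kappa$ is a field, a perfect complex over $\kappa$ is a bounded complex with finite-dimensional cohomology, so the functions
\[
\nu_x(i) \coloneqq \dim_\kappa \pi_i(F_x(E)), \qquad \nu_y(i) \coloneqq \dim_\kappa \pi_i(F_y(E))
\]
are both $\N$-valued with finite support. Set $\nu(i) \coloneqq \max(\nu_x(i), \nu_y(i))$; this still has finite support.

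It remains to verify that $x$ factors through $\M_\C^{\heartsuit, \nu}$ (the case of $y$ being symmetric). By \cref{nu_substacks}, this amounts to showing that for every field $K$ and every factorization $\kappa \to \pi_0(\kappa) = \kappa \to K$, one has $\dim_K \pi_i(F_x(E) \otimes_\kappa K) \leq \nu(i)$. Since $\kappa$ is a field, the base change $- \otimes_\kappa K \colon \Mod_\kappa \to \Mod_K$ is $t$-exact, so
\[
\pi_i(F_x(E) \otimes_\kappa K) \simeq \pi_i(F_x(E)) \otimes_\kappa K,
\]
whose $K$-dimension equals $\dim_\kappa \pi_i(F_x(E)) = \nu_x(i) \leq \nu(i)$. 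This gives the desired factorization and concludes the argument.

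There is no real obstacle here: the statement is essentially a bookkeeping consequence of the fact that perfect complexes over a field have only finitely many nonzero homotopy groups, each of finite dimension, combined with the flat base-change behavior of tensoring with a field extension.
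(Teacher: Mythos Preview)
Your proposal is correct and follows essentially the same approach as the paper: define $\nu$ as the pointwise maximum of the dimension functions of $F_x(E)$ and $F_y(E)$, which has finite support since both are perfect over a field. The only difference is that you spell out the verification over an arbitrary field extension $\kappa \to K$ using $t$-exactness of base change, whereas the paper simply says ``by definition of $\nu$''; your extra detail is harmless and arguably clarifying.
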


\begin{proof}
	Let $F,G \in \C_\kappa^\heartsuit \subset \Fun^{st}((\C^\omega)^{op}, \Perf(\kappa))$  be the pseudo-perfect objects corresponding respectively to $x,y$.
	For $i \in \ZZ$, define $h^i(F) \coloneqq \dim_\kappa H^i(F(E))$ and define $h^i(G)$ similarly.
	Since $F(E),G(E) \in \Perf(\kappa)$, the function
\[
\nu\colon \ZZ \to \N
\]
\[
i \mapsto \mathrm{max}\left\lbrace h^i(F), h^i(G) \right\rbrace
\]
has finite support.
	By definition of $\nu$, the morphisms $x,y \colon \Spec(\kappa) \to \M_\C^\heartsuit$ factor through $\M_\C^{\heartsuit, \nu}$.
\end{proof}

\begin{corollary}\label{closed_point_is_semisimple}
	Let $\C \in \Prlo_k$ of finite type equipped with an admissible $t$-structure $\tau$ satisfying \cref{Assumption_subobject} and universally satisfying openness of flatness. 
	Let $\Spec(\kappa) \to \Spec(k)$ be a closed point.
	There is a bijection between closed $\kappa$-points of $\M_\C^\heartsuit$ and pseudo-perfect semisimple objects in $\C_{\kappa}^\heartsuit$.
\end{corollary}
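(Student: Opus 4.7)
The plan is to establish the bijection in two directions. The forward direction---every closed $\kappa$-point of $\M_\C^\heartsuit$ corresponds to a pseudo-perfect semisimple object of $\C_\kappa^\heartsuit$---is already furnished by \cref{closed_point_is_semisimple_preliminar}. The substantive part is the converse: given a pseudo-perfect semisimple $F \in \C_\kappa^\heartsuit$, I must show the induced $\kappa$-point $x_F \colon \Spec(\kappa) \to \M_\C^\heartsuit$ is closed.

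The strategy is to reason by contradiction, assuming a nontrivial specialization $x_F \rightsquigarrow y$ with $y \neq x_F$ in $\M_\C^\heartsuit$. First I would apply \cref{max_nu} to produce a function $\nu \colon \ZZ \to \N$ of finite support such that both $x_F$ and $y$ factor through the quasi-compact open substack $\M_\C^{\heartsuit,\nu}$ of \cref{nu_substacks}. Quasi-compactness of $\M_\C^{\heartsuit,\nu}$ then furnishes a further specialization $y \rightsquigarrow z$ to some closed point $z$ there, and transitivity yields $x_F \rightsquigarrow z$ inside $\M_\C^{\heartsuit,\nu}$.

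I would then invoke \cref{specialization}-(1) with $\Y = \M_\C^\heartsuit$ ($\mathrm{S}$-complete by \cref{S_complete}) and $\X = \M_\C^{\heartsuit,\nu}$ (of finite type by \cref{nu_substacks}): the specialization $x_F \rightsquigarrow z$ is realized by a morphism $\Theta_\kappa \to \M_\C^{\heartsuit,\nu} \hookrightarrow \M_\C^\heartsuit$. By \cref{flat_perf_Theta}, such a morphism translates into a finite descending filtration
\[
0 = F_N \subseteq F_{N-1} \subseteq \cdots \subseteq F_{-M} = F
\]
in $\C_\kappa^\heartsuit$ whose generic fiber is $F$ and whose associated graded $\bigoplus_n F_n / F_{n+1}$ realizes $z$ after forgetting the grading.

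The decisive step, and the only place where the hypothesis on $F$ is actually used, will be to exploit semisimplicity: since every subobject of a semisimple object of $\C_\kappa^\heartsuit$ is a direct summand, the filtration splits and hence $F \simeq \bigoplus_n F_n / F_{n+1} \simeq z$, i.e.\ $x_F = z$. This forces $x_F$ to be closed in $\M_\C^{\heartsuit,\nu}$; since $\M_\C^{\heartsuit,\nu}$ is open in $\M_\C^\heartsuit$ and contains $y$ with $y \in \overline{\{x_F\}}$, it follows that $y = x_F$, contradicting $y \neq x_F$. The main subtlety of the argument is not any single computation but the careful bookkeeping of closures as one passes between the quasi-compact substack $\M_\C^{\heartsuit,\nu}$ and the ambient $\M_\C^\heartsuit$; this transfer is precisely what \cref{max_nu} was set up to enable.
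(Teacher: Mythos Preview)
Your approach is essentially the paper's, packaged as a contradiction argument rather than a direct one: the paper invokes \cref{specialization}-(2) to produce the closed point $x_G$ in the closure of $x_F$ directly, whereas you manufacture $z$ via quasi-compactness of $\M_\C^{\heartsuit,\nu}$; both routes then feed into \cref{specialization}-(1) and the semisimplicity-splits-the-filtration step in the same way.

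One technical point you should not skip: \cref{specialization}-(1) requires $\kappa$ to be algebraically closed, and the closed point $z$ you find need not a priori be $\kappa$-rational. The paper handles both issues in one stroke by first base-changing so that $\M_\C^\heartsuit \in \St_\kappa$ and then replacing $\kappa$ by an algebraic closure (this does not affect whether $x_F$ is closed, and allows $x_G$ to be taken $\kappa$-rational). Your invocation of \cref{max_nu} on the arbitrary specialization $y$ also implicitly treats $y$ as a $\kappa$-point; once you have passed to an algebraically closed $\kappa$ this is harmless, but as written it is a loose end.
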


\begin{proof}
	Since the (closed) $\kappa$-points of $\M_\C^\heartsuit$ are in bijection with the (closed) $\kappa$-points of $\M_\C^\heartsuit \times_{\Spec(k)} \Spec(\kappa)$, we can assume that $\M_\C^\heartsuit \in \St_\kappa$.
	Notice that since $\kappa$ is a field, any family in $\C_\kappa^\heartsuit$ is $\tau_\kappa$-flat (\cref{tau_flat_in_heart}).
	Hence every pseudo-perfect family $F \in \C_\kappa^\heartsuit$ corresponds uniquely to a morphism $x_F \colon \Spec(\kappa) \to \M_\C^\heartsuit$.\\ \indent
	By \cref{closed_point_is_semisimple_preliminar}, any closed $\kappa$-point must parametrize a pseudo-perfect semisimple object of $\C_{\kappa}^\heartsuit$.
	Let now $F \in \C_\kappa^\heartsuit$ be a pseudo-perfect semisimple object and $x_F \colon \Spec(\kappa) \to \M_\C^\heartsuit$ be the corresponding morphism.
	By \cref{specialization}-(2), the closure of $x_F$ contains a unique closed point $x_G$.
	Up to replace $\kappa$ with an extension, we can suppose that $\kappa$ is algebraically closed and that $x_F, x_G$ are $\kappa$-rational.
	Let $G \in C^\heartsuit_\kappa$ be the pseudo-perfect family corresponding to $x_G$.
	We wish to show that $x_F$ and $x_G$ agree, that is $F \simeq G$.
	By \cref{max_nu}, there exists a function $\nu \colon \ZZ \to \N$ with finite support such that $x_F, x_G$ factor through $\M_\C^{\heartsuit, \nu}$.
	By \cref{specialization}-(1) and \cref{nu_substacks}, the specialization $x_F \rightsquigarrow x_G$ in $\M_\C^{\heartsuit, \nu}$ is realized by a morphism $\Theta_{\kappa} \to \M_\C^{\heartsuit, \nu} \hookrightarrow \M_\C^\heartsuit$.
	Explicitly this means that $G$ is identified with the associated graded of a filtration of $F$ (\cref{flat_perf_Theta} and \cref{QCoh_Theta}-(4)).
	Since $F$ is supposed semisimple, it follows that $F \simeq G$.
\end{proof}

\subsection{Existence of good moduli spaces for $\M_\C^\heartsuit$}\label{existence_good_section}

	We know put all the results together to obtain the main statement of the article.

\begin{definition}
	Let $\C \in \Prlo_k$ be of finite type equipped with an admissible $t$-structure $\tau$.
	Let $A$ be a discrete commutative algebra over $k$ and $F \in \C^\heartsuit_A$ be a pseudo-perfect $\tau_A$-flat object.
	We say that $F$ lies over a substack $\X \subset \M_\C^\heartsuit$ if the associated morphism $x_F \colon \Spec(A) \to \M_\C^\heartsuit$ factors through $\X$.
\end{definition}

    For $\Spec(\kappa) \to \Spec(k)$ a closed point with $\kappa$ algebraically closed, we have the following
\begin{theorem}\label{good_general}
	Let $\C \in \Prlo_k$ of finite type equipped with an admissible $t$-structure $\tau$ satisfying \cref{Assumption_subobject} and universally satisfying openness of flatness. 
	Then $\M_\C^\heartsuit$ is $\Theta$-reductive and $\mathrm{S}$-complete.
	In particular, any closed quasi-compact substack $\X \subset \M_\C^\heartsuit$ admits a separated good moduli space $X$ whose $\kappa$-points parametrize pseudo-perfect semisimple objects of $\C_\kappa^\heartsuit$ lying over $\X$.
	Moreover, $X$ admits a natural derived enhancement if $\X$ is the truncation of a derived substack of $\M_\C^{[0,0]}$.
\end{theorem}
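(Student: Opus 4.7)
The plan is to combine the two valuative criteria already established for $\M_\C^\heartsuit$ with the Alper--Halpern-Leistner--Heinloth theorem (\cref{existence_good_separated}). By \cref{Theta_reductive} and \cref{S_complete}, $\M_\C^\heartsuit$ itself is $\Theta$-reductive and $\mathrm{S}$-complete. The first step would be to observe that both conditions descend to closed substacks: given any morphism $f \colon \Theta_R \setminus \{0\} \to \X$, composing with the closed immersion $\X \hookrightarrow \M_\C^\heartsuit$ yields, by $\Theta$-reductiveness of $\M_\C^\heartsuit$, a unique extension $\widetilde{f} \colon \Theta_R \to \M_\C^\heartsuit$. Since the restriction of $\widetilde{f}$ to the scheme-theoretically dense open $\Theta_R \setminus \{0\}$ factors through the closed substack $\X$, the whole $\widetilde{f}$ factors through $\X$ as well, and uniqueness of the lift is inherited from uniqueness in $\M_\C^\heartsuit$. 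The same argument goes through with $\ST$ in place of $\Theta_R$ for $\mathrm{S}$-completeness.

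Next I would verify the remaining hypotheses of \cref{existence_good_separated} for $\X$. Quasi-compactness is given by assumption; local finite presentation is inherited from $\M_\C^\heartsuit$ via \cref{geometric_heart}, so combined with quasi-compactness $\X$ is of finite presentation; affine diagonal (hence separated diagonal and affine stabilizers) is inherited from \cref{affine_diagonal} using that closed immersions are affine. Applying \cref{existence_good_separated} then produces the desired separated good moduli space $q \colon \X \to X$.

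To identify the $\kappa$-points of $X$, I would invoke \cref{good_moduli_properties}-(iv), which puts $\kappa$-points of $X$ in bijection with closure-equivalence classes of $\kappa$-points of $\X$, equivalently with closed $\kappa$-points of $\X$. Since $\X$ is closed in $\M_\C^\heartsuit$, any closed $\kappa$-point of $\X$ is also closed in $\M_\C^\heartsuit$, so by \cref{closed_point_is_semisimple} it corresponds to a pseudo-perfect semisimple object of $\C_\kappa^\heartsuit$; conversely, any such semisimple object factoring through $\X$ defines a point that is closed in $\M_\C^\heartsuit$ and therefore in $\X$, which gives the stated bijection once we read "lying over $\X$" as the factorization of $x_F \colon \Spec(\kappa) \to \M_\C^\heartsuit$ through $\X$. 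Finally, for the derived enhancement, if $\X = t_0 \mathbf{X}$ for some derived substack $\mathbf{X} \subset \M_\C^{[0,0]}$, then $\mathbf{X}$ inherits local geometricity and local finite presentation from \cref{moduli_flat_objects}, so \cref{derived_good}-(1) automatically upgrades $q$ to a derived good moduli space $\mathbf{X} \to X$.

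The conceptual heavy lifting has already been absorbed into \cref{Theta_reductive} and \cref{S_complete}; the only remaining point to check carefully is the stability of the two valuative criteria under passage to closed substacks, but this follows immediately from the uniqueness clauses in \cref{def_Theta_reductive} and \cref{def_S_complete} together with the fact that closed immersions are monomorphisms. So no real obstacle remains.
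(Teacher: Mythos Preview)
Your proposal is correct and follows essentially the same route as the paper's proof: invoke \cref{Theta_reductive} and \cref{S_complete}, observe that $\Theta$-reductiveness, $\mathrm{S}$-completeness, and affine diagonal are inherited by closed substacks, apply \cref{existence_good_separated}, then read off the $\kappa$-points via \cref{good_moduli_properties}-(4) and \cref{closed_point_is_semisimple}, and finish with \cref{derived_good}. The paper's version is terser (it simply asserts stability under closed substacks without the density argument you spell out), but the structure and ingredients are identical.
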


\begin{proof}
	By \cref{Theta_reductive} and \cref{S_complete}, $\M_\C^\heartsuit$ is $\Theta$-reductive and $\mathrm{S}$-complete.
	Moreover $\M_\C^\heartsuit$ has affine diagonal by \cref{affine_diagonal}.
	Since this properties are preserved under taking closed substacks, the hypothesis of \cref{existence_good_separated} are satisfied for $\X$.
	The characterization of $\kappa$-points of $X$ follows from \cref{good_moduli_properties}-(4) and \cref{closed_point_is_semisimple}.
	The natural derived enhancement is provided by \cref{derived_good}.
\end{proof}

\begin{corollary}\label{good_general_qc_components}
	In the setting of \cref{good_general}, assume that the connected components of $\M_\C^\heartsuit$ are quasi-compact.
	Then $\M_\C^\heartsuit$ admits a separated good moduli space $\mathrm{M}_\C^\heartsuit$.
	Moreover, the $\kappa$-points of $\mathrm{M}_\C^\heartsuit$ parametrize pseudo-perfect semisimple objects of $\C_\kappa^\heartsuit$.
	Furthermore, $\mathrm{M}_\C^\heartsuit$ admits a natural a derived enhancement $\mathrm{M}_\C^{[0,0]}$ which is a derived good moduli space for $\M_\C^{[0,0]}$.
\end{corollary}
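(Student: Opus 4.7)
The plan is to reduce to \cref{good_general} by decomposing $\M_\C^\heartsuit$ into its connected components. First I would observe that, by \cref{geometric_heart}, $\M_\C^\heartsuit$ is an algebraic stack locally of finite presentation over $k$; hence its connected components $\left\lbrace\X_\alpha\right\rbrace_{\alpha \in A}$ are open and closed substacks, and by hypothesis each $\X_\alpha$ is quasi-compact. In particular each $\X_\alpha$ is a closed quasi-compact substack of $\M_\C^\heartsuit$, so \cref{good_general} produces a separated good moduli space $q_\alpha \colon \X_\alpha \to X_\alpha$ together with the expected description of closed $\kappa$-points.

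Next I would set $\mathrm{M}_\C^\heartsuit \coloneqq \bigsqcup_{\alpha \in A} X_\alpha$ (which is an algebraic space, being a disjoint union of algebraic spaces) and assemble the $q_\alpha$'s into a morphism $q \colon \M_\C^\heartsuit \to \mathrm{M}_\C^\heartsuit$. The conditions defining a good moduli space---the quasi-compactness and quasi-separatedness of $q$, the exactness of $q_\ast$ on $\QCoh$, and the equivalence $\OO_{\mathrm{M}_\C^\heartsuit} \xrightarrow{\sim} q_\ast\OO_{\M_\C^\heartsuit}$---are all local on the target for the Zariski topology, so the fact that each $q_\alpha$ is a good moduli space implies the same for $q$. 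Separatedness of $q$ likewise follows from the separatedness of each $q_\alpha$. The characterization of the $\kappa$-points of $\mathrm{M}_\C^\heartsuit$ in terms of pseudo-perfect semisimple objects of $\C_\kappa^\heartsuit$ then follows by collating the componentwise descriptions furnished by \cref{good_general}.

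Finally, for the derived enhancement I would invoke \cref{derived_good}-(1) applied to the geometric derived stack $\M_\C^{[0,0]}$, whose classical truncation is $\M_\C^\heartsuit$: this produces a derived good moduli space $\mathrm{M}_\C^{[0,0]} \to \M_\C^{[0,0]}$ whose truncation recovers $q \colon \M_\C^\heartsuit \to \mathrm{M}_\C^\heartsuit$ by \cref{derived_good}-(2). There is no real obstacle in this argument: the nontrivial content (namely $\Theta$-reductiveness, $\mathrm{S}$-completeness, and the application of \cite[Theorem A]{AHLH}) has been absorbed into \cref{good_general}, and the present corollary reduces to a routine glueing of good moduli spaces across the open-and-closed decomposition into connected components.
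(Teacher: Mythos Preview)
Your proposal is correct and follows essentially the same approach as the paper: reduce to the connected components (which are closed and quasi-compact), apply \cref{good_general} to each, and glue. The paper's proof is terser---it invokes \cite[Proposition~4.7-(ii)]{Alp} to handle the Zariski-local gluing of good moduli spaces in one stroke---but your explicit verification of the good moduli space axioms over the disjoint-union target amounts to the same thing.
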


\begin{proof}
By \cite[Proposition 4.7-(ii)]{Alp}, it is enough to prove that every connected component of $\M_\C^\heartsuit$ admits a separated good moduli space.
	The claims then follow at once by our assumption that the connected components of $\M_\C^\heartsuit$ are quasi-compact and \cref{good_general}.	
\end{proof}

	In the next paragraph, we give applications of \cref{good_general}.

\subsubsection{Quasi-compact substacks of extensions}\label{subsection_extension}
	In this subsection we want to prove that there are natural closed substacks of $\M_\C^\heartsuit$ that are quasi-compact.
	This substacks roughly arise by fixing a finite number of objects in $\C^\heartsuit$ and looking at all elements that are extensions of the given objects.

\begin{recollection}
	Let $\C \in \Prlo_k$ of finite type equipped with an admissible $t$-structure $\tau$ universally satisfying openness of flatness.
	Following \cite[Section I.5.6]{HDR}, we introduce the derived stack of extensions $\M_\C^{[0,0], ext}$.
	We will not give the precise construction, but we will limit ourselves to describe its points: a morphism $\Spec (A) \to \M_\C^{[0,0], ext}$ corresponds to a fiber/cofiber sequence
\[
F_1 \to F_2 \to F_3
\]
with the $F_i$'s pseudo-perfect and $\tau_A$-flat.
	Notice that if $\Spec (A)$ is underived, the above is a short exact sequence in $\C_A^\heartsuit$.
	The stack $\M_\C^{[0,0], ext}$ comes with a morphism
\[
(\ev_1, \ev_2, \ev_3)\colon \M_\C^{[0,0], ext} \to \M_\C^{[0,0]} \times \M_\C^{[0,0]} \times \M_\C^{[0,0]}
\]
that sends a fiber/cofiber sequence to its components.
\end{recollection}

\begin{notation}
	We will denote by $\M_\C^{\heartsuit, ext}$ the classical truncation of the derived stack $\M_\C^{[0,0], ext}$.
\end{notation}

\begin{proposition}\label{proper_ev}
	Let $\C \in \Prlo_k$ of finite type equipped with an admissible $t$-structure $\tau$ satisfying \cref{Assumption_subobject} and universally satisfying openness of flatness.
	The morphism of stacks over $k$
\[
\ev_2\colon \M_\C^{\heartsuit, ext} \to \M_\C^\heartsuit
\]
\[
(0 \to F_1 \to F_2 \to F_3 \to 0) \mapsto F_2
\]
satisfies the valuative criterion for properness.
\end{proposition}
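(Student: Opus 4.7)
The plan is to verify the valuative criterion directly, by constructing the extension of the subobject $F_1 \subset F_2$ to the given family over $R$. Fix a DVR $R$ over $k$ with fraction field $K$ and uniformizer $\pi$; a square to be filled consists of a short exact sequence $0 \to F_1 \to F_2 \to F_3 \to 0$ in $\C_K^\heartsuit$ of pseudo-perfect objects and a pseudo-perfect $\tau_R$-flat object $G \in \C_R^\heartsuit$ together with an isomorphism $G \otimes_R K \simeq F_2$. I have to produce a unique (up to unique isomorphism) short exact sequence $0 \to G_1 \to G \to G_3 \to 0$ in $\C_R^\heartsuit$ of pseudo-perfect $\tau_R$-flat objects restricting to the given one over $K$.

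The main step is the construction of $G_1$ as the ``saturation'' of $F_1$ inside $G$. By \cref{flatness_DVR} multiplication by $\pi$ is injective on $G$, so $G \hookrightarrow G \otimes_R K \simeq F_2$ is a monomorphism in $\C_R^\heartsuit$. I would then define
\[
G_1 \coloneqq G \cap F_1 \subseteq F_2
\]
as the pullback of the two monomorphisms $G \hookrightarrow F_2$ and $F_1 \hookrightarrow F_2$ in $\C_R^\heartsuit$, and set $G_3 \coloneqq G/G_1$. Pseudo-perfectness of $G_1$ and $G_3$ follows at once from \cref{Assumption_subobject} applied to the pseudo-perfect object $G$, while $\tau_R$-flatness of $G_1$ follows from \cref{flatness_DVR} since injectivity of $\pi$ on $G$ restricts to $G_1$. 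Exactness of $-\otimes_R K$ would give $G_1 \otimes_R K \simeq F_1$ and $G_3 \otimes_R K \simeq F_3$, so the resulting sequence restricts to the original one over $K$.

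For uniqueness, given another extension $0 \to G_1' \to G \to G_3' \to 0$, I would compare $G_1'$ with $G_1 = G \cap F_1$. The identification $G_1' \otimes_R K \simeq F_1$ inside $F_2$ immediately gives $G_1' \subseteq G_1$. For the reverse inclusion, given $x \in G_1$, its image in $F_2$ lies in $F_1 \simeq G_1' \otimes_R K$, so clearing denominators produces some $n \geq 0$ with $\pi^n x \in G_1'$; $\tau_R$-flatness of the other quotient $G_3'$ via \cref{flatness_DVR} then forces $x \in G_1'$.

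The step I expect to be the main obstacle is $\tau_R$-flatness of $G_3 = G/G_1$: I would argue that if $\pi x \in G_1$ for some $x \in G$, then the image of $x$ in $F_2$ satisfies $\pi \cdot \mathrm{image}(x) \in F_1$ and hence already lies in $F_1$ since $\pi$ is a unit in $K$, forcing $x \in G \cap F_1 = G_1$. This is precisely what dictates the saturation as the correct choice of $G_1$; all remaining verifications reduce to manipulations of subobjects controlled by \cref{Assumption_subobject}, \cref{flatness_DVR}, and exactness of localization.
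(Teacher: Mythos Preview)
Your proof is correct and constructs exactly the same objects as the paper's: your $G_1 = G \cap F_1$ is the paper's $\widetilde{F}_1$, and your $G_3 = G/G_1$ is the paper's $\widetilde{F}_3$. The paper just builds them in the opposite order, defining $\widetilde{F}_3$ first as the image of the composite $G \to F_2 \to F_3$ and then $\widetilde{F}_1$ as its kernel; this makes the $\tau_R$-flatness of $G_3$ immediate (it is a subobject of the $K$-object $F_3$, on which $\pi$ acts invertibly, so \cref{flatness_DVR} applies directly) and reduces uniqueness to the uniqueness of the epi-mono factorization of $G \to F_3$, avoiding your element-style saturation argument. Both routes are equivalent and your reasoning is sound.
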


\begin{proof}
	Let $R$ be a DVR and $K$ be its fraction field.
	Consider the lifting problem
\[\begin{tikzcd}[sep=small]
	{\Spec (K)} && {\M_\C^{\heartsuit, ext}} \\
	\\
	{\Spec (R) } && {\M_\C^{\heartsuit}}
	\arrow["t", from=1-1, to=1-3]
	\arrow[from=1-1, to=3-1]
	\arrow[from=1-3, to=3-3]
	\arrow[dashed, from=3-1, to=1-3]
	\arrow["s"', from=3-1, to=3-3]
\end{tikzcd}\]
	We want to prove that a lift exist and it is unique.
	The above diagram corresponds to a short exact sequence
\begin{equation}\label{s.e.s.}
    0 \to F_1 \to \widetilde{F}_2 \otimes_R K  \to F_3 \to 0
\end{equation}
in $\C_K^\heartsuit$, where $F_1, F_3$ are pseudo-perfect $\tau$-flat families over $K$, and $F_2$ is a pseudo-perfect $\tau$-flat families over $R$.
	In virtue of \cref{induced_t_structure} we can see \eqref{s.e.s.} as a short exact sequence in $\C_R^\heartsuit$.
	Notice that the families in \eqref{s.e.s.} are not necessarily pseudo-perfect over $R$.
	Define $\widetilde{F}_3$ to be the image of the composite map $\widetilde{F}_2 \to \widetilde{F}_2\otimes_R K \to F_3$.
	By construction $\widetilde{F}_3$ is a quotient of $\widetilde{F}_2$.
	Hence $\widetilde{F}_3$ is pseudo-perfect over $R$ by \cref{Assumption_subobject}.
	Since $F_3$ is $\tau_R$-flat and $\widetilde{F}_3$ is a subobject of $F_3$, it follows from \ref{flatness_DVR} that $\widetilde{F}_3$ is $\tau_R$-flat.
	By the two-out-of-three property for perfect complexes, it follows that 
\[
\widetilde{F}_1 \coloneqq \mathrm{Ker}(\widetilde{F}_2 \to \widetilde{F}_3) \simeq \mathrm{fib}(\widetilde{F}_2 \to \widetilde{F}_3)
\]
is pseudo-perfect.
	Since $\widetilde{F}_1$ is a subobject of $\widetilde{F}_2$, it follows again from \cref{flatness_DVR} that $\widetilde{F}_1$ is also $\tau_R$-flat.
	Thus we have constructed a lift
\[
0 \to \widetilde{F}_1 \to \widetilde{F}_2 \to \widetilde{F}_3 \to 0
\]
of \eqref{s.e.s.}.
	The uniqueness follows by the uniqueness of the epi-mono factorization.
\end{proof}

	We will also need the following lemma:

\begin{lemma}\label{qc_map}
	Let $\C \in \Prlo_k$ of finite type equipped with an admissible $t$-structure $\tau$ universally satisfying openness of flatness.
	There exists a perfect complex $\E \in \Perf(\M_\C^{[0,0]} \times\M_\C^{[0,0]})$ and a commutative
\[\begin{tikzcd}[sep=small]
	{\M_\C^{[0,0], ext}} &&&& {\mathbb{V}(\E)} \\
	\\
	&& {\M_\C^{[0,0]} \times\M_\C^{[0,0]}}
	\arrow["\phi", from=1-1, to=1-5]
	\arrow["{(\ev_1, \ev_3)}"', from=1-1, to=3-3]
	\arrow["p", from=1-5, to=3-3]
\end{tikzcd}\]
where $\mathbb{V}(\E)\coloneqq \Spec(\mathrm{Sym}(\E))$ and $\phi$ is an equivalence.
	In particular $(ev_1, ev_3)$ is a quasi-compact morphism.
\end{lemma}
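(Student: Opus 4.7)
The plan is to realize $\M_\C^{[0,0], ext}$ as the total space of the "derived $\Ext^1$-bundle" over $\M_\C^{[0,0]} \times \M_\C^{[0,0]}$. Let $\mathcal{F}_1, \mathcal{F}_3 \in \C_{\M_\C^{[0,0]} \times \M_\C^{[0,0]}}$ denote the pseudo-perfect $\tau$-flat universal families obtained by pulling back along the two projections. The key input is \cref{pseudo_perf_Hom}, which gives $\Hom(\mathcal{F}_3, \mathcal{F}_1[1]) \in \Perf(\M_\C^{[0,0]} \times \M_\C^{[0,0]})$; I would accordingly set $\E := \Hom(\mathcal{F}_3, \mathcal{F}_1[1])^\vee$.

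For a morphism $f \colon T = \Spec(A) \to \M_\C^{[0,0]} \times \M_\C^{[0,0]}$, the $T$-points of $\mathbb{V}(\E)$ are $\Map_{\OO_T}(f^\ast \E, \OO_T)$, which, via duality for perfect complexes together with \cref{Hom_pseudo_perf_pullback}, are identified with $\Omega^\infty \Hom_{\C_T}(F_3, F_1[1])$, where $F_i := f^\ast \mathcal{F}_i$. On the other hand, a $T$-point of $\M_\C^{[0,0], ext}$ over $(F_1, F_3)$ is by definition a fiber/cofiber sequence $F_1 \to F_2 \to F_3$ in $\C_T$ whose middle term is pseudo-perfect and $\tau$-flat. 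By the standard classification of fiber sequences in a stable $\infty$-category, such sequences are parametrized, up to a contractible space of choices, by their boundary map $\alpha \colon F_3 \to F_1[1]$; the inverse assignment takes $\alpha$ to $F_2 := \mathrm{fib}(\alpha)$, and rotation of the fiber sequence $F_2 \to F_3 \to F_1[1]$ recovers $F_1 \to F_2 \to F_3$. This defines $\phi$, and its commutativity with the structure maps to $\M_\C^{[0,0]} \times \M_\C^{[0,0]}$ is tautological.

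The one point to verify is that the middle object $F_2$ attached to an arbitrary map $\alpha$ indeed lies in $\M_\C^{[0,0], ext}$, i.e., is pseudo-perfect and $\tau$-flat over $A$. Pseudo-perfectness is immediate: $\Perf(T)$ is closed under fibers and pseudo-perfectness is tested after evaluation on $\C^\omega$. For $\tau$-flatness, the long exact sequence of homotopy objects associated to $F_1 \to F_2 \to F_3$ first shows that $F_2 \in \C_A^\heartsuit$, since $\pi_i(F_1) = \pi_i(F_3) = 0$ for $i \neq 0$. Tensoring this fiber sequence over $A$ with an arbitrary $M \in \Mod_A^\heartsuit$ yields a fiber sequence $F_1 \otimes_A M \to F_2 \otimes_A M \to F_3 \otimes_A M$ whose outer terms lie in $\C_A^\heartsuit$ by $\tau$-flatness of $F_1$ and $F_3$; a second application of the long exact sequence then forces $F_2 \otimes_A M \in \C_A^\heartsuit$. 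The final assertion that $(\ev_1, \ev_3)$ is quasi-compact follows at once from the affineness, hence quasi-compactness, of $p \colon \mathbb{V}(\E) \to \M_\C^{[0,0]} \times \M_\C^{[0,0]}$. The only mildly subtle step is keeping track of the identifications between internal $\Hom$, pullbacks and duality, but all of these are already codified in \cref{pseudo_perf_Hom} and \cref{Hom_pseudo_perf_pullback}.
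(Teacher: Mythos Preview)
Your proof is correct and follows essentially the same approach as the paper: both define $\E$ as the shifted dual of the internal $\Hom$ between the two universal families and identify the $T$-points of $\mathbb{V}(\E)$ with boundary maps classifying fiber sequences. Your version is in fact slightly more complete, since you explicitly verify that the middle term $F_2 = \mathrm{fib}(\alpha)$ is pseudo-perfect and $\tau$-flat (a step the paper leaves implicit), and your orientation $\Hom(\mathcal{F}_3,\mathcal{F}_1[1])$ is the correct one for extensions $F_1 \to F_2 \to F_3$, whereas the paper's indexing appears to contain a harmless transposition.
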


\begin{remark}
	\cref{qc_map} should not be surprising, indeed above any point parametrizing a pair of objects $F_1, F_3$, the (connected components of) fiber of $(\ev_1, \ev_3)$ consists on the possible extensions $\Ext^1(F_1, F_3)$.
\end{remark}

\begin{proof}
	Let $\mathcal{F}$ be the universal family of $\M_\C^{[0,0]}$, i.e., the $\tau$-flat pseudo-perfect family 
\[
\mathcal{F} \in \Fun((\C^{\omega})^{op}, \Perf(\M_\C^{[0,0]}))
\]
corresponding to the identity of $\M_\C^{[0,0]}$.
	Let $p_i \colon \M_\C^{[0,0]} \times \M_\C^{[0,0]} \to \M_\C^{[0,0]}$ be the canonical projections and set $\mathcal{F}_i \coloneqq p_i^\ast\mathcal{F}$.
	By definition, $\mathcal{F}_1$ and $\mathcal{F}_2$ are pseudo-perfect.
	Thus
\[
\E \coloneqq \Hom(\mathcal{F}_1, \mathcal{F}_2)^\vee[-1] \in \Perf(\M_\C^{[0,0]} \times \M_\C^{[0,0]})
\]
by \cref{pseudo_perf_Hom} and we can define the linear stack over $\M_\C^{[0,0]} \times \M_\C^{[0,0]}$ associated to $\E$:
\[
\mathbb{V}(\E) \coloneqq \Spec(\mathrm{Sym}(\E)).
\] 
	For $x \colon \Spec (A) \to \M_\C^{[0,0]} \times \M_\C^{[0,0]}$ parametrizing a couple of pseudo-perfect $\tau_A$-flat families $F_1, F_2$, we have
\begin{equation}\label{Hom_lin}
\begin{aligned}
    \Map_{\big/(\M_\C^{[0,0]} \times \M_\C^{[0,0]})} & (\Spec (A), \mathbb{V}(\E))  \simeq \tau_{\geq 0}\Hom_{\Mod_A}(x^\ast\E, A) \simeq\\
    & \simeq \tau_{\geq 0}\Hom_{\Mod_A}(\Hom(F_1, F_2)^\vee[-1], A) \simeq \\
    & \simeq \tau_{\geq 0}\Hom_{\Mod_A}(A, \Hom(F_1, F_2)[1]) \simeq\\
    & \simeq \tau_{\geq 0}(\Hom(F_1, F_2)[1]),
\end{aligned}    
\end{equation}
where we used that $x^\ast\E \simeq \Hom(x^\ast\mathcal{F}_1, x^\ast\mathcal{F}_2)$  (see \cite[Lemma I.4.9]{HDR}).
	In particular, any  point of $\M_\C^{ext, [0,0]}$ defines a point of $\mathbb{V}(\E)$, so that we have an induced morphism $\phi \colon \M_\C^{ext, [0,0]} \to \mathbb{V}(\E)$.
	This morphism induces an equivalence
\[
\Map_{\big/(\M_\C^{[0,0]} \times \M_\C^{[0,0]})}(\Spec (A), \M_\C^{ext, [0,0]}) \to \Map_{\big/(\M_\C^{[0,0]} \times \M_\C^{[0,0]})}(\Spec (A), \mathbb{V}(\E))
\]
for any $\Spec (A) \to \M_\C^{[0,0]} \times \M_\C^{[0,0]}$ as shown by \eqref{Hom_lin}.
	Thus we have a commutative diagram
\[\begin{tikzcd}[sep=small]
	{\M_\C^{[0,0], ext}} &&&& {\mathbb{V}(\E)} \\
	\\
	&& {\M_\C^{[0,0]} \times\M_\C^{[0,0]}}
	\arrow["\phi", from=1-1, to=1-5]
	\arrow["{(\ev_1, \ev_3)}"', from=1-1, to=3-3]
	\arrow["p", from=1-5, to=3-3]
\end{tikzcd}\]
where the horizontal arrow is an equivalence, proving the statement.
\end{proof}

\begin{construction}\label{construction_ext}
	Let $\C \in \Prlo_k$ of finite type equipped with an admissible $t$-structure $\tau$ satisfying \cref{Assumption_subobject} and universally satisfying openness of flatness.
	Let $\Spec(\kappa) \to \Spec(k)$ be a closed point with $\kappa$ algebraically closed and let $r, N \in \N$.
	Let $\underline{F} = \left\lbrace F_1, \cdots F_r\right\rbrace \subset \C_\kappa^\heartsuit$ be a finite set of pseudo-perfect semisimple objects. 
	Notice that the objects in $\underline{F}$ are automatically $\tau_{\kappa}$-flat (\cref{tau_flat_in_heart}) and that the corresponding points of $\M_\C^\heartsuit$ are closed (\cref{closed_point_is_semisimple}).
	Denote by $\mathcal{G}_{F_i}$ the associated residual gerbes and let $q_i \colon \mathcal{G}_{F_i} \hookrightarrow \M_\C^\heartsuit$ be the closed immersion (\cite[\href{https://stacks.math.columbia.edu/tag/0H27}{Lemma 0H27}]{stacks-project}). By \cite[Théorème 11.3]{LMB}, the stacks $\mathcal{G}_{F_i}$ are of finite type over a field.
	Consider the diagram with pullback square
\[\begin{tikzcd}[sep=small]
	{\M_\C^{\heartsuit, ext, (F_{n_1}, F_{n_2})}} && {\M_\C^{\heartsuit, ext}} && {\M_\C^{\heartsuit}} \\
	\\
	{\mathcal{G}_{F_{n_1}} \times \mathcal{G}_{F_{n_2}}} && {\M_\C^{\heartsuit} \times \M_\C^{\heartsuit}}
	\arrow["{q'}", from=1-1, to=1-3]
	\arrow["{\ev_2^{(2)}}", bend left, from=1-1, to=1-5]
	\arrow["p"', from=1-1, to=3-1]
	\arrow["{\ev_2}", from=1-3, to=1-5]
	\arrow["{(\ev_1, \ev_3)}", from=1-3, to=3-3]
	\arrow["{(q_1,q_2)}"', from=3-1, to=3-3]
\end{tikzcd}\]
	The projection $p$ is quasi-compact as it is the pullback of a quasi-compact morphism (\cref{qc_map}).
	Hence $\M_\C^{\heartsuit, ext,  (F_{n_1}, F_{n_2})}$ is an algebraic stack of finite presentation.
	Since $\M_\C^\heartsuit$ has affine diagonal by \cref{affine_diagonal}, it follows from \cite[\href{https://stacks.math.columbia.edu/tag/075S}{Lemma 075S}]{stacks-project} that the map $\ev_2^{(2)} \colon \M_\C^{\heartsuit, ext, (F_{n_1}, F_{n_2})} \to \M_\C^\heartsuit$ is quasi-compact.
	We then define the closed substack
\[
\M_\C^{\heartsuit, (F_{n_1}, F_{n_2})} \xhookrightarrow{i^{(2)}} \M_\C^\heartsuit
\]
to be the scheme-theoretic image of $\ev_2'$ (this is well defined by \cite[\href{https://stacks.math.columbia.edu/tag/081I}{Lemma 081I}]{stacks-project}).
	Since $q'$ is the pullback of a closed immersion, it is proper.
	Since moreover $\ev_2$ satisfies the valuative criterion for properness (\cref{proper_ev}), it follows that $\ev_2^{(2)}$ is a quasi-compact morphism satisfying the valuative criterion for properness, thus it is proper.
	In particular $\ev_2^{(2)}$ is (universally) closed. Hence $\M_\C^{\heartsuit, (F_{n_1}, F_{n_2})}$ is quasi-compact by \cite[\href{https://stacks.math.columbia.edu/tag/02JQ}{Lemma 02JQ}]{stacks-project}.\\ \indent
	We then proceed by induction.
	Suppose we have defined the closed substack
\[
i^{(s-1)} \colon \M_\C^{\heartsuit, \underline{F}''} \to \M_\C^\heartsuit
\]
where $F''$ is the ordered set $(F_{n_1}, \cdots F_{n_{s-1}})$. We define the closed substack $\M_\C^{\heartsuit, \underline{F}'}$ to be the image of the morphism $\ev_2^{(s)}$ defined by the diagram with pullback square
\[\begin{tikzcd}[sep=small]
	{\M_\C^{\heartsuit, \underline{F}'}} && {\M_\C^{\heartsuit, ext}} && {\M_\C^{\heartsuit}} \\
	\\
	{\M_\C^{\heartsuit, \underline{F}''} \times \mathcal{G}_{F_{n_s}}} && {\M_\C^{\heartsuit} \times \M_\C^{\heartsuit}}
	\arrow["{q''}", from=1-1, to=1-3]
	\arrow["{\ev_2^{(s)}}", bend left, from=1-1, to=1-5]
	\arrow["p"', from=1-1, to=3-1]
	\arrow["{\ev_2}", from=1-3, to=1-5]
	\arrow["{(\ev_1, \ev_3)}", from=1-3, to=3-3]
	\arrow["{(i', q_3)}"', from=3-1, to=3-3]
\end{tikzcd}\]
	The same proof as above shows that $\M_\C^{\heartsuit, \underline{F}'}$ is a well-defined quasi-compact closed substack of $\M_\C^\heartsuit$. Finally, we define the substack
\[
\M_\C^{\heartsuit, \underline{F}, N} \subset \M_\C^\heartsuit
\]
to be the union of the $\M_\C^{\heartsuit, \underline{F}'}$ constructed above.
\end{construction}

\begin{remark}\label{quasi_compact_ext}
	In the setting of \cref{construction_ext}, the stack $\M_\C^{\heartsuit, \underline{F}, N}$ is a finite union of quasi-compact closed substack of $\ \M_\C^\heartsuit$.
	Hence $\M_\C^{\heartsuit, \underline{F}, N}$ is itself a quasi-compact closed substack of $ \M_\C^\heartsuit$.
	Moreover, its closed $\kappa$-points are in bijection with families of $\C_\kappa^\heartsuit$ that are successive extensions of length at most $N$ of elements of $\underline{F}$.
\end{remark}

\begin{remark}
	In \cref{construction_ext} we suppose that $\kappa$ is algebraically closed field only to ensure that semisimple objects corresponds to closed points of $\M_\C^\heartsuit$.
	If we only suppose that $\kappa$ is a field, we should ask that the objects of $\underline{F}$ define closed points of $\M_\C^\heartsuit$. Note that a closed point must corresponds to a semisimple pseudo-perfect object by \cref{closed_point_is_semisimple_preliminar}.
\end{remark}

\begin{theorem}\label{good_ext}
	In the setting of \ref{construction_ext}, the stack $\M_\C^{\heartsuit, \underline{F}, N}$ admits a separated good moduli space $\mathrm{M}_\C^{\heartsuit, \underline{F}, N}$. Moreover, the $\kappa$-points of $\mathrm{M}_\C^{\heartsuit, \underline{F}, N}$ are in bijection with direct sum of at most $N$ objects in $\underline{F}$, possibly with repetitions.
\end{theorem}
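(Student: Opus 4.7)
The plan is to reduce the statement to Theorem \ref{good_general} applied to the quasi-compact closed substack produced by Construction \ref{construction_ext}, and then to unpack what the characterization of $\kappa$-points of the good moduli space means in this concrete case.

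First, I would invoke Remark \ref{quasi_compact_ext}, which asserts that $\M_\C^{\heartsuit, \underline{F}, N}$ is a quasi-compact closed substack of $\M_\C^\heartsuit$. The hypotheses of Theorem \ref{good_general} are therefore met, which immediately yields the existence of a separated good moduli space $\mathrm{M}_\C^{\heartsuit, \underline{F}, N}$ and identifies its $\kappa$-points with the pseudo-perfect semisimple objects of $\C_\kappa^\heartsuit$ lying over $\M_\C^{\heartsuit, \underline{F}, N}$.

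Second, I would translate the ``lying over'' condition using the second half of Remark \ref{quasi_compact_ext}: the closed $\kappa$-points of $\M_\C^{\heartsuit, \underline{F}, N}$ are in bijection with objects of $\C_\kappa^\heartsuit$ that are successive extensions of length at most $N$ of elements of $\underline{F}$. Combining this with the first step, a $\kappa$-point of $\mathrm{M}_\C^{\heartsuit, \underline{F}, N}$ corresponds to a pseudo-perfect \emph{semisimple} object $G \in \C_\kappa^\heartsuit$ admitting such a filtration.

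Third, I would show that such a $G$ is necessarily a direct sum of at most $N$ elements of $\underline{F}$, and conversely. For the forward direction I would argue by induction on the length $n \leq N$ of a filtration $0 = G_0 \subset G_1 \subset \cdots \subset G_n = G$ with $G_i / G_{i-1} \simeq F_{n_i}$: since $G$ is semisimple, the subobject $G_1$ is a direct summand, so $G \simeq F_{n_1} \oplus G/G_1$; the complement $G/G_1$ is itself semisimple (as a summand of $G$) and carries the induced filtration of length $n-1$, and we conclude by induction. For the converse direction, any $F_{n_1} \oplus \cdots \oplus F_{n_s}$ with $s \leq N$ is semisimple (a direct sum of semisimples) and defines a point of $\M_\C^{\heartsuit, \underline{F}, N}$ via the trivial split successive extension, hence defines a $\kappa$-point of $\mathrm{M}_\C^{\heartsuit, \underline{F}, N}$.

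The main obstacle is essentially the third step, where one must use that in $\C_\kappa^\heartsuit$ subobjects of a semisimple object are direct summands; but this is built into the standard meaning of ``semisimple'' that is used throughout the paper (compare Corollary \ref{closed_point_is_semisimple_preliminar} and Corollary \ref{closed_point_is_semisimple}), so the argument is routine once the quasi-compactness and the characterization of closed points from Remark \ref{quasi_compact_ext} are in hand.
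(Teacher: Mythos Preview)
Your proposal is correct and follows essentially the same route as the paper, which simply cites Remark~\ref{quasi_compact_ext}, Theorem~\ref{good_general}, and Corollary~\ref{closed_point_is_semisimple}. Your third step makes explicit the elementary fact (left implicit in the paper) that a semisimple object arising as an iterated extension of objects in $\underline{F}$ must split as a direct sum of those objects; the only small point to make precise in your second step is that a semisimple object lying over the closed substack is automatically a \emph{closed} $\kappa$-point there (via Corollary~\ref{closed_point_is_semisimple} and closedness of $\M_\C^{\heartsuit,\underline{F},N}$ in $\M_\C^\heartsuit$), which is what lets you invoke the description of closed $\kappa$-points in Remark~\ref{quasi_compact_ext}.
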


\begin{proof}
	The statement follows by combining \cref{quasi_compact_ext} with \cref{good_general} and \cref{closed_point_is_semisimple}.
\end{proof}

\section{Good moduli spaces for perverse sheaves}\label{section_good_Perv}

    In this section we specialize the results of \cref{existence_good_section} to the moduli of perverse sheaves.
    We start our discussion by recalling the definition of perverse sheaves following \cite{BBDG} and the representability results from \cite{Exodromy, exodromyconicality}.

\begin{notation}
	Throughout this section we will work with hypersheaves instead of sheaves.
	In the application we are interested in, there is no difference between the two notions.
	Since there is no risk of confusion, we will not make any reference to this in our notation.
\end{notation}

\begin{definition}
	A stratified space is the data of a continuous map $\rho \colon X \to P$ where $X$ is a topological space and $P$ is a poset endowed with the Alexandroff topology.
	A morphism of stratified spaces $(f,r) \colon (X,P) \to (Y,Q)$ is a commutative diagram
\[\begin{tikzcd}[column sep=scriptsize,row sep=small]
	X && Y \\
	\\
	P && Q
	\arrow["f", from=1-1, to=1-3]
	\arrow["{\rho_x}"', from=1-1, to=3-1]
	\arrow["{\rho_Y}", from=1-3, to=3-3]
	\arrow["r"', from=3-1, to=3-3]
\end{tikzcd}\]
	Abusing notations, we will often refer to a stratified space as a pair $(X,P)$.
	For $p \in P$, the $p$-stratum of $X$ is $X_p \coloneqq \rho^{-1}(p) \subset X$.
\end{definition}
	 
\begin{definition}[{\cite[Definition A.5.3]{HA}}]
	Let $(X,P)$ be a stratified space.
	The cone $(C(X), P \sqcup \left\lbrace - \infty \right\rbrace$) is the stratified space defined by:
	\begin{enumerate}\itemsep=0.2cm
    	\item as a set, $C(X) = \left\lbrace \ast \right\rbrace \sqcup (X \times \mathbb{R}_{>0}) $;
		\item a subset $U \subset C(X)$ is open if and only if $U \cap X \times \mathbb{R}_{>0}$ is open, and if $\ast \in U$ then there exist $\epsilon >0$ such that $X \times (0, \epsilon) \subset U$;
		\item the stratification of $C(X)$ is given by the map $\overline{\rho} \colon X \to P \sqcup \left\lbrace - \infty \right\rbrace$ defined by $\overline{\rho}(\ast) = - \infty$ and $\overline{\rho}(x,t) = \rho(x)$ for $(x,t) \in X \times \mathbb{R}_{>0}$.
	\end{enumerate}
\end{definition}

\begin{definition}[{\cite[Definition A.5.5]{HA}}]
	Let $(X,P)$ be a stratified space.
	We say that $(X,P)$ is conically stratified if for every $p \in P$ and $x \in X_p$, there exists a stratified space $(Y, P_{>p})$ and a topological space $Z$ such that there exists an open immersion $j \colon C(Y) \times Z \to X$ for which $x$ lies in the image of $j$ and the following diagram commute:
\[\begin{tikzcd}[sep=small]
	{C(Y) \times Z} && X \\
	{C(Y)} \\
	{P_{\geq p}} && P
	\arrow["j", hook, from=1-1, to=1-3]
	\arrow["{\pi_1}"', from=1-1, to=2-1]
	\arrow["{\rho_X}", from=1-3, to=3-3]
	\arrow["{\overline{\rho}_Y}"', from=2-1, to=3-1]
	\arrow[from=3-1, to=3-3]
\end{tikzcd}\]
\end{definition}

\begin{definition}
    Let $(X,P)$ be a stratified space.
	We say that $(X,P)$ is conically refineable if there exists a conical stratification $(X,Q)$ with locally weakly contractible strata equipped with a surjective map of posets $r \colon Q \to P$ such that $(id_X, r) \colon (X,Q) \to (X,P)$ is a morphism of stratified spaces.
\end{definition}

\begin{definition}
	Let $(X,P)$ be a stratified space and let $\E \in \Cat_\infty$. 
	For $p \in P$, denote by $i_p \colon X_p \to X$ the inclusion.
	An object $F \in \Sh(X; \E)$ is constructible if $i_p^\ast F$ is locally constant for every $p \in P$.
	We denote by
	\[
	\Cons_P(X; \E) \subset \Sh(X ; \E)
	\]
	the full subcategory spanned by constructible sheaves.
\end{definition}

\begin{definition}
	Let $(X,P)$ be a stratified space and let $\E \in \Cat_\infty$.
	We let 
\[
\Cons_{P, \omega}(X; \E) \subset \Cons_P(X; \E)
\]
be the full subcategory spanned by constructible sheaves  with compact stalks.
\end{definition}

\begin{recollection} \cite[Corollary 4.1.15]{exodromyconicality}\label{conical_vs_exodromic}
	Let $(X,P)$ be a conically refineable stratified space and let  $\Pi_\infty(X,P)$ be the opposite of the full subcategory of objects $x\in \Cons_P(X; \Spc) $ such that the functor
\[
\Map(x,-) \colon \Cons_P(X; \Spc) \to \Spc
\]
commutes with colimits.
	Then \cite[Corollary 4.1.15]{exodromyconicality} shows that for every  $\E \in \Prlo$, there is a canonical equivalence 
\[
\Cons_P(X; \E) \simeq \Fun(\Pi_\infty(X,P), \E).
\]
\end{recollection}

\begin{example}\label{example_exodromic}
	By \cite[Theorem 5.4.1]{Exodromy}, if $(X,P)$ is a conically stratified space with locally weakly contractible strata, then $\Pi_\infty(X,P)$ agrees with Lurie's simplicial set of exit paths (see \cite[Definition A.6.2 \& Theorem A.6.4]{HA}).
	By \cite[Theorem 0.3.1-(3)]{exodromyconicality}, if $(X,P)$ admits a conical refinement $(X,Q)$ with locally weakly contractible strata, then $\Pi_\infty(X,P)$ is a localization of $\Pi_\infty(X,Q)$.
	In particular, the objects of $\Pi_\infty(X,P)$ are exactly the points of $X$.
\end{example}

\begin{definition}[{\cite[Definition 2.2.1]{Exodromy}}]\label{finite_strat}
	Let $(X,P)$ be a conically refineable stratified space.
\begin{itemize}\itemsep=0.2cm
    \item[(i)] We say that $(X,P)$ is categorically compact if $\Pi_\infty(X,P) \in \Cat_\infty^\omega$.
    \item[(ii)] We say that $(X,P)$ is locally categorically compact if $X$ admits a fundamental system of open subsets $\left\lbrace U_i \right\rbrace_{i \in I}$ such that each $(U_i,P)$ is categorically compact.
\end{itemize}
\end{definition}

\begin{example}\label{algebraic_strat}
	Real algebraic varieties with finite stratification by Zariski locally closed subsets are conically refineable, categorically compact, locally categorically compact and have locally weakly contractible strata by \cite[Theorem 2.4.12 \& Remark 2.4.13]{Perv_moduli} (see also \cite[Theorem 5.3.13]{exodromyconicality}).
\end{example}

\begin{definition}
  Let $X$ be a topological space.
  We say that a stratification $X \to P$ is locally finite if for every point $x \in X$, there is an open neighborhood $U$ of $x$ such that $U$ intersects only finitely many strata of $(X,P)$. 
\end{definition} 

\begin{example}\label{subanalytic_strat}
	Real compact subanalytic spaces with locally finite stratification by subanalytic subsets are conically refineable, categorically compact, locally categorically compact and have locally weakly contractible strata by \cite[Theorem 2.4.10 \& Remark 2.4.13]{Perv_moduli} (see also \cite[Theorem 5.3.9]{exodromyconicality}).
\end{example}

\begin{definition}
	A complex subanalytic stratified space is the data of a triple $(M,X,P)$ where $M$ is a smooth complex analytic space, $X \subset M$ is a locally closed complex subanalytic subset, and $X \to P$ is a locally finite stratification by complex subanalytic subsets of $M$.
\end{definition}

\begin{definition}
	A complex algebraic stratified space is the data of a stratified space $(X,P)$ where $X$ is (the complex points of) an algebraic variety over $\mathbb{C}$ and $X \to P$ is a finite stratification by Zariski locally closed subsets.
\end{definition}

\begin{definition}\label{Whitney_strat_def}
	Let $A$ be a smooth manifold. 
	A stratified space $(X, P)$ on a subset $X$ of $A$ is said to be a Whitney stratified space if the following conditions hold:
	\begin{enumerate}\itemsep=0.2cm
    \item $P$ is finite;
    \item for every $p \in P$, the stratum $X_p$ is smooth;
    \item each point has a neighborhood intersecting only a finite number of strata;    	\item if $p \in P$ and $\overline{X_p}$ is the closure of $X_p$ in $A$, the intersection $(X_p \setminus \overline{X_p}) \cap X$ is a union of strata;
    \item for every $p,q \in P$, $p < q$, the pair $(X_p, X_q)$ satisfies Whitney's conditions $B$ (see e.g. \cite[Section 1.2]{GM}).
\end{enumerate}
\end{definition}

\begin{theorem}[{\cite[Theorem 1.1 \& Proposition 2.2]{NV}}]\label{Whitney_vs_conical}
	Whitney stratified spaces are conically stratified.
\end{theorem}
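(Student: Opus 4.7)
The plan is to invoke Thom--Mather theory of abstract stratified spaces and use it to produce, at every point, a local conical chart compatible with the stratification. Fix $p \in P$ and $x \in X_p$. Because $(X,P)$ is Whitney, there exist Mather control data on $(X,P)$: a system of tubular neighborhoods $T_q$ of each stratum $X_q$ in $A$ with smooth retractions $\pi_q \colon T_q \to X_q$ and tubular functions $\rho_q \colon T_q \to [0, \infty)$ vanishing exactly on $X_q$, satisfying the compatibilities $\pi_p \circ \pi_q = \pi_p$ and $\rho_p \circ \pi_q = \rho_p$ on a neighborhood of $X_p$, and such that the restriction of $(\pi_q, \rho_q)$ to every higher stratum meeting $T_q$ is a submersion. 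The existence of such control data for Whitney stratifications is the main content of Mather's notes and is what makes the argument below work.

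First I would build the conical neighborhood of $x$ as follows. Choose a Euclidean chart $U \subset X_p$ around $x$ and set
\[
W \coloneqq \{ y \in T_p \mid \pi_p(y) \in U, \ \rho_p(y) < \varepsilon \}
\]
for $\varepsilon$ small enough that $\overline{W}$ is contained in every $T_q$ needed. Thom's first isotopy lemma applied to the proper stratified submersion $(\pi_p, \rho_p) \colon W \setminus X_p \to U \times (0, \varepsilon)$ produces a stratified homeomorphism
\[
W \setminus X_p \simeq U \times (0, \varepsilon) \times L,
\]
where $L \coloneqq \rho_p^{-1}(\varepsilon) \cap \pi_p^{-1}(x)$ is the link of $X_p$ at $x$, naturally stratified by $P_{>p}$. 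Extending the trivialization across the cone point by the retraction $\pi_p$ identifies $W$ with $U \times C(L)$ as stratified spaces over $P_{\geq p} \simeq P_{>p} \sqcup \{-\infty\}$, which is exactly the local picture demanded by the definition of conicality, with $Z = U$ and $Y = L$.

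The main obstacle is to ensure that $(L, P_{>p})$ is itself a stratified space to which the construction applies, so that the argument actually iterates. The submersion property of $(\pi_p, \rho_p)$ on every higher stratum, a consequence of Whitney's condition B, guarantees that the intersection of $\rho_p^{-1}(\varepsilon)$ and $\pi_p^{-1}(x)$ with each $X_q$ ($q > p$) is a smooth submanifold of $L$; what must still be verified is that these submanifolds form a Whitney stratification of $L$. This preservation of Whitney regularity under transverse intersection with $\rho_p^{-1}(\varepsilon) \cap \pi_p^{-1}(x)$ is the technical heart of the proof and is precisely where one uses the full strength of the Whitney conditions. Granting this, the depth of the stratification strictly decreases in passing from $(X,P)$ to $(L, P_{>p})$, so induction on depth (which terminates because $P$ is finite, hence of finite depth) promotes the local product-with-cone description above to a genuine conical chart in the sense of \cite[Definition A.5.5]{HA}, completing the proof.
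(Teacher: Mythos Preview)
The paper does not prove this theorem at all: it is stated purely as a citation to \cite{NV} (Theorem 1.1 and Proposition 2.2), with no argument given. There is therefore nothing in the paper to compare your proposal against.

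That said, your sketch is the standard Thom--Mather route and is essentially correct as an outline. The one place where your write-up is a genuine sketch rather than a proof is the step you yourself flag: that the link $L$ inherits a Whitney stratification so that the induction goes through. This is true, but it is not automatic from the definitions and requires a careful argument about preservation of Whitney regularity under transverse slicing; the cited reference \cite{NV} (and Mather's original notes) handle precisely this point. If you intend your proposal to replace the bare citation, you would need to either supply that argument or cite it explicitly.
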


\begin{definition}\label{Withney_manifold_def}
	A complex Whitney stratified space is a stratified space $(X,P)$ such that:
\begin{enumerate}\itemsep=0.2cm
	\item $(X,P)$ is a complex algebraic stratified space or a compact complex subanalytic stratified space;
	\item $(X,P)$ is a Whitney stratified space.
\end{enumerate}
	If moreover $X$ is smooth, we say that $(X,P)$ is a complex Whitney stratified manifold.
\end{definition}

\begin{theorem}\label{Whitney_manifold}
	Complex Whitney stratified spaces have locally weakly contractible strata and are conically stratified, locally categorically compact and categorically compact.
\end{theorem}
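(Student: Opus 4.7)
The plan is to verify each of the four properties separately by assembling the earlier results in the section, splitting into the algebraic and compact subanalytic cases whenever necessary.

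First I would handle conical stratification, which is the easiest: by Definition \ref{Withney_manifold_def}, a complex Whitney stratified space is in particular a Whitney stratified space in the sense of Definition \ref{Whitney_strat_def}, so Theorem \ref{Whitney_vs_conical} applies and gives conicality directly. Next, for local weak contractibility of strata, I would invoke condition (2) in Definition \ref{Whitney_strat_def}: every stratum $X_p$ is a smooth manifold, hence locally Euclidean, hence locally (weakly) contractible. This step requires no additional input beyond the definition of Whitney stratification.

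The remaining two properties, namely local categorical compactness and categorical compactness, would be dispatched by reducing to the real setting and appealing to Examples \ref{algebraic_strat} and \ref{subanalytic_strat}. If $(X,P)$ is a complex algebraic stratified space, then the underlying real structure presents $X$ as a real algebraic variety, the stratification is finite (by Definition of complex algebraic stratified space) and each stratum, being Zariski locally closed in the complex sense, is in particular Zariski locally closed for the real algebraic structure; so Example \ref{algebraic_strat} applies and produces both categorical compactness and local categorical compactness. If instead $(X,P)$ is a compact complex subanalytic stratified space, then $X$ is a compact real subanalytic space equipped with a locally finite stratification by real subanalytic subsets, and Example \ref{subanalytic_strat} applies in the same way.

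The only point demanding a bit of care is the compatibility between the complex and real pictures in the two examples: one has to check that the notion of "Zariski locally closed subset" and "subanalytic subset" are preserved when passing from the complex structure to the underlying real structure, but both facts are standard (complex polynomials give real polynomial equations under the identification $\mathbb{C}^n \simeq \mathbb{R}^{2n}$, and complex subanalytic subsets are a fortiori real subanalytic). I do not expect any deep obstacle; the theorem is really a packaging statement showing that the Whitney hypothesis combines cleanly with the earlier examples to upgrade conical refineability to genuine conical stratification while preserving the categorical compactness properties.
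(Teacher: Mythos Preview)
Your argument for conical stratification and for the compact subanalytic case matches the paper's, and your treatment of local weak contractibility of strata is actually more direct than the paper's: you use that Whitney strata are smooth manifolds, hence locally Euclidean, while the paper obtains this property as a byproduct of the subanalytic machinery. Both are valid.

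The point where your proposal is too quick is the complex algebraic case. You assert that ``the underlying real structure presents $X$ as a real algebraic variety'' and then invoke \cref{algebraic_strat} globally. This is unproblematic when $X$ is affine (your remark about $\mathbb{C}^n \simeq \mathbb{R}^{2n}$ is exactly the right idea there), but for a general complex variety it is not immediate that the complex points with their analytic topology form a real algebraic variety in the sense required by the cited results, nor that complex Zariski locally closed subsets become real Zariski locally closed. The paper does not take this for granted: it first reduces to the affine case, invokes a Weil-restriction result (\cite[Proposition~2.2 \& Proposition~2.6]{Weil_restr_note}) to pass to an affine variety over $\mathbb{R}$, and obtains categorical compactness there. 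For a general complex algebraic $X$, the paper then covers by finitely many affines, notes that iterated intersections remain affine, and applies a van-Kampen/Mayer--Vietoris style closure property of categorical compactness (\cite[Theorem~5.1.7-(5)]{exodromyconicality}) to conclude. Your proposal would be complete if you either restricted to quasi-projective $X$ (where a global real algebraic model is standard) or supplied this affine-cover argument; as written, the step ``so \cref{algebraic_strat} applies'' hides the part of the proof that requires the most work.
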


\begin{proof}
		Complex Whitney stratified spaces are conically stratified by \cref{Whitney_vs_conical}.
	By \cite[Theorem 2.4.10 \& Remark 2.4.13]{Perv_moduli} (see also \cite[Theorem 5.3.9]{exodromyconicality}), complex subanalytic stratified spaces have locally weakly contractible strata and are locally categorically compact, and categorically compact if $X$ is compact.
	Suppose now that $(X,P)$ is a complex algebraic stratified space with $X$ affine.
	Then $(X,P)$ is a complex subanalytic stratified space, hence locally categorically compact with locally weakly contractible strata.
	By \cite[Proposition 2.2 \& Proposition 2.6]{Weil_restr_note}, we can suppose that $X$ is the (real points of) an affine variety over $\mathbb{R}$ and $X \to P$ is a finite stratification by Zariski locally closed subsets.
	In this case $(X,P)$ is categorically compact by \cite[Theorem 5.3.13]{exodromyconicality}.
	In general, a complex algebraic stratified space $(X,P)$ can be covered by affine charts.
	Hence $(X,P)$ have locally weakly contractible strata and is locally categorically by the above discussion.
	Since $X$ admits a finite cover by affine subsets whose iterated intersections are again affine, hence categorically compact, \cite[Theorem 5.1.7-(5)]{exodromyconicality} shows that $(X,P)$ is categorically compact.
\end{proof}

\subsection{Perverse $t$-structures}

    In this paragraph we recall the definition of perverse $t$-structures and prove that these are admissible (\cref{Perv_accessible}) and satisfy \cref{Assumption_subobject} (\cref{Perv_subobjects}).

\begin{definition}\label{def_perv}
	Let $(X,P)$ be a stratified space and $\mathfrak{p} \colon P \to \ZZ$ be a function. 
	Let $R$ be a simplicial commutative ring.
	Consider the pair of full subcategories $\Sh(X;\Mod_R)$ defined by
\[
{}^{\mathfrak p} \Sh(X;\Mod_R)_{\geq 0} \coloneqq \big\{F \in \Sh(X;\Mod_R) \mid \ \forall p \in P \ , \ \pi_i( i_p^{\ast}(F) ) = 0 \text{ for every } i \leq \mathfrak p(p) \big\},
\]
\[
{}^{\mathfrak p} \Sh(X;\Mod_R)_{\leq 0} \coloneqq \big\{F \in \Sh(X;\Mod_R) \mid \ \forall p \in P \ , \ \pi_i( i_p^{!}(F) ) = 0 \text{ for every } i \geq \mathfrak p(p) \big\}.
\]
	The $\infty$-category of perverse sheaves is
\[
{}^\mathfrak{p} \Perv(X; \Mod_R) \coloneqq {}^{\mathfrak p} \Sh(X;\Mod_R)_{\leq 0} \cap {}^{\mathfrak p} \Sh(X;\Mod_R)_{\geq 0}.
\]
\end{definition}

\begin{definition}
    In the setting of \cref{def_perv}, we set
\[
{}^{\mathfrak p} \Cons_P(X;\Mod_R)_{\geq 0} \coloneqq {}^{\mathfrak p} \Sh(X;\Mod_R)_{\geq 0} \cap \Cons_P(X;\Mod_R) \subset \Cons_P(X;\Mod_R).
\]
    We define analogously ${}^{\mathfrak p} \Cons_P(X;\Mod_R)_{\leq 0}$, ${}^{\mathfrak p} \Perv_P(X;\Mod_R)$, ${}^{\mathfrak p} \Cons_{P, \omega}(X;\Mod_R)_{\geq 0}$, ${}^{\mathfrak p} \Cons_{P, \omega}(X;\Mod_R)_{\leq 0}$, ${}^{\mathfrak p} \Perv_{P, \omega}(X;\Mod_R)$.
\end{definition}
	
\begin{recollection}\label{recollements}
	Let $(X,P)$ be a conically stratified space with locally weakly contractible strata and let $S \subset P$ be a closed subset and let $U \coloneqq P \setminus S$.
	Consider the corresponding open and closed immersions $j \colon X_U \hookrightarrow X$ and $i \colon X_S \to X$.
	Let $R$ be a simplicial commutative ring.
	By \cite[Corollary 2.18 \& Proposition 2.26]{Haine} we have a recollement of $\infty$-categories
\[
i_\ast \colon \Sh(X_S; \Mod_R) \rightarrow \Sh_P(X; \Mod_R) \leftarrow \Sh(X_U; \Mod_R) \colon j_\ast
\]
	By \cite[Corollary 6.8.2]{Exodromy}, the above restricts to a recollement of $\infty$-categories
\[
i_\ast \colon \Cons_S(X_S; \Mod_R) \rightarrow \Cons_P(X; \Mod_R) \leftarrow \Cons_U(X_U; \Mod_R) \colon j_\ast
\]
	and, if furthermore $(X,P)$ is locally categorically compact, then the above further restricts to a recollement
\[
i_\ast \colon \Cons_{S, \omega}(X_S; \Mod_R) \rightarrow \Cons_{P, \omega}(X; \Mod_R) \leftarrow \Cons_{U, \omega}(X_U; \Mod_R) \colon j_\ast
\]
\end{recollection}

\begin{recollection}
	Let $\D$ be a stable $\infty$-category and let 
\[
D_F \xhookrightarrow{i_\ast} \D \xhookleftarrow{j_\ast} D_U
\]
be a pair of full subcategories, respectively equipped with $t$-structures $\tau_F, \tau_U$.
	Suppose that $\D$ is a recollement of $\D_U$ and $D_F$, so that there are adjunctions $i^\ast \dashv i_\ast \dashv i^!$ and $j^\ast \dashv j_\ast$. Since the notion of $t$-structure depends only on the underlying homotopy category, it follows from \cite[Theorem 4.1.10]{BBDG} that the pair of full subcategories
\[
\D_{\geq 0} \coloneqq \lbrace d \in \D \mid \ j^\ast d \in (\D_U)_{\geq 0}, i^\ast d \in (\D_F)_{\geq 0} \rbrace,
\]
\[
\D_{\leq 0} \coloneqq \lbrace d \in \D \mid \ j^\ast d \in (\D_U)_{\leq 0}, i^! d \in (\D_F)_{\leq 0}\rbrace,
\]
define a $t$-structure on $\D$.
\end{recollection}

\begin{proposition}\label{perverse_t_structure}
	Let $(X,P)$ be a stratified space with $P$ finite and let $\mathfrak{p} \colon P \to \ZZ$ be a function.
\begin{enumerate}\itemsep=0.2cm
    \item The pair of $\infty$-categories $({}^{\mathfrak p} \Sh(X;\Mod_R)_{\leq 0}, {}^{\mathfrak p} \Sh(X;\Mod_R)_{\geq 0})$ of \cref{def_perv} define a $t$-structure ${}^{\mathfrak{p}} \tau$ on $\Sh(X;\Mod_R)$.
    \item If $(X,P)$ is conically stratified with locally weakly constructible strata, the $t$-structure ${}^{\mathfrak{p}} \tau$ restricts to a $t$-structure on $\Cons_P(X;\Mod_R)$.
    \item If $(X,P)$ is conically stratified with locally weakly constructible strata and locally categorically compact, and if $R$ is a discrete regular noetherian ring, the $t$-structure ${}^{\mathfrak{p}} \tau$ restricts to a $t$-structure on $\Cons_{P, \omega}(X; \Mod_R)$.
\end{enumerate}
\end{proposition}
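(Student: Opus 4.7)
The plan is to argue by induction on $|P|$, gluing shifted standard $t$-structures along open--closed decompositions via the Beilinson--Bernstein--Deligne construction \cite[Theorem 1.4.10]{BBDG}.

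In the base case $|P| = 1$, the function $\mathfrak{p}$ is a single integer $n$ and the perverse conditions of \cref{def_perv} are exactly those of the $n$-shift of the standard $t$-structure on $\Sh(X;\Mod_R)$; this gives~(1). For~(2) and~(3), truncations of locally constant sheaves are computed stalkwise and so remain locally constant; in~(3) one additionally needs that $\Mod_R^\omega = \Perf(R)$ is closed under truncation, which holds precisely because $R$ is discrete regular noetherian (an arbitrary noetherian $R$ will not do, as $R = k[\varepsilon]/\varepsilon^2$ shows).

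For the inductive step, fix a minimal $p_0 \in P$, so that $\{p_0\}$ is closed in the Alexandroff topology on $P$, and use the associated open--closed decomposition $j \colon X_U \hookrightarrow X \hookleftarrow X_{p_0} \colon i$ with $U \coloneqq P \setminus \{p_0\}$. The open refinement $(X_U, U)$ inherits each of the hypotheses of (1)--(3) and satisfies $|U| < |P|$, so induction furnishes a perverse $t$-structure on $\Sh(X_U;\Mod_R)$ and, where applicable, on its constructible and compact-stalk variants; the single stratum $X_{p_0}$ is handled by the base case with $n = \mathfrak{p}(p_0)$. Each hypothesis of (1)--(3) comes with the corresponding recollement---the standard one for sheaves in~(1), those of \cref{recollements} in~(2) and~(3)---so BBD gluing produces a $t$-structure on each of $\Sh(X;\Mod_R)$, $\Cons_P(X;\Mod_R)$, and $\Cons_{P,\omega}(X;\Mod_R)$. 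Unwinding the gluing formula via $j^\ast$, $i^\ast$, $i^!$ stratum by stratum reproduces verbatim the stratumwise conditions of \cref{def_perv}, which proves~(1); by construction, the $t$-structures of~(2) and~(3) restrict from that of~(1).

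The delicate point is~(3): one must check that the BBD truncation of a compact-stalk sheaf still has compact stalks. Since BBD truncations are assembled from the six recollement functors applied to the inductive truncations on $X_U$ and the shifted standard truncations on $X_{p_0}$, this reduces to two inputs: (a) stability of the compact-stalk subcategories under each of $j^\ast, j_!, j_\ast, i^\ast, i_\ast, i^!$, which is the content of the refined recollement in \cref{recollements} (once all six adjoints are extracted from the given fully faithful ones using presentability and stability); and (b) the inductive and base-case statements that truncations on each piece preserve compact stalks, where the regularity of $R$ does the essential work.
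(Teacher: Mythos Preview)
Your proof is correct and follows essentially the same approach as the paper: both argue by induction on $|P|$, reducing (1) to the BBD recollement construction from shifted standard $t$-structures on the strata (the paper cites \cite[Lemma 3.2.2]{Perv_moduli} for this), and both deduce (2) and (3) from the fact that the recollement restricts to $\Cons_P$ and $\Cons_{P,\omega}$ via \cref{recollements}, together with the observation that regularity of $R$ makes $\Perf(R)$ stable under truncation. Your write-up is more explicit about the inductive mechanism and about why the glued truncation preserves compact stalks, but the strategy is identical.
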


\begin{proof}
	Point $(1)$ is proven in \cite[Lemma 3.2.2]{Perv_moduli}, where it is shown that the pair of $\infty$-categories $({}^{\mathfrak p} \Sh(X;\Mod_R)_{\leq 0}, {}^{\mathfrak p} \Sh(X;\Mod_R)_{\geq 0})$ is the $t$-structure obtained by successive recollement of the shifted standard $t$-structure on $\Sh(X_p;\Mod_R)$, $p \in P$. 
    Point $(2)$ then follows by \cref{recollements} and the fact that the standard $t$-structure on $\Sh(X_p;\Mod_R)$ restricts to a $t$-structure on $\Loc(X_p;\Mod_R)$, $p \in P$.
	If $R$ is a discrete regular noetherian, the $t$-structure on $\Mod_R$ restricts to a $t$-structure on $\Mod_R^\omega = \Perf(R)$ by \cite[\href{https://stacks.math.columbia.edu/tag/07LT}{Proposition 07LT}]{stacks-project} and \cite[\href{https://stacks.math.columbia.edu/tag/066Z}{Lemma 066Z}]{stacks-project}.
	Hence the standard $t$-structure on $\Loc(X_p; \Mod_R)$ restricts to a $t$-structure on $\Loc_{\omega}(X_p; \Mod_R)$, $p \in P$. 
	Point $(3)$ then follows again by \cref{recollements}.
\end{proof}

\begin{lemma}\label{Perv_accessible}
	Let $(X,P)$ be a locally categorically compact conically stratified space with locally weakly contractible strata.
	Let $\mathfrak{p} \colon P \to \ZZ$ be a function and let $A \in \dAff_k$.
	Then the perverse $t$-structure on $\Cons_P(X; \Mod_A)$ is $\omega$-accessible and non-degenerate.
\end{lemma}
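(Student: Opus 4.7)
The plan is to exploit the explicit construction of ${}^{\mathfrak p}\tau$ in the proof of \cref{perverse_t_structure}: it is obtained as a successive recollement, indexed by the (finite) poset $P$, of shifts of the standard $t$-structures on the local systems on each stratum. I will argue by induction on $|P|$ that $\omega$-accessibility and non-degeneracy hold, with the base case being a single stratum and the inductive step being preservation under one recollement step furnished by \cref{recollements}.

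In the base case $\Loc(X_p;\Mod_A)\simeq \Fun(\Pi_\infty(X_p),\Mod_A)$ via exodromy (\cref{conical_vs_exodromic}), equipped with the pointwise standard $t$-structure shifted by $\mathfrak p(p)$. Since the standard $t$-structure on $\Mod_A$ is $\omega$-accessible and non-degenerate (for any simplicial commutative ring), both properties transfer pointwise to the functor category.

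For the inductive step, consider a recollement $\D_F\hookrightarrow \D \hookleftarrow \D_U$ with glued $t$-structure. Non-degeneracy is formal: an object $F\in\bigcap_n \D_{\geq n}$ satisfies $j^*F\in\bigcap_n(\D_U)_{\geq n}$ and $i^*F\in\bigcap_n(\D_F)_{\geq n}$; both vanish by induction, so $F\simeq 0$ by joint conservativity of $(j^*,i^*)$ intrinsic to any recollement. The dual statement, via $(j^*,i^!)$, handles $\bigcap_n \D_{\leq -n}$. In particular, iterating this over $P$ yields joint conservativity of the families $\{i_p^*\}_{p\in P}$ and $\{i_p^!\}_{p\in P}$ on $\Cons_P(X;\Mod_A)$. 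For $\omega$-accessibility, the condition $j^*F\in(\D_U)_{\leq 0}$ is preserved by filtered colimits since $j^*$ is a left adjoint and $(\D_U)_{\leq 0}$ is inductively stable under them. The condition $i^!F\in(\D_F)_{\leq 0}$ is more delicate: via the recollement fiber sequence $i_*i^!\to \Id\to j_* j^*$, and since $i_*$ preserves colimits (being the left adjoint to $i^!$), it suffices to show that $j_*$ commutes with filtered colimits.

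The main obstacle is precisely this last point: establishing that $j_*$ on constructible sheaves commutes with filtered colimits. Via the exodromy equivalence $\Cons_P(X;\Mod_A)\simeq \Fun(\Pi_\infty(X,P),\Mod_A)$, this $j_*$ is computed as a right Kan extension along the inclusion of exit path categories, and the local categorical compactness hypothesis on $(X,P)$ is precisely what ensures that the relevant coslice categories make this right Kan extension commute with filtered colimits. Modulo this point, the rest of the argument is a straightforward unwinding of the recollement formalism, and one concludes that the perverse $t$-structure on $\Cons_P(X;\Mod_A)$ is both $\omega$-accessible and non-degenerate.
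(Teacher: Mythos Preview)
Your argument is correct. Both your proof and the paper's rest on the same key input: under the local categorical compactness hypothesis, the right adjoints in the constructible recollement preserve filtered colimits (this is \cite[Proposition 6.9.2]{Exodromy}). You package this via an induction on $|P|$ through the recollement, deducing that $j_*$ preserves filtered colimits and hence so does $i^!$ via the fiber sequence $i_*i^!\to\Id\to j_*j^*$; the paper instead invokes the same fact directly for each $i_p^!$ and checks the defining condition of ${}^{\mathfrak p}\Cons_P(X;\Mod_A)_{\leq 0}$ without any induction. For non-degeneracy, you run the same recollement induction using joint conservativity of $(j^*,i^*)$ and $(j^*,i^!)$, whereas the paper argues left-completeness pointwise on strata and appeals to the argument on p.~56 of \cite{BBDG} for right-completeness. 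Your approach has the virtue of being uniform (one induction handles both properties), while the paper's is more direct and avoids tracking the recollement structure. One small point: your induction presupposes $P$ finite; the paper's proof for $\omega$-accessibility does not need this, and for non-degeneracy it first reduces to finite $P$ via \cite[Lemma 7.3.9]{Exodromy}, so you should either state that reduction or note the implicit finiteness.
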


\begin{proof}
	We begin by proving that ${}^{\mathfrak{p}}\tau$ is $\omega$-accessible.
	Let $p \in P$.
	Our assumptions guarantee that $i_p^! \colon \Cons_P(X; \Mod_k) \to \Loc(X_p; \Mod_k)$ commutes with filtered colimits (\cite[Proposition 6.9.2]{Exodromy}). 
	Moreover the standard $t$-structure on $\Loc(X_p; \Mod_k)$ is clearly $\omega$-accessible.
	In particular, for every filtered diagram $\left\lbrace F_i \right\rbrace_{i \in I} \in \Cons_P(X;\Mod_k)$ and every $n \in \ZZ$ we have equivalences
	\[
	\pi_n(i_p^!\colim_{i \in I} F_i) \simeq \pi_n(\colim_{i \in I} i_p^! F_i) \simeq \colim_{i \in I} \pi_n(i_p^!F_i).
	\]
	The result then follows by definition of ${}^{\mathfrak{p}}\Cons_P(X; \Mod_k)_{\leq 0}$.\\ \indent
    We now prove that ${}^{\mathfrak{p}}\tau$ is non-degenerate. Again by \cite[Lemma 7.3.9]{Exodromy} we can assume that $P$ is finite.
    Since the standard $t$-structure on locally constant sheaves is clearly left-complete, it follows that the perverse $t$-structure is also left-complete.
    Moreover, the argument at the bottom of page 56 of \cite{BBDG} implies that the perverse $t$-structure is also right-complete, thus non-degenerate.
\end{proof}

\begin{notation}
	Let $(Y,Q)$ be stratified spaces.
	Let $A \in \dAff_k$.
	Given a functor 
	\[
	f \colon \Cons_Q(Y; \Mod_A) \to \Cons_P(X; \Mod_A),
	\]
	we set
	\[
	{}^{\mathfrak{p}} f \coloneqq {}^{\mathfrak{p}} \pi_0 \circ f \colon \Cons_Q(Y; \Mod_A) \to {}^{\mathfrak{p}}\Perv_P(X; \Mod_A).
	\]
\end{notation}

\begin{proposition}\label{Perv_subobjects}
	Let $R$ be a regular noetherian ring.
	Let $(X,P)$ be a locally categorically compact conically stratified space with locally weakly contractible strata and $P$ finite.
	Let $F\in {}^{\mathfrak{p}}\Perv_{P,\omega}(X)$.
	Then every subobject of $F$ lies in ${}^{\mathfrak{p}}\Perv_{P,\omega}(X)$.
\end{proposition}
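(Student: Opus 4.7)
We proceed by induction on the cardinality of $P$. For the base case $|P|=1$, which is a single-stratum space, the perverse heart ${}^{\mathfrak{p}}\Perv_P(X)$ coincides with $\Loc(X;\Mod_R)^\heartsuit[-\mathfrak{p}(p)]$, so a subobject in ${}^{\mathfrak{p}}\Perv_P(X)$ corresponds to a sub-local-system. The perfect-stalks condition is pointwise: since $R$ is noetherian, submodules of finitely generated $R$-modules are finitely generated; since $R$ is regular, every finitely generated $R$-module is perfect by Auslander--Buchsbaum--Serre. Hence sub-local-systems of local systems with perfect stalks have perfect stalks.

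For the inductive step, choose a minimal $p \in P$, so that $i \colon X_p \hookrightarrow X$ is a closed stratum with open complement $j \colon X_U \hookrightarrow X$, where $U \coloneqq P \setminus \{p\}$. By \cref{recollements} and \cref{perverse_t_structure}, the perverse $t$-structure restricts to $\Cons_{P,\omega}(X;\Mod_R)$ and participates in a recollement with $\Cons_{\{p\},\omega}(X_p;\Mod_R) = \Loc_\omega(X_p;\Mod_R)$ and $\Cons_{U,\omega}(X_U;\Mod_R)$, and all recollement functors preserve $\omega$-ness. Given a mono $G \hookrightarrow F$ in ${}^{\mathfrak{p}}\Perv_P(X)$ with $F \in {}^{\mathfrak{p}}\Perv_{P,\omega}(X)$, the $t$-exactness of $j^*$ yields $j^*G \hookrightarrow j^*F$ in ${}^{\mathfrak{p}}\Perv_U(X_U;\Mod_R)$ with $j^*F \in \Cons_{U,\omega}$; the inductive hypothesis applied to $(X_U,U)$ gives $j^*G \in \Cons_{U,\omega}$, so the stalks of $G$ on $X_U$ are perfect.

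To control the stalks of $G$ on $X_p$, I would study the canonical morphisms in the perverse heart ${}^{\mathfrak{p}}j_!(j^*G) \xrightarrow{\alpha} G \xrightarrow{\beta} {}^{\mathfrak{p}}j_*(j^*G)$ arising from the adjunctions ${}^{\mathfrak{p}}j_! \dashv j^* \dashv {}^{\mathfrak{p}}j_*$. Both outer terms lie in $\Cons_{P,\omega}$, since $j_!$, $j_*$ and perverse truncation all preserve $\omega$-ness. Each of $\ker(\alpha)$, $\mathrm{coker}(\alpha)$, $\ker(\beta)$, $\mathrm{coker}(\beta)$ is supported on $X_p$ (its image under $j^*$ vanishes), hence is of the form $i_* L$ for a shifted local system $L$ on $X_p$. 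Using the adjunctions ${}^{\mathfrak{p}}\pi_0 i^* \dashv i_* \dashv {}^{\mathfrak{p}}\pi_0 i^!$ in the perverse hearts, together with the recollement vanishings $i^*j_! = 0 = i^! j_*$, each such $L$ realizes as a subobject or quotient of a perfect local system ${}^{\mathfrak{p}}\pi_0 i^!({}^{\mathfrak{p}}j_!(j^*G))$ or ${}^{\mathfrak{p}}\pi_0 i^*({}^{\mathfrak{p}}j_*(j^*G))$ on $X_p$. The base case applied to $X_p$ then yields $L \in \Loc_\omega(X_p;\Mod_R)$, and so $i_* L \in \Cons_{P,\omega}(X)$. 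Finally, $G$ is assembled from these perfect pieces via the short exact sequences attached to $\alpha$ and $\beta$ (factoring through the intermediate extension ${}^{\mathfrak{p}}j_{!*}(j^*G)$), and since $\Cons_{P,\omega}$ is closed under finite limits and colimits in $\Cons_P$, we conclude $G \in \Cons_{P,\omega}(X) \cap {}^{\mathfrak{p}}\Perv_P(X) = {}^{\mathfrak{p}}\Perv_{P,\omega}(X)$.

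The main technical obstacle will be the rigorous identification of the kernel/cokernel local systems $L$ on $X_p$ as subobjects or quotients of the specific perfect local systems coming from applying the recollement functors to ${}^{\mathfrak{p}}j_!(j^*G)$ and ${}^{\mathfrak{p}}j_*(j^*G)$; this step requires careful bookkeeping of the $t$-exactness of $i^*$ and $i^!$ on the perverse heart and of their interaction with perverse truncation.
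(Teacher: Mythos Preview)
Your overall strategy—induction on $|P|$ via recollement—is essentially dual to the paper's: you peel off a minimal (closed) stratum, while the paper peels off a maximal (open) one and argues with quotients rather than subobjects. The base case, the inductive control of $j^*G$, and the perfectness of ${}^{\mathfrak{p}}j_!(j^*G)$ and ${}^{\mathfrak{p}}j_*(j^*G)$ are all correct.

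There is, however, a genuine gap in your treatment of the pieces supported on $X_p$. You claim that each of the local systems underlying $\ker(\alpha)$, $\mathrm{coker}(\alpha)$, $\ker(\beta)$, $\mathrm{coker}(\beta)$ is a subobject or quotient of ${}^{\mathfrak{p}}\pi_0 i^!({}^{\mathfrak{p}}j_!(j^*G))$ or ${}^{\mathfrak{p}}\pi_0 i^*({}^{\mathfrak{p}}j_*(j^*G))$—objects built solely from $j^*G$. Adjunction indeed gives this for $\ker(\alpha)$ and $\mathrm{coker}(\beta)$, but \emph{not} for $\ker(\beta) = i_*\,{}^{\mathfrak{p}}\pi_0 i^! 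G$ or $\mathrm{coker}(\alpha) = i_*\,{}^{\mathfrak{p}}\pi_0 i^* G$: these depend on $G$ itself, not only on $j^*G$. To see that $j^*G$ alone cannot constrain them, replace $G$ by $G \oplus i_*M$ for an arbitrary non-perfect local system $M$ on $X_p$; then $j^*G$ is unchanged while $\ker(\beta)$ picks up the summand $i_*M$. The fix is to bring the hypothesis $G \hookrightarrow F$ back into play at this point: since ${}^{\mathfrak{p}}\pi_0 i^!$ is left exact, one obtains ${}^{\mathfrak{p}}\pi_0 i^! G \hookrightarrow {}^{\mathfrak{p}}\pi_0 i^! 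F$, and the target is perfect because $F \in \Cons_{P,\omega}(X;\Mod_R)$ and both $i^!$ and perverse truncation preserve $\omega$. The base case on $X_p$ then handles $\ker(\beta)$, and two applications of two-out-of-three (via $\mathrm{coker}(\beta)$ and ${}^{\mathfrak{p}}j_* j^*G$) yield $G \in \Cons_{P,\omega}(X;\Mod_R)$. This is precisely the mechanism in the paper's proof: one piece of the four-term recollement sequence is controlled by adjunction into a ${}^{\mathfrak{p}}j_!$ or ${}^{\mathfrak{p}}j_*$ term (the paper cites \cite[1.4.17.1]{BBDG} here), and the other piece requires the comparison with $F$.
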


\begin{proof}
	Since perfect complexes satisfy the two-out-of-three properties, it is equivalent to show that the quotients of $F$ lie in ${}^{\mathfrak{p}}\Perv_{P,\omega}(X)$.
	Up to replace $X$ by an open subset, we can suppose that $(X,P)$ is categorically compact conical with locally weakly contractible strata.
	In particular, $\Pi_\infty(X,P)$ admits a finite number of objects up to equivalence.
	We argue by induction on the cardinality of $P$.	
    If $P=\left\lbrace \ast \right\rbrace$, the category of perverse sheaves is up to a shift identified with  $\Loc(X; \Mod_R^\heartsuit)$.
	In this case, \cref{Perv_subobjects} boils down to the fact that a quotient of a finitely presented module over a noetherian ring is again finitely presented. We now proceed with the inductive step.\\ \indent
	Let $G$ be a quotient of $F$.
	Consider a maximal element $p\in P$ and let $j \colon X_p \hookrightarrow X$ and $i \colon X\setminus X_p \hookrightarrow X$ be the inclusions.
	The localization sequence \cite[Lemma 1.4.19]{BBDG} attached to the open subset $X_p\subset X$ gives rise to a morphism of exact sequences of perverse sheaves 
\begin{equation}\label{fundamental_sequence}
\begin{tikzcd}[sep=small]
	0 & {i_{\ast}{}^\mathfrak{p}\pi_1(i^\ast F)} & {{}^{\mathfrak{p}} j_!j^\ast F} & F & {i_{\ast}{}^{\mathfrak{p}}i^\ast F} & 0 \\
	0 & {i_{\ast}{}^\mathfrak{p}\pi_1(i^\ast G)} & {{}^{\mathfrak{p}} j_!j^\ast G} & G & {i_{\ast}{}^{\mathfrak{p}}i^\ast G} & 0
	\arrow[from=1-1, to=1-2]
	\arrow[from=1-2, to=1-3]
	\arrow[from=1-2, to=2-2]
	\arrow["\psi_1",from=1-3, to=1-4]
	\arrow[from=1-3, to=2-3]
	\arrow["\varphi_1", from=1-4, to=1-5]
	\arrow[two heads, from=1-4, to=2-4]
	\arrow[from=1-5, to=1-6]
	\arrow[from=1-5, to=2-5]
	\arrow[from=2-1, to=2-2]
	\arrow[from=2-2, to=2-3]
	\arrow["{\psi_2}"', from=2-3, to=2-4]
	\arrow["{\varphi_2}"', from=2-4, to=2-5]
	\arrow[from=2-5, to=2-6]
\end{tikzcd}   
\end{equation}
	Since $R$ is regular noetherian, \cref{perverse_t_structure} implies that the perverse sheaves in the top row of \eqref{fundamental_sequence} have perfect stalks.
	Since $F \to G$ and  $G  \to i_{\ast}{}^{\mathfrak{p}}i^\ast G$   are epimorphisms, so is the right vertical arrow.
	Applying the inductive hypothesis we get that $i_{\ast}{}^{\mathfrak{p}}i^{\ast} G$ has perfect stalks.
	Since $j^\ast$ is ${}^{\mathfrak{p}}\tau$-exact, it preserves epimorphisms.
	In particular $j^\ast F \to j^\ast G$ is an epimorphism.
	Hence, $j^\ast G$ has perfect stalks by the unstratified case already treated.
	By \cref{perverse_t_structure}, we deduce that ${}^{\mathfrak{p}} j_!j^\ast G$ has perfect stalks.
	To conclude, we are left to show that $i_{\ast}{}^\mathfrak{p}\pi_1(i^\ast G)$ has perfect stalks.
	By \cite[1.4.17.1]{BBDG}, we know that $i_{\ast}{}^\mathfrak{p}\pi_1(i^\ast G)$ is a subobject of $i_\ast {}^\mathfrak{p}i^!{}^\mathfrak{p}j_!j^\ast G$.
	Since  ${}^\mathfrak{p}j_!j^\ast G$ has perfect stalks, it follows from \cref{perverse_t_structure} that $i_\ast{}^\mathfrak{p}i^!{}^\mathfrak{p}j_!j^\ast G$ has perfect stalks. 
	Hence $i_{\ast}{}^\mathfrak{p}\pi_1(i^\ast G)$ has perfect stalks by our inductive hypothesis.
\end{proof}

\begin{example}\label{good_refinement_example}
	By \cref{Whitney_manifold}, complex Whitney stratified space satisfy the assumptions of \cref{Perv_subobjects}.
\end{example}

\begin{example}
	By \cref{algebraic_strat}, \cref{subanalytic_strat} and \cref{Whitney_vs_conical}, real algebraic varieties and real compact analytic spaces equipped with Whitney stratification satisfy the assumptions of \cref{Perv_subobjects}.
\end{example}

\subsection{Existence of good moduli spaces for ${}^{\mathfrak{p}}\mathbf{Perv}_P(X)$}\label{subsection_good_Perv}

	In this subsection we specialize the results of \cref{existence_good_section} to moduli spaces of perverse sheaves.
	We also prove that for complex Whitney stratified manifolds, the entire algebraic stack of perverse sheaves admits a good moduli space.

\begin{recollection}
	Let $(X,P)$ be a stratified space and $\mathfrak{p} \colon X \to P$ be a function.
	The derived prestack of constructible sheaves is defined by the assignment
\[
\mathbf{Cons}_P(X) \colon \dAff_k \to \mathrm{Spc}
\]
\[
\Spec (A) \mapsto Cons_{P,\omega}(X; \Mod_A)^{\simeq}
\]
with functoriality given by extension of scalars. 
	If $P$ is finite, we let
\[
{}^{\mathfrak{p}}\mathbf{Perv}_P(X) \subset \mathbf{Cons}_P(X)
\]
the sub-prestack over $k$ spanned by those constructible sheaves that are ${}^{\mathfrak{p}}\tau$-flat as objects of $\Sh(X; \Mod_R)$.
	We will refer to ${}^{\mathfrak{p}}\mathbf{Perv}_P(X)$ as the derived prestack of perverse sheaves.
\end{recollection}

\begin{remark}
    If the perverse $t$-structure restricts to a $t$-structure on the $\infty$-category of constructible sheaves (see e.g. \cref{perverse_t_structure}), then a constructible sheaf is ${}^{\mathfrak{p}}\tau$-flat as a sheaf if and only if it is so as a constructible sheaf.
\end{remark}

\begin{theorem}\label{openness_flatness_perv}
    Let $(X, R)$ be a conically stratified space with locally weakly contractible strata, let $\phi\colon R \to P$ be a map of posets, let $\mathfrak{p} \colon P\to \ZZ$ be a function. 
    Assume that $(X, R)$ is categorically compact.
    Then:
\begin{enumerate}\itemsep=0.2cm
	\item $\Cons_P(X; \Mod_k) \in \Prlo_k$ is of finite type.
    \item There is a canonical equivalence
\[
\M_{\Cons_P(X; \Mod_k)} \simeq \mathbf{Cons}_P(X).
\]
    In particular $\mathbf{Cons}_P(X)$ is a locally geometric derived stack, locally of finite presentation over $k$.
    \item If furthermore $(X, R)$ is locally categorically compact and $R,P$ are finite posets, then the morphism of derived stacks
\[
{}^{\mathfrak{p}}\mathbf{Perv}_P(X) \hookrightarrow \mathbf{Cons}_P(X)
\]
is representable by an open immersion.
    In particular ${}^{\mathfrak{p}}\mathbf{Perv}_P(X)$ is a $1$-Artin stack locally of finite presentation over $k$.
\end{enumerate}
\end{theorem}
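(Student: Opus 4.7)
The plan is to handle the three parts in sequence, with part (3) requiring the bulk of the work.

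For (1), I would invoke the exodromy equivalence from Recollection \ref{conical_vs_exodromic} to identify $\Cons_P(X;\Mod_k) \simeq \Fun(\Pi_\infty(X,P),\Mod_k)$. By Example \ref{example_exodromic}, the refinement $(X,R)\to(X,P)$ realises $\Pi_\infty(X,P)$ as a localisation of $\Pi_\infty(X,R)$, and since $(X,R)$ is categorically compact one has $\Pi_\infty(X,R) \in \Cat_\infty^{\omega}$; because localisations of compact $\infty$-categories remain compact, $\Pi_\infty(X,P) \in \Cat_\infty^{\omega}$. Finite type of $\Fun(K,\Mod_k)$ for a compact $K$ is then standard: it is built from $\Mod_k \in \Prlo_k$ (which is of finite type) by a finite sequence of tensor products and finite colimits in $\Prlo_k$, operations which preserve finite type.

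For (2), the exodromy equivalence is natural in the coefficient category, giving
\[
\C_A = \C \otimes_k \Mod_A \simeq \Fun(\Pi_\infty(X,P),\Mod_A) \simeq \Cons_P(X;\Mod_A).
\]
Since $\Pi_\infty(X,P)$ has finitely many equivalence classes of objects $x_1,\dots,x_n$, the sum of representables $E \coloneqq \bigoplus_i \Map_{\Pi_\infty(X,P)}(-,x_i)\otimes k$ is a compact generator of $\C$ (the conditions of Lemma \ref{generators_stable} being immediate from Yoneda). Then Corollary \ref{compact_generator_smooth} states that $F\in\C_A$ is pseudo-perfect iff $F(E)\in\Perf(A)$, and Yoneda translates $F(\Map(-,x_i)\otimes k)$ into the stalk of the corresponding constructible sheaf at $x_i$. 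Hence $\M_\C(\Spec A) \simeq \Cons_{P,\omega}(X;\Mod_A)^{\simeq} = \mathbf{Cons}_P(X)(\Spec A)$ functorially in $A$; combined with (1), the Toën--Vaquié theorem recalled after Recollection \ref{moduli_of_objects_recollection} yields local geometricity and local finite presentation of $\mathbf{Cons}_P(X)$.

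For (3), under the equivalence from (2) the inclusion ${}^{\mathfrak{p}}\mathbf{Perv}_P(X) \hookrightarrow \mathbf{Cons}_P(X)$ corresponds to $\M_\C^{[0,0]} \hookrightarrow \M_\C$, so I would invoke Proposition \ref{moduli_flat_objects}. Admissibility of ${}^{\mathfrak{p}}\tau$ comes from \cref{Perv_accessible}, so the only remaining content is universal openness of flatness. For this I would induct on $|P|$ via the recollement description of ${}^{\mathfrak{p}}\tau$ established in the proof of \cref{perverse_t_structure}: picking a maximal stratum $p$ with open complement $j\colon X_U \hookrightarrow X$, tensoring the recollement with $\Mod_A$ preserves both the glued $t$-structure and the gluing data, so ${}^{\mathfrak{p}}\tau_A$-flatness of $F$ decomposes into ${}^{\mathfrak{p}}\tau_A$-flatness of $j^{\ast}F$ on $X_U$ (handled inductively) together with shifted flatness of the constructible data over the stratum $p$ (stalks and costalks, which are perfect over $A$). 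The base case of a one-point poset reduces to shifted flatness of a locally constant sheaf of perfect $A$-complexes, which cuts out an open derived subscheme of $\Spec A$ by upper semicontinuity of Tor-amplitude for perfect complexes. Patching these open loci yields universal openness of flatness, hence the open immersion; the $1$-Artin statement then follows from Corollary \ref{geometric_heart}. The main obstacle is this last step: controlling the interaction between the recollement gluing functors $i_p^{\ast}j_{\ast}$ and base change $\Spec B \to \Spec A$ uniformly in families, and verifying that the shifted flatness conditions at each stratum remain open after pullback. Finiteness of $P$ is what makes the induction terminate and is precisely the source of the hypothesis in (3).
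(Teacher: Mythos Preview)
The paper's own proof is a bare citation to \cite[Lemma 4.1.12, Lemma 4.1.14 \& Theorem 4.2.10]{Perv_moduli}; it does not reproduce any argument. Your proposal therefore goes well beyond the paper, and for parts (1) and (2) your sketch is essentially the argument carried out in the cited reference: exodromy identifies $\Cons_P(X;\Mod_k)$ with $\Fun(\Pi_\infty(X,P),\Mod_k)$, categorical compactness of $(X,R)$ transfers to $(X,P)$ via \cite[Theorem 0.3.1-(5)]{exodromyconicality}, and compactness of $\Pi_\infty(X,P)$ in $\Cat_\infty$ yields finite type of the functor category. (One minor point: your compact generator should be $\bigoplus_i j_{x_i,\#}k$, i.e.\ corepresentables rather than representables, cf.\ \cref{weak_gen_Cons}; the variance in your formula is reversed but the idea is right.)

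For part (3) your strategy is sound and again matches what \cite{Perv_moduli} does, but the gap you flag is real and deserves a word. The decomposition of ${}^{\mathfrak p}\tau_A$-flatness you want is not via induction on recollements per se, but rather the direct observation that $F\otimes_A M$ lies in the perverse heart if and only if, for each $p\in P$, both $i_p^\ast(F\otimes_A M)$ and $i_p^!(F\otimes_A M)$ sit in the prescribed range. The crucial step is that $i_p^!$ commutes with $-\otimes_A M$ and with base change along $\Spec B\to\Spec A$: this holds precisely because the locally categorically compact hypothesis forces $i_p^!$ to preserve colimits (\cite[Proposition 6.9.2]{Exodromy}, used in the proof of \cref{Perv_accessible}), hence to be $\Mod_A$-linear. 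The same hypothesis ensures that $i_p^! F$ is perfect over $A$ whenever $F\in\Cons_{P,\omega}(X;\Mod_A)$, so the flatness condition at each stratum becomes a Tor-amplitude bound on a perfect complex, which is Zariski-open. Finiteness of $P$ then gives the flat locus as a finite intersection of opens. So your ``main obstacle'' is resolved exactly by the extra hypotheses in (3), and once you make this explicit the argument is complete.
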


\begin{proof}
    This is \cite[Lemma 4.1.12, Lemma 4.1.14 \& Theorem 4.2.10]{Perv_moduli}.
\end{proof}

For $\Spec(\kappa) \to \Spec(k)$ a closed point with $\kappa$ algebraically closed, we have the following

\begin{theorem}\label{good_Perv_non_smooth}
	Let $(X,P)$ be a categorically compact, locally categorically compact conically stratified space with locally weakly contractible strata and $P$ finite.
	Then the algebraic $t_0{}^{\mathfrak{p}}\mathbf{Perv}_P(X)$ is $\Theta$-reductive and $\mathrm{S}$-complete.
	In particular, any closed quasi-compact substack $\X \subset t_0{}^{\mathfrak{p}}\mathbf{Perv}_P(X)$ admits a separated good moduli space $X$ whose $\kappa$-points parametrize semisimple perverse sheaves with perfect stalks lying over $\X$.
	Moreover $X$ admits a natural derived enhancement if $\X$ is the truncation of a substack of ${}^{\mathfrak{p}}\mathbf{Perv}_P(X)$.
\end{theorem}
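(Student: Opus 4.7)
The plan is to apply \cref{good_general} directly to the category $\C = \Cons_P(X;\Mod_k)$ equipped with the perverse $t$-structure ${}^{\mathfrak{p}}\tau$, after matching each hypothesis. Concretely, I need to verify four things: that $\C \in \Prlo_k$ is of finite type, that ${}^{\mathfrak{p}}\tau$ is admissible, that ${}^{\mathfrak{p}}\tau$ universally satisfies openness of flatness, and that $\C$ satisfies \cref{Assumption_subobject}.

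First, by \cref{openness_flatness_perv}(1), the category $\C$ is of finite type (this is where "categorically compact" is used), hence smooth by \cref{relation_finiteness_conditions}. By \cref{perverse_t_structure}(2), the perverse $t$-structure is well-defined on $\Cons_P(X;\Mod_k)$ (this uses that $(X,P)$ is conically stratified with locally weakly contractible strata and $P$ finite), and \cref{Perv_accessible} gives $\omega$-accessibility and non-degeneracy, so ${}^{\mathfrak{p}}\tau$ is admissible. Universal openness of flatness is encoded in \cref{openness_flatness_perv}(3): combined with the identification $\M_\C \simeq \mathbf{Cons}_P(X)$ of \cref{openness_flatness_perv}(2), this gives ${}^{\mathfrak{p}}\mathbf{Perv}_P(X) \simeq \M_\C^{[0,0]}$ as an open substack of $\M_\C$, which is precisely the content of universal openness of flatness through \cref{moduli_flat_objects}.

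The remaining verification is \cref{Assumption_subobject}. Let $R$ be a DVR over $k$. Under the identification $\C_R \simeq \Cons_P(X;\Mod_R)$, the pseudo-perfect objects in $\C_R$ lying in $\C_R^\heartsuit$ correspond to perverse sheaves with perfect stalks, i.e., to objects of ${}^{\mathfrak{p}}\Perv_{P,\omega}(X;\Mod_R)$. Since a DVR is regular noetherian, \cref{Perv_subobjects} applies and tells us that every subobject of such an object again lies in ${}^{\mathfrak{p}}\Perv_{P,\omega}(X;\Mod_R)$, i.e., is pseudo-perfect; this is exactly the content of \cref{Assumption_subobject}.

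With all four hypotheses checked, \cref{good_general} immediately yields that $t_0{}^{\mathfrak{p}}\mathbf{Perv}_P(X) \simeq \M_\C^\heartsuit$ is $\Theta$-reductive and $\mathrm{S}$-complete, that every closed quasi-compact substack $\X$ admits a separated good moduli space, and that its $\kappa$-points are in bijection with pseudo-perfect semisimple objects of $\C_\kappa^\heartsuit$ lying over $\X$, which are precisely semisimple perverse sheaves on $X_\kappa$ with perfect stalks. The derived enhancement of the good moduli space when $\X$ arises as the truncation of a derived substack of ${}^{\mathfrak{p}}\mathbf{Perv}_P(X)$ follows from the derived-enhancement clause of \cref{good_general}, which itself uses \cref{derived_good}. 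The main subtlety to track carefully is simply that the hypotheses on $(X,P)$ --- categorical compactness, local categorical compactness, locally weakly contractible strata, and finiteness of $P$ --- are exactly the union of what is needed in each of \cref{openness_flatness_perv}, \cref{perverse_t_structure}, \cref{Perv_accessible}, and \cref{Perv_subobjects}; no genuinely new argument is required beyond assembling these inputs.
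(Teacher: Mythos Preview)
Your proposal is correct and follows essentially the same approach as the paper: verify that $\C = \Cons_P(X;\Mod_k)$ with the perverse $t$-structure satisfies all the hypotheses of \cref{good_general} by invoking \cref{openness_flatness_perv}, \cref{Perv_accessible}, and \cref{Perv_subobjects}. The paper's proof is a one-liner citing exactly these three inputs; you have simply unpacked which hypothesis each reference supplies.
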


\begin{proof}
	By \cref{Perv_accessible}, \cref{Perv_subobjects} and \cref{openness_flatness_perv}, the algebraic stack ${}^{\mathfrak{p}}\mathbf{Perv}_P(X)$ satisfies the assumptions of \cref{good_general}.
\end{proof}

\begin{example}
	By \cref{Whitney_manifold}, complex Whitney stratified spaces satisfy the assumptions of \cref{good_Perv_non_smooth}.
\end{example}

\begin{example}
	By \cref{algebraic_strat}, \cref{subanalytic_strat} and \cref{Whitney_vs_conical}, real algebraic varieties and real compact analytic varieties equipped with Whitney stratification satisfy the assumptions of \cref{good_Perv_non_smooth}.
\end{example}

\subsubsection{Good moduli for perverse sheaves on complex Whitney stratified manifolds}
	In this paragraph we construct a good moduli space for the entire algebraic stack perverse sheaves for the middle perversity on a complex Whitney stratified manifolds.
	
\begin{remark}
    Notice that \cref{good_ext} provide a class closed substacks of the stack of perverse sheaves admitting a good moduli space.
    Roughly, these closed substacks parametrize perverse sheaves that can be written as extension of a fixed finite set of semisimple perverse sheaves.
    The good moduli spaces arising in this way are much smaller than the good moduli of perverse sheaves constructed in \cref{good_Perv_algebraic} and define closed algebraic subspace of it by \cite[Lemma 4.14]{Alp}.
\end{remark}

\begin{definition}\label{middle_perversity_def}
    Let $(X,P)$ be a complex Whitney stratified space. 
    The middle perversity is the function
\[
\mathfrak{p} \colon P \to \ZZ
\]
\[
p \mapsto \dim_\mathbb{C} X_p \ .
\]
\end{definition}
	
\begin{definition}
	For $K$ be a field, $P \in \Perf(K)$ and $i \in \N$, we set
\[
h_i(P) \coloneqq \dim_K \pi_i(P), \qquad \chi(F) \coloneqq \sum_{j \in \N} (-1)^j h_j(P).
\]
\end{definition}

\begin{definition}
	Let $n \in \ZZ$.
	Let $\mathbf{Perf}_k^{\left\lbrace \chi=n \right\rbrace} \subset \mathbf{Perf}_k$ be the sub-presheaf
\[
\mathbf{Perf}_k^{\left\lbrace \chi=n \right\rbrace} \colon \dAff_k^{op} \to \mathrm{Spc}
\]
defined by sending $\Spec(A) \in \dAff_k$ to the maximal $\infty$-subgroupoid of $\Perf(A)$ spanned by those perfect complexes $F$ for which the following property holds: for every field $K$ and every $A \to \pi_0(A) \to K$ we have $\chi(F \otimes _A K) = n$.
\end{definition}

\begin{lemma}\label{Perf_chi_representability}
	Let $n \in \ZZ$.
	The the inclusion
\[
\mathbf{Perf}_k^{\left\lbrace \chi=n \right\rbrace} \subset \mathbf{Perf}_k
\]
is representable by an open and closed immersion.
	In particular, the presheaf $\mathbf{Perf}_k^{\left\lbrace \chi=n \right\rbrace}$ is a locally geometric derived stack, locally of finite presentation over $k$.
\end{lemma}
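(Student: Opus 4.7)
The plan is to test the inclusion against an arbitrary map $f\colon\Spec(A)\to\mathbf{Perf}_k$ classifying a perfect complex $F\in\Perf(A)$ and produce an open and closed derived subscheme $U_n\subset\Spec(A)$ representing the fibered product $\Spec(A)\times_{\mathbf{Perf}_k}\mathbf{Perf}_k^{\{\chi=n\}}$. Stability of open and closed immersions under base change, together with the fact that $\mathbf{Perf}_k$ is a locally geometric derived stack locally of finite presentation over $k$ (the classical result of Toën--Vaquié, which is the unstratified specialization of the representability machinery already used in this paper), will then yield the second assertion.

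The key technical input is local constancy, on $\lvert\Spec(\pi_0 A)\rvert$, of the function
\[
\chi_F(x)\coloneqq \chi\!\left(F\otimes^{\mathrm{L}}_A\kappa(x)\right).
\]
To establish this I would Zariski-localize on $\Spec(\pi_0 A)$ so that the classical restriction $F\otimes^{\mathrm{L}}_A\pi_0 A\in\Perf(\pi_0 A)$ becomes quasi-isomorphic to a strictly perfect complex, i.e.\ a bounded complex of finitely generated projective $\pi_0 A$-modules; the Euler characteristic is then the alternating sum of the ranks of those terms, and each rank is a locally constant function on $\Spec(\pi_0 A)$.

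Given local constancy, the subset $\chi_F^{-1}(n)\subset\lvert\Spec(\pi_0 A)\rvert=\lvert\Spec(A)\rvert$ is open and closed, hence corresponds to an idempotent of $\pi_0 A$ which lifts uniquely to an idempotent of the connective ring spectrum $A$; this produces a decomposition $A\simeq A_1\times A_2$, and I set $U_n\coloneqq\Spec(A_1)$. To conclude, I would verify the universal property by checking that for any $g\colon\Spec(B)\to\Spec(A)$, the pulled-back complex $F\otimes_A B$ lies in $\mathbf{Perf}_k^{\{\chi=n\}}(B)$ precisely when every residue-field point of $\Spec(B)$ maps into $\chi_F^{-1}(n)\subset\Spec(A)$, which by construction is equivalent to $g$ factoring through $U_n$. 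The only mildly delicate point is the local trivialization of a perfect complex by a bounded complex of projectives of constant rank on each connected component; the rest is formal.
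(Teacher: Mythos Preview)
Your proposal is correct and follows essentially the same approach as the paper's proof, which is very terse: the paper simply cites local constancy of the Euler characteristic (Stacks Project, Lemma 0BDJ) for the first claim and Toën--Vaquié for the representability of $\mathbf{Perf}_k$ for the second. Your version spells out the same argument in more detail, including the idempotent-lifting step to handle the derived enhancement and the explicit verification of the universal property, but the underlying strategy is identical.
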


\begin{proof}
	The first claim follows from locally constancy of Euler characteristic (\cite[\href{https://stacks.math.columbia.edu/tag/0BDJ}{Lemma 0BDJ}]{stacks-project}).
	Since $\mathbf{Perf}_k$ is a derived stack over $k$, locally geometric and locally of finite presentation over $k$ by \cite[Proposition 3.7]{TV}, the second claim follows the first.
\end{proof}

\begin{definition}
	Let $X$ be a topological space and $f \colon X \to \ZZ$.
	We say that $f$ is constructible if there exists a stratification $(X,P)$ of $X$ such that $f$ is constant on the strata of $(X,P)$.
\end{definition}

\begin{definition}\label{recollection_euler_CC}
	Let $(X, P)$ be a stratified space and $K$ a field.
	For $F \in \Cons_{P, \omega}(X; \Mod_K)$, define the local Euler-Poincaré index of $F$ by 
\[
\chi(F) \colon X \to \ZZ
\]
\[
x \mapsto \chi(F_x).
\]
\end{definition}

\begin{definition}\label{Cons_chi}
	Let $(X, P)$ be a stratified space and $\chi \colon X \to \ZZ$ be a constructible function.
	Let $K$ be a field.
	We denote by
\[
\Cons_{P, \omega}^{\chi}(X, \Mod_K) \subset \Cons_{P, \omega}(X, \Mod_K)
\]
	the full subcategory spanned by those constructible sheaves $F \in \Cons_{P, \omega}(X, \Mod_K)$ for which $\chi(F) = \chi$.
\end{definition}

\begin{definition}\label{Perv_chi}
	In the setting of \cref{Cons_chi}, let $\mathfrak{p}\colon P \to \ZZ$ be a function.
	We define
\[
{}^{\mathfrak{p}}\Perv_{P, \omega}^{\chi}(X, \Mod_K) \coloneqq {}^{\mathfrak{p}}\Perv_{P, \omega}(X, \Mod_K) \cap \Cons_{P, \omega}^{\chi}(X, \Mod_K) \subset {}^{\mathfrak{p}}\Perv_{P, \omega}(X, \Mod_K).
\]
\end{definition}

\begin{definition}\label{Cons_nu}
	Let $(X, P)$ be a stratified space admitting a categorically compact conical refinement.
	Consider a finite set of representatives $\left\lbrace x_i \right\rbrace_{i \in I}$ for the equivalence classes of $\Pi_\infty(X,P)$.
	Let $\nu \colon \ZZ \to \N$ be function and $K$ a field.
	We denote by
\[
\Cons_{P, \omega}^{\nu}(X, \Mod_K) \subset \Cons_{P, \omega}(X, \Mod_K)
\]
	the full subcategory spanned by those constructible sheaves $F$ with perfect stalks for which $\sum_{i \in I} h^j(F_{x_i}) \leq \nu(j)$ for every $j \in \ZZ$.
\end{definition}

\begin{definition}\label{Perv_nu}
	In the setting of \cref{Cons_nu}, let $\mathfrak{p}\colon P \to \ZZ$ be a function.
	We define
\[
{}^{\mathfrak{p}}\Perv_{P, \omega}^{\nu}(X, \Mod_K) \coloneqq {}^{\mathfrak{p}}\Perv_{P, \omega}(X, \Mod_K) \cap \Cons_{P,\omega}^{\nu}(X, \Mod_K) \subset {}^{\mathfrak{p}}\Perv_{P, \omega}(X, \Mod_K).
\]
\end{definition}

\begin{remark}
    By \cref{conical_vs_exodromic} and \cref{example_exodromic}, \cref{Cons_nu} and \cref{Perv_nu} do not depend on the choice of  representatives for the equivalence classes of $\Pi_\infty(X,P)$.
\end{remark}
	
\begin{remark}
	Let $(X,P)$ be a stratified space with $X$ an analytic manifold and let $F \in \Cons_{P, \omega}(X; \Mod_\mathbb{C})$.
	Attached to $F$ there is a lagrangian cycle $CC(F) \subset T^\ast(X)$, the characteristic cycle of $F$, which roughly measure how far is $F$ from being locally constant.
	In this paragraph, we will need to invoke the theory of characteristic cycles.
	We will blackbox it and refer to \cite[Chapter IX]{KS} for an introductory exposition.
\end{remark}

\begin{remark}\label{rem_vanishing_cohom}
	Let $(X, P)$ be a Whitney stratified manifold and $\mathfrak{p}$ the middle perversity.
	Let $K$ be a field and $F \in \Perv_P(X; \Mod_K)$.
	Then for every $x \in X$ we have $h_i(F_x)= 0$ for every $i<0$ and $i >\dim_{\mathbb{C}} X$.
	Indeed for $i < 0$ this follows directly by the definition of the connective part of $\Perv_P(X; \Mod_K)$.
	For $i > \dim_{\mathbb{C}} X$, see the argument given in \cite{BBDG}, bottom of page 56.
\end{remark}
	
\begin{theorem}[{Massey}]\label{bound_cohomologies}
	Let $(X, P)$ be a Whitney stratified manifold and let $\mathfrak{p} \colon P \to \ZZ$ be the middle perversity.
	Let $\chi \colon X \to \ZZ$ be a constructible function.
	There exists a function $\nu \colon \ZZ \to \N$ with finite support such that
\[
{}^{\mathfrak{p}}Perv_{P, \omega}^{\chi}(X; \Mod_\mathbb{C}) \subset {}^{\mathfrak{p}}\Perv_{P, \omega}^{\nu}(X; \mathbb{C}).
\]
\end{theorem}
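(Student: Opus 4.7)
The plan is to use the theory of characteristic cycles for perverse sheaves on Whitney stratified manifolds. To every $F \in {}^{\mathfrak p}\Perv_P(X; \Mod_\mathbb{C})$ one associates a Lagrangian cycle $CC(F) = \sum_{p \in P} m_p(F) [\overline{T^\ast_{X_p} X}]$ in $T^\ast X$. Because $\mathfrak{p}$ is the middle perversity and $(X,P)$ is Whitney, Kashiwara's positivity theorem guarantees that $m_p(F) \in \ZZ_{\geq 0}$. The Dubson--Kashiwara microlocal index formula then expresses the local Euler--Poincaré function $\chi(F)(x) = \sum_{p \in P} m_p(F) \cdot (-1)^{\dim_\mathbb{C} X_p} \mathrm{Eu}_{\overline{X_p}}(x)$ as an integer combination of local Euler obstructions. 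Since the matrix of values $\mathrm{Eu}_{\overline{X_p}}(x_q)$ with $x_q \in X_q$ is unipotent triangular with respect to the stratum partial order, it is invertible over $\ZZ$. Consequently, fixing the constructible function $\chi$ determines the multiplicities $m_p(F)$ uniquely, so they are uniformly bounded over ${}^{\mathfrak p}\Perv^\chi_{P,\omega}(X; \Mod_\mathbb{C})$.

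Next I would invoke Massey's bounds on local Betti numbers of perverse sheaves: for any $F \in {}^{\mathfrak p}\Perv_P(X; \Mod_\mathbb{C})$ on a Whitney stratified manifold and any $x \in X$, the total stalk dimension $\sum_i h_i(F_x)$ admits an upper bound that is an explicit linear function of the characteristic cycle multiplicities $m_p(F)$, with coefficients depending only on the local geometry of the stratification near $x$. Combined with the previous step, this yields, for each $x \in X$, a constant $C(x)$ such that $\sum_i h_i(F_x) \leq C(x)$ for every $F \in {}^{\mathfrak p}\Perv^\chi_{P,\omega}(X; \Mod_\mathbb{C})$. By \cref{rem_vanishing_cohom} these cohomologies can only be non-zero in degrees $0 \leq i \leq \dim_\mathbb{C} X$, so each individual $h_i(F_x)$ is also bounded by $C(x)$.

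To conclude, fix a finite set of representatives $\{x_i\}_{i \in I}$ for the equivalence classes of $\Pi_\infty(X,P)$; finiteness follows from categorical compactness of $(X,P)$ (\cref{Whitney_manifold}). Define
\[
\nu(j) \coloneqq \begin{cases} \sum_{i \in I} C(x_i) & \text{if } 0 \leq j \leq \dim_\mathbb{C} X, \\ 0 & \text{otherwise.} \end{cases}
\]
Then $\nu$ has finite support and by construction $\sum_{i \in I} h^j(F_{x_i}) \leq \nu(j)$ for every $F \in {}^{\mathfrak p}\Perv^\chi_{P,\omega}(X; \Mod_\mathbb{C})$ and every $j \in \ZZ$, establishing the desired inclusion. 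The main obstacle will be invoking Massey's stalk bound in a form tailored to arbitrary Whitney stratified manifolds (both complex algebraic and compact complex subanalytic, as in \cref{Withney_manifold_def}) and carefully matching the sign conventions on the middle perversity used here with those of the classical references on characteristic cycles.
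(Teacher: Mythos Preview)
Your proposal is correct and follows essentially the same route as the paper: both argue that fixing $\chi$ determines the characteristic cycle (you via the Dubson--Kashiwara formula and triangularity of the Euler obstruction matrix, the paper via a direct citation of \cite[Theorem~9.7.11]{KS}), and then invoke Massey's bounds on stalk cohomologies in terms of the characteristic cycle together with \cref{rem_vanishing_cohom} for the finite range. The only point the paper makes more explicit is the local reduction to $X \subset \mathbb{C}^n$ needed before applying \cite[Proposition~2.7, Corollary~5.5, Theorem~7.5]{Massey}, which you already flag as the main obstacle.
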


\begin{proof}
	By \cref{rem_vanishing_cohom}, it is enough to show that a choice of $\chi$ bounds the dimensions of the cohomologies $h_i$ of the stalks in the range $0 \leq i \leq \dim_{\mathbb{C}} X$.
	By \cite[Theorem 9.7.11]{KS}, fixing $\chi$ is equivalent to fixing the characteristic cycle.
	Up to taking an open neighborhood of $X$, we can assume $X \subset \mathbb{C}^n$.
	Up to shrinking again $X$, \cite[Proposition 2.7]{Massey} shows that the hypothesis of \cite[Corollary 5.5 \& Theorem 7.5]{Massey} are satisfied.
	Together with \cref{rem_vanishing_cohom}, this shows that a choice of $\chi$ bounds the dimension of the cohomologies of the stalks, hence proving the claim.
\end{proof}

\begin{definition}
	Let $(X, P)$ be a stratified space admitting a categorically compact, locally categorically compact conical refinement with locally weakly contractible strata and let $\mathfrak{p} \colon P \to \ZZ$ be a function.
	Let $\chi \colon X \to \ZZ$ be a constructible function.
	We define the sub-presheaf of ${}^{\mathfrak{p}}\mathbf{Perv}_P(X)$
\[
{}^{\mathfrak{p}}\mathbf{Perv}_P^{\chi}(X) \colon \dAff_k^{op} \to \mathrm{Spc}
\]
by sending $\Spec(A) \in \dAff_k$ to the maximal $\infty$-subgrupoid of $\Cons_{P, \omega}(X; \Mod_A)$ spanned by those objects $F$ for which the following property holds: $F$ is ${}^{\mathfrak{p}}\tau$-flat over $A$ and for every field $K$ and every $A \to \pi_0(A) \to K$ we have $\chi(F_x \otimes_A K) = \chi(x)$ for every $x \in X$.
\end{definition}

\begin{definition}\label{underline_chi_definition}
	Let $(X, P)$ be a stratified space.
	Consider a set of representants $\left\lbrace x_i \right\rbrace_{i \in I}$ of the equivalence classes of $\Pi_\infty(X,P)$ and a set of integers $\left\lbrace \chi_i \right\rbrace_{i \in I}$.
	These data determine a constructible function 
\[
\underline{\chi} \colon X \to \ZZ
\]
\[ x \mapsto \chi_i \ \ \textit{if $[x]= [x_i]$ in $\pi_0(\Pi_\infty(X,P))$.}
\]
\end{definition}

\begin{remark}
	If $(X,P)$ is a conically refineable stratified space, it follows by \cref{conical_vs_exodromic} and \cref{example_exodromic} that the constructible function $\underline{\chi}$ of \cref{underline_chi_definition} do not depend on the choice of the $x_i$'s, but only on the choice of the $\chi_i$'s.
\end{remark}

\begin{lemma}\label{pullback_Perv_chi}
	In the setting of \cref{underline_chi_definition}, suppose that $(X,P)$ is conically refineable.
	Then we have a pullback square of derived prestack over $k$
\[\begin{tikzcd}[sep=small]
	{{}^{\mathfrak{p}}\mathbf{Perv}_P^{\underline{\chi}}(X)} && {{}^{\mathfrak{p}}\mathbf{Perv}_P(X)} \\
	\\
	{\prod_{i \in I} \mathbf{Perf}_k^{\left\lbrace \chi=\chi_i \right\rbrace}} && {\prod_{i \in I} \mathbf{Perf}_k}
	\arrow[from=1-1, to=1-3]
	\arrow[from=1-1, to=3-1]
	\arrow["{\prod_{i \in I} i^\ast_{x_i}}", from=1-3, to=3-3]
	\arrow[from=3-1, to=3-3]
\end{tikzcd}\]
\end{lemma}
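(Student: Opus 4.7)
The plan is to check the universal property of the pullback levelwise on derived affines. Let $\Spec(A) \in \dAff_k$. Unwinding the definitions, an $A$-point of $\mathbf{Perv}_P^{\underline{\chi}}(X)$ is a $\mathfrak{p}$-flat constructible sheaf $F \in \Cons_{P,\omega}(X; \Mod_A)$ such that for every $A \to \pi_0(A) \to K$ with $K$ a field, $\chi((F \otimes_A K)_x) = \underline{\chi}(x)$ holds for every $x \in X$. An $A$-point of the presheaf-theoretic pullback is the same data \emph{except} the last condition is only required at the chosen representatives $x_i$, $i \in I$. So the two presheaves agree if and only if, for a fixed $F$ and field-valued point $A \to K$, the constructible function $\chi_F \colon x \mapsto \chi((F \otimes_A K)_x)$ is determined by its values at the $x_i$.

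First, I would reduce to a statement about stalks over a field. Base change along $A \to K$ sends $F$ to $F \otimes_A K \in \Cons_{P,\omega}(X; \Mod_K)$ and commutes with stalks, so the content is purely the statement that $\chi_G \colon X \to \ZZ$ is constant on the equivalence classes of $\pi_0(\Pi_\infty(X,P))$ for any $G \in \Cons_{P,\omega}(X;\Mod_K)$. This is precisely the content of the exodromy equivalence recalled in \cref{conical_vs_exodromic}: under
\[
\Cons_P(X;\Mod_K) \simeq \Fun(\Pi_\infty(X,P), \Mod_K),
\]
the stalk $G_x$ corresponds to evaluation at $x$, and equivalent objects of $\Pi_\infty(X,P)$ are sent to equivalent objects of $\Mod_K$. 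Hence $\chi_G(x)$ only depends on $[x] \in \pi_0(\Pi_\infty(X,P))$.

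Combining the two observations, the condition $\chi_F = \underline{\chi}$ on all of $X$ is equivalent to $\chi_F(x_i) = \chi_i$ for every $i \in I$, which is exactly the condition that the stalk $i_{x_i}^*F$ lies in $\mathbf{Perf}_k^{\{\chi=\chi_i\}}$ (here we use that $F$ has compact, hence perfect, stalks so that the stalks define $A$-points of $\mathbf{Perf}_k$). This gives the desired equivalence of $A$-points, natural in $A$, and so produces the pullback square.

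The only mild subtlety is ensuring that the map $\prod_{i\in I} i^*_{x_i}\colon \mathbf{Perv}_P(X) \to \prod_{i\in I}\mathbf{Perf}_k$ is well-defined at the level of derived prestacks, i.e.\ that stalks of $\mathfrak{p}$-flat families of compact constructible sheaves lie in $\Perf(A)$ and depend functorially on $A$; this is immediate from compactness of the stalks and base-change for $i_{x_i}^*$. The argument does not use categorical compactness or the finiteness of the indexing set $I$, so the same reasoning works in the generality stated.
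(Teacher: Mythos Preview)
Your proof is correct and follows essentially the same approach as the paper's: both reduce the claim to the observation that the local Euler--Poincar\'e index is constant on equivalence classes of $\Pi_\infty(X,P)$, which is a direct consequence of the exodromy equivalence of \cref{conical_vs_exodromic} (together with \cref{example_exodromic} to identify objects of $\Pi_\infty(X,P)$ with points of $X$). Your write-up is more detailed in spelling out the levelwise verification and the base-change step to a field, but the key idea is the same.
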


\begin{proof}
	By \cref{conical_vs_exodromic} and \cref{example_exodromic}, the local Euler-Poincaré index is constant on the equivalent classes of $\Pi_\infty(X,P)$.
	Hence the presheaf ${}^{\mathfrak{p}}\mathbf{Perv}_P^{\underline{\chi}}(X)$ can be described as follows.
	For $\Spec(A) \in \dAff_k$, we have that $F \in {}^{\mathfrak{p}}\mathbf{Perv}_P^{\underline{\chi}}(X)(\Spec(A))$ if and only if the following property holds: $F \in \Cons_{P, \omega}(X; \Mod_A)$ is ${}^{\mathfrak{p}}\tau_A$-flat and for every field $K$ and every $A \to \pi_0(A) \to K$ we have $\chi(F_{x_i} \otimes _A K) = \chi_i$ for every $i \in I$.
	The claim follows.
\end{proof}

\begin{lemma}\label{representability_Perv_chi}
	In the setting of \cref{underline_chi_definition}, let $(X, P)$ be a stratified space admitting a categorically compact, locally compact refinement with locally weakly contractible strata.
	Then the structural morphism
\[
{}^{\mathfrak{p}}\mathbf{Perv}_P^{\underline{\chi}}(X) \to{}^{\mathfrak{p}}\mathbf{Perv}_P(X)
\]
is representable by an open and closed immersion.
	In particular, the presheaf ${}^{\mathfrak{p}}\mathbf{Perv}_P^{\underline{\chi}}(X)$ is a $1$-Artin stack locally of finite presentation over $k$.
\end{lemma}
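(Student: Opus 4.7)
The plan is to deduce the statement directly from the pullback square of \cref{pullback_Perv_chi} in tandem with the representability result \cref{Perf_chi_representability} applied factor by factor.

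First, I would verify that the index set $I$ of equivalence classes of objects in $\Pi_\infty(X,P)$ is finite. By hypothesis $(X,P)$ admits a categorically compact refinement $(X,R)$, so $\Pi_\infty(X,R) \in \Cat_\infty^\omega$. A compact $\infty$-category has only finitely many equivalence classes of objects, as every $\infty$-category is a filtered colimit of its finitely generated sub-$\infty$-categories. Since $\Pi_\infty(X,P)$ is a localization of $\Pi_\infty(X,R)$ (see \cref{example_exodromic}), its set of equivalence classes of objects is a quotient of that of $\Pi_\infty(X,R)$ and hence finite as well.

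Next, \cref{Perf_chi_representability} gives that each of the inclusions $\mathbf{Perf}_k^{\{\chi = \chi_i\}} \hookrightarrow \mathbf{Perf}_k$ is representable by an open and closed immersion. A finite product of open and closed immersions of derived stacks is again an open and closed immersion, so
\[
\prod_{i \in I} \mathbf{Perf}_k^{\{\chi = \chi_i\}} \to \prod_{i \in I} \mathbf{Perf}_k
\]
is representable by an open and closed immersion. This class of morphisms is stable under pullback, so the top horizontal map of the square in \cref{pullback_Perv_chi} is also representable by an open and closed immersion, yielding the first assertion.

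For the second assertion, I would invoke that ${}^{\mathfrak{p}}\mathbf{Perv}_P(X)$ is itself a $1$-Artin stack locally of finite presentation over $k$ by \cref{openness_flatness_perv}, so that ${}^{\mathfrak{p}}\mathbf{Perv}_P^{\underline{\chi}}(X)$ inherits these properties across the open immersion we have just constructed. The only step requiring genuine input is the finiteness of $I$ described above; the rest is a formal consequence of stability of open and closed immersions under pullback and finite products.
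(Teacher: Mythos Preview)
Your proof is correct and follows essentially the same route as the paper: finiteness of $I$, then \cref{Perf_chi_representability} and \cref{pullback_Perv_chi}, then \cref{openness_flatness_perv}. The only cosmetic difference is that the paper obtains finiteness of $I$ by citing that $(X,P)$ itself is categorically compact (via \cite[Theorem 0.3.1-(5)]{exodromyconicality}), whereas you argue via the refinement and localization; both arguments are fine.
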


\begin{proof}
	By \cite[Theorem 0.3.1-(5)]{exodromyconicality}, the stratified space $(X,P)$ is categorically compact.
	Hence the $\infty$-category $\Pi_\infty(X,P)$ has a finite number of equivalence classes.
	It then follows by \cref{Perf_chi_representability} and \cref{pullback_Perv_chi} that the structural morphism ${}^{\mathfrak{p}}\mathbf{Perv}_P^{\underline{\chi}}(X) \to {}^{\mathfrak{p}}\mathbf{Perv}_P(X)$ is (the pullback of) an open and closed immersion.
	The last statement follows since ${}^{\mathfrak{p}}\mathbf{Perv}_P(X)$ is a $1$-Artin stack locally of finite presentation over $k$ by \cref{openness_flatness_perv}.
\end{proof}

\begin{example}
    By \cref{Whitney_manifold}, \cref{representability_Perv_chi} applies to complex Whitney stratified spaces.
\end{example}

\begin{recollection}\label{weak_gen_Cons}
	Let $(X,P)$ be a stratified space admitting a categorically compact conical refinement $(X,Q)$ with locally weakly contractible strata. 
	By \cite[Theorem 0.3.1-(5)]{exodromyconicality}, the stratified space $(X,P)$ is categorically compact.
	Denote by $I$ the finite set of equivalence classes of $\Pi_\infty(X,P)$ and choose a point $x_i$ for every $i \in I$.
	Let $A \in \dAff_k$ and denote by $j_{x_i} \colon \left\lbrace x_i \right\rbrace \hookrightarrow X$ the inclusions. Consider the adjunction
\[
j_{x_i, \#} \colon \Mod_A \rightleftarrows \Cons_P(X;\Mod_A) \colon j_{x_i}^\ast.
\]
	It follows from \cite[Corollary 6.5.4]{Exodromy} and \cite[Theorem 5.1.7-(2)]{exodromyconicality} that $\bigoplus_{i \in I} j_{x_i, \#} A$ is a compact generator of $\Cons_P(X;\Mod_A)$.
\end{recollection}

\begin{recollection}
	Let $(X, P)$ be a stratified space admitting a categorically compact, locally categorically compact conical refinement with locally weakly contractible strata and let $\mathfrak{p}\colon P \to \ZZ$ be a function.
	Consider a finite set of representatives $\left\lbrace x_i \right\rbrace_{i \in I}$ for the equivalence classes of $\Pi_\infty(X,P)$ and the associated compact generator $\bigoplus_{i \in I} j_{x_i, \#} k$ of $\Cons_P(X; \Mod_k)$ (\cref{weak_gen_Cons}).
	Let $\nu \colon \ZZ \to \N$ be a function with finite support.
	Consider the algebraic stack ${}^{\mathfrak{p}}\mathbf{Perv}_P^{\nu}(X)$ of 
	\cref{nu_substacks} and let $\Spec(A) \in \dAff_k$.
	Unwinding the definitions, we have that $F \in {}^{\mathfrak{p}}\mathbf{Perv}_P^{\nu}(X)$ if and only if the following property holds: $F \in \Cons_{P, \omega}(X; \Mod_A)$ is ${}^{\mathfrak{p}}\tau_A$-flat and for every field $K$ and every $A \to \pi_0(A) \to K$ we have $\sum_{i \in I} h_j(F_{x_i} \otimes _A K) \leq \nu(j)$ for every $j \in \ZZ$.
\end{recollection}

\begin{proposition}\label{Perv_qc_connected_components}
	In the setting of \cref{underline_chi_definition}, let $(X, P)$ be a Whitney stratified manifold and $\mathfrak{p} \colon P \to \ZZ$ the middle perversity.
	There exists a function $\nu \colon \ZZ \to \N$ with finite support such that 
\[
{}^{\mathfrak{p}}\mathbf{Perv}_P^{\underline{\chi}}(X) \subset {}^{\mathfrak{p}}\mathbf{Perv}_P^{\nu}(X).
\]
	In particular, the algebraic stack $t_0{}^{\mathfrak{p}}\mathbf{Perv}_P^{\underline{\chi}}(X)$ is of finite presentation over $k$.
\end{proposition}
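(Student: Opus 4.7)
The plan is to extract $\nu$ directly from Massey's bound and to promote the containment from $\CC$-coefficients to arbitrary coefficients via descent to a countable subfield of characteristic zero.

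First I would apply \cref{bound_cohomologies} to the constructible function $\underline{\chi}$ to obtain a function $\nu \colon \ZZ \to \N$ with finite support satisfying
\[
{}^{\mathfrak{p}}\Perv_{P, \omega}^{\underline{\chi}}(X; \Mod_\CC) \subset {}^{\mathfrak{p}}\Perv_{P, \omega}^{\nu}(X; \Mod_\CC),
\]
and take this $\nu$ as the candidate function.

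Next I would verify the containment ${}^{\mathfrak{p}}\mathbf{Perv}_P^{\underline{\chi}}(X) \subset {}^{\mathfrak{p}}\mathbf{Perv}_P^{\nu}(X)$. Since both defining conditions are tested against residue-field base-changes $A \to \pi_0(A) \to K$, it suffices to show that any $G \in {}^{\mathfrak{p}}\Perv_{P, \omega}^{\underline{\chi}}(X; \Mod_K)$ with $K$ a characteristic-zero field satisfies $\sum_{i \in I} h_j(G_{x_i}) \leq \nu(j)$ for every $j \in \ZZ$. Via the exodromy equivalence (\cref{conical_vs_exodromic}) combined with the categorical compactness of $\Pi_\infty(X,P)$ (provided by \cref{Whitney_manifold}), the object $G$ is encoded by a functor from a compact $\infty$-category to $\Perf(K)$, which only involves finitely many coefficients in $K$; hence $G$ descends to some $G_0 \in {}^{\mathfrak{p}}\Perv_{P, \omega}^{\underline{\chi}}(X; \Mod_{K_0})$ for a finitely generated (hence countable) characteristic-zero subfield $K_0 \subset K$, and the $\underline{\chi}$-condition is preserved since stalk Euler characteristics are invariant under field extension. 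Fixing any embedding $K_0 \hookrightarrow \CC$, extension of scalars yields $G_\CC \in {}^{\mathfrak{p}}\Perv_{P, \omega}^{\underline{\chi}}(X; \Mod_\CC)$, and stability of stalk dimensions under field extension lets Massey's bound for $G_\CC$ transfer back to $G$.

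For the finite-presentation assertion, the containment just established combined with \cref{representability_Perv_chi} realizes $t_0{}^{\mathfrak{p}}\mathbf{Perv}_P^{\underline{\chi}}(X)$ as an open and closed substack of $t_0{}^{\mathfrak{p}}\mathbf{Perv}_P^{\nu}(X)$. Since \cref{nu_substacks} provides that $t_0{}^{\mathfrak{p}}\mathbf{Perv}_P^{\nu}(X)$ is of finite presentation (hence quasi-compact) over $k$, a closed substack is quasi-compact, and, being locally of finite presentation by \cref{representability_Perv_chi}, it is of finite presentation over $k$.

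I expect the main obstacle to be making the descent-to-$\CC$ step rigorous: precisely, articulating why a pseudo-perfect perverse sheaf with coefficients in an arbitrary characteristic-zero field is already defined over a countable subfield. This requires unpacking the $\infty$-categorical encoding furnished by exodromy together with the finiteness features built into pseudo-perfectness and into the compactness of $\Pi_\infty(X,P)$.
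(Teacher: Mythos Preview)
Your proposal is correct and follows the same overall strategy as the paper: extract $\nu$ from Massey's theorem over $\CC$, reduce the general containment to the case of $\CC$-coefficients by descending to a finitely generated subfield of $K$ and embedding it in $\CC$, and then deduce finite presentation from the containment in ${}^{\mathfrak{p}}\mathbf{Perv}_P^{\nu}(X)$.

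The one place where you diverge is precisely the step you flag as an obstacle. You argue the descent to a countable subfield by unpacking exodromy and compactness of $\Pi_\infty(X,P)$. The paper bypasses this entirely: since ${}^{\mathfrak{p}}\mathbf{Perv}_P^{\underline{\chi}}(X)$ is already known to be locally of finite presentation (\cref{representability_Perv_chi}), its functor of points commutes with filtered colimits of rings, so writing $K=\colim_j K_j$ over subfields finitely generated over $\mathbb{Q}$ forces the map $\Spec(K)\to{}^{\mathfrak{p}}\mathbf{Perv}_P^{\underline{\chi}}(X)$ to factor through some $\Spec(K_j)$. This is the standard ``limit argument'' and makes your obstacle disappear without any $\infty$-categorical unpacking. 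For the last step, the paper phrases quasi-compactness as ``open substack of a noetherian stack'', while you use ``closed substack of a quasi-compact stack''; both are fine.
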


\begin{proof}
	Let us assume for the moment that the first claim holds.
	By \cref{representability_Perv_chi}, the algebraic stack $t_0{}^{\mathfrak{p}}\mathbf{Perv}_P^{\underline{\chi}}(X)$ is locally of finite presentation over $k$.
	Hence we are reduced to prove that $t_0{}^{\mathfrak{p}}\mathbf{Perv}_P^{\underline{\chi}}(X)$ is quasi-compact.
	Since by \cref{nu_substacks} the algebraic stack $t_0{}^{\mathfrak{p}}\mathbf{Perv}_P^{\nu}(X)$ is of finite presentation over the noetherian ring $k$, it is noetherian.
	In particular, $t_0{}^{\mathfrak{p}}\mathbf{Perv}_P^{\underline{\chi}}(X)$ is an open substack of a Noetherian algebraic stack.
	Since quasi-compactness of an algebraic stack is a topological property by \cite[\href{https://stacks.math.columbia.edu/tag/04YC}{Lemma 04YC}]{stacks-project}, it follows that $t_0{}^{\mathfrak{p}}\mathbf{Perv}_P^{\underline{\chi}}(X)$ quasi-compact. \\ \indent
	We now show the first claim. 
	Let $\Spec(A) \in \dAff_k$ and consider a morphism $\Spec(A) \to {}^{\mathfrak{p}}\mathbf{Perv}_P^{\underline{\chi}}(X)$ corresponding to a ${}^{\mathfrak{p}}\tau_A$-flat constructible sheaf $F \in \Cons_{P, \omega}(X; \Mod_A)$.
	Consider a morphism $A \to \pi_0(A) \to K$ with $K$ a field.
	Since $k$ contains $\mathbb{Q}$, we can write $K$ as a filtered colimits of its sub-fields $K_j$ that are finitely generated over $\mathbb{Q}$.
	Since ${}^{\mathfrak{p}}\mathbf{Perv}_P^{\underline{\chi}}(X)$ is locally of finite presentation, the composition
\[
\Spec(K) \to \Spec(A) \to {}^{\mathfrak{p}}\mathbf{Perv}_P^{\underline{\chi}}(X)
\]
factors through $\Spec (K_j)$ for some $j$.
	Hence we can assume that $K$ is finitely generated over $\mathbb{Q}$.
	Up to choosing an embedding $K \hookrightarrow \mathbb{C}$, we can assume that $K = \mathbb{C}$.
	The claim then follows directly by \cref{bound_cohomologies}.
\end{proof}

\begin{corollary}\label{Perv_good_components}
	In the setting of \cref{Perv_qc_connected_components}, the connected components of $t_0 {}^{\mathfrak{p}}\mathbf{Perv}_P^{\underline{\chi}}(X)$ are quasi-compact.
\end{corollary}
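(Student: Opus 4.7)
The plan is to exhibit the connected components of $t_0{}^{\mathfrak{p}}\mathbf{Perv}_P(X)$ as connected components of the quasi-compact pieces $t_0{}^{\mathfrak{p}}\mathbf{Perv}_P^{\underline{\chi}}(X)$ produced in \cref{Perv_qc_connected_components}. To make this work I first need to check that varying $\underline{\chi}$ decomposes the full stack of perverse sheaves into the open-and-closed pieces indexed by the finite set $I = \pi_0(\Pi_\infty(X,P))$.

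First I would invoke \cite[Theorem 0.3.1-(5)]{exodromyconicality} as in \cref{representability_Perv_chi} to get that $I$ is finite, so the set of constructible functions of the form $\underline{\chi}$ in \cref{underline_chi_definition} is in bijection with $\ZZ^I$. By \cref{representability_Perv_chi} each inclusion
\[
{}^{\mathfrak{p}}\mathbf{Perv}_P^{\underline{\chi}}(X) \hookrightarrow {}^{\mathfrak{p}}\mathbf{Perv}_P(X)
\]
is representable by an open and closed immersion. Moreover, for any $A$-point $F$ of ${}^{\mathfrak{p}}\mathbf{Perv}_P(X)$ the collection of stalks $(F_{x_i})_{i \in I}$ are perfect complexes of $A$-modules whose Euler characteristics are locally constant on $\Spec(A)$ by \cite[\href{https://stacks.math.columbia.edu/tag/0BDJ}{Lemma 0BDJ}]{stacks-project}. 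Hence over each connected component of $\Spec(A)$ the tuple of Euler characteristics is constant, giving a well-defined map ${}^{\mathfrak{p}}\mathbf{Perv}_P(X) \to \ZZ^I$ on $\pi_0$; this shows
\[
{}^{\mathfrak{p}}\mathbf{Perv}_P(X) \simeq \bigsqcup_{\underline{\chi} \in \ZZ^I} {}^{\mathfrak{p}}\mathbf{Perv}_P^{\underline{\chi}}(X)
\]
as a disjoint union of open-and-closed substacks.

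Next, by \cref{Perv_qc_connected_components} each $t_0{}^{\mathfrak{p}}\mathbf{Perv}_P^{\underline{\chi}}(X)$ is of finite presentation over the noetherian ring $k$, so its underlying topological space is noetherian. In a noetherian topological space there are only finitely many connected components and each is open and closed, hence quasi-compact. Any connected component of $t_0{}^{\mathfrak{p}}\mathbf{Perv}_P(X)$ is contained in one of the clopen pieces $t_0{}^{\mathfrak{p}}\mathbf{Perv}_P^{\underline{\chi}}(X)$ and therefore coincides with a connected component of that piece.

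I do not anticipate a serious obstacle: the decomposition into $\underline{\chi}$-components and noetherianity of each such component both reduce to ingredients already established in the previous lemmas and proposition. The only substantive input is the locally-constancy of the Euler characteristic of a family of perfect complexes, which is standard.
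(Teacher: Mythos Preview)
Your proof is correct and follows essentially the same route as the paper: decompose $t_0{}^{\mathfrak{p}}\mathbf{Perv}_P(X)$ as a disjoint union of the clopen pieces $t_0{}^{\mathfrak{p}}\mathbf{Perv}_P^{\underline{\chi}}(X)$, observe that each connected component lands in one such piece, and then invoke \cref{Perv_qc_connected_components} to get quasi-compactness. You supply more justification than the paper does for the decomposition (via local constancy of the Euler characteristic, which is exactly the content already packaged in \cref{Perf_chi_representability} and \cref{pullback_Perv_chi}) and for why components of a noetherian stack are quasi-compact, whereas the paper simply notes that each connected component is a closed substack of some $t_0{}^{\mathfrak{p}}\mathbf{Perv}_P^{\underline{\chi}}(X)$ and stops there.
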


\begin{proof}
	Keeping the notations of \cref{underline_chi_definition}, we have a decomposition in open and closed substacks
\[
t_0{}^{\mathfrak{p}}\mathbf{Perv}_P(X) = \bigsqcup_{\underline{\chi}} t_0{}^{\mathfrak{p}}\mathbf{Perv}_P^{\underline{\chi}}(X).
\]
	In particular, every connected component is a closed substack of $t_0{}^{\mathfrak{p}}\mathbf{Perv}_P^{\underline{\chi}}(X)$ for some $\underline{\chi}$.
	The claim then follows by \cref{Perv_qc_connected_components}.
\end{proof}

    For $\Spec(\kappa) \to \Spec(k)$ a closed point with $\kappa$ algebraically closed, we have the following

\begin{theorem}\label{good_Perv_algebraic}
    Let $(X, P)$ be a Whitney stratified manifold and $\mathfrak{p} \colon P \to \ZZ$ the middle perversity.
    The algebraic stack $t_0{}^{\mathfrak{p}}\mathbf{Perv}_P(X)$ admits a separated good moduli space $t_0{}^{\mathfrak{p}}\Perv_P(X)$.
	Moreover, the $\kappa$-points of $t_0{}^{\mathfrak{p}}\Perv_P(X)$ parametrize semisimple perverse sheaves with perfect stalks.
	Furthermore, $t_0{}^{\mathfrak{p}}\Perv_P(X)$ admits a natural derived enhancement ${}^{\mathfrak{p}}\Perv_P(X)$ which is a derived good moduli space for ${}^{\mathfrak{p}}\mathbf{Perv}_P(X)$.
\end{theorem}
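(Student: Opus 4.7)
The plan is to deduce this theorem by assembling the results already established in the paper, exhibiting the algebraic stack $t_0{}^{\mathfrak{p}}\mathbf{Perv}_P(X)$ as an instance of the setup of \cref{good_general_qc_components}. More precisely, I would take $\C \coloneqq \Cons_P(X; \Mod_k)$ equipped with the middle perversity $t$-structure ${}^{\mathfrak{p}}\tau$ of \cref{perverse_t_structure}, and check that the hypotheses of \cref{good_general_qc_components} are satisfied.

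First I would verify the abstract hypotheses. By \cref{Whitney_manifold}, $(X,P)$ is categorically compact, locally categorically compact, conically stratified and has locally weakly contractible strata, so \cref{openness_flatness_perv} applies and shows that $\C$ is of finite type, that ${}^{\mathfrak{p}}\mathbf{Perv}_P(X) \simeq \M_\C^{[0,0]}$, and that ${}^{\mathfrak{p}}\tau$ universally satisfies openness of flatness. \cref{Perv_accessible} then gives admissibility of the $t$-structure, while \cref{Perv_subobjects} gives \cref{Assumption_subobject}. Hence the machinery of \cref{general_case} applies to $\M_\C^{[0,0]} \simeq {}^{\mathfrak{p}}\mathbf{Perv}_P(X)$, and in particular $t_0{}^{\mathfrak{p}}\mathbf{Perv}_P(X)$ is $\Theta$-reductive and $\mathrm{S}$-complete with affine diagonal.

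The main ingredient beyond \cref{good_general} is the quasi-compactness of connected components, which is precisely the content of \cref{Perv_good_components} for the middle perversity on a Whitney stratified manifold. Indeed, the decomposition
\[
t_0{}^{\mathfrak{p}}\mathbf{Perv}_P(X) = \bigsqcup_{\underline{\chi}} t_0{}^{\mathfrak{p}}\mathbf{Perv}_P^{\underline{\chi}}(X)
\]
coming from locally constancy of the local Euler--Poincaré index together with \cref{representability_Perv_chi} realizes each connected component as a closed substack of some $t_0{}^{\mathfrak{p}}\mathbf{Perv}_P^{\underline{\chi}}(X)$, which is quasi-compact by \cref{Perv_qc_connected_components}. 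Here the geometric input is Massey's bound \cref{bound_cohomologies} on stalk cohomologies in terms of the characteristic cycle for middle perversity perverse sheaves; this is the only place where the Whitney stratified manifold hypothesis (as opposed to the more general stratified spaces of \cref{good_Perv_non_smooth}) is essential, and it is the main obstacle in making the theorem work for the \emph{entire} stack rather than just for its closed quasi-compact substacks.

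Having verified the hypotheses of \cref{good_general_qc_components}, I would conclude as follows. Applying \cref{good_general_qc_components} yields a separated good moduli space $t_0{}^{\mathfrak{p}}\Perv_P(X)$ for $t_0{}^{\mathfrak{p}}\mathbf{Perv}_P(X)$; the description of its $\kappa$-points as semisimple perverse sheaves with perfect stalks follows from \cref{closed_point_is_semisimple} combined with the identification of pseudo-perfect objects in $\C_\kappa^\heartsuit$ with perverse sheaves with perfect stalks (as in \cref{example_Perv} and the description given in \cref{perverse_t_structure}), and from \cref{good_moduli_properties}-(4) which matches closed points of the stack with points of the good moduli space. Finally, the existence of a derived enhancement ${}^{\mathfrak{p}}\Perv_P(X)$ providing a derived good moduli space for ${}^{\mathfrak{p}}\mathbf{Perv}_P(X)$ follows from \cref{derived_good}-(1) applied to the derived stack ${}^{\mathfrak{p}}\mathbf{Perv}_P(X) \simeq \M_\C^{[0,0]}$, whose truncation is the algebraic stack for which we have just produced a classical good moduli space.
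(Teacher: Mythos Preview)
Your proof is correct and follows essentially the same approach as the paper: verify the hypotheses of \cref{good_general_qc_components} using \cref{Whitney_manifold}, \cref{openness_flatness_perv}, \cref{Perv_accessible}, \cref{Perv_subobjects}, and \cref{Perv_good_components}, then apply that corollary. The paper's proof is in fact just the one-line citation of these same results, so your version is a faithful expansion of it.
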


\begin{proof}
    By \cref{openness_flatness_perv}, \cref{Perv_accessible}, \cref{Perv_subobjects}, \cref{Whitney_manifold} and \cref{Perv_good_components}, the algebraic stack $t_0{}^{\mathfrak{p}}\mathbf{Perv}_P(X)$ satisfies the assumptions of \cref{good_general_qc_components}.
\end{proof}

\appendix

\section{}\label{Appendix_QCoh_ST_Theta}

	In this Appendix we give a description of the derived $\infty$-category of quasi-coherent sheaves on $\ST$.
	We recall that $\ST$ is defined as
\[
\ST\coloneqq \left[\faktor{\Spec (R[s,t] / (st-\pi))}{\G_{m, R}}\right],
\]
where $R$ is a DVR, $\pi$ a uniformizer for $R$ and the action of $\G_m$ is induced by the one on $\A^2$ (where $s$ has weight $1$ and $t$ has weight $-1$).
	The statements in \cref{explicit_description_ST} are of course highly inspired by \cite[Lemma 7.14]{AHLH}, and they summarize all the contents of this Appendix.
	The proof is instead inspired \cite{Moulinos}, where the equivalent statements about $\QCoh([\A^1 / \G_m])$ are proven.

\subsection{A description of $\QCoh(\Theta)$}

	We recall the main results of \cite{Moulinos} for reader's convenience.
	The results are stated over the sphere spectrum $\mathbb{S}$, but they hold for any $\mathbb{E}_\infty$-ring spectrum.
	We denote by $\ZZ$ the poset of integers and by $\ZZ^{ds}$ the underlying set.
	In the following we use the convention introduced in \cref{notation_Rep}.

\begin{theorem}\label{QCoh_Theta}
\hfill
\begin{enumerate}\itemsep=0.2cm
	\item (\cite[Theorem 4.1]{Moulinos} \& \cite[Proposition 3.1.6]{rotation_invariance})
	There is an equivalence
\[
\QCoh(\mathrm{B}\G_m) \simeq \Rep(\ZZ) \simeq \mathrm{CoMod}_{\mathbb{S}[\ZZ]}
\]
of symmetric monoidal $\infty$-categories.
    \item (\cite[Theorem 1.1]{Moulinos}) We have an equivalence of symmetric monoidal categories 
\[
\QCoh([\A^1 / \G_m]) \simeq \Mod_{\mathbb{S}[x]}(\Rep(\ZZ^{ds})) \simeq \Rep(\ZZ) \ .
\]
    \item (\cite[Proposition 6.3]{Moulinos}) The pullback along the open immersion $\Spec (\mathbb{S}) = [\G_m / \G_m] \xhookrightarrow{j} [\A^1 / \G_m]$ induces the functor
    \[
    \colim \colon \Rep(\ZZ) \to \mathrm{Sp}
    \]
    \[
    (\cdots \xrightarrow{x} F_n \xrightarrow{x} F_{n-1} \xrightarrow{x} \cdots) \mapsto \colim(\cdots \xrightarrow{x} F_n \xrightarrow{x} F_{n-1} \xrightarrow{x} \cdots)
    \]
    under the equivalence of point (2).
    \item (\cite[Proposition 6.5]{Moulinos}) The pullback along the closed immersion $\mathrm{B}\G_m = [\Spec(\mathbb{S}) / \G_m] \xhookrightarrow{i} [\A^1 / \G_m]$ induces the functor
    \[
    \Gr \colon \Rep(\ZZ) \to \Rep(\ZZ^{ds})
    \]
    \[
    (\cdots \xrightarrow{x} F_n \xrightarrow{x} F_{n-1} \xrightarrow{x} \cdots) \mapsto \bigoplus_{n \in \ZZ}\cof(F_n \xrightarrow{x} F_{n-1})
    \]
    under the equivalence of points (1) and (2).
    \item (\cite[Proposition 8.2]{Moulinos}) The natural $t$-structure on $\QCoh([\A^1 / \G_m])$ corresponds via the equivalence of point (2) to the standard $t$-structure on $\Rep(\ZZ)$.
    That is, $F_\bullet \in \Rep(\ZZ)$ is (co)connective if and only if $F_n$ is (co)connective for every $n$.
\end{enumerate}
\end{theorem}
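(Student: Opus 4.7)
The plan is to deduce all five statements from a single descent/Koszul-style computation, following the strategy of \cite{Moulinos}. First, I would establish (1) by using the atlas $p\colon \Spec(\mathbb{S}) \to \mathrm{B}\G_m$ and the \v{C}ech nerve identification $\QCoh(\mathrm{B}\G_m) \simeq \mathrm{Tot}\bigl(\Mod_{\mathcal{O}(\G_m^{\bullet})}\bigr)$. Since $\G_m = \Spec(\mathbb{S}[\ZZ])$ as an $\mathbb{E}_\infty$-group scheme, the descent datum for $F \in \QCoh(\mathrm{B}\G_m)$ becomes precisely a $\mathbb{S}[\ZZ]$-comodule structure on $p^*F$; and the equivalence $\mathrm{CoMod}_{\mathbb{S}[\ZZ]} \simeq \Fun(\ZZ^{ds}, \mathrm{Sp})$ follows formally because $\mathbb{S}[\ZZ] = \bigoplus_{n \in \ZZ} \mathbb{S}$ as a coalgebra, so the comodule structure decomposes a spectrum into graded pieces. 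Matching the symmetric monoidal structures reduces to comparing Day convolution on $\Rep(\ZZ^{ds})$ with the tensor product of comodules under the group law of $\ZZ$.

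For (2), I would run the analogous descent argument along the atlas $\A^1 \to [\A^1/\G_m]$, yielding
\[
\QCoh([\A^1/\G_m]) \simeq \Mod_{\mathbb{S}[x]}\!\bigl(\QCoh(\mathrm{B}\G_m)\bigr) \simeq \Mod_{\mathbb{S}[x]}\!\bigl(\Rep(\ZZ^{ds})\bigr),
\]
where $x$ sits in weight $1$ (because scaling $x$ by $\lambda \in \G_m$ gives $\lambda\cdot x$). The second equivalence with $\Rep(\ZZ)$ in its filtered incarnation then comes from the observation that giving a graded $\mathbb{S}[x]$-module with $x$ of weight $1$ is the same as giving a sequence of spectra $F_\bullet$ together with transition maps $F_n \xrightarrow{x} F_{n-1}$; the functor sends $M = \bigoplus_n M_n$ to the diagram $\cdots \to M_n \to M_{n-1} \to \cdots$.

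Assertions (3) and (4) then follow tautologically once the equivalence of (2) is in place. For (3), restricting along $j \colon \Spec(\mathbb{S}) = [\G_m/\G_m] \hookrightarrow [\A^1/\G_m]$ is base change along $\mathbb{S}[x] \to \mathbb{S}[x,x^{-1}]$, which inverts the action of $x$; on a filtered object $F_\bullet$ this is exactly $\colim(\cdots \xrightarrow{x} F_n \xrightarrow{x} F_{n-1} \xrightarrow{x} \cdots)$, because localization by $x$ commutes with the forgetful functor to $\mathrm{Sp}$ and identifies with the sequential colimit. For (4), restricting along $i \colon \mathrm{B}\G_m \hookrightarrow [\A^1/\G_m]$ is base change along $\mathbb{S}[x] \to \mathbb{S}$, which on a graded module corresponds to taking the derived quotient $M/x$; degree by degree, $M_n / x\cdot M_{n+1}$ is exactly $\cof(F_{n+1} \xrightarrow{x} F_n)$, matching $\Gr$ (up to an indexing shift) through the equivalence of (1).

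Finally, for (5), I would verify that connectivity on both sides can be tested after pulling back to the atlas $\A^1$. On the filtered side, $F_\bullet$ is connective iff each $F_n$ is, by definition of the standard $t$-structure on a diagram category. On $\QCoh([\A^1/\G_m])$, connectivity is defined fppf-locally and hence by pullback to $\A^1$: $p^*F = \bigoplus_n F_n$ is connective iff each summand is. This matches, and the coconnective side is identical. The most delicate step is (2): one must check that the descent-coend presentation
\[
\QCoh([\A^1/\G_m]) \simeq \mathbb{S}[x]\text{-modules in } \Rep(\ZZ^{ds})
\]
is compatible with the symmetric monoidal structures, so that the identification with $\Rep(\ZZ)$ as a monoidal category (under Day convolution along the additive structure of $\ZZ$) really holds; this is exactly where \cite{Moulinos} does the bulk of the work, by identifying both sides with modules in the presentably symmetric monoidal $\infty$-category $\Rep(\ZZ^{ds})$ and invoking the monoidal Barr--Beck theorem.
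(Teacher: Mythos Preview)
The paper does not supply a proof of this theorem: it is explicitly presented as a recollection of results from \cite{Moulinos} and \cite{rotation_invariance}, with a precise citation attached to each item, and no proof environment follows. So there is nothing in the paper to compare your argument against beyond the bare citations.

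That said, your sketch is a faithful outline of how the cited results are actually established in \cite{Moulinos}: descent along the atlas $\Spec(\mathbb{S}) \to \mathrm{B}\G_m$ identifies $\QCoh(\mathrm{B}\G_m)$ with $\mathbb{S}[\ZZ]$-comodules (hence $\ZZ$-graded spectra); the affine morphism $[\A^1/\G_m] \to \mathrm{B}\G_m$ together with monoidal Barr--Beck gives modules over $\mathbb{S}[x]$ in graded spectra, which unwind to filtered spectra; and (3)--(5) follow by direct base-change computations. One small wrinkle: in (1) the displayed equivalence reads $\QCoh(\mathrm{B}\G_m)\simeq\Rep(\ZZ)$, but under the paper's convention $\Rep(\ZZ;\E)=\Fun(\ZZ^{\mathrm{op}},\E)$ this would mean filtered rather than graded objects; the intended target in (1) is the graded category $\Rep(\ZZ^{ds})$, which is indeed what you (correctly) work with. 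Your sketch is sound, but strictly speaking no proof is required here beyond pointing to the references.
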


\begin{remark}\label{pullback_atlas_Theta}
	Let $q \colon \Spec(\mathbb{S}) \to \mathrm{B}\G_m$ and $p \colon \A^1 \to [\A^1 / \mathrm{G}_m]$ be the atlas maps.
	Via the equivalences of \cref{QCoh_Theta}-(1)-(2), there are commutative diagrams
\[\begin{tikzcd}[sep=small]
	{\QCoh(\mathrm{B\mathrm{G}_m})} && {\Rep(\ZZ^{ds})} & {\QCoh(\A^1 / \G_m)} && {\Mod_{\mathbb{S}[x]}(\Rep(\ZZ^{ds}))} \\
	\\
	{\mathrm{Sp}} && {\mathrm{Sp}} & {\QCoh(\A^1)} && {\Mod_{\mathbb{S}[x]}}
	\arrow["\simeq", from=1-1, to=1-3]
	\arrow["{p^\ast}"', from=1-1, to=3-1]
	\arrow["{\pi_!}", from=1-3, to=3-3]
	\arrow["\simeq", from=1-4, to=1-6]
	\arrow["{p^\ast}"', from=1-4, to=3-4]
	\arrow["{\widetilde{\pi}_!}", from=1-6, to=3-6]
	\arrow["\Id"', from=3-1, to=3-3]
	\arrow["\simeq"', from=3-4, to=3-6]
\end{tikzcd}\]
where the right vertical arrow is given by (respectively, induced by)
\[
\pi_! \colon \Rep(\ZZ^{ds}) \to \mathrm{Sp}
\]
\[
F_\bullet \mapsto \oplus_{n \in \ZZ} F_n \ .
\]
\end{remark}

\begin{notation}\label{notations}
	In what follows we will make extensive use of the following diagram:
\[\begin{tikzcd}[sep=small]
	{\Spec(R[s,t] / (st-\pi))} && {\ST} \\
	\\
	{\A^2_R} && {[\A^2_R / \G_{m,R}]} \\
	\\
	{\Spec(R)} && {\mathrm{B}\G_{m,R}}
	\arrow["{p'}", two heads, from=1-1, to=1-3]
	\arrow["{i'}"', hook, from=1-1, to=3-1]
	\arrow["{\phi'}"', bend right = 45, from=1-1, to=5-1]
	\arrow["i", hook, from=1-3, to=3-3]
	\arrow["\phi", bend left = 60, from=1-3, to=5-3]
	\arrow["{p''}", two heads, from=3-1, to=3-3]
	\arrow["{\phi_1'}"', from=3-1, to=5-1]
	\arrow["{\phi_1}", from=3-3, to=5-3]
	\arrow["p"', two heads, from=5-1, to=5-3]
\end{tikzcd}\]
	All the squares are pullbacks, the horizontal maps are atlases, the top vertical maps are closed immersions and the bottom vertical maps are the natural projections.
\end{notation}

\begin{lemma}
	The pushforward $\phi_{\ast} \colon \QCoh(\ST) \to \QCoh(\mathrm{B}\G_{m,R})$ induces an equivalence
\[
\QCoh(\ST) \simeq \Mod_{\phi_{\ast} \OO_{\ST}}(\mathrm{B}\G_{m,R})
\]
of symmetric monoidal $\infty$-categories.
\end{lemma}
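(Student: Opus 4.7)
The plan is to invoke the standard equivalence between quasi-coherent sheaves on an affine morphism and modules over the pushforward of the structure sheaf. The key point is that the morphism $\phi \colon \ST \to \mathrm{B}\G_{m,R}$ is affine, so the adjunction $\phi^\ast \dashv \phi_\ast$ is monadic and the resulting monad is given by tensoring with $\phi_\ast \OO_{\ST}$.

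First I would verify that $\phi$ is affine. By \cref{notations}, the diagram
\[
\begin{tikzcd}[sep=small]
\Spec(R[s,t]/(st-\pi)) \ar[r,"p'",two heads] \ar[d,"\phi'"'] & \ST \ar[d,"\phi"] \\
\Spec(R) \ar[r,"p",two heads] & \mathrm{B}\G_{m,R}
\end{tikzcd}
\]
is a pullback square, the map $\phi'$ is affine (as a morphism of affine schemes), and $p$ is a faithfully flat presentation. Since affineness is local on the target for the fpqc topology (see \cite[Corollary 3.4.2.2]{SAG}), $\phi$ is affine.

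Next I would apply Barr--Beck--Lurie to the adjunction $\phi^\ast \dashv \phi_\ast$. Since $\phi$ is affine, $\phi_\ast$ preserves all colimits and is conservative (both properties can be checked after pulling back along the faithfully flat cover $p$, where they reduce to the corresponding properties of $\phi'_\ast$, a fact that holds for any affine morphism of derived schemes by e.g. \cite[Proposition 2.5.4.3]{SAG}). Thus by \cite[Theorem 4.7.3.5]{HA}, $\phi_\ast$ induces an equivalence of $\infty$-categories
\[
\QCoh(\ST) \simeq \Mod_{T}(\QCoh(\mathrm{B}\G_{m,R})),
\]
where $T$ is the monad $\phi_\ast \phi^\ast$. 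Since $\phi$ is affine, the projection formula $\phi_\ast \phi^\ast(-) \simeq \phi_\ast \OO_{\ST} \otimes -$ holds, so $T \simeq \phi_\ast \OO_{\ST} \otimes -$ and the monad structure coincides with the $\mathbb{E}_\infty$-algebra structure on $\phi_\ast \OO_{\ST}$ inherited from $\OO_{\ST}$. This identifies the target with $\Mod_{\phi_\ast \OO_{\ST}}(\QCoh(\mathrm{B}\G_{m,R}))$.

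Finally, to upgrade this to an equivalence of symmetric monoidal $\infty$-categories, one observes that $\phi^\ast \colon \QCoh(\mathrm{B}\G_{m,R}) \to \QCoh(\ST)$ is symmetric monoidal (being a pullback functor for a morphism of stacks), and the induced symmetric monoidal structure on its right adjoint $\phi_\ast$ through the monad identification is precisely the relative tensor product over $\phi_\ast \OO_{\ST}$; this is the symmetric monoidal enhancement of Barr--Beck--Lurie recorded in \cite[Proposition 4.8.5.1 \& Theorem 5.29]{HA}. The main (minor) obstacle is checking that $\phi_\ast$ preserves geometric realizations of $\phi_\ast$-split simplicial objects and is conservative; both reduce to the corresponding statements for $\phi'_\ast$ via faithfully flat descent along $p$, so no serious difficulty arises.
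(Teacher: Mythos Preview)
Your proof is correct and follows essentially the same approach as the paper: both verify that $\phi$ is (quasi-)affine by pulling back along the atlas $p \colon \Spec(R) \to \mathrm{B}\G_{m,R}$ and reducing to the affine map $\phi' \colon \Spec(R[s,t]/(st-\pi)) \to \Spec(R)$, then invoke a standard result to conclude. The only difference is that the paper cites \cite[Proposition 3.2.5]{Tannaka_QCoh} as a black box for the symmetric monoidal equivalence, whereas you unpack that black box via Barr--Beck--Lurie and the projection formula.
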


\begin{remark}
	The proof is a translation in our setting of the discussion at the beginning of \cite[Section 5]{Moulinos}.
\end{remark}

\begin{proof}
	By \cite[Proposition 3.2.5]{Tannaka_QCoh}, it is enough to show that 
\[
\Spec(R[s,t] / (st-\pi)) \to \ST
\]
is a quasi-affine morphism.
	By \cite[Remark 3.1.29]{Tannaka_QCoh}, it is enough to show it after pulling back along atlas of $\mathrm{B}\G_{m,R}$.
	The conclusion follows from the outer pullback diagram of \cref{notations}.
\end{proof}

	We now identify the graded $R$-module of $\phi_{\ast} \OO_{\ST}$ under the equivalence of \cref{QCoh_Theta}-(1).
	This will identify $\QCoh(\ST)$ as a category of modules in $\Rep(\ZZ^{ds}; \Mod_R)$.
	We begin by identifying the underlying $R$-module.

\begin{lemma}\label{underlying_R_mod}
	The underlying $R$-module of $\phi_{\ast} \OO_{\ST}$ is $R[s,t] / (st-\pi)$.
\end{lemma}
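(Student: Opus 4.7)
The plan is to exploit the fact that all squares in the diagram of Notation A.1.3 are pullbacks, so that the computation of $\phi_{\ast}\OO_{\ST}$ can be reduced to an analogous computation on the atlas $\Spec(R) \to \mathrm{B}\G_{m,R}$. Concretely, I would first observe that the atlas map $p \colon \Spec(R) \to \mathrm{B}\G_{m,R}$ is faithfully flat, and that the outer rectangle in the diagram
\[
\begin{tikzcd}[sep=small]
\Spec(R[s,t]/(st-\pi)) \ar[r, "p'", two heads] \ar[d, "\phi'"'] & \ST \ar[d, "\phi"] \\
\Spec(R) \ar[r, "p"', two heads] & \mathrm{B}\G_{m,R}
\end{tikzcd}
\]
is a pullback, where $\phi' = \phi'_1 \circ i'$ is the composition of the closed immersion with the projection from $\A^2_R$ down to $\Spec(R)$.

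Next, I would apply flat base change (see e.g. Haine, Theorem 0.5) along $p$, which yields an equivalence
\[
p^\ast \phi_{\ast}\OO_{\ST} \simeq \phi'_{\ast} (p')^\ast \OO_{\ST} \simeq \phi'_{\ast}\OO_{\Spec(R[s,t]/(st-\pi))}.
\]
Since $\phi' \colon \Spec(R[s,t]/(st-\pi)) \to \Spec(R)$ is an affine morphism between affine schemes, its pushforward of the structure sheaf is computed by forgetting the $R$-algebra structure to the underlying $R$-module, giving $R[s,t]/(st-\pi)$.

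Finally, to conclude, I would invoke \cref{pullback_atlas_Theta}: under the equivalence $\QCoh(\mathrm{B}\G_{m,R}) \simeq \Rep(\ZZ^{ds}; \Mod_R)$, the functor $p^\ast$ is identified with the functor $\pi_!$ sending a graded $R$-module to the direct sum of its weight components. Since the natural grading on $R[s,t]/(st-\pi)$ (with $s$ in weight $1$ and $t$ in weight $-1$) has underlying $R$-module equal to $R[s,t]/(st-\pi)$ itself, this identifies the underlying $R$-module of $\phi_{\ast}\OO_{\ST}$ with $R[s,t]/(st-\pi)$.

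The argument is essentially formal once flat base change is in hand; the only subtlety to verify carefully is that the outer square is indeed a pullback, which follows by pasting the two pullback squares of Notation A.1.3. There is no serious obstacle in this lemma, and I do not expect any step to present real difficulty beyond bookkeeping.
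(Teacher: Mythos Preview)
Your proposal is correct and follows essentially the same approach as the paper: both compute $p^\ast \phi_\ast \OO_{\ST}$ via base change along the pullback square (the paper cites \cite[Corollary 3.2.6]{Tannaka_QCoh} for the Beck--Chevalley equivalence, while you invoke flat base change), and both then identify the result with $R[s,t]/(st-\pi)$ using affineness of $\phi'$. Your final paragraph invoking \cref{pullback_atlas_Theta} is unnecessary, since by definition the ``underlying $R$-module'' of an object of $\QCoh(\mathrm{B}\G_{m,R})$ \emph{is} its image under $p^\ast$, so the computation is already complete after the base change step.
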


\begin{proof}
	Consider the diagram of \cref{notations}.
	By \cite[Corollary 3.2.6]{Tannaka_QCoh}, the Beck-Chevalley transformation $p^\ast\phi_{\ast} \to (\phi')_{\ast}(p')^\ast$ is an equivalence.
	It then follows that the underlying $R$-module of $\phi_{\ast} \OO_{\ST}$ is identified with
\[
p^\ast\phi_{\ast} \OO_{\ST} \simeq (\phi')_{\ast}(p')^{\ast} \OO_{\ST} \simeq (\phi')_{\ast}\OO_{\Spec(R[s,t] / (st-\pi))} \simeq R[s,t] /(st-\pi),
\]
where the last equivalence follows by $\Spec(R[s,t] / (st-\pi))$ being affine.
\end{proof}

	Now we compute the graded structure.
	In order to do so, we notice that by \cref{notations}, the graded structure is induced by the one on $\A^2$.
	Thus it is enough to compute it there and this can be done already at the non-commutative level.

\begin{lemma}\label{graded_structure}
	Via the equivalence of \cref{QCoh_Theta}-(1) the spectral quasi-coherent sheaf $(\phi_1)_{\ast} \OO_{\A^2}$ is identified with $\mathbb{S}[t,s]$, where $t$ has weight $-1$ and $s$ has weight $1$.
\end{lemma}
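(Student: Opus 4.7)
The plan is to reduce to the absolute case over the sphere spectrum $\mathbb{S}$, identify the underlying spectrum via Beck-Chevalley, and then determine the $\mathbb{Z}$-grading from the $\mathbb{G}_m$-action.

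First, since the right column of \cref{notations} exhibits $[\mathbb{A}^2_R/\mathbb{G}_{m,R}]$ as the base change of $[\mathbb{A}^2/\mathbb{G}_m]$ along $\mathrm{B}\mathbb{G}_{m,R}\to \mathrm{B}\mathbb{G}_m$, and since the weight decomposition depends only on the $\mathbb{G}_m$-action and not on $R$, it suffices to prove the statement for $\phi_1\colon [\mathbb{A}^2/\mathbb{G}_m]\to \mathrm{B}\mathbb{G}_m$ over $\mathbb{S}$.

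Second, I would mimic the proof of \cref{underlying_R_mod}: applying Beck-Chevalley to the outer pullback square in \cref{notations}, whose left column is the structure map $\phi_1'\colon \mathbb{A}^2\to \mathrm{Spec}(\mathbb{S})$ and whose right column is $\phi_1$, yields
\[
p^\ast (\phi_1)_\ast \mathcal{O}_{[\mathbb{A}^2/\mathbb{G}_m]} \simeq (\phi_1')_\ast (p'')^\ast \mathcal{O}_{[\mathbb{A}^2/\mathbb{G}_m]} \simeq (\phi_1')_\ast \mathcal{O}_{\mathbb{A}^2} \simeq \mathbb{S}[s,t].
\]
By \cref{pullback_atlas_Theta}, pullback along the atlas $p$ corresponds under $\mathrm{QCoh}(\mathrm{B}\mathbb{G}_m)\simeq \mathrm{Rep}(\mathbb{Z}^{ds})$ to the direct sum of weight spaces, so the underlying spectrum of $(\phi_1)_\ast \mathcal{O}$ is $\mathbb{S}[s,t]$.

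Third, to pin down the $\mathbb{Z}$-grading itself, I would realize $[\mathbb{A}^2/\mathbb{G}_m]$ as the fiber product $[\mathbb{A}^1_{w=1}/\mathbb{G}_m]\times_{\mathrm{B}\mathbb{G}_m}[\mathbb{A}^1_{w=-1}/\mathbb{G}_m]$ over $\mathrm{B}\mathbb{G}_m$, where the subscripts indicate the weight of the $\mathbb{G}_m$-action on $\mathbb{A}^1$. Moulinos's theorem (\cref{QCoh_Theta}-(2)) provides the pushforward $\mathbb{S}[s]$ with $s$ in weight $1$ for the first factor, while the same result applied after composing the $\mathbb{G}_m$-action with the inversion automorphism gives $\mathbb{S}[t]$ with $t$ in weight $-1$ for the second. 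Using that each projection is affine, base change along the fiber product identifies $(\phi_1)_\ast \mathcal{O}_{[\mathbb{A}^2/\mathbb{G}_m]}$ with the tensor product $\mathbb{S}[s]\otimes_{\mathbb{S}}\mathbb{S}[t]\simeq \mathbb{S}[s,t]$ inside $\mathrm{Rep}(\mathbb{Z}^{ds})$, with the asserted weights.

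The main obstacle will be managing the $\mathbb{E}_\infty$-algebra structures through the equivalences: specifically, checking that the multiplication on $\mathbb{S}[s,t]$ read off from the fiber-product presentation agrees with the one coming from $(p'')^\ast \mathcal{O}_{\mathbb{A}^2}$, with no additional twist in the grading. Since both structures are characterized by universal properties as the free $\mathbb{E}_\infty$-algebra on two generators of prescribed weights, the verification reduces to comparing generators, but carrying it out precisely requires unwinding the equivalences of Moulinos carefully and using the monoidality of $p^\ast$ together with the projection formula.
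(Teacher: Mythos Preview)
Your argument is correct, but it is considerably more elaborate than what the paper does. The paper's proof is a one-step observation: both $\mathbb{S}[t,s]$ (as defined) and $(\phi_1)_\ast\mathcal{O}_{\mathbb{A}^2}$ are $\mathbb{S}[\mathbb{Z}]$-comodules whose coaction arises from the \emph{same} map of monoids
\[
\mathbb{N}\times\mathbb{N}\longrightarrow \mathbb{N}\times\mathbb{N}\times\mathbb{Z},\qquad (n,m)\longmapsto (n,m,m-n),
\]
the former by definition and the latter because this is exactly the map encoding the $\mathbb{G}_m$-action $\mathbb{A}^2\times\mathbb{G}_m\to\mathbb{A}^2$. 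The identification therefore holds already at the level of monoid algebras, and no Beck--Chevalley, no fiber-product decomposition, and no appeal to Moulinos's $\mathbb{A}^1$ result is needed.

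Your route---reducing to $\mathbb{S}$, identifying the underlying module via base change, then decomposing $[\mathbb{A}^2/\mathbb{G}_m]\simeq [\mathbb{A}^1_{w=1}/\mathbb{G}_m]\times_{\mathrm{B}\mathbb{G}_m}[\mathbb{A}^1_{w=-1}/\mathbb{G}_m]$ and tensoring the two pushforwards---is valid and pleasantly modular, reusing the $\mathbb{A}^1$ case as a black box. Its cost is exactly the ``obstacle'' you flag at the end: tracking the $\mathbb{E}_\infty$-algebra structure through the K\"unneth identification. The paper avoids this entirely because working with the monoid map gives the algebra and comodule structures simultaneously; your Step~2 is then also redundant, since Step~3 already determines the underlying spectrum. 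A minor slip: the square you want in \cref{notations} is the \emph{bottom} square (with $\phi_1,\phi_1',p,p''$), not the outer one.
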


\begin{proof}
	By definition, $\mathbb{S}[t,s]$ is the $\mathbb{S}[\ZZ]$-comodule 
\[
\mathbb{S}[\N \times \N] \to \mathbb{S}[\N \times \N] \otimes_{\mathbb{S}} \mathbb{S}[\ZZ]
\]
arising from the natural map of monoids
\[
\N \times \N \to \N \times \N \times \ZZ
\]
\[
(n,m) \mapsto (n,m, m-n).
\]
	The comodule structure of on $\phi_{\ast}\OO_{\A^2}$ arises from the action map $\A^2 \times \G_m \to \A^2$, which is induced by the same map of monoids.
	Thus the equivalence of \eqref{underlying_R_mod} can be promoted to an equivalence of $\mathbb{S}[\ZZ]$-comodules as stated.
\end{proof}

\begin{corollary}\label{QCoh_ST}
	There is an equivalence
\[
\QCoh(\ST) \simeq \Mod_{R[s,t] / (st-\pi)} (\Rep(\ZZ^{ds}; \Mod_R))
\]
of symmetric monoidal $\infty$-categories.
\end{corollary}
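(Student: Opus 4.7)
The plan is to assemble the previous results of the Appendix. The opening Lemma of this Appendix shows $\phi \colon \ST \to \mathrm{B}\G_{m,R}$ is quasi-affine and produces a symmetric monoidal equivalence
\[
\phi_\ast \colon \QCoh(\ST) \xrightarrow{\ \sim\ } \Mod_{\phi_\ast \OO_{\ST}}(\QCoh(\mathrm{B}\G_{m,R})).
\]
By \cref{QCoh_Theta}-(1), base-changed from $\mathbb{S}$ to $R$, the target category is symmetric monoidally equivalent to $\Rep(\ZZ^{ds};\Mod_R)$. Consequently
\[
\QCoh(\ST) \simeq \Mod_{\phi_\ast \OO_{\ST}}(\Rep(\ZZ^{ds};\Mod_R)),
\]
and the remaining task is to identify the commutative algebra $\phi_\ast \OO_{\ST}$ in $\Rep(\ZZ^{ds};\Mod_R)$ with $R[s,t]/(st-\pi)$, equipped with the grading $|s|=1$, $|t|=-1$.

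To identify $\phi_\ast \OO_{\ST}$, I will use the factorization $\phi = \phi_1 \circ i$ from the diagram in \cref{notations}, where $i \colon \ST \hookrightarrow [\A^2_R/\G_{m,R}]$ is the closed immersion cutting out the $\G_{m,R}$-invariant regular function $st-\pi$. By \cref{graded_structure} (base-changed to $R$), $(\phi_1)_\ast \OO_{[\A^2_R/\G_{m,R}]} \simeq R[s,t]$ in $\Rep(\ZZ^{ds};\Mod_R)$ with the stated grading. Because $st-\pi$ lies in weight $0$, quotienting by it is an operation internal to $\Rep(\ZZ^{ds};\Mod_R)$; pushing forward the short sequence
\[
\OO_{[\A^2_R/\G_{m,R}]} \xrightarrow{\ st-\pi\ } \OO_{[\A^2_R/\G_{m,R}]} \to i_\ast \OO_{\ST}
\]
along $\phi_1$ and using \cref{underlying_R_mod} as a consistency check on the underlying $R$-module then identifies $\phi_\ast \OO_{\ST}$ with $R[s,t]/(st-\pi)$ as a commutative algebra object in $\Rep(\ZZ^{ds};\Mod_R)$. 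Substituting back gives the equivalence claimed.

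The main obstacle I anticipate is bookkeeping: ensuring that the algebra structure (not just the underlying $\Rep(\ZZ^{ds};\Mod_R)$-object) is correctly transported through \cref{QCoh_Theta}-(1) and through the pullback square of \cref{notations}, and verifying that pushforward along the closed immersion $i$ commutes with the quotient $-\otimes_{R[s,t]} R[s,t]/(st-\pi)$ in the graded sense. Both points reduce to the fact that the equivalence $\QCoh(\mathrm{B}\G_{m,R}) \simeq \Rep(\ZZ^{ds};\Mod_R)$ is symmetric monoidal and that $st-\pi$ is homogeneous of weight $0$, so no genuinely new computation is required beyond the Appendix lemmas already in place.
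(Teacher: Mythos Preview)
Your proposal is correct and follows essentially the same approach as the paper: the paper's proof simply states that the result follows formally from \cref{underlying_R_mod} and \cref{graded_structure}, referring to \cite[Theorem 5.1]{Moulinos} for details, and your write-up spells out exactly this assembly (the quasi-affine equivalence, the identification of $\QCoh(\mathrm{B}\G_{m,R})$ with $\Rep(\ZZ^{ds};\Mod_R)$, and the passage from $R[s,t]$ to $R[s,t]/(st-\pi)$ via the weight-$0$ element $st-\pi$).
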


\begin{proof}
	It follows formally from \cref{underlying_R_mod} and \cref{graded_structure}.
	See \cite[Theorem 5.1]{Moulinos}, for a detailed proof.
\end{proof}

\begin{remark}\label{explicit_description}
	Let us describe objects of $\Mod_{R[s,t] / (st-\pi)}(\Rep(\ZZ^{ds}; \Mod_R))$ informally: we have to give ourselves a graded $R$-modules, with an action of $s$ that makes the weight increase by $1$ and an action of $t$ that makes the weight decrease by $1$.
	Moreover, we have the relation $st = \pi = ts$.
	Thus we can represent such an object by
\[
F_\bullet \colon \ \
\begin{tikzcd}[sep=small]
	\cdots & {F_{n+1}} & {F_{n}} & {F_{n-1}} & \cdots
	\arrow["t", shift left, from=1-1, to=1-2]
	\arrow["s", shift left, from=1-2, to=1-1]
	\arrow["t", shift left, from=1-2, to=1-3]
	\arrow["s", shift left, from=1-3, to=1-2]
	\arrow["t", shift left, from=1-3, to=1-4]
	\arrow["s", shift left, from=1-4, to=1-3]
	\arrow["t", shift left, from=1-4, to=1-5]
	\arrow["s", shift left, from=1-5, to=1-4]
\end{tikzcd}
\]
with the relation that composing $s$ with $t$ (and vice-versa) is the endomorphism induced by multiplication by $\pi$.
	This recovers the description of the abelian category $\QCoh(\ST)$ provided by \cite[Lemma 7.14]{AHLH}.
	Moreover the proof of \cite[Proposition 8.2]{Moulinos} carries on in our setting showing under the equivalence of \cref{QCoh_ST} an object $F_\bullet$ is (co)connective if and only if $F_n$ is (co)connective for every $n \in \ZZ$.
\end{remark}

\begin{remark}\label{pullback_atlas_ST}
	Let $p \colon \Spec(R[s,t]/(st-\pi)) \to \ST$ be the atlas map.
	Via the equivalence of \cref{QCoh_ST} there is a commutative diagram
\[\begin{tikzcd}[sep=small]
	{\QCoh(\ST)} && {\Mod_{R[s,t] / (st-\pi)} (\Rep(\ZZ^{ds}; \Mod_R))} \\
	\\
	{\QCoh(\Spec(R[s,t] / (st-\pi))} && {\Mod_{R[s,t] / (st-\pi)}}
	\arrow["\sim", from=1-1, to=1-3]
	\arrow["{p^\ast}"', from=1-1, to=3-1]
	\arrow["{\pi_!}", from=1-3, to=3-3]
	\arrow["\sim"', from=3-1, to=3-3]
\end{tikzcd}\]
where the right vertical arrow is induced by
\[
\pi_! \colon \Rep(\ZZ^{ds}) \to \mathrm{Sp}\]
\[
F_\bullet \mapsto \bigoplus_{n \in \ZZ} F_n \ .
\]
\end{remark}

\begin{corollary}\label{computation_pullback_atlas_ST}
	Let $\ev_0 \colon \Mod_R(\Rep(\ZZ^{ds})) \to Mod_R$ be the evaluation at $0$ functor and let $\ev_{0,!} \Mod_R \to \Mod_R(\Rep(\ZZ^{ds}))$ be its left adjoint.
	In the setting of \cref{pullback_atlas_ST}, we have a commutative diagram
\[\begin{tikzcd}[sep=small]
	{\QCoh(\ST)} & {\Mod_{R[s,t] / (st-\pi)} (\Rep(\ZZ^{ds}; \Mod_R))} && {\Mod_R(\Rep(\ZZ^{ds}))} \\
	\\
	{\QCoh(\Spec(R[s,t] / (st-\pi))} & {\Mod_{R[s,t] / (st-\pi)}} && {\Mod_R}
	\arrow["\sim", from=1-1, to=1-2]
	\arrow["{{p^\ast}}"', from=1-1, to=3-1]
	\arrow["{{\pi_!}}", from=1-2, to=3-2]
	\arrow["{- \otimes_R R[s,t]/(st-\pi)}"', from=1-4, to=1-2]
	\arrow["\sim"', from=3-1, to=3-2]
	\arrow["{\ev_{0, !}}"', from=3-4, to=1-4]
	\arrow["{- \otimes_R R[s,t]/(st-\pi)}", from=3-4, to=3-2]
\end{tikzcd}\]
\end{corollary}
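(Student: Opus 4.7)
The plan is to decompose the diagram into its two squares and verify each separately. The left square is precisely \cref{pullback_atlas_ST} applied to the atlas $p \colon \Spec(R[s,t]/(st-\pi)) \to \ST$, so nothing new needs to be done there. All the work is in the right square, which is a purely algebraic statement about graded and ungraded modules and has no geometric content once the equivalences of \cref{QCoh_ST} have been established.

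For the right square, the key observation is that the functor $\pi_! \colon \Rep(\ZZ^{ds}; \Mod_R) \to \Mod_R$, $F_\bullet \mapsto \bigoplus_n F_n$, is symmetric monoidal. This is because the Day convolution symmetric monoidal structure on $\Fun(\ZZ^{ds}, \Mod_R) \simeq \Rep(\ZZ^{ds}; \Mod_R)$ is equivalent to $\LMod_{R[\ZZ]}(\Mod_R)$ (cf.\ \cref{QCoh_Theta}-(1)), and under this identification $\pi_!$ is nothing but the relative tensor product $- \otimes_{R[\ZZ]} R$ along the augmentation $R[\ZZ] \to R$, which is manifestly symmetric monoidal. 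As a consequence, $\pi_!$ commutes with tensor products and restricts to a base-change functor on module categories over any $A \in \mathrm{Alg}(\Rep(\ZZ^{ds}; \Mod_R))$, giving $\Mod_A(\Rep(\ZZ^{ds}; \Mod_R)) \to \Mod_{\pi_! A}$. Applied to $A = R[s,t]/(st-\pi)$ (graded), combined with \cref{underlying_R_mod} and \cref{graded_structure}, we get $\pi_!(R[s,t]/(st-\pi)) \simeq R[s,t]/(st-\pi)$ as $R$-algebras.

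The second ingredient is the elementary identity $\pi_! \circ \ev_{0,!} \simeq \Id_{\Mod_R}$, which holds because $\ev_{0,!}(M)$ is, by definition of left Kan extension along $\{0\} \hookrightarrow \ZZ^{ds}$, the graded $R$-module with $M$ in weight $0$ and zero elsewhere, whose total sum is $M$. Combining these two observations, for any $M \in \Mod_R$ we obtain a chain of equivalences
\[
\pi_!\bigl(\ev_{0,!}(M) \otimes_R R[s,t]/(st-\pi)\bigr) \simeq \pi_!(\ev_{0,!}(M)) \otimes_R \pi_!(R[s,t]/(st-\pi)) \simeq M \otimes_R R[s,t]/(st-\pi),
\]
natural in $M$, which is exactly the commutativity of the right square.

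The main (mild) obstacle is bookkeeping: one must carefully distinguish between $R[s,t]/(st-\pi)$ viewed as a graded commutative algebra in $\Rep(\ZZ^{ds}; \Mod_R)$ and as an ordinary commutative $R$-algebra, and check that $\pi_!$ identifies the two consistently with their respective module structures. Once this is set up, symmetric monoidality of $\pi_!$ reduces everything to the trivial computation above, so no further subtleties are expected.
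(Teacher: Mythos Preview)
Your overall strategy is sound and reaches the right conclusion, but it differs from the paper's proof and contains one genuine error in the justification.

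The paper does not argue via symmetric monoidality of $\pi_!$. Instead it passes to right adjoints: the right square commutes iff $\ev_0 \circ U' \circ \pi^\ast \simeq U$, where $\pi^\ast$ is the constant-diagram functor, $U,U'$ are forgetful, and $\ev_0$ is evaluation at $0$. For $M \in \Mod_{R[s,t]/(st-\pi)}$ one has $U'\pi^\ast(M) = \underline{U(M)}$ and $\ev_0(\underline{U(M)}) = U(M)$, which finishes the argument in one line with no monoidal bookkeeping.

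Your route also works, but your justification for why $\pi_!$ is symmetric monoidal is wrong. You assert that $\Rep(\ZZ^{ds};\Mod_R) \simeq \LMod_{R[\ZZ]}(\Mod_R)$ and that $\pi_!$ is $-\otimes_{R[\ZZ]} R$. Neither is true: graded $R$-modules are \emph{not} $R[\ZZ]$-modules (the latter are $R$-modules equipped with an automorphism). What \cref{QCoh_Theta}-(1) actually gives is $\Rep(\ZZ^{ds}) \simeq \mathrm{CoMod}_{R[\ZZ]}$, and under that description $\pi_!$ is the forgetful functor from comodules, not a base change. The conclusion that $\pi_!$ is symmetric monoidal is nonetheless correct, but for a different reason: by \cref{pullback_atlas_Theta} it is identified with pullback along the atlas $\Spec R \to \mathrm{B}\G_{m,R}$, hence symmetric monoidal; equivalently, it is left Kan extension along the map of commutative monoids $\ZZ \to \{0\}$, which is symmetric monoidal for Day convolution. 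Once you replace your justification with either of these, the rest of your computation $\pi_!(\ev_{0,!}(M)\otimes_R A) \simeq M \otimes_R A$ goes through. The paper's right-adjoint argument buys you a shorter proof with no monoidal input; your approach has the mild advantage of staying on the left-adjoint side where the functors in the statement actually live.
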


\begin{proof}
	By  \cref{pullback_atlas_ST}, it is enough to show that the right square commutes. Passing to right adjoints, it is enough to show that the square
\[\begin{tikzcd}[sep=small]
	{\Mod_{R[s,t] / (st-\pi)} (\Rep(\ZZ^{ds}; \Mod_R))} && {\Mod_R(\Rep(\ZZ^{ds}))} \\
	\\
	{\Mod_{R[s,t] / (st-\pi)}} && {\Mod_R}
	\arrow["{U'}", from=1-1, to=1-3]
	\arrow["{\ev_0^\ast}", from=1-3, to=3-3]
	\arrow["{\pi^\ast}", from=3-1, to=1-1]
	\arrow["U"', from=3-1, to=3-3]
\end{tikzcd}\]
commutes, where $U$ and $U'$ are the forgetful functors.
	Let $M \in \Mod_{R[s,t]/(st-\pi)}$.
	The composition $U' \circ \pi^\ast(M)$ is identified with the diagonal functor $\underline{M}$ defined by $n \mapsto U(M)$ for all $n \in \ZZ^{ds}$.
	The claim follows by evaluating $\underline{M}$ at zero.
\end{proof}

\subsection{Operations on $\QCoh(\ST)$}

	We now want to describe pullback along the open and closed immersion defined by the loci $\left\lbrace t \neq 0 \right\rbrace$, $\left\lbrace s \neq 0 \right\rbrace$, $\left\lbrace t = 0 \right\rbrace$ and $\left\lbrace s = 0 \right\rbrace$ of $\ST$.

\begin{proposition}\label{generic_points}
	Let $j_{\left\lbrace s \neq 0\right\rbrace}, j_{\left\lbrace t \neq 0\right\rbrace} \colon \Spec (R)  \hookrightarrow \ST$ be the open immersions.
	We have commutative diagrams
\[\begin{tikzcd}[column sep=tiny, row sep=small]
	{\QCoh(\ST)} & {\QCoh(\Spec (R) )} & {\QCoh(\ST)} & {\QCoh(\Spec (R) )} \\
	\\
	{\Mod_{R[s,t]/(st-\pi)}(\Rep(\ZZ^{ds}))} & {\Mod_R} & {\Mod_{R[s,t]/(st-\pi)}(\Rep(\ZZ^{ds}))} & {\Mod_R}
	\arrow["{j^\ast_{\left\lbrace s \neq 0 \right\rbrace}}", from=1-1, to=1-2]
	\arrow[from=1-1, to=3-1]
	\arrow[from=1-2, to=3-2]
	\arrow["{j^\ast_{\left\lbrace t \neq 0 \right\rbrace}}", from=1-3, to=1-4]
	\arrow[from=1-3, to=3-3]
	\arrow[from=1-4, to=3-4]
	\arrow["{\colim_s}"', from=3-1, to=3-2]
	\arrow["{\colim_t}"', from=3-3, to=3-4]
\end{tikzcd}\]
where the vertical arrows are equivalences and the bottom horizontal arrows are defined respectively by
\[
F_\bullet \mapsto \colim(\cdots \xleftarrow{s} F_n \xleftarrow{s} F_{n-1} \xleftarrow{s} \cdots)
\]
\[
F_\bullet \mapsto \colim(\cdots \xrightarrow{t} F_n \xrightarrow{t} F_{n-1} \xrightarrow{t} \cdots )
\]
\end{proposition}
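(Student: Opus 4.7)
The two cases are symmetric (swap $s$ and $t$, and remember that $t$ has weight $-1$), so I'll focus on $j_{\left\lbrace s \neq 0 \right\rbrace}$. The plan is to identify the open substack $\{s \neq 0\} \subset \ST$ explicitly as a $\mathbb{G}_m$-quotient, transport the pullback functor across the equivalence of \cref{QCoh_ST}, and then reduce the computation to an elementary manipulation of graded modules with an invertible action.

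First, I would note that inverting $s$ in $R[s,t]/(st-\pi)$ produces $R[s,s^{-1}]$ (since the relation $st = \pi$ forces $t = \pi s^{-1}$ once $s$ is invertible), and that $[\Spec(R[s,s^{-1}])/\mathbb{G}_{m,R}] \simeq \Spec(R)$ because $\Spec(R[s,s^{-1}]) \to \Spec(R)$ is a $\mathbb{G}_m$-torsor. The open immersion $j_{\left\lbrace s \neq 0 \right\rbrace}$ is therefore obtained as the $\mathbb{G}_m$-quotient of the open immersion $\Spec(R[s,s^{-1}]) \hookrightarrow \Spec(R[s,t]/(st-\pi))$. Since the equivalence in \cref{QCoh_ST} is symmetric monoidal and pullback along an open immersion of derived stacks is given by tensoring with the structure sheaf of the open, the functor $j^{\ast}_{\left\lbrace s \neq 0 \right\rbrace}$ corresponds under this equivalence to the localization
\[
- \otimes_{R[s,t]/(st-\pi)} R[s,s^{-1}] \colon \Mod_{R[s,t]/(st-\pi)}(\Rep(\ZZ^{ds}; \Mod_R)) \to \Mod_{R[s,s^{-1}]}(\Rep(\ZZ^{ds}; \Mod_R)),
\]
with $s$ still of weight $1$. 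To verify this identification rigorously I would pull back to the atlas $\Spec(R[s,t]/(st-\pi)) \to \ST$ and invoke the compatibility established in \cref{pullback_atlas_ST} and \cref{computation_pullback_atlas_ST}: after applying the conservative forgetful functor $\pi_!$, both functors become the ordinary ring-theoretic localization, and the statement then descends along the faithfully flat atlas.

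Second, I would observe that the target category $\Mod_{R[s,s^{-1}]}(\Rep(\ZZ^{ds}; \Mod_R))$ is equivalent to $\Mod_R$. Indeed, an object is a $\ZZ$-graded $R$-module $F_\bullet$ equipped with maps $s \colon F_n \to F_{n+1}$ that are all equivalences (by invertibility of $s$); such data is uniquely determined by the $R$-module $F_0$ with the constant diagram $F_n \coloneqq F_0$ and $s = \mathrm{id}$. The quasi-inverse equivalence $\Mod_{R[s,s^{-1}]}(\Rep(\ZZ^{ds}; \Mod_R)) \simeq \Mod_R$ is given by evaluation at any fixed degree, and matches $\QCoh(\Spec(R)) \simeq \Mod_R$ through the identification $[\Spec(R[s,s^{-1}])/\mathbb{G}_m] \simeq \Spec(R)$.

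Finally, I would compute the composite functor by a direct chase: for $F_\bullet$ a graded $R[s,t]/(st-\pi)$-module, the localization $s^{-1}F_\bullet$ has $n$-th graded piece given by the filtered colimit $\colim(F_n \xrightarrow{s} F_{n+1} \xrightarrow{s} F_{n+2} \to \cdots)$, because localization commutes with filtered colimits. By cofinality, this filtered colimit agrees with the colimit of the entire $\ZZ$-indexed diagram $(\cdots \xrightarrow{s} F_{n-1} \xrightarrow{s} F_n \xrightarrow{s} F_{n+1} \xrightarrow{s} \cdots)$, which is the formula displayed in the statement (after reading the leftward arrows as $s$ going right). The hardest step is the first one: making precise the claim that the symmetric monoidal equivalence of \cref{QCoh_ST} intertwines pullback along the open immersion with localization in the module category. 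Once this naturality is in place, the remaining verifications are routine manipulations of graded colimits.
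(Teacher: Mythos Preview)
Your proposal is correct and follows essentially the same approach as the paper: identify the open substack $\{s\neq 0\}$ (resp.\ $\{t\neq 0\}$) with $[\G_{m,R}/\G_{m,R}]\simeq\Spec(R)$, transport the pullback functor across the symmetric monoidal equivalence of \cref{QCoh_ST} to the localization $-\otimes_{\overline{R}}\overline{R}_s$ (resp.\ $-\otimes_{\overline{R}}\overline{R}_t$), and then read off the colimit formula by writing the localized algebra as a filtered colimit along multiplication by $s$ (resp.\ $t$). The paper's proof is terser---it simply asserts the identification and takes the weight-$0$ part---whereas you spell out the cofinality argument and the equivalence $\Mod_{R[s,s^{-1}]}(\Rep(\ZZ^{ds};\Mod_R))\simeq\Mod_R$ more explicitly, but the underlying strategy is the same.
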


\begin{proof}
	We prove the commutativity of the second diagram, the other being analogue.
	Let $F_\bullet \in \Mod_{R[s,t]/(st-\pi)}(\Rep(\ZZ^{ds}))$ and set $\overline{R} \coloneqq R[s,t]/(st-\pi)$.
	Then we have
	\[
	R[t^{\pm 1}] \simeq \overline{R}_t \simeq \colim (\cdots \overline{R} \xrightarrow{t} \overline{R} \xrightarrow{t} \cdots)
	\]
	It follows that under the equivalences
	\[
	\QCoh(\ST|_{\left\lbrace t \neq 0 \right\rbrace}) \simeq \QCoh([\G_{m,R}/ \G_{m,R}]) \simeq \QCoh(\Spec (R) ),
	\]
	the pullback $j^\ast_{\left\lbrace t \neq 0 \right\rbrace} \colon \QCoh(\ST) \to \QCoh(\ST|_{\left\lbrace t \neq 0 \right\rbrace}) $ then corresponds to weight $0$ part of
	\[
	F_\bullet \otimes_{\overline{R}} \overline{R}_t \simeq \colim (\cdots F_\bullet \xrightarrow{t} F_\bullet \xrightarrow{t} \cdots).
	\]
	The claim follows.
\end{proof}

\begin{proposition}\label{closed_points}
	Let $\kappa = R / (\pi)$ and let $i_{\left\lbrace s = 0\right\rbrace}, i_{\left\lbrace t = 0\right\rbrace} \colon \mathrm{B}\G_{m, \kappa} \hookrightarrow \ST$ be the closed immersions.
	We have commutative diagrams
\[\begin{tikzcd}[sep=tiny]
	{\QCoh(\ST)} & {\QCoh(\mathrm{B}\G_{m, \kappa})} & {\QCoh(\ST)} & {\QCoh(\mathrm{B}\G_{m, \kappa})} \\
	\\
	{\Mod_{R[s,t]/(st-\pi)}(\Rep(\ZZ^{ds}))} & {\Mod_R} & {\Mod_{R[s,t]/(st-\pi)}(\Rep(\ZZ^{ds}))} & {\Mod_R}
	\arrow["{i^\ast_{\left\lbrace s = 0 \right\rbrace}}", from=1-1, to=1-2]
	\arrow[from=1-1, to=3-1]
	\arrow[from=1-2, to=3-2]
	\arrow["{i^\ast_{\left\lbrace t = 0 \right\rbrace}}", from=1-3, to=1-4]
	\arrow[from=1-3, to=3-3]
	\arrow[from=1-4, to=3-4]
	\arrow["{\Gr_s}"', from=3-1, to=3-2]
	\arrow["{\Gr_t}"', from=3-3, to=3-4]
\end{tikzcd}\]
where the vertical arrows are equivalences and the bottom horizontal arrows are defined respectively by
\[
F_\bullet \mapsto [ \cdots \xrightarrow{t} F_n / sF_{n-1} \xrightarrow{t} F_{n-1} / sF_{n-2} \xrightarrow{t} \cdots ]
\]
\[
F_\bullet \mapsto [ \cdots \xleftarrow{s} F_n / tF_{n+1} \xleftarrow{s} F_{n-1} / tF_{n} \xleftarrow{s} \cdots ]
\]
\end{proposition}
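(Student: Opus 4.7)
The plan is to compute the pullback via an explicit short graded resolution and derived tensor products. I will focus on $i_{\{s=0\}}^\ast$; the second diagram is proven by the symmetric argument. (Note that, matching \cref{explicit_description_ST}, the codomain of $i_{\{s=0\}}$ and $i_{\{t=0\}}$ should be $\Theta_\kappa$ rather than $\mathrm{B}\G_{m,\kappa}$, since the formulas land in objects carrying a nontrivial $t$- or $s$-action.)

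First, I would algebraically identify the closed immersion. Under the equivalence of \cref{QCoh_ST} and the analogous description $\QCoh(\Theta_\kappa) \simeq \Mod_{\kappa[t]}(\Rep(\ZZ^{ds};\Mod_\kappa))$ coming from \cref{QCoh_Theta}-(2), the map $i_{\{s=0\}}$ corresponds to the graded surjection
\[
R[s,t]/(st-\pi) \twoheadrightarrow R[s,t]/(st-\pi,s) = \kappa[t],
\]
with $t$ of weight $-1$. Consequently the pullback becomes the derived extension of scalars
\[
i_{\{s=0\}}^\ast F_\bullet \simeq F_\bullet \otimes^{L}_{R[s,t]/(st-\pi)} \kappa[t],
\]
computed in $\Rep(\ZZ^{ds};\Mod_R)$. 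To verify this identification, I would use flat base change along the atlas $p \colon \Spec(R[s,t]/(st-\pi)) \to \ST$ together with \cref{pullback_atlas_ST} and the analogous computation for $\Theta_\kappa$ from \cref{QCoh_Theta}.

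Next, I would resolve $\kappa[t]$ freely over $R[s,t]/(st-\pi)$. Writing a general element in the normal form $a_0 + \sum_{i \geq 1} a_i s^i + \sum_{j \geq 1} b_j t^j$ and computing the action of $s$, one checks directly that $s$ is a non-zero-divisor (using that $\pi \in R$ is nonzero). This yields a short exact sequence of graded modules
\[
0 \to R[s,t]/(st-\pi)(-1) \xrightarrow{s} R[s,t]/(st-\pi) \to \kappa[t] \to 0,
\]
where $(-1)$ is the grading shift that makes multiplication by $s$ a degree-$0$ map. Tensoring with $F_\bullet$ produces the cofiber sequence
\[
F_\bullet(-1) \xrightarrow{s} F_\bullet \to i_{\{s=0\}}^\ast F_\bullet,
\]
whose degree-$n$ piece is exactly $\cof(F_{n-1} \xrightarrow{s} F_n) = F_n/sF_{n-1}$, with $t$-action inherited from the $R[s,t]/(st-\pi)$-linear $t$-action on $F_\bullet$. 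This recovers the formula for $\Gr_s$. The second diagram for $i_{\{t=0\}}^\ast$ follows identically, using that $t$ is likewise a non-zero-divisor and that $R[s,t]/(st-\pi)/(t) = \kappa[s]$.

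The main obstacle is the first step: justifying that the stacky pullback corresponds precisely to the graded derived tensor product, with the correct tracking of the $\ZZ$-grading through the flat base change comparison with the atlases. Once that identification is clean, the remaining argument is a direct computation via the free resolution above.
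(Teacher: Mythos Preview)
Your proposal is correct and follows essentially the same route as the paper: both identify the pullback with the derived extension of scalars along the graded quotient $R[s,t]/(st-\pi) \twoheadrightarrow \kappa[t]$ (resp.\ $\kappa[s]$) and compute it as the cofiber of multiplication by $s$ (resp.\ $t$), with the paper simply writing $\kappa[s] \simeq \cof(\overline{R} \xrightarrow{t} \overline{R})$ and tensoring. Your observation that the target should be $\Theta_\kappa$ rather than $\mathrm{B}\G_{m,\kappa}$ is correct (consistent with \cref{explicit_description_ST}), and your extra care with the non-zero-divisor check and grading shift is a welcome addition; the identification of stacky pullback with graded base change that you flag as the ``main obstacle'' is implicit in the paper once the symmetric monoidal equivalence of \cref{QCoh_ST} is in place, since closed immersions of affines over the base correspond to algebra quotients there.
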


\begin{proof}
	We prove the commutativity of the second diagram, the other being analogue.
	Let $F_\bullet \in \Mod_{R[s,t]/(st-\pi)}(\Rep(\ZZ^{ds}))$ and set $\overline{R} \coloneqq R[s,t]/(st-\pi)$.
	Then we have
\[
\kappa[s] \simeq \overline{R} / (t) \simeq \cof (\overline{R} \xrightarrow{t} \overline{R})
\]
	It follows that $i^\ast_{\left\lbrace s = 0 \right\rbrace}$ corresponds to
\[
F_\bullet \otimes_{\overline{R}} \kappa[s] \simeq \cof (F_\bullet \xrightarrow{t} F_\bullet).
\]
	The claim follows.
\end{proof}

	Consider now the cover of $\ST \setminus \left\lbrace 0 \right\rbrace$ given by the inclusion of the open substacks
\[
\Spec(R) \xhookrightarrow{j_{\left\lbrace s \neq 0 \right\rbrace}} \ST \setminus \left\lbrace 0 \right\rbrace,
\]
\[
\Spec(R) \xhookrightarrow{j_{\left\lbrace t \neq 0 \right\rbrace}} \ST \setminus \left\lbrace 0 \right\rbrace.\]
	These open substacks intersect in the locus
\[
\Spec(K) \xhookrightarrow{j_{\left\lbrace s,t \neq 0 \right\rbrace}} \ST \setminus \left\lbrace 0 \right\rbrace,
\]
where $K \coloneqq \Frac(R)$.
	Hence we get an equivalence
\begin{equation}\label{QCoh_equiv}
	\QCoh(\ST \setminus \left\lbrace 0 \right\rbrace) \simeq \QCoh(\Spec (R) ) \times_{\QCoh(\Spec(K))} \QCoh(\Spec (R) ).
\end{equation}

	This allows us to prove the following:

\begin{lemma}\label{descent_ST}
	Let $\C \in \Prlo$ equipped with an admissible $t$-structure $\tau$.
	Then \eqref{QCoh_equiv} restricts to an equivalence between $\tau$-flat families over $\ST \setminus \left\lbrace 0 \right\rbrace$ and the full $\infty$-category of $\QCoh(\Spec (R) ) \times_{\QCoh(\Spec (K))} \QCoh(\Spec (R) )$ spanned by the following data:
\begin{itemize}
	\item[(a)] a $\tau$-flat family $F$ over $K$;
	\item[(b)] $\tau$-flat families $E_1, E_2$ over $R$ equipped with injections $E_i \hookrightarrow F$ in $\C_R^\heartsuit$;
	\item[(c)] equivalences $E_i \otimes_R K \simeq F$.
\end{itemize}
\end{lemma}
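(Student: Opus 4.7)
The strategy is to apply $\C \otimes_k -$ to the equivalence \eqref{QCoh_equiv} and then characterize $\tau$-flatness under the resulting fiber product description. Since $\C \in \Prlo_k$ is dualizable in $\Pr^L_k$ by \cref{finiteness_conditions}, the functor $\C \otimes_k - \colon \Prlo_k \to \Prlo_k$ preserves limits, so we obtain
\[
\C_{\ST \setminus \{0\}} \simeq \C_R \times_{\C_K} \C_R.
\]
An object of the right-hand side is a triple $(E_1, E_2, \phi)$ with $E_i \in \C_R$ and $\phi \colon E_1 \otimes_R K \xrightarrow{\sim} E_2 \otimes_R K$ in $\C_K$; setting $F \coloneqq E_1 \otimes_R K$ and identifying $E_2 \otimes_R K$ with $F$ via $\phi$ produces the equivalences of (c).

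Next, the two open immersions $j_{\{s \neq 0\}}, j_{\{t \neq 0\}} \colon \Spec(R) \to \ST \setminus \{0\}$ assemble into a flat effective cover by a derived scheme. By \cref{flat_affine_atlas}, a family $G \in \C_{\ST \setminus \{0\}}$ is $\tau$-flat if and only if both restrictions $j_{\{s \neq 0\}}^\ast G$ and $j_{\{t \neq 0\}}^\ast G$ are $\tau_R$-flat; under the equivalence above, this amounts to the condition that both $E_1$ and $E_2$ are $\tau_R$-flat. Once this holds, the pullback $F = E_i \otimes_R K$ is $\tau_K$-flat by \cref{pullback_flat_affine} (equivalently, by \cref{tau_flat_in_heart} since $K$ is a field), yielding (a) for free.

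It remains to produce the injections of (b). The inclusion $R \hookrightarrow K$ identifies $K$ with the filtered colimit $\colim(R \xrightarrow{\pi} R \xrightarrow{\pi} \cdots)$ in $\Mod_R$. By \cref{tens_commute_colim}, tensoring with $E_i$ yields
\[
E_i \otimes_R K \simeq \colim\bigl(E_i \xrightarrow{\pi} E_i \xrightarrow{\pi} \cdots\bigr) \quad \text{in } \C_R.
\]
By \cref{flatness_DVR}, the $\tau_R$-flatness of $E_i$ is equivalent to $E_i \in \C_R^\heartsuit$ and the injectivity of $\pi \colon E_i \to E_i$ in $\C_R^\heartsuit$. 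Since $\C_R^\heartsuit$ is a Grothendieck abelian category (as $\tau_R$ is admissible and $\omega$-accessible) and filtered colimits of monomorphisms are monomorphisms there, the canonical map $E_i \to E_i \otimes_R K = F$ is a monomorphism in $\C_R^\heartsuit$, giving the injection in (b).

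Conversely, any triple $(F, E_1, E_2)$ satisfying (a)--(c) yields an object $(E_1, E_2, \phi)$ of $\C_R \times_{\C_K} \C_R$ where $\phi$ is the composite equivalence $E_1 \otimes_R K \simeq F \simeq E_2 \otimes_R K$ given by (c); the $\tau_R$-flatness of $E_1, E_2$ from (b) guarantees $\tau$-flatness of the corresponding family on $\ST \setminus \{0\}$ by the criterion above. The only mildly delicate point is the passage between "multiplication by $\pi$ is injective" and "the unit $E_i \to E_i \otimes_R K$ is a monomorphism", but this is settled by \cref{flatness_DVR} together with the fact that $\C_R^\heartsuit$ is Grothendieck abelian.
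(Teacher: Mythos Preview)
Your argument is correct and follows the same descent strategy as the paper's proof, which simply invokes faithfully flat descent of $\tau$-flatness along the Zariski cover $\Spec(R) \sqcup \Spec(R) \to \ST \setminus \{0\}$ and identifies $E_1, E_2, F$ with the three restrictions. Your version is more careful on two points the paper leaves implicit: you cite \cref{flat_affine_atlas} rather than \cref{descent_flat_affine} (the latter is stated for maps between affines, whereas the target here is not affine), and you actually justify the injections in (b) via \cref{flatness_DVR} and the fact that filtered colimits of monomorphisms in the Grothendieck abelian category $\C_R^\heartsuit$ remain monomorphisms, a step the paper's proof does not address.
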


\begin{proof}
	Since $\tau$-flatness satisfies faithfully flat descent (\cref{descent_flat_affine}), this follows directly from the covering of $\ST \setminus \left\lbrace 0 \right\rbrace$ described above.
	More precisely, let $F_\bullet$  be a $\tau$-flat family over $\ST \setminus \left\lbrace 0 \right \rbrace$.
	Then under the equivalence of \eqref{QCoh_equiv}, the family $F$ corresponds to $(j_{\left\lbrace s,t \neq 0 \right\rbrace})_\C^\ast F_\bullet$, the family $E_1$ to $(j_{\left\lbrace t \neq 0 \right\rbrace})_\C^\ast F_\bullet$ and the family $E_2$ to $(j_{\left\lbrace s \neq 0 \right\rbrace})_\C^\ast F_\bullet$.
\end{proof}

\begin{corollary}\label{pushforward_ST}
	In the setting of \cref{descent_ST}, let $F_\bullet$ be a $\tau$-flat family over $\ST \setminus \left\lbrace 0 \right \rbrace$ and $j \colon \ST \setminus \left\lbrace 0 \right\rbrace \to \ST$ be the open immersion.
	Set $E_1 \simeq (j_{\left\lbrace t \neq 0 \right\rbrace})_\C^\ast F_\bullet$ and $E_2 \simeq (j_{\left\lbrace s \neq 0 \right\rbrace})^\ast F_\bullet$.
	Then $\pi_0 j_{\C, \ast} F_\bullet$ is identified with $\bigoplus_{n \in \ZZ} E_1 \cap (\pi^{-n} E_2)t^n$.
\end{corollary}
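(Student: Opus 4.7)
The plan is to invoke Mayer--Vietoris for the affine open cover $\ST \setminus \{0\} = \{s \neq 0\} \cup \{t \neq 0\}$. By \cref{descent_ST}, the datum of $F_\bullet$ corresponds to the triple $(F, E_1, E_2)$ with the specified inclusions. Denote by $i_t, i_s \colon \Spec(R) \hookrightarrow \ST$ the two open immersions and by $i_{st} \colon \Spec(K) \hookrightarrow \ST$ their common refinement. Since $\C \otimes_k -$ preserves limits (by dualizability of $\C$, \cref{finiteness_conditions}) and Beck--Chevalley for open immersions in $\C_{-}$ follows from the quasi-coherent version via \cite[Theorem 0.5]{Haine} (cf. the proof of \cref{formula_push_derived}), the pushforward $j_{\C, \ast} F_\bullet$ fits in a fiber sequence in $\C_{\ST}$:
\[
j_{\C, \ast} F_\bullet \longrightarrow (i_t)_{\C, \ast} E_1 \oplus (i_s)_{\C, \ast} E_2 \longrightarrow (i_{st})_{\C, \ast} F.
\]

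The next step is to describe each pushforward explicitly via \cref{QCoh_ST}. Since $\{t \neq 0\} \subset \ST$ corresponds to inverting $t$ (so that $s = \pi/t$ there), the pushforward $(i_t)_{\C, \ast} E_1$ identifies with $E_1 \otimes_R R[t^{\pm 1}]$: its weight-$n$ piece is $E_1 \cdot t^{-n}$, with $t$ acting as the identity $E_1 \to E_1$ and $s$ as multiplication by $\pi$. Symmetrically, $(i_s)_{\C, \ast} E_2 \simeq E_2 \otimes_R R[s^{\pm 1}]$ has weight-$n$ piece $E_2 \cdot s^n$, with $s$ acting as the identity and $t$ as multiplication by $\pi$; while $(i_{st})_{\C, \ast} F \simeq F[t^{\pm 1}]$ has weight-$n$ piece $F \cdot t^{-n}$. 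All three of these objects lie in $\C_{\ST}^\heartsuit$, since each weight component lies in $\C_R^\heartsuit$ (\cref{explicit_description}).

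Because the three terms are in the heart, the fiber sequence yields a left-exact sequence in $\C_{\ST}^\heartsuit$ via its associated long exact sequence, and $\pi_0 j_{\C, \ast} F_\bullet$ is identified with the kernel of the right-hand arrow. Using the weight-$n$ identification of $(i_{st})_{\C, \ast} F$ with $F$ via $f \cdot t^{-n} \mapsto f$, the map from $(i_t)_{\C, \ast} E_1$ in weight $n$ becomes $e_1 \mapsto \iota_1(e_1)$, while the map from $(i_s)_{\C, \ast} E_2$ acquires a factor of $\pi^n$, since $s^n = \pi^n t^{-n}$ in $R[s,t]/(st-\pi)$, making it $e_2 \mapsto \pi^n \iota_2(e_2)$. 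The kernel in weight $n$ is therefore $\{e_1 \in E_1 : \iota_1(e_1) \in \pi^n \iota_2(E_2)\} \cong E_1 \cap \pi^n E_2$ (taken inside $F$). Reindexing by $m = -n$, so that the generator $t^m$ sits in weight $-m$, recovers the claimed decomposition $\bigoplus_{m \in \ZZ} (E_1 \cap \pi^{-m} E_2)\, t^m$.

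The main subtlety will be the careful bookkeeping of the gradings: the weight-$n$ piece of $(i_{st})_{\C, \ast} F$ admits two natural identifications with $F$, via $t^{-n}$ and via $s^n$, differing by the factor $\pi^n$, and it is precisely this discrepancy that produces the shift of $E_2$ by $\pi^{-m}$ in the final formula. A secondary verification needed along the way is that the three affine-open pushforwards indeed land in the heart; this reduces to checking weight by weight and using that the forgetful functor $\C_K \to \C_R$ is $t$-exact (as the right adjoint to the exact localization $- \otimes_R K$).
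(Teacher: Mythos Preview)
Your proof is correct and follows essentially the same approach as the paper: both compute $\pi_0 j_{\C,\ast} F_\bullet$ as the kernel of the Mayer--Vietoris map $E_1 \oplus E_2 \to F$ (cf.\ \cref{formula_push}) after identifying the three pushforwards as graded objects of the form $(-)\otimes_R R[t^{\pm 1}]$, and then read off the weight-$n$ piece as $E_1 \cap \pi^n E_2$. Your bookkeeping of the $\pi^n$ shift via $s^n = \pi^n t^{-n}$ is slightly more explicit than the paper's, which instead rewrites the $E_2$-diagram with the $\pi$-powers absorbed into the entries before taking the intersection, but the content is the same.
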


\begin{proof}
	Set $F \simeq (j_{\left\lbrace s,t \neq 0 \right\rbrace})_\C^\ast F_\bullet$.
	The underlying $R[s,t]/(st-\pi)$-family of $\pi_0 j_\ast F_\bullet$ is identified with
\[
\mathrm{Ker}(E_1 \oplus E_2 \to F).
\]
	By means of the equivalence $[\G_m / \G_m] \simeq \Spec(R)$, the family $E_1 \in \C_R$ corresponds to the graded object 
\[
E_1 \otimes_R R[t^{\pm 1}] 
\]
	Similarly for $E_2$ and $F$. 
	Hence the pushforward of these families to $\ST$ are identified respectively with:
\[\begin{tikzcd}
	\cdots && F && F && F && \cdots
	\arrow["id", shift left, from=1-1, to=1-3]
	\arrow["\pi", shift left, from=1-3, to=1-1]
	\arrow["id", shift left, from=1-3, to=1-5]
	\arrow["\pi", shift left, from=1-5, to=1-3]
	\arrow["id", shift left, from=1-5, to=1-7]
	\arrow["\pi", shift left, from=1-7, to=1-5]
	\arrow["id", shift left, from=1-7, to=1-9]
	\arrow["\pi", shift left, from=1-9, to=1-7]
\end{tikzcd}\]
\[\begin{tikzcd}
	\cdots && {E_1} && {E_1} && {E_1} && \cdots
	\arrow["id", shift left, from=1-1, to=1-3]
	\arrow["\pi", shift left, from=1-3, to=1-1]
	\arrow["id", shift left, from=1-3, to=1-5]
	\arrow["\pi", shift left, from=1-5, to=1-3]
	\arrow["id", shift left, from=1-5, to=1-7]
	\arrow["\pi", shift left, from=1-7, to=1-5]
	\arrow["id", shift left, from=1-7, to=1-9]
	\arrow["\pi", shift left, from=1-9, to=1-7]
\end{tikzcd}\]
\[\begin{tikzcd}
	\cdots && {E_2} && {E_2} && {E_2} && \cdots
	\arrow["\pi", shift left, from=1-1, to=1-3]
	\arrow["id", shift left, from=1-3, to=1-1]
	\arrow["\pi", shift left, from=1-3, to=1-5]
	\arrow["id", shift left, from=1-5, to=1-3]
	\arrow["\pi", shift left, from=1-5, to=1-7]
	\arrow["id", shift left, from=1-7, to=1-5]
	\arrow["\pi", shift left, from=1-7, to=1-9]
	\arrow["id", shift left, from=1-9, to=1-7]
\end{tikzcd}\]
Using that $E_2 \subseteq F$, the last family is equivalent to
\[\begin{tikzcd}
	\cdots && {\pi E_2} && {E_2} && {\pi^{-1}E_2} && \cdots
	\arrow["id", shift left, from=1-1, to=1-3]
	\arrow["\pi", shift left, from=1-3, to=1-1]
	\arrow["id", shift left, from=1-3, to=1-5]
	\arrow["\pi", shift left, from=1-5, to=1-3]
	\arrow["id", shift left, from=1-5, to=1-7]
	\arrow["\pi", shift left, from=1-7, to=1-5]
	\arrow["id", shift left, from=1-7, to=1-9]
	\arrow["\pi", shift left, from=1-9, to=1-7]
\end{tikzcd}\]
	The pushforward $j_{\C, \ast} F_\bullet$ is then identified with
\[\begin{tikzcd}
	\cdots && {\pi E_2 \cap E_1} && {E_2 \cap E_1} && {\pi^{-1}E_2 \cap E_1} && \cdots
	\arrow["id", shift left, from=1-1, to=1-3]
	\arrow["\pi", shift left, from=1-3, to=1-1]
	\arrow["id", shift left, from=1-3, to=1-5]
	\arrow["\pi", shift left, from=1-5, to=1-3]
	\arrow["id", shift left, from=1-5, to=1-7]
	\arrow["\pi", shift left, from=1-7, to=1-5]
	\arrow["id", shift left, from=1-7, to=1-9]
	\arrow["\pi", shift left, from=1-9, to=1-7]
\end{tikzcd}\]
as claimed.
\end{proof}

\begin{lemma}\label{local_criterion_ST}
	Let $\C \in \Prlo_k$ equipped with an admissible $t$-structure $\tau$.
	Let $F_\bullet \in \C_{\ST}$.
	If the restrictions $(j_{\left\lbrace s\neq 0 \right\rbrace})_\C^\ast F_\bullet, (j_{\left\lbrace s\neq 0 \right\rbrace})_\C^\ast F_\bullet, (i_{\left\lbrace s = t = 0 \right\rbrace})_\C^\ast F_\bullet$ are $\tau$-flat, then $F_\bullet$ is $\tau$-flat.
\end{lemma}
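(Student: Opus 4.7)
The plan is to pull back to the affine atlas $p \colon X = \Spec(\overline R) \to \ST$, where $\overline R \coloneqq R[s,t]/(st-\pi)$, and then verify flatness prime-by-prime using \cref{local_criterion_flatness}. Since $p$ is a $\G_{m,R}$-torsor from an affine scheme, it is a flat effective epimorphism, and \cref{flat_affine_atlas} reduces the statement to showing that $p_\C^\ast F_\bullet \in \C_{\overline R}$ is $\tau_{\overline R}$-flat. As $\overline R$ is discrete noetherian over $k$, \cref{local_criterion_flatness} then reduces the problem to verifying that
\[
p_\C^\ast F_\bullet \otimes_{\overline R} \overline R/\mathfrak{p} \in \C_{\overline R}^\heartsuit
\]
for every prime ideal $\mathfrak{p} \subset \overline R$.

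I would then split on the position of $\mathfrak{p}$. If $s \notin \mathfrak{p}$ (the case $t \notin \mathfrak{p}$ is symmetric), then the composite $\Spec(\overline R/\mathfrak{p}) \to X \xrightarrow{p} \ST$ factors as $\Spec(\overline R/\mathfrak{p}) \to \Spec(\overline R_s) \xrightarrow{q_s} \Spec R \xhookrightarrow{j_{\{s \neq 0\}}} \ST$, where $q_s$ is induced by the inclusion $R \hookrightarrow R[s,s^{-1}] = \overline R_s$ and realizes the identification $[\Spec \overline R_s / \G_{m,R}] \simeq \Spec R$. Since $(j_{\{s \neq 0\}})_\C^\ast F_\bullet$ is $\tau_R$-flat by hypothesis, \cref{pullback_flat_affine} shows that its pullback $q_{s,\C}^\ast (j_{\{s \neq 0\}})_\C^\ast F_\bullet \simeq (p_\C^\ast F_\bullet) \otimes_{\overline R} \overline R_s$ is $\tau_{\overline R_s}$-flat; tensoring with the discrete $\overline R_s$-module $\overline R_s/\mathfrak{p}_s \simeq \overline R/\mathfrak{p}$ then lands in $\C_{\overline R}^\heartsuit$ directly from the definition of $\tau$-flatness. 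If instead $s, t \in \mathfrak{p}$, then $st = \pi$ forces $\pi \in \mathfrak{p}$, so $\mathfrak{p} = (s,t)$ is maximal with residue field $\kappa$, and the composite $\Spec \kappa \to X \to \ST$ factors through the atlas $\Spec \kappa \to \mathrm{B}\G_{m,\kappa} \xhookrightarrow{i_{\{s=t=0\}}} \ST$. By \cref{stability_flatness} the pullback of the $\tau$-flat family $(i_{\{s=t=0\}})_\C^\ast F_\bullet$ to $\Spec \kappa$ is $\tau_\kappa$-flat, and lies in $\C_\kappa^\heartsuit$ by \cref{tau_flat_in_heart} since $\kappa$ is a field.

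There is no real conceptual obstacle beyond assembling the right combination of existing lemmas: the only point worth emphasizing is the elementary geometric observation that any prime of $\overline R$ containing both $s$ and $t$ must equal the unique maximal ideal $(s,t)$ (whose image in $\ST$ is the origin), so that the two open loci $\{s \neq 0\}, \{t \neq 0\}$ together with the closed point $\{s=t=0\}$ exhaust the primes of the atlas $X$.
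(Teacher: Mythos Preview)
Your proof is correct and follows essentially the same strategy as the paper: pull back along the atlas $p$ and apply the local criterion for flatness (\cref{local_criterion_flatness}), using that every prime of $\overline R$ lies in one of the three loci. Your argument is in fact slightly more careful than the paper's---you make the prime-by-prime verification explicit and invoke \cref{flat_affine_atlas} rather than \cref{descent_flat_affine} (which is stated only for maps between affines), but the underlying idea is identical.
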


\begin{proof}
	By \cref{descent_flat_affine} it is enough to show that $p_\C^\ast F_\bullet$ is $\tau$-flat.
	Consider the pullback diagram
\[\begin{tikzcd}[sep=small]
	{\G_{m} =\Spec(R[s^{\pm 1}])} && {\Spec(R[s,t]/(st-\pi))} \\
	\\
	{[\G_{m} / \G_{m}] =\Spec(R)} && \ST
	\arrow["{j_{ \left\lbrace s \neq 0 \right\rbrace}}", hook, from=1-1, to=1-3]
	\arrow["q"', from=1-1, to=3-1]
	\arrow["p", from=1-3, to=3-3]
	\arrow["{j_{\left\lbrace s \neq 0 \right\rbrace}}"', hook, from=3-1, to=3-3]
\end{tikzcd}\]
where the vertical maps are the atlas maps and the horizontal are open immersions.
	By hypothesis, $(j_{\left\lbrace s \neq 0 \right\rbrace})_\C^\ast F_\bullet$ is $\tau_R$-flat, which in turn implies that $q_\C^\ast (j_{\left\lbrace s \neq 0 \right\rbrace})_\C^\ast F_\bullet \simeq (j_{\left\lbrace s \neq 0 \right\rbrace})_\C^\ast p^\ast F_\bullet$ is $\tau_{R[s^{\pm 1}]}$-flat.
	Similarly, the restriction of $p_\C^\ast F_\bullet$ to the locus $\left\lbrace t \neq 0 \right\rbrace$ is $\tau_{R[t^{\pm 1}]}$-flat.
	Consider now the pullback diagram
\[\begin{tikzcd}[sep=small]
	{\Spec(\kappa)} && {\Spec(R[s,t]/(st-\pi))} \\
	\\
	{\mathrm{B}\G_{m}} && \ST
	\arrow["{i_{\left\lbrace s = t = 0 \right\rbrace}}", hook, from=1-1, to=1-3]
	\arrow["g"', from=1-1, to=3-1]
	\arrow["p", from=1-3, to=3-3]
	\arrow["{i_{\left\lbrace s = t = 0 \right\rbrace}}"', hook, from=3-1, to=3-3]
\end{tikzcd}\]
	Arguing as above we find that $(i_{\left\lbrace s = t = 0 \right\rbrace})_\C^\ast p_\C^\ast F_\bullet$ is $\tau_{\kappa}$-flat.
	Hence $p_\C^\ast F_\bullet$ is $\tau$-flat by \cref{local_criterion_flatness}.
\end{proof}

\bibliographystyle{alpha}
\bibliography{good_bib_arxiv}

\end{document}